\documentclass[oneside,reqno, twoside, 11pt]{amsart}
\usepackage[pdftex]{graphicx}
\usepackage{color}
\usepackage[T1]{fontenc}
\usepackage{lmodern}
\usepackage[english]{babel}
\usepackage{mathrsfs}  

\usepackage{upgreek}

\usepackage{amsfonts}
\usepackage{extpfeil}
\usepackage{verbatim}

       \usepackage{BOONDOX-cal}

\usepackage{microtype}
\usepackage[new]{old-arrows}
\usepackage{amsmath}
\usepackage[all, cmtip]{xy}
\usepackage{tikz-cd}

\usepackage{amsthm}
\usepackage{mathtools}

\usepackage{bbm}
\usepackage{paralist}
\usepackage[T1]{fontenc}
\usepackage[colorlinks,citecolor=magenta,pagebackref=true,urlcolor=magenta, bookmarksdepth=3]{hyperref}
\usepackage[nodisplayskipstretch]{setspace}
\setstretch{1.3}

\setlength{\oddsidemargin}{0in}
\setlength{\evensidemargin}{0in}
\setlength{\marginparwidth}{0in}
\setlength{\marginparsep}{0in}
\setlength{\marginparpush}{0in}
\setlength{\topmargin}{0.3in}
\setlength{\headsep}{14pt}
\setlength{\footskip}{.3in}
\setlength{\textheight}{8.0in}
\setlength{\textwidth}{5.8in}
\setlength{\parskip}{4pt}
\linespread{1.2}

\usepackage{breqn}

\usepackage{nicefrac}



\usepackage[font=small,labelfont=bf]{caption}
\usepackage[labelformat=simple]{subcaption}

\usepackage{epsfig}

\DeclareFontFamily{OT1}{pzc}{}
\DeclareFontShape{OT1}{pzc}{m}{it}{<-> s * [1.10] pzcmi7t}{}
\DeclareMathAlphabet{\mathpzc}{OT1}{pzc}{m}{it}

\makeatletter
\newtheorem*{rep@theorem}{\rep@title}
\newcommand{\newreptheorem}[2]{%
	\newenvironment{rep#1}[1]{%
		\def\rep@title{#2~\ref{##1}}%
		\begin{rep@theorem}}%
		{\end{rep@theorem}}}

\makeatother

\theoremstyle{plain}
\newtheorem*{thm*}{Theorem}
\newtheorem{thm}{Theorem}[section]
\newtheorem{mthm}{\bf Theorem}

\newtheorem{mcor}[mthm]{\bf Corollary}

\newreptheorem{mthm}{Main Theorem}
\newreptheorem{mcor}{Corollary}
\newreptheorem{mlem}{Lemma}
\newreptheorem{thm}{Theorem}

\newtheorem{cor}[thm]{Corollary}
\newtheorem{lem}[thm]{Lemma}

\newtheorem*{lem*}{Lemma}
\newtheorem{prp}[thm]{Proposition}

\newtheorem{obs}[thm]{Observation}

\theoremstyle{definition}

\newtheorem{ex}[thm]{Example}

\newtheorem{rem}[thm]{Remark}


\newcommand{\longtwoheadrightarrow}{\longrightarrow\hspace{-1.2em}\rightarrow\hspace{.2em}}

\newcommand{\ann}{\mathrm{ann}}

\newcommand{\supp}{\mathrm{supp}\hspace{1mm}}




 \newcommand{\R}{\mathbb{R}}





\newcommand{\RR}{\varPsi}

\newcommand{\tet}{\vartheta}

\newcommand{\st}{\mr{st}}

\newcommand{\sbseq}{\subseteq}
\newcommand{\spseq}{\supseteq}


\newcommand{\vanish}[1]{}



\def\V{{\bf V}}

\def\im{\mathrm{im}}
\def\ker{\mathrm{ker}}
\def\kerc{\mathrm{ker}^{\circ}\hspace{-0.75mm}}
\def\coker{\mathrm{coker}}
\def\sbs\subset
\def\sbseq{\subseteq}

\def\langle{\left<}
\def\rangle{\right>}

\def\({\left(}
\def\){\right)}
\def\no={\,{\,|\!\!\!\!\!=\,\,}}

\def\no={\,{\,|\!\!\!\!\!=\,\,}}
\def\sbseq{\subseteq}

\def\sbseq{\subseteq}
\def\sbs\subset
\def\spseq{\supseteq}


\newcommand{\xqedhere}[2]{%
	\rlap{\hbox to#1{\hfil\llap{\ensuremath{#2}}}}}

\newcommand\Defn[1]{\textbf{#1}}
\newcommand{\cm}[1]{}
\newcommand\mc[1]{\mathcal{#1}}

\newcommand\mbf[1]{\mathbf{#1}}

\newcommand\mr[1]{\mathrm{#1}}

\newcommand{\bigslant}[2]{{\raisebox{.3em}{$#1$} \Big/ \raisebox{-.3em}{$#2$}}}

\newcommand\x{\mathbf{x}}

\DeclareMathOperator{\lk}{lk}
\DeclareMathOperator{\susp}{susp}

\DeclareMathOperator{\Lk}{lk}

\DeclareMathOperator{\St}{st}

\newcommand{\KK}{\mathcal{K}}
\newcommand{\MW}{\mathcal{M}}



\title[Beyond Positivity]{Combinatorial Lefschetz theorems beyond positivity}
\author{Karim Adiprasito}

\address{\emph{Karim Adiprasito}, Einstein Institute of Mathematics, Hebrew University of Jerusalem, Jerusalem, Israel \emph{and} Department of Mathematics, KTH Stockholm, Stockholm, Sweden}
\email{adiprasito@math.huji.ac.il}

\date{01.07.2019}

\keywords{hard Lefschetz theorem, triangulated manifolds, face rings}
\subjclass[2010]{Primary 05E45, 13F55; Secondary  32S50, 14M25, 05E40, 52B70, 57Q15}

\parindent=0pt
\parskip=5pt

\begin{document}
	
	\begin{abstract}
		Consider a simplicial complex that allows for an embedding into $\mathbb{R}^d$. How many faces of dimension $\nicefrac{d}{2}$ or higher can it have? How dense can they be?
		
		This basic question goes back to Descartes' "Lost Theorem" and Euler's work on polyhedra. Using it and other fundamental combinatorial problems, we introduce a version of the K\"ahler package beyond positivity, allowing us to prove the hard Lefschetz theorem for toric varieties (and beyond) even when the ample cone is empty. A particular focus lies on replacing the Hodge-Riemann relations by a non-degeneracy relation at torus-invariant subspaces, allowing us to state and prove a generalization of theorems of Hall and Laman in the setting of toric varieties and, more generally, the face rings of Hochster, Reisner and Stanley. This has several applications to quantitative and combinatorial topology, among them the following:
		\begin{compactenum}
			\item We fully characterize the possible face numbers of simplicial rational homology spheres, resolving the $g$-conjecture of McMullen in full generality and generalizing Stanley's earlier proof for simplicial polytopes.  The same methods also verify a conjecture of K\"uhnel: if $M$ is a triangulated closed $(d-1)$-manifold on $n$ vertices, then
			\[\binom{d+1}{j}\mathrm{b}_{j-1}(M)\ \le \ \binom{n-d+j-2}{j}\ \quad \text{for}\ 1\le j\le \frac{d}{2}.\] 
			\item We prove that for a simplicial complex that embeds into $\mathbb{R}^{2d}$, the number of $d$-dimensional simplices exceeds the number of $(d-1)$-dimensional simplices by a factor of at most $d+2$. This generalizes a result going back to Descartes and Euler, and resolves the Gr\"unbaum-Kalai-Sarkaria conjecture. A consequence of this is a high-dimensional version of the celebrated crossing number inequality of  Ajtai, Chv{\'a}tal, Leighton, Newborn and Szemer{\'e}di: For a map of a simplicial complex $\varDelta$ into $\R^{2d}$, the number of pairwise intersections of $d$-simplices is at least
		\[\frac{f_d^{d+2}(\varDelta)}{(d+3)^{d+2}f_{d-1}^{d+1}(\varDelta)}\]
		provided $f_d(\varDelta)> (d+3)f_{d-1}(\varDelta)$.
		\end{compactenum}
	\end{abstract}
	
	\maketitle
	
	\newcommand{\AR}{\mathcal{A}}
	\newcommand{\BR}{\mathcal{B}}
	\newcommand{\CR}{\mathcal{C}}
	\newcommand{\Mu}{M}
	\newcommand{\Soc}{\mathcal{S}\hspace{-1mm}\mathcal{o}\hspace{-1mm}\mathcal{c}}
	\newcommand{\Socl}{{\Soc^\circ}}

\section{Introduction}
	
\subsection{The approach via Hodge theory}	The hard Lefschetz theorem, in almost all cases that we know, is connected to rigid algebro-geometric properties. Most often, it comes with a notion of an ample class, which not only induces the Lefschetz theorem but the induced bilinear form satisfies the Hodge-Riemann relations as well, which give us finer information about its signature (see~\cite{Lazarsfeld, Voisin}). 

Even in the few cases that we have the hard Lefschetz without the Hodge-Riemann relations, they are often at least conjecturally present in some form, as for instance in the case of Grothendieck's standard conjectures and Deligne's proof of the hard Lefschetz standard conjecture, see~\cite{Deligne}. This connection is deep and while we understand Lefschetz theorems even for singular varieties, to this day, we have no way to understand the Lefschetz theorem without such a rigid atmosphere for it to live in.
	
	The goal of this paper is to provide a different criterion for varieties to satisfy the hard Lefschetz property that goes beyond positivity, and abandons the Hodge-Riemann relations entirely (but not the associated bilinear form); instead of finding Lefschetz elements in the ample cone of a variety, we give general position criteria for an element in the first cohomology group to be Lefschetz. The price we pay for this achievement is that the variety itself has to be sufficiently "generic". 
	
\subsection{Beyond positivity and the Hall-Laman relations}	We therefore turn to toric varieties, which allow for a sensible notion of genericity without sacrificing all properties of the variety, most importantly, without changing its Betti vector. Specifically, we consider varieties with a fixed equivariant cohomology ring, and allow variation over the Artinian reduction, that is, the variation over the torus action. The main result can be summarized as follows:
	\enlargethispage{8mm}
	\begin{mthm}\label{mthm:gl}
		Consider a PL $(d-1)$-sphere $\varSigma$, and the associated graded commutative ring $\R[\varSigma]$. Then
		there exists an open dense subset of the Artinian reductions $\mathcal{R}$ of $\R[\varSigma]$ and an open dense subset $\mathcal{L} \subset \AR^1(\varSigma)$, where $\AR(\varSigma)\in \mc{R}$, such that for every $k\le\nicefrac{d}{2}$, we have:
		\begin{compactenum}[\bf (1)] 
			\item \emph{Generic Lefschetz property:} For every $\AR(\varSigma)\in \mathcal{R}$ and every $\ell \in \mathcal{L}$, we have an isomorphism 
			\[\AR^k(\varSigma)\ \xrightarrow{\ \cdot \ell^{d-2k} \ }\ \AR^{d-k}(\varSigma). \]
			\item \emph{Hall-Laman relations:} 
			The \Defn{Hodge-Riemann bilinear form}  
			\[\begin{array}{rccccc}
			\mr{Q}_{\ell,k}:&\AR^k(\varSigma)& \times &\AR^k(\varSigma) & \longrightarrow &\ \AR^d(\varSigma)\cong \R \\
			&a		&	& b& {\xmapsto{\ \ \ \ }} &\ \mr{deg}(ab\ell^{d-2k})
			\end{array}\]
			is nondegenerate when restricted to any squarefree monomial ideal in $\AR(\varSigma)$, as well as the annihilator of any squarefree monomial ideal.
		\end{compactenum}	
	\end{mthm}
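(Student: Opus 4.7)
I would first observe that the Hall-Laman relations (2) applied to the unit ideal $\AR(\varSigma)$ itself, together with Poincar\'e duality for the Gorenstein ring $\AR(\varSigma)$ (which holds because $\varSigma$ is a sphere), imply the generic Lefschetz property (1): nondegeneracy of the symmetric form $\mr{Q}_{\ell,k}$ forces $\cdot\ell^{d-2k}\colon\AR^k(\varSigma)\to\AR^{d-k}(\varSigma)$ to be injective, and injectivity combined with the Poincar\'e identity $\dim\AR^k(\varSigma)=\dim\AR^{d-k}(\varSigma)$ turns the map into an isomorphism. The heart of the theorem therefore lies in (2). Moreover, the locus of pairs $(\mathcal{R},\ell)$ for which nondegeneracy fails at some squarefree monomial ideal is Zariski-closed in the parameter space of linear systems of parameters and linear forms, so it suffices to exhibit a single good pair; openness and density of $\mathcal{R}$ and $\mathcal{L}$ then follow automatically.

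\textbf{Induction on dimension through links.} The plan is to prove (2) by induction on $d$. For every nonempty face $\sigma\in\varSigma$, the link $\Lk(\sigma)$ is a PL sphere of dimension $d-|\sigma|-1$, so the inductive hypothesis supplies the Hall-Laman relations for generic Artinian reductions of $\AR(\Lk(\sigma))$. The squarefree monomial ideals of $\AR(\varSigma)$ are stratified by subcomplexes: the principal ideal $(x_\sigma)$ is controlled by the face ring of $\Lk(\sigma)$ through the standard identification of $\mr{ann}(x_\sigma)$ with the Stanley-Reisner ideal of the faces avoiding $\sigma$ and of the corresponding subquotient with $\AR(\Lk(\sigma))$ up to degree shift, while more general squarefree ideals decompose along such principal pieces and their intersections.

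\textbf{Propagating nondegeneracy.} Under these identifications, the restriction of $\mr{Q}_{\ell,k}$ to a principal ideal $(x_\sigma)$ matches, after a shift of degree, the Hodge-Riemann form on $\AR(\Lk(\sigma))$ with a Lefschetz element obtained as a generic linear combination of $\ell$ with the vertex variables of $\sigma$. For generic $\ell$, this induced element is itself generic in $\AR^1(\Lk(\sigma))$, so the inductive hypothesis yields nondegeneracy of $\mr{Q}_{\ell,k}$ on $(x_\sigma)$. Assembling these ideal-by-ideal statements, together with the dual analysis for annihilators of squarefree monomial ideals, produces the Hall-Laman relations on all squarefree monomial ideals of $\AR(\varSigma)$.

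\textbf{Main obstacle.} The critical difficulty is uniform propagation: a single generic $(\mathcal{R},\ell)$ must simultaneously trigger nondegeneracy on all squarefree monomial ideals and their annihilators, and these pieces interact nontrivially across the ring. This forces the inductive hypothesis to be stated in a form robust enough to survive the restriction-to-link maps, including careful bookkeeping of how generic linear forms on $\varSigma$ specialize on each $\Lk(\sigma)$. A secondary, but still substantial, obstacle is that the class of PL spheres is broader than the shellable or polytopal classes where local structure is most transparent; when the direct link-based argument on some particular $\varSigma$ fails to supply the desired genericity, one expects to invoke a surgery/deformation argument within the PL category (bistellar flips or analogous local moves), propagating the Hall-Laman property along a path of PL spheres to reduce to cases where the inductive machinery applies cleanly.
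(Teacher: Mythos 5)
Your reduction of (1) to (2) via Poincar\'e duality, and the observation that the failure locus is Zariski-closed so that exhibiting a single good pair suffices, are both correct and match the paper's framing. The instinct to induct on dimension through vertex links, and the identification of principal ideals $(x_\sigma)$ with shifted face rings of links, are also on the right track and appear in the paper (via the cone lemmas and the notion of biased Poincar\'e duality at links). However, your proposal has a genuine gap at the critical step: the mechanism by which nondegeneracy propagates from links to the ambient sphere, and your proposed fallback for when the link-based argument fails, is the one approach the paper explicitly identifies as unworkable.

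The paper's Section on "Limits of refinement and decomposition" argues directly against the bistellar/local-move strategy you suggest as a "secondary" repair. Local refinement moves give an embedding of algebras $\AR^\ast \hookrightarrow \widetilde{\AR}^\ast$, so they can propagate the Lefschetz property \emph{forward} along blowups; but to handle blowdowns (which are unavoidable if one insists on connecting arbitrary PL spheres by bistellar flips) McMullen's program requires the Hodge--Riemann relations to control degeneration of the pairing, and those are exactly what is abandoned here. Without Hodge--Riemann, there is no way to run this path-propagation argument, and for dimension $\ge 5$ the decision problem already shows that the set of bistellar sequences is not algorithmically tractable. The paper's actual strategy is quite different: it proves the Hall-Laman relations by induction on dimension via a \emph{perturbation lemma} (Lemma~\ref{lem:perturbation}) applied vertex by vertex to build Lefschetz elements as generic linear combinations $``\sum" x_v$ (the "transversal prime property"), and the globality problem you correctly identify — that the intermediate pieces $\mr{N}_W U$ need to be manifolds for the perturbation step to be analysable — is solved not by moving to nicer spheres, but by embedding the $(k-1)$-skeleton of $\varSigma$ into a higher-dimensional sphere as the tabula of a \emph{railway} (a hypersurface envelope with a controlled vertex ordering), reducing the question to biased Poincar\'e duality in the ambient sphere via the characterization Theorems~\ref{thm:char_ospd} and~\ref{thm:ospd2} and the doubling trick. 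Biased Poincar\'e duality, not the Lefschetz theorem itself, is what gets proved by induction; the Lefschetz theorem then falls out of Proposition~\ref{prp:ospd} and Lemma~\ref{lem:midred}. Your proposal also omits the non-obvious fact (Theorem~\ref{thm:perturb}) that the condition of the perturbation lemma at the next vertex $w$ is \emph{equivalent} to biased Poincar\'e duality of a pullback ideal in $\lk_w$, which is the precise bridge between the link induction you envision and the ambient statement you need.
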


For future reference, and as used in the main theorem, we shall say generic if we mean generic in the strongest sense: the properties are satisfied for an open, dense subset of the allowed space of configurations. We shall extend this result to homology spheres and homology manifolds in later sections, but for cleanness of the presentation restrict to the case of PL spheres first. In particular, we do not rely on the Pachner theorem for our proof.

\subsection{Genericity of the Poincar\'e pairing}

A critical auxiliary result on the way is the following genericity criterion for the Poincar\'e pairing.

	\begin{mthm}[Biased Poincar\'e duality]\label{mthm:opd}
		Consider a PL $(d-1)$-sphere $\varSigma$, and the associated graded commutative ring $\R[\varSigma]$. Then
		there exists an open dense subset of the Artinian reductions $\mathcal{R}$ of $\R[\varSigma]$ such that for every $k\le\nicefrac{d}{2}$ the perfect pairing
			\[\begin{array}{rccccc}
\AR^k(\varSigma)& \times &\AR^{d-k}(\varSigma)& \longrightarrow &\ \AR^d(\varSigma)\cong \R \\
			a		&	& b& {\xmapsto{\ \ \ \ }} &\ \mr{deg}(ab)
			\end{array}\]
		is non-degenerate in the first factor when restricted to any squarefree monomial ideal $\mc{I}$.
	\end{mthm}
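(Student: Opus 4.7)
The plan is to reformulate biased Poincar\'e duality as a transversality statement between $\mc{I}$ and $\ann(\mc{I})$, and then to establish it by induction exploiting the PL sphere structure. Since $\varSigma$ is a PL $(d-1)$-sphere, $\AR(\varSigma)$ is Artinian Gorenstein with socle in degree $d$, so the unrestricted Poincar\'e pairing $\AR^k \times \AR^{d-k} \to \AR^d \cong \R$ is already perfect. For a squarefree monomial ideal $\mc{I}$ and $a \in \mc{I}^k$, we first observe that $a \cdot \mc{I}^{d-k} = 0$ is equivalent to $a \in \ann(\mc{I})$: for any $b \in \mc{I}^j$ with $j < d - k$ and any $c \in \AR^{d-k-j}$, the product $bc$ lies in $\mc{I}^{d-k}$, so $abc = 0$, and perfectness of the unrestricted pairing $\AR^{k+j} \times \AR^{d-k-j} \to \R$ then forces $ab = 0$ (degrees $j > d-k$ are automatic, since $\AR^{k+j} = 0$). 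Consequently, the biased Poincar\'e property is equivalent to the transversality
\[ \mc{I}^k \cap \ann(\mc{I})^k \ = \ \{0\}, \qquad k \le \nicefrac{d}{2}. \]
This is a Zariski-open condition on the Artinian reduction, and $\varSigma$ admits only finitely many squarefree monomial ideals, so it suffices to exhibit a single Artinian reduction satisfying the transversality simultaneously for every pair $(\mc{I}, k)$.

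For existence we induct on the dimension and the number of vertices of $\varSigma$. Fix a vertex $v \in \varSigma$; the link $\lk_\varSigma(v)$ is a PL $(d-2)$-sphere and the antistar $\varSigma - v$ is a PL $(d-1)$-ball. The decomposition $\varSigma = \ol{\St_\varSigma(v)} \cup (\varSigma - v)$ with intersection $\lk_\varSigma(v)$ yields a Mayer-Vietoris short exact sequence of graded Stanley-Reisner modules. For a sufficiently generic Artinian reduction of $\varSigma$, the chosen parameters simultaneously form LSOPs for the three subcomplexes (using Cohen-Macaulayness of PL balls and spheres, and the fact that links of PL spheres remain PL spheres), so the MV sequence descends to a short exact sequence of Artinian reductions. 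By the inductive hypothesis, biased Poincar\'e duality holds for $\AR(\lk_\varSigma(v))$ and, in an appropriate relative-ball version, for $\AR(\varSigma - v)$. For a squarefree monomial ideal $\mc{I} \subset \R[\varSigma]$, its pullbacks to the link and antistar are again squarefree, and an element $a \in \mc{I}^k \cap \ann(\mc{I})^k$ in $\AR(\varSigma)$ projects to elements lying in the corresponding transverse intersections in the smaller pieces, which vanish by induction. A short diagram chase on the Artinian MV sequence then forces $a = 0$.

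The principal obstacle is the antistar step: $\varSigma - v$ is only a PL ball, hence Cohen-Macaulay but not Gorenstein, and we must formulate and prove a relative version of biased Poincar\'e duality for balls in which the pairing takes values in modules supported on the boundary sphere. A secondary difficulty is that annihilators do not commute with subcomplex pullbacks: the pullback of $\ann(\mc{I})$ to $\lk_\varSigma(v)$ need not coincide with $\ann(\mc{I}|_{\lk_\varSigma(v)})$, so the diagram chase must be arranged to use only the easy containment $\ann(\mc{I})|_{\lk_\varSigma(v)} \subseteq \ann(\mc{I}|_{\lk_\varSigma(v)})$. Finally, one must verify that the Artinian reductions produced inductively on the link and antistar can be amalgamated into an Artinian reduction of $\varSigma$ that works for all squarefree ideals simultaneously, which is where the compatibility of generic LSOPs with the MV decomposition is crucial. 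Once these points are in place, openness upgrades the single good Artinian reduction to the open dense subset stated in the theorem.
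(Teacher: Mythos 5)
Your reformulation of biased Poincar\'e duality as the transversality $\mc{I}^k\cap\ann(\mc{I})^k=\{0\}$, together with the observation that this is a Zariski-open condition over finitely many squarefree monomial ideals (so that exhibiting one good Artinian reduction suffices), is correct and is exactly the paper's Corollary~\ref{cor:map}. Your vertex-removal step also resembles the paper's persistence lemma, Lemma~\ref{lem:persistence}, but look carefully at its hypothesis: it requires $k<\nicefrac{d}{2}$, because it pairs an element of $\mc{I}^k$ with a prime divisor $x_v$ and then invokes biased duality for $\lk_v\varDelta$ inside $\AR^k(\lk_v\varSigma)$, and in a $(d-2)$-sphere the degree $k$ is at or below the middle only when $k<\nicefrac{d}{2}$. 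Iterating this reduction along links drives everything to the case $d=2k$. So the easy part of your scheme reproduces the easy part of the paper's.

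It is precisely the middle case $d=2k$ where your proposal has a genuine gap, and it is the essential case. There, the cone piece $\AR^k(\st_v^\circ\varSigma)\cong\AR^{k-1}(\lk_v\varSigma)$ already sits at the middle degree of the link (so the inductive hypothesis is not free), and, far more seriously, the quotient $\AR^k(\varSigma-v)$ is the face ring of a $(d-1)$-ball: it is Cohen--Macaulay but not Gorenstein and has no Poincar\'e pairing, so ``biased Poincar\'e duality'' is not even defined there. What you name the ``principal obstacle''---a relative version of biased duality for balls with values on the boundary sphere---is not a formulation detail to be added later; it \emph{is} the theorem. The paper's Theorem~\ref{thm:vd} runs exactly the vertex-removal induction you propose, and it closes only for $L$-decomposable spheres, where the ball $\varSigma-v$ keeps decomposing in a controlled way; Section~\ref{sec:local-move} explains why this cannot be pushed to arbitrary PL spheres. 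The paper's actual route to the middle case is entirely different: it reduces biased duality at $\varDelta$ to the same property for a hypersurface envelope (Corollary~\ref{cor:envelope}, Theorems~\ref{thm:char_ospd} and~\ref{thm:ospd2}), passes to the double $\widetilde{\mr{D}}\Mu$ via the folding map, builds an octavian hereditary railway with a good vertex order inside a higher-dimensional sphere (Proposition~\ref{prp:railways}), and only then applies a perturbation argument (Lemma~\ref{lem:perturbation}) together with the map-to-homology criterion of Lemma~\ref{lem:Ciso} to verify the two conditions of Theorem~\ref{thm:env}. None of that machinery is present in your sketch, and without some substitute for it the induction you propose does not close.
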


	We shall see that biased Poincar\'e duality and the Hall-Laman relations	 are not related to regularity  of toric varieties, but rather a generic property of the same. In particular, even smooth varieties do not satisfy these refining properties in general (see Example~\ref{ex:smooth}), but another variety with the same equivariant cohomology does. Our approach to the Lefschetz theorem is therefore somewhat transversal to the classical proofs of the Lefschetz theorem. 	
	
Biased Poincar\'e duality can perhaps be visualized most easily via the following geometric metaphor: every element $x$ in $\mc{I}^k$ pairs with some element $y$ in $\AR^{d-k}(\varSigma)$. We wish to modify $y$ to an element $y'\in \mc{I}^{d-k}$ so that
\[ \mr{deg}(xy')\ =\  \mr{deg}(xy)\ \neq\ 0.\]
Ideally, one could attempt to achieve this by pushing along a Lefschetz pencil, or a suitable Morse function as used by Andreotti and Frankel \cite{AndreottiFrankel, Milnor}, indicating that the question may be associated to the Lefschetz theorem on the simplicial complex associated to $\R[\varSigma]/\mc{I}$. We will forgo the geometric intuition, and work with the Lefschetz theorem directly.
		
\subsection{Why genericity? Limits of positivity}	To a discrete geometer as well as an algebraic geometer, the reason we need to come up with the Hall-Laman relations and the generic Lefschetz theorem lies in the limited availability of positivity in this context. 

To a discrete geometer, this is most familiar under the name of convexity, appearing in the Lefschetz theorem and in particular McMullen's proof in the form of the theory of mixed volumes of convex bodies. To an algebraic geometer, it comes in the form of K\"ahler structures that smooth projective varieties enjoy.

And indeed, aside from more exotic combinatorial constructions where the Lefschetz theorem holds for more trivial combinatorial reasons (see also our discussion of Laman's theorem), the first part of Theorem~\ref{mthm:gl} is known mainly in the case when $\varSigma$ is combinatorially equivalent to the boundary of a polytope, in which case we can conclude the Lefschetz theorem from the hard Lefschetz theorem for rationally smooth toric varieties~\cite{BBDG} or using the specialized approach of McMullen~\cite{McMullenInvent}. This allows us to treat all spheres of dimension two and lower by Steinitz' theorem.

Unfortunately, already in dimension three PL spheres rarely generate fans, not to even speak of fans that support an ample class~\cite{Alon, GP, KalaiM, PZ, Rudin}. 

And what is more, we shall extend the hard Lefschetz theorem even to general homology spheres and then further to closed orientable manifolds in Section~\ref{sec:mani}, which are beyond any reach of the only combinatorial program available, the program via local moves (see Section~\ref{sec:local-move}). 

	The generic Lefschetz theorem is the classical version conjectured usually in connection to the $g$-conjecture, see also~\cite{Swartzthirty}. The Hall-Laman relations instead replace the Hodge-Riemann relations in their role towards the Lefschetz theorem for ample classes. We will motivate their name when we discuss the Gr\"unbaum-Kalai-Sarkaria conjecture in Section~\ref{sec:Laman}.
	
\subsection{Key ideas}	The proof relies on several new techniques towards measuring and exploiting genericity in intersection rings, in particular with regard to the Poincar\'e pairing. The key ideas can be summarized as follows:
	\begin{compactenum}[(1)]
		\item To replace the classical signature conditions on the Hodge-Riemann bilinear form, we introduce Hall-Laman relations of the Lefschetz theorem that posits the Hodge-Riemann form restricts well to certain ideals. We introduce it in Section~\ref{sec:pairings}.
		\item Section~\ref{sec:perturbation} provides the critical perturbation lemma that details how one obtains the Lefschetz theorems from properties of the pairing. It applies immediately to prove the Lefschetz theorem for nicely decomposable spheres.
		\item Section~\ref{sec:railway} introduces the notions of railways in manifolds, allowing us to do surgery on a manifold to simplify the objects we need to prove the Lefschetz theorem on.
	\end{compactenum}
	
	More informally: We will argue that kernels of a generic Lefschetz theorem are associated with subcomplexes of the simplicial sphere $\varSigma$, in the sense that they are minimally supported on that subcomplex.
	
	 We then use the Hall-Laman relations and the perturbation lemma to show that the kernel can be decreased in size of support, first in the case of nicely decomposable spheres, and then using railways for general manifolds. As we will see, the Hall-Laman relations are naturally proved first, with the Lefschetz theorem being more of an afterthought, and with the case $d=2k$ of the Hall-Laman relations holding special importance.
	
\subsection{Applications to toric varieties and combinatorics } The Lefschetz theorem is therefore as announced valid for generic Artinian reductions. In particular, the more algebro-geometrically inclined reader may consult the following corollary for easier visualization, obtained from Theorem~\ref{mthm:gl} as open dense Artinian reductions can be chosen rationally.
	
	\begin{mcor}
		Consider $\mathfrak{F}$ a complete simplicial fan in $\R^d$. Then, after perturbing the rays of $\mathfrak{F}$ to a suitable rational fan $\mathfrak{F}'$, the Chow ring of the toric variety $X_{\mathfrak{F}'}$ satisfies the hard Lefschetz property with respect to a generic Picard divisor, while the equivariant Chow ring remains unchanged from $X_{\mathfrak{F}}$ to $X_{\mathfrak{F}'}$.
		
		Moreover, we can find such a perturbation in a way that the Hodge-Riemann pairing with respect to this generic Picard divisor is non-degenerate when restricted to the ideal in the Chow ring generated by any open subset, that is, any order filter, of the torus-orbit stratification.
	\end{mcor}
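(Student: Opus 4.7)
The plan is to derive the corollary directly from Main Theorem~\ref{mthm:gl} via the classical Danilov--Jurkiewicz dictionary. Let $\varSigma$ denote the $(d-1)$-dimensional simplicial complex underlying $\mathfrak{F}$; because $\mathfrak{F}$ is complete and simplicial, $\varSigma$ is a PL $(d-1)$-sphere. The equivariant Chow ring of $X_{\mathfrak{F}}$ is canonically the face ring $\R[\varSigma]$ and depends only on $\varSigma$. The ordinary Chow ring $A^{\ast}(X_{\mathfrak{F}})$ is obtained from $\R[\varSigma]$ by quotienting by the linear system of parameters formed by the coordinate entries of the ray vectors of $\mathfrak{F}$, i.e.\ it is one specific Artinian reduction $\AR(\varSigma)$ of $\R[\varSigma]$.

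First I would observe that the Artinian reductions arising from simplicial fans with fixed combinatorics $\varSigma$ form a Zariski open subset of the full space of Artinian reductions of $\R[\varSigma]$; the condition is linear independence of the ray vectors in each maximal face, which is open. Main Theorem~\ref{mthm:gl} supplies an open dense subset $\mathcal{R}$ of Artinian reductions on which the generic Lefschetz property and the Hall--Laman relations both hold. Since the rational points are dense in the real variety of admissible ray placements, I can choose a small rational perturbation $\mathfrak{F}'$ of $\mathfrak{F}$ with the same face lattice whose associated Artinian reduction lies in $\mathcal{R}$. Because $\mathfrak{F}'$ has the same combinatorics as $\mathfrak{F}$, its equivariant Chow ring is still $\R[\varSigma]$, unchanged. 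Next I would pick a rational class $\ell \in A^1(X_{\mathfrak{F}'})$ lying in the open dense set $\mathcal{L} \subset \AR^1(\varSigma)$ supplied by Main Theorem~\ref{mthm:gl}; simpliciality of $\mathfrak{F}'$ guarantees that every torus-invariant Weil divisor is $\Q$-Cartier, so after clearing denominators $\ell$ is represented by a Picard divisor. Part~(1) of Theorem~\ref{mthm:gl} then yields the hard Lefschetz isomorphism.

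For the final assertion, the torus-orbit stratification of $X_{\mathfrak{F}'}$ is combinatorially the opposite of the face poset of $\varSigma$: closures of orbits correspond to faces, and an open union of strata (an order filter of the stratification) is exactly an order filter of the face poset. Under the Danilov--Jurkiewicz dictionary, the ideal in $\AR(\varSigma)$ generated by the classes of the closures of the strata in such an order filter is precisely the squarefree monomial ideal spanned by the corresponding face monomials. Part~(2) of Main Theorem~\ref{mthm:gl} is therefore exactly the required non-degeneracy of the Hodge--Riemann pairing on such an ideal.

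The one genuine point of care is the compatibility between the ring-theoretic genericity furnished by Main Theorem~\ref{mthm:gl} (openness and density in the real vector space of all Artinian reductions, respectively of all degree-one classes) and the more restricted geometric input at hand (reductions coming from honest rational fans with fixed combinatorics, classes coming from $\Q$-Cartier divisors). This is handled by the fact that the geometrically admissible rational data form a dense subset of an irreducible real algebraic subvariety of the parameter space, so any non-empty Zariski open subset of that parameter space contains rational points of the desired form; after that observation, the remainder of the argument is a bookkeeping translation under the standard dictionary.
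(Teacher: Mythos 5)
The paper offers no real proof of this corollary: it is presented in the introduction as a reformulation of Theorem~\ref{mthm:gl}, with the entire justification compressed into the remark that it is "obtained from Theorem~\ref{mthm:gl} as open dense Artinian reductions can be chosen rationally." Your write-up expands this one-liner into an explicit derivation via the Danilov--Jurkiewicz dictionary, and the skeleton — identify the equivariant Chow ring with the face ring $\R[\varSigma]$, identify the Chow ring with the Artinian reduction determined by the ray matrix, intersect the open dense set $\mathcal{R}$ with the locus of fan realizations of $\varSigma$, and pick a rational point there together with a rational Lefschetz class in $\mathcal{L}$ — is exactly what the paper intends.

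One step should be corrected, though the argument survives. You assert that the Artinian reductions coming from fans with fixed combinatorics form a Zariski open set cut out by "linear independence of the ray vectors in each maximal face." Linear independence is merely the properness / l.s.o.p.\ condition and is far from sufficient to be a fan: one also needs the maximal cones to have pairwise disjoint interiors and to cover $\R^d$, which is a strict-inequality (semialgebraic) condition. The set of complete-simplicial-fan realizations with a fixed face lattice is therefore Euclidean open but generally \emph{not} Zariski open. The related claim in your closing paragraph, that the admissible data form "a dense subset of an irreducible real algebraic subvariety of the parameter space," is off for the same reason: the admissible locus is simply a nonempty Euclidean-open subset of $\R^{d\times n}$, not a subvariety. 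None of this is fatal, because Euclidean openness is all that is needed: the set $\mathcal{R}$ of Theorem~\ref{mthm:gl} is open and dense in the Euclidean topology on the space of Artinian reductions, it therefore meets the nonempty Euclidean-open locus of fan realizations of $\varSigma$ in a nonempty Euclidean-open set, and rational points are dense in any such set. After clearing denominators in the Lefschetz class one obtains a genuine Picard divisor, as you observe, and the translation of squarefree monomial ideals in $\AR^\ast(\varSigma)$ into the torus-orbit language is the standard bookkeeping.
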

	
	Notably, the requirement that there exists an ample class has vanished from the assumptions. The second part, the Hall-Laman relations, are more than a bonus to this fact, but play a similarly crucial role to the generic Lefschetz theorem as the Hodge-Riemann relations play for the proof of the classical hard Lefschetz theorem. 
		
	Consequences of Theorem~\ref{mthm:gl} are legion, too many to collect them all here individually. Among them are the following that we explain briefly. We stand on shoulders of giants here, as many of these results were already established assuming Theorem~\ref{mthm:gl}(1).
	\begin{compactenum}[(1)]
		\item \emph{McMullen's $g$-conjecture, see \cite{zbMATH03333960}:} Chief among the applications of Theorem~\ref{mthm:gl}(1) is the characterization of face vectors of triangulated manifolds, following and resolving the ingenious $g$-conjecture of McMullen.
	
		In particular, Theorem~\ref{mthm:gl}(1) proves that the set of $f$-vectors, that is, the number of vertices, edges, two-dimensional faces etc.\ of a PL sphere is also the $f$-vector of a simplicial polytope. This is achieved by realizing, using that theorem, that the $f$-vector is but a linear transformation of the primitive Betti numbers 
		\[g_i(\varSigma)\ =\ \dim \bigslant{\AR^i(\varSigma)}{\ell\AR^{i-1}(\varSigma)}\]
		of $\AR^\ast(\varSigma)$. This implication was first observed by Stanley, who then used the hard Lefschetz theorem for rationally smooth toric varieties to demonstrate the $g$-conjecture in the case of spheres arising as boundaries of simplicial polytopes~\cite{StanleyHL}. These Betti numbers are then characterized as being the Betti vectors of commutative graded algebras generated in degree one, the constructive side being provided by Billera and Lee~\cite{BL}.  The later generalizations of Theorem~\ref{mthm:gl} found in
 Section~\ref{sec:mani} extend this to characterize the $f$-vectors of rational homology spheres.
		
		Going beyond spheres, Theorem~\ref{mthm:gl}(1) provides necessary conditions on the possible face vectors of triangulations of a fixed rational homology manifold $\Mu$, see also~\cite{KN, Swartzthirty} for details. In particular, it also applies to bound the complexity of triangulated manifolds, and resolves a conjecture of K\"uhnel \cite{Kuhnel}: If $\Mu$ is a triangulated $(d-1)$-dimensional closed rational homology manifold on $n$ vertices, then in terms of its rational Betti numbers, we have
		\[\binom{d+1}{j}\mr{b}_{j-1}(\Mu)\ \le \ \binom{n-d+j-2}{j}\ \quad \text{for}\ 1\le j\le \frac{d}{2}\] 
see also~\cite[Section 4.3]{KN}.		
		\item \emph{Complexity measures for connectivity:} Given a (simplicial rational homology) sphere~$\varSigma$ of dimension $d-1$, and a nonnegative integer $k$ less than~$d$, one can measure the "complexity" of the triangulation in dimension $k-1$ in several ways. One especially fruitful invariant for this complexity is
		$\widetilde{H}_{k-1}(\varSigma_{|W})$ for $W$ a subset of vertices $\varSigma^{(0)}$ of $\varSigma$ and $\varSigma_{|W}$ the induced subcomplex on $W$. For a numerical measure, one often uses either the $1$-norm
		\[|(\varSigma)|_{k-1,1,m}\ \coloneqq \ \sum_{\substack{W\subset \varSigma^{(0)}\\ |W|=m } } \mr{b}_{k-1}(\varSigma_{|W})\]
		or the $\infty$-norm
		\[|(\varSigma)|_{k-1,\infty}\ \coloneqq \ \max_{W\subset \varSigma^{(0)} } \mr{b}_{k-1}(\varSigma_{|W}).\]
		Following~\cite{MiglioreNagel} and using Theorem~\ref{mthm:gl}(1) (and its generalization Theorem~\ref{mthm:h}), it follows that $|(\varSigma)|_{k-1,1,m}$ is maximized by the Billera-Lee polytopes among all triangulated spheres with the same $f$-vector. This answers a question of Codenotti, Santos and Spreer \cite{CSS}.

		Moreover, it follows from~\cite{AdiprasitoTC} and Theorem~\ref{mthm:gl}(1) that for $k\le \nicefrac{d}{2}$, we have a tight bound
		\[|(\varSigma)|_{k-1,\infty}\ \le \ g_k(\varSigma),\]
		where $g_k$ denotes the $k$-th primitive Betti number of $\AR(\varSigma)
		$. The case of arbitrary $k$ is treated by exploiting Alexander duality in the sphere $\varSigma$. 
			In addition, there exists a subset $\mc{E}\subset \varSigma^{(0)}$, \[|\mc{E}|\ \le\ ((k+1)g_{k+1}+(d+ 1-k)g_k)(\varSigma)\] such that $\mr{b}_{k-1}(\varSigma_{W\cup \mc{E}})=0$ for all $W\subset \varSigma^{(0)}$. Hence, nontrivial homology classes in $H_{k-1}(\varSigma_{W})$ are not only boundaries in $\varSigma$, but one can find them to be boundaries of chains with at most \[((k+1)g_{k+1}+(d+ 1-k)g_k)(\varSigma)\] additional vertices.
The above results still apply to measure the complexity of triangulated manifolds $\Mu$, but now measuring not the induced homology of subcomplexes, but the kernel of
			\[\widetilde{H}_{k-1}(\Mu_{|W})\ \longrightarrow\ \widetilde{H}_{k-1}(\Mu).\]
		
		\item \emph{Gr\"unbaum-Kalai-Sarkaria conjecture, see~\cite{Gbhd}:} 
		The probably most widely studied application is to a conjecture of Gr\"unbaum, Kalai and Sarkaria: 
		As Kalai observed in \cite{KDM}, Theorem~\ref{mthm:gl}(1) implies that if $\varDelta$ is a simplicial complex of dimension $d$ that allows a PL embedding into $\R^{2d}$ then 
		\begin{equation}\label{eq:grb}
		f_d(\varDelta)\ \le \ (d+2)f_{d-1}(\varDelta).
		\end{equation}	
		We will give a simpler, self-contained proof of this implication in Section~\ref{sec:GKS}. Kalai also observed that this bound is essentially sharp, only differing to constructions in an additive error depending only on $d$, see also Remark~\ref{rem:kkk}.
		
		The case $d=1$ (when $\varDelta$ is a simple graph) is a consequence of what is now called Euler's formula, postulated first by Descartes and Euler \cite{descartes, euler}, though the first formal proof by modern standards seems to be due to Legendre \cite{Legendre}. They did not however seem to make the connection to Inequality~\eqref{eq:grb}.
		
		However, already for $d=2$, only little improvement was known over the trivial bound of $f_2(\varDelta)\ \le \ f_0(\varDelta)f_{1}(\varDelta)$: Dey~\cite{Dey} proved that 
		\[f_d(\varDelta)\ \le \ Cf^{d+1-\frac{1}{3^{d}}}_{0}(\varDelta),\]
		a bound that stood essentially uncontested for 25 years. Parsa~\cite{Parsa} (see also \cite{Skopenkov}) recently improved this to \[f_d(\varDelta)\ \le \ Cf^{d+1-\frac{1}{3^{d-1}}}_{0}(\varDelta)\]
		but the topological techniques employed are seemingly limited in potential, see also Gundert's Diplomarbeit~\cite{Gundert} for an excellent survey. 
		
		Due to this, several algebraic techniques towards the Gr\"unbaum-Kalai-Sarkaria conjecture were proposed, most notably the proposal of Kalai and Sarkaria using algebraic shifting \cite[Section~5.2]{KalaiS}. Unfortunately, a proof of this program was elusive, though Theorem~\ref{mthm:gl}(1) vindicates their approach by verifying their program, as Kalai noticed in the same manuscript.
		
		Our Inequality~\eqref{eq:grb} implies the stronger and optimal
		\[f_d(\varDelta)\ \le \ (d+2) \binom{f_{0}(\varDelta)}{d}.\]
		This has various consequences in discrete geometry (see also \cite[Section 3]{UW}). For instance, if $\varGamma$ is a simplicial complex and $\varGamma \rightarrow \R^{2d}$ is a PL map, then we have for the number of pairwise intersections of $d$-simplices, the $d$-th crossing number $\mr{cr}_d(\varGamma)$, the inequality
		\[\mr{cr}_d(\varGamma)\ \ge\ \frac{f_d^{d+2}(\varGamma)}{(d+3)^{d+2}f_{d-1}^{d+1}(\varGamma)}\]
		if $f_d(\varGamma)> (d+3)f_{d-1}(\varGamma)$, obtaining therefore a  generalization of the influential crossing lemma of Ajtai, Chv{\'a}tal, Leighton, Newborn and Szemer{\'e}di~\cite{ajtai, leighton}. Compare this also Gundert's work on the subject \cite{gundert2013}, who relied on strong additional assumptions on the spectrum of the the Laplacian of $\varGamma$ and deep work of Fox, Gromov, Lafforgue, Naor and Pach \cite{FGLNP}. 
		
\begin{rem}
We in fact prove the Gr\"unbaum-Kalai-Sarkaria conjecture for simplicial complexes $\varDelta$ which are embeddable as a subcomplex into a triangulated rational homology sphere of dimension~$2d$, and more generally embeddings as subcomplexes into~$2d$-dimensional triangulated manifolds $\Mu$, where we obtain a bound
		\[f_d(\varDelta)\ \le \ (d+2)f_{d-1}(\varDelta) + {\binom{2d+1}{d}} \mr{b}_d (\Mu).\]
This is a larger class of embeddings, but still not all, see for instance~\cite{DV, Rushing}. The case of arbitrary topological embeddings therefore remains open.
\end{rem}	
	\end{compactenum}

	
\subsection{Future directions}	Even more interesting, the generic Lefschetz theorem raises several questions.
	\begin{compactitem}[$\circ$]
		\item One can equivalently ask whether polyhedral spheres satisfy the analogue of the Lefschetz property as well, provided an appropriate construction of intersection cohomology for this case. Unfortunately, realizing the associated intersection cohomology in the abstract case seems to require extra ingredients.
		\item Are there analogues of the generic Lefschetz theorem in classical algebraic geometry? Is there a geometric proof? This could use theory of K\"ahler currents, but clearly requires more beyond that as Section~\ref{sec:counterexHall} demonstrates. Arbitrary algebraic cycles can be deformed in various ways, of course, but lose much of their properties in the transition. Does a generic fibre say anything about the Lefschetz property at the limit?
		\item There are other interesting spaces that could, generically, behave like algebraic varieties with respect to the Lefschetz theorems, for instance quotients of real reductive groups $G$ by a maximal compact $K$ and a "generic" cocompact $\varGamma$, offering a route to some of Venkatesh's conjectures in the area~\cite{Venkatesh}.
		\item Another possible area of interest and source of interesting deformations could be Hrushovski's theory of Frobenius automorphisms and difference varieties \cite{Hrushovski}, but for this a cohomology theory remains to be developed. See also \cite{giabicani} for recent developments concerning intersection theory of difference varieties.
		\item A crucial part of our approach lies in the fact that the underlying cohomology rings are commutative and generated in degree one. As we shall see, not all such Gorenstein rings satisfy the Lefschetz property, but perhaps generic ones? The answer is easily shown to be yes without qualification~\cite{Watanabe} but we are interested in fixing certain interesting invariants, such as the equivariant cohomology. On the other hand, Boij has constructed "generic" Gorenstein algebras that do not even have unimodal Betti numbers~\cite{Boij}. We lack an interesting conjecture in this area.
	\end{compactitem}
	
	The proof introduces several new ideas, and the development of techniques requires some space. To help the reader, we offer some intermediate results along the way to mark milestones of what we have learned up to that point. These avocations provided in this paper are weaker than the final results but offer some helpful camp and rejuvenation for the reader, and in some cases already offer substantial improvement on previous results.
	
	\textbf{Acknowledgements:} The research leading to these results has received funding from the European Research Council under the European
Union's Seventh Framework Programme ERC Grant agreement ERC StG 716424 - CASe and the Israel Science Foundation under ISF Grant 1050/16. 
The main ideas for this paper were obtained strolling in the nature walk of the Hebrew University of Jerusalem, the apothecary garden in Leipzig, and the National Institute for Forest Science in Seoul. 
	
Tremendous thanks to June Huh, Gil Kalai and David Kazhdan for delightful conversations and insightful comments. Thanks to Gil especially for bribing me with many coffees to explain critical steps, and to David for making me present the proof in the Kazhdan Sunday Seminar, whose attendants I thank for their patience and questions, which helped shape the polished version of this paper. Special thanks to June Huh for asking many questions that helped improve and shape the exposition of this paper. 

Special thanks also  to Bella Novik for many helpful and invaluable remarks, and to Johanna Steinmeyer who proofread early versions for typos, as well as providing some of the pictures. Thanks to Anna Gundert for making her thesis and survey on the Gr\"unbaum-Kalai-Sarkaria conjecture publicly available. Thanks to Lou Billera, Ehud Hrushovski, Eric Katz, Satoshi Murai, Pavel Pat\'ak, Paco Santos, Martin Tancer and Shmuel Weinberger for enlightening conversations and helpful comments, and Corrado de Concini, Misha Gromov, Leonid Gurvits and Maxim Kontsevich for comments leading to this updated version. 	
	
	

	\section{Design of the paper}
	
	Generally, proofs of Lefschetz theorems are delayed to the very end of the paper, with the exception of Theorem~\ref{thm:vd}.
	
	Section~\ref{sec:basics} recalls some of the basics for the rings we use inspired from Chow rings of toric varieties, as well as some classical and new facts concerning these. Section~\ref{sec:Laman} will recall a basic classical case of a generic Lefschetz theorem, and its relations to an expansive numerical property of ideals in face rings. Section~\ref{sec:pairings} discusses a first general position property in the intersection ring that will accompany us throughout the paper, as well as an important observation: the Lefschetz property on hypersurfaces, which allows for a true induction on dimension, is related to a crucial genericity of the Poincar\'e pairing. 
	
	Section~\ref{sec:perturbation} shows that the general position condition in turn can be used to show the Lefschetz theorem, by introducing a crucial perturbation lemma. We apply it to prove the hard Lefschetz theorem for derived subdivisions of shellable spheres, or more generally the class of $L$-decomposable spheres, in a slight variation on the concept of vertex decomposable spheres defined by Billera and Provan. Up to this point, our reasoning is purely algebraic and combinatorial. 

Section~\ref{sec:railway} provides the critical idea to translate this to general spheres and manifolds, initially for the middle Lefschetz property, the so called railway construction. The proof for PL spheres is wrapped up in Section~\ref{sec:induct} for the middle isomorphism, and Section~\ref{sec:beymid} for the general isomorphism and the general Hall-Laman relations.
	
Finally, we extend the Lefschetz theorem from intersection rings of spheres to arbitrary manifolds in Section~\ref{sec:mani}, and discuss the case of homology manifolds. This is irrelevant to the combinatorial applications we outlined, but may be of relevance in future applications. For cleanness of the presentation, we choose to delay this aspect to the end. 
	
		\setcounter{tocdepth}{1}
	\tableofcontents
	
	\section{Basic Notions}\label{sec:basics}
	
	We set up some of the basic objects, and refer to~\cite{Lee, Stanley96} for a more comprehensive introduction. While many of these notions have their origin in toric geometry, we immediately define them in a more general combinatorial setting, with the additional benefit of selfcontainment, and refer the interested reader to \cite{CoxLittleSchenck} for the more classical aspects.
	
\subsection{Face rings and stress spaces} 
If $\varDelta$ is an abstract simplicial complex on groundset $[n]\coloneqq \{1,\cdots,n\}$, let $I_\varDelta\coloneqq \langle \x^{\mbf{a}}:\ \supp(\mbf{a})\notin\varDelta\rangle$ denote the nonface ideal in $\R[\x]$, where $\R[\x]=\R[x_1,\cdots,x_n]$. Let $\R^\ast[\varDelta]\coloneqq \R[\x]/I_\varDelta$ denote the face ring of $\varDelta$. A collection of linear forms $\Theta=(\theta_1,\cdots,\theta_l)$ in the polynomial ring $\R[\x]$ is a \Defn{partial linear system of parameters} if \[\dim_{\mr{Krull}} \bigslant{\R^\ast[\varDelta]}{\Theta \R^\ast[\varDelta]}\ =\ \dim_{\mr{Krull}} \R^\ast[\varDelta]-l,\] for $\dim_{\mr{Krull}}$ the Krull dimension. If $l=\dim_{\mr{Krull}} \R^\ast[\varDelta] = \dim \varDelta +1$, then $\Theta$ is simply a \Defn{linear system of parameters}, and the corresponding quotient ${\R^\ast[\varDelta]}/{\Theta \R^\ast[\varDelta]}$ is called a \Defn{Artinian reduction} of $\R^\ast[\varDelta]$. 
	
	The dual action of $\R^\ast[\x]$ acting on itself by multiplication is the action of the polynomial ring on its dual $\R_\ast[\x]$ by partial differentials, where naturally every variable in a polynomial $p=p(\x)$ is replaced with a corresponding partial differential $p(\nicefrac{\mr{d}}{\mr{d}\x})$. 
	In details, we define the \Defn{stress space} of $\varDelta$ as 
	\[{\R_\ast}[\varDelta]\ \coloneqq \ \ker\left[I_\varDelta: \R[\x]\ \longrightarrow\ \R[\x]\right]\]
	and by minding also the linear system of parameters
	\begin{equation} \label{eq:xx}
	{\R_\ast}[\varDelta;\Theta]\ \coloneqq \ \ker\left[\Theta: {\R_\ast}[\varDelta]\longrightarrow {\R_\ast}[\x] \right].
	\end{equation}
	The elements of this space are called stresses; the spaces themselves are very familiar in toric geometry and coincide simply with the top homology groups of the Ishida complex~\cite{MR951199, Oda}, see Section~\ref{sec:ish}.
	
	Note that ${\R_\ast}[\varDelta;\Theta]$ and the face ring \[\R^{\ast}[\varDelta,\Theta]\ \coloneqq\ \R^\ast[\varDelta]/\Theta \R^\ast[\varDelta]\] are isomorphic as graded vector spaces, and are naturally dual to each other \cite{Weil}:
	 The dual $c(\nicefrac{\mr{d}}{\mr{d} \x})$ of a linear form $c=c(\x)$ acts on the linear stress space, and therefore defines an action \[\R^\ast[\varDelta;\Theta]\times {\R_\ast}[\varDelta;\Theta]\ \longrightarrow\ {\R_\ast}[\varDelta;\Theta].\] 
	 We will often call this the \Defn{Weil duality} to distinguish it from other forms of duality present in this paper.
	
	\subsection{The relative case} A \Defn{relative simplicial complex} $\RR=(\varDelta, \varGamma)$ is a pair of simplicial complexes $\varDelta, \varGamma$ with $\varGamma \subset \varDelta$.
	If $\RR=(\varDelta,\varGamma)$ is a relative simplicial complex, then we can define the \Defn{relative face module}
	\[\R^\ast[\RR]\ \coloneqq \ \bigslant{I_\varGamma}{I_\varDelta}
	\]
	and its reduction $\R^\ast[\RR]/\Theta \R^\ast[\RR]$.
	
It is not hard to modify this definition to obtain a definition of \Defn{relative stress spaces} by
\[{\R_\ast}[\RR]\ \coloneqq\ \coker\left[{\R_\ast}[\varGamma]\ \longrightarrow\ {\R_\ast}[\varDelta]\right]\]
	and
	\[{\R_\ast}[\RR;\Theta]\ \coloneqq \ \ker\left[\Theta:{\R_\ast}[\RR]\longrightarrow {\R_\ast}[\RR]\right].\]
	This will be occasionally useful. We obtain a pairing
	\[\R^\ast[\RR]\times {\R_\ast}[\RR]\ \longrightarrow\ {\R_\ast}[\varDelta].\]
	This restricts to a perfect pairing
	\[\R^k[\RR,\Theta]\times {\R_k}[\RR,\Theta]\ \longrightarrow\ {\R_0}[\varDelta]\ \cong\ \R.\]
	
	\subsection{Coordinates and properness} Observe finally that $\Theta$ induces a map $\varDelta^{(0)}\rightarrow \R^l$ by associating to the vertices of $\varDelta$ the coordinates $\V_\varDelta=({v_1},\cdots, {v_n}) \in \R^{l\times n}$, where $\V_\varDelta\x = \Theta$. Hence, as is standard to do when considering stress spaces, we identify a pair $(\varDelta; \Theta)$ with a \Defn{geometric simplicial complex}, that is, a simplicial complex with a map of the vertices to $\R^l$. The differentials given by $\V_\varDelta$ are therefore $\V_\varDelta\nabla$, where $\nabla$ is the gradient. 
	
	Conversely, the canonical stress spaces and reduced face rings, respectively, of a geometric simplicial complex are those induced by the linear system of parameters given by the geometric realization. 
	The stress space or face ring of a \Defn{geometric simplicial complex} is considered with respect to its natural system of parameters induced by the coordinates.
	
	A geometric simplicial complex in $\R^d$ is \Defn{proper} if the image of every $k$-face, with $k<d$, linearly spans a subspace of dimension $k+1$.  A sequence of linear forms is a (partial) linear system of parameters if the associated coordinatization is proper. For the results of our paper, we always think of every simplicial complex as geometric and proper, that is, as coming with a proper coordinatization in a vector space over~$\R$, and shall generally assume $(d-1)$-dimensional complexes to be realized in $\R^d$ unless otherwise stated, so that the associated collection of coordinatizing linear forms is a linear system of parameters.
	
	 We are most interested in the cases when the coordinates give rise to an Artinian reduction, that is, the coordinates induce a linear system of parameters, justifying our notation.
 All geometric simplicial complexes are therefore assumed to be proper. For geometric simplicial complexes, we shall use the notation
\[\AR_\ast(\RR)\ \coloneqq\ {\R_\ast}[\RR;\V_\RR \x] \quad \text{resp.}\quad \AR^\ast(\RR)\ \coloneqq\ {\R^\ast}[\RR;\V_\RR \x]\]

	\subsection{Minkowski weights} We close with a useful notion related to simplicial stresses: The space of squarefree coefficients of $\AR_\ast(\RR)$ is also called the space of \Defn{Minkowski weights}~\cite{FS}, denoted by~$\MW(\RR)$.
	\begin{prp}[cf.~\cite{Lee}]\label{prp:mw}
		Consider a relative simplicial $d$-complex $\RR$ in $\R^d$. Then
		\[\upvarrho:\AR_\ast(\RR)\ \longrightarrow\ \MW(\RR),\]
		the map restricting a stress to its squarefree terms, is injective (and therefore an isomorphism). Moreover, $\MW_k(\RR)$ is the space of weights on $(k-1)$-faces that satisfy the \Defn{Minkowski balancing condition}: for every $(k-2)$-face $\tau$ of $\RR$,
		\[\sum_{\substack{\sigma\in\RR^{(k-1)}\\ \sigma\supset\tau}} c(\sigma)v_{\sigma{\setminus}\tau}\ =\ 0 \mod \mr{span}\,( \tau),\]
		that is, the sum on the left-hand side lies in the linear span of $\tau$.
	\end{prp}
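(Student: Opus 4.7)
My plan is to prove Proposition~\ref{prp:mw} by combining three observations: the Minkowski balancing condition follows directly from the stress equations; squarefree monomials span $\AR^k(\RR)$; and the Weil duality pairing then forces $\upvarrho$ to be an isomorphism onto the balanced weight space.

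For the balancing condition, I pick a representative $p \in \R_k[\varDelta]$ of a stress in $\AR_k(\RR)$ and fix a $(k-2)$-face $\tau$. Applying the operator $\partial^\tau \coloneqq \prod_{i \in \tau} \partial/\partial x_i$, which commutes with every $\theta_l(\partial)$, one sees that the linear form $q \coloneqq \partial^\tau p$ is annihilated by $\Theta$, so $\sum_j q_j v_j = 0$ in $\R^d$. A direct calculation yields $q_j = c(\tau \cup \{j\})$ when $j \notin \tau$ with $\tau \cup \{j\} \in \varDelta^{(k-1)}$, and $q_j = 2c_{2e_j + e_{\tau \setminus j}}$ when $j \in \tau$. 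Since the second group of terms contributes only vectors in $\mr{span}(\tau)$, reducing modulo $\mr{span}(\tau)$ yields precisely the Minkowski balancing identity. The same computation applies in the relative setting once we work modulo $\R_\ast[\varGamma]$.

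For injectivity and surjectivity, I would first establish the spanning lemma: every monomial $\x^\alpha$ with $\supp(\alpha) \in \varDelta$ and some $\alpha_i \ge 2$ reduces modulo $I_\varDelta + \Theta$ to a linear combination of squarefree face monomials $\x^F$. This uses properness: the vectors $\{v_j\}_{j \in \supp(\alpha)}$ are linearly independent in $\R^d$, so the parameter system $\Theta$ permits rewriting $x_i$ as a linear combination of variables indexed by vertices in links of subfaces of $\supp(\alpha)$; iterating drives the expression to squarefree form. Given the spanning, the Weil pairing satisfies $\x^F(\partial)p = c(F)$, so $\upvarrho$ is dual to the inclusion of a spanning set into $\AR^k(\RR)$ and is injective. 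For surjectivity, given a balanced weight $w$, the non-squarefree coefficients of a lifting stress $p$ are constructed iteratively: for each $(k-2)$-face $\tau$, the stress equation $\theta_l(\partial)p = 0$ at $\beta = e_\tau$ splits into a squarefree part determined by $w$ and a non-squarefree part involving $c_{2e_i + e_{\tau \setminus i}}$; the balancing condition guarantees that the first part lies in $\mr{span}(\tau)$, so the second can be chosen (uniquely, by the local linear independence supplied by properness) to solve the equation. Higher multiplicities are determined by analogous equations at non-squarefree $\beta$.

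The main obstacle is the spanning lemma together with the iterative solvability of the non-squarefree coefficients: both hinge on properness to supply the local linear independence, and require careful bookkeeping to ensure that substitutions never introduce nonface supports, that the procedure terminates, and that the inductive determination of non-squarefree coefficients is consistent at every stage. Once these technical points are in place, the three observations assemble to show that $\upvarrho$ is an isomorphism onto the space of Minkowski-balanced weights.
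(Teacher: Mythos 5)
The paper does not prove this proposition — it cites it as ``cf.~\cite{Lee}'' — so there is no proof in the paper to compare against directly; what the paper does offer (after the fact, in Section~3.7) is the combinatorial Ishida complex $I_\bullet[k]$, whose top cohomology is attributed to \cite{Oda} as being $\AR_k$, and whose boundary map $\partial c[\sigma]=\sum_{x\in\sigma^{(0)}}cx[\sigma\setminus x]$ into $\bigoplus_\tau V^{[1]}/\ker\tau$ has kernel visibly equal to the balanced weights (since $\ker\tau=\mr{span}(\tau)$). That route identifies $\AR_k$ with the balanced weights in one stroke and is the cleanest way to obtain the proposition. Your parts~(1) and~(2) are fine and match the standard direct computations: applying $\partial^\tau$ to a stress representative and reading off the coefficients of the resulting linear form correctly produces the balancing relation, and the squarefree-spanning lemma (which does use properness as you describe) gives injectivity of $\upvarrho$ via the Weil pairing identity $\x^F(\partial)p=c(F)$.

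The genuine gap is in your surjectivity step. Your scheme determines the excess-one coefficients $c_{2e_i+e_{\tau\setminus i}}$ uniquely from the squarefree balancing hypothesis, which is correct. But at the next stage you need, for each excess-one index $\beta$ with $\supp\beta\in\varDelta$, that the ``known part'' $\sum_{j\notin\supp\beta}(\beta_j+1)c_{\beta+e_j}v_j$ already lies in $\mr{span}\{v_j:j\in\supp\beta\}$; only then can the excess-two coefficients be solved for. This is a \emph{new} linear condition on the excess-one coefficients you just produced, and it is not among the hypothesized balancing relations. It must be derived as a consequence, and that derivation is precisely the content you have deferred to ``careful bookkeeping'' and ``analogous equations.'' As stated, the induction could in principle reach an inconsistent stage, and you have not ruled this out. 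To close the gap you would either (a) prove these higher consistency identities directly (this is essentially what Lee does), or (b) bypass the iteration entirely by appealing to the Ishida/stress-complex isomorphism $\AR_k\cong\ker\partial$ and checking that the resulting identification is indeed restriction to squarefree terms, or (c) pair injectivity with a dimension count of the balanced-weight space. Without one of these, the surjectivity half of the proposition is unproved. The relative case is also dispatched too quickly: you should spell out why balancing is only imposed at $(k-2)$-faces $\tau\notin\varGamma$ — the point being that after passing to $\coker[\R_\ast[\varGamma]\to\R_\ast[\varDelta]]$, the equations at $\beta=e_\tau$ with $\tau\in\varGamma$ acquire slack from the quotient, which is not immediate from ``work modulo $\R_\ast[\varGamma]$.''
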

	Here, we use $\AR_\ast(\RR)$ to denote the stress space of the geometric simplicial complex $\RR$ as convened upon, and $\RR^{(i)}$ to denote the $i$-dimensional faces of $\RR$.
The action of the face ring on stress spaces extends to Minkowski weights.
	
	\subsection{The cone lemmas} A crucial ingredient for the inductive structure is given by pullbacks to prime divisors.
	Recall that the  \Defn{star} and \Defn{link} of a face $\sigma$ in $\varDelta$ are
	the subcomplexes \[\St_\sigma \varDelta\ \coloneqq \ \{\tau:\exists \tau'\supset \tau,\ \sigma\subset
	\tau'\in \varDelta\}\ \
	\text{and}\ \ \Lk_\sigma \varDelta\ \coloneqq \ \{\tau{\setminus} \sigma: \sigma\subset
	\tau\in \varDelta\}.\]
	For geometric simplicial complexes $\varDelta$, we shall think of the star of a face as a geometric subcomplex of $\varDelta$, and the link of a face $\sigma$ as the geometric simplicial complex obtained by the orthogonal projection to $\mr{span}(\sigma)^\bot$.
	Let us denote the \Defn{deletion} of $\sigma$ by $\varDelta-\sigma$, the maximal subcomplex of $\varDelta$ that does not contain $\sigma$.
	Let 
	\[\St_\sigma^\circ \varDelta\ \coloneqq\  (\St_\sigma \varDelta,\St_\sigma \varDelta-\sigma).\]
	We have the following two elementary lemmas.
	
	\begin{lem}[Cone lemma I, see {\cite[Thm. 7]{Lee}}]
		For any vertex $v\in \varDelta$, where $\varDelta$ is a geometric simplicial complex in $\R^d$, and any integer $k$, we have an isomorphism
		\[\AR_k({\Lk}_v \varDelta)\ \cong\ \AR_{k}(\St_v \varDelta).\]
	\end{lem}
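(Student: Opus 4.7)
The plan is to exploit the fact that $\St_v \varDelta$ is the cone $v \ast \Lk_v \varDelta$, so its nonface ideal coincides with $I_{\Lk_v \varDelta}$ (the vertex $v$ lies in every facet of the star and hence appears in no nonface). I will choose coordinates on the ambient $\R^d$ that cleanly separate the linear system of parameters of $\St_v\varDelta$ into an ``apex part'' and a ``link part'', and then identify stresses on the star with stresses on the link via the specialization $x_v \mapsto 0$.

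Rotate $\R^d$ so that $v$ points along the first coordinate axis. The $d$ linear parameters $\V_{\St_v\varDelta}\x$ defining $\AR_\ast(\St_v\varDelta)$ then take the form
\[
L_1 \ = \ \lambda\, x_v\, +\, \sum_w v_w^1\, x_w, \qquad L_i \ = \ \sum_w v_w^i\, x_w\ (2\le i\le d),
\]
with $\lambda\ne 0$, the sums running over vertices $w\in\Lk_v^{(0)}\varDelta$. Since orthogonal projection onto $v^\perp$ kills exactly the first coordinate, $L_2,\ldots,L_d$ involve no $x_v$ and are precisely the canonical linear parameters of $\Lk_v\varDelta$ (realized in $v^\perp\cong \R^{d-1}$). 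Writing a candidate degree-$k$ stress as
\[
f\ =\ \sum_{j\ge 0}\, x_v^j\, f_j, \qquad f_j\in\R[x_w:w\in\Lk_v^{(0)}\varDelta],
\]
annihilation by each $L_i(\nabla)$, $i\ge 2$, forces every $f_j$ to be killed by the LSOP of $\Lk_v\varDelta$, while annihilation by $L_1(\nabla)=\lambda\partial_v+D_1$, with $D_1=\sum_w v_w^1\partial_w$, yields the recursion
\[
\lambda\,(j+1)\, f_{j+1}\ =\ -D_1\, f_j,
\]
so $f$ is completely determined by its $x_v$-free component $f_0$.

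The specialization $f\mapsto f_0$ then gives a candidate map $\AR_k(\St_v\varDelta)\to \AR_k(\Lk_v\varDelta)$, with inverse
\[
g\ \longmapsto\ \sum_{j\ge 0}\frac{1}{j!}\!\left(\frac{-x_v}{\lambda}\right)^{\!j} D_1^j\, g.
\]
The mild verifications are that $D_1$ commutes with each $D_i=L_i(\nabla)$ (both have constant coefficients) so $D_1^j g$ remains in the kernel of the link LSOP, and that $D_1^j g$ stays supported on faces of $\Lk_v\varDelta$ (differentiation only deletes variables, and subsets of faces are faces), so that each $x_v^j f_j$ lies in $\R[\St_v\varDelta]$. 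I do not anticipate a serious obstacle: the whole argument reduces to a linear-algebraic separation of one LSOP element that eliminates $x_v$ from the remaining $d-1$ that recover the LSOP of the link, and the bookkeeping above is the entire proof.
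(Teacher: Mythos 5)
Your proof is correct. The paper cites this lemma to \cite[Thm.~7]{Lee} and gives no proof of its own, so there is no in-paper argument to compare against; your verification is a self-contained, stress-side (Weil-dual) version of the standard ring-theoretic one, in which the single $x_v$-bearing parameter $L_1$ is used to eliminate $x_v$ from $\AR^\ast(\St_v\varDelta)$, leaving exactly $\AR^\ast(\Lk_v\varDelta)$. Your recursion $\lambda(j+1)f_{j+1}=-D_1 f_j$ and the closed form $g\mapsto\sum_{j\ge 0}\frac{1}{j!}\bigl(\tfrac{-x_v}{\lambda}\bigr)^j D_1^j g$ are precisely the dual of that elimination. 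Two things you rely on silently but should state: properness of the realization (which the paper assumes globally) ensures $v\neq 0$, hence $\lambda\neq 0$, and also that $L_2,\dots,L_d$ restrict to the canonical l.s.o.p.\ of $\Lk_v\varDelta$ in $v^\perp$; and the sum defining the inverse is finite because $D_1^j g=0$ once $j>\deg g$. Otherwise the ``mild verifications'' you list really are the whole content: all the operators in play ($D_1$, $D_i$, and the nonface differentials $m(\nabla)$) are constant-coefficient and hence commute, and differentiation never enlarges the support of a monomial, so every $x_v^j f_j$ stays supported on faces of $\St_v\varDelta=v\ast\Lk_v\varDelta$.
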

		
	\begin{lem}[Cone lemma II, see {\cite[Lem. 3.3]{AdiprasitoTC}}]
		In the situation of the first cone lemma we have a natural isomorphism
		\[x_v:\AR_{k+1}(\St_v^\circ \varDelta)\ \longrightarrow\ \AR_{k}(\St_v \varDelta).\]
	\end{lem}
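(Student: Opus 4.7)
By the Weil duality recalled earlier in the section, the statement is equivalent to showing that the dual map of face rings
\[\cdot x_v\colon \AR^k(\St_v \varDelta) \longrightarrow \AR^{k+1}(\St_v^\circ \varDelta)\]
is an isomorphism, since the $\R$-adjoint of multiplication by $x_v$ on the face ring side is the action of $x_v$ (as the partial derivative $\partial/\partial x_v$) on the stress space side. I would therefore prove this dual statement and then transpose.

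The key observation is the cone structure $\St_v \varDelta = v \ast \Lk_v \varDelta$, which at the level of face rings gives a factorization $\R^\ast[\St_v \varDelta] \cong \R^\ast[\Lk_v \varDelta][x_v]$. In particular, $x_v$ is a non-zero-divisor. Concretely, in the ambient polynomial ring on the variables $\{x_v\} \cup \{x_w : w \in \Lk_v \varDelta^{(0)}\}$, the nonface ideal $I_{\St_v \varDelta}$ is generated by nonfaces of $\Lk_v \varDelta$ alone, since every nonface of the star involving $v$ equals $x_v$ times a nonface of the link. On the other hand, viewing $\St_v \varDelta - v = \Lk_v \varDelta$ as a subcomplex of $\St_v \varDelta$ turns the singleton $\{v\}$ itself into a nonface, so a short computation identifies the relative face module as the principal ideal generated by $x_v$:
\[\R^\ast[\St_v^\circ \varDelta]\ =\ I_{\St_v \varDelta - v} / I_{\St_v \varDelta}\ \cong\ x_v \cdot \R^\ast[\St_v \varDelta].\]
Since $x_v$ is a non-zero-divisor, multiplication by $x_v$ is a degree-raising isomorphism $\R^k[\St_v \varDelta] \xrightarrow{\sim} \R^{k+1}[\St_v^\circ \varDelta]$.

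It remains to descend this to the Artinian reductions and transpose. After translating the ambient coordinates so that $v$ sits at the origin (a change of basis of $\R^d$ that leaves the Artinian reduction unchanged up to canonical isomorphism), the parameter forms $\theta_i = \sum_w v_{w,i} x_w$ have zero coefficient at $x_v$, and so $\Theta$ lies entirely in the polynomial subring generated by the link variables. The factorization $\R^\ast[\St_v \varDelta] \cong \R[x_v] \otimes_\R \R^\ast[\Lk_v \varDelta]$ is then compatible with the $\Theta$-action, so $x_v$ remains a non-zero-divisor modulo $\Theta$, and the above isomorphism descends to $\cdot x_v\colon \AR^k(\St_v \varDelta) \to \AR^{k+1}(\St_v^\circ \varDelta)$. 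Transposing through Weil duality finally gives the stated map $x_v\colon \AR_{k+1}(\St_v^\circ \varDelta) \to \AR_k(\St_v \varDelta)$. The main obstacle is precisely this last compatibility step: once the coordinate translation is performed the argument is purely formal, but without it one must reason more carefully about why $x_v$ stays a non-zero-divisor on the Artinian reduction of the star.
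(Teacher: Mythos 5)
Your argument on the unreduced level is correct: the cone decomposition $\St_v \varDelta = v\ast\Lk_v\varDelta$ gives $\R^\ast[\St_v\varDelta]\cong\R^\ast[\Lk_v\varDelta][x_v]$, so $x_v$ is a non-zero-divisor, the relative module $\R^\ast[\St_v^\circ\varDelta]$ is the principal ideal $(x_v)$, and $\cdot x_v$ identifies $\R^k[\St_v\varDelta]$ with $\R^{k+1}[\St_v^\circ\varDelta]$. The gap is in the passage to Artinian reductions. You propose translating the realization so that $v$ sits at the origin and call this a ``change of basis of $\R^d$''. It is not: a translation is affine, not linear, so it genuinely changes the linear system of parameters $\Theta$ and hence the Artinian reduction. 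Worse, putting $v$ at the origin violates properness --- a vertex must span a line in $\R^d$ --- and would cause every face through $v$ in $\St_v\varDelta$ to degenerate by one dimension, so the resulting $\Theta$ would no longer be a linear system of parameters for the star. The step you flagged as ``the main obstacle'' is therefore broken as written.

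The repair is much simpler and needs no coordinate manipulation at all. Multiplication by $x_v$ is a morphism of graded $\R^\ast[\St_v\varDelta]$-modules (indeed of $\R[\x]$-modules), since $r\cdot(x_v m) = x_v\cdot(r m)$ by commutativity, and any isomorphism of modules descends to an isomorphism modulo an ideal. Concretely, $\Theta\R^\ast[\St_v^\circ\varDelta] = \Theta\cdot x_v\R^\ast[\St_v\varDelta] = x_v\cdot\Theta\R^\ast[\St_v\varDelta]$, so $\cdot x_v$ carries $\Theta\R^{k-1}[\St_v\varDelta]$ precisely onto $\Theta\R^{k}[\St_v^\circ\varDelta]$ and induces an isomorphism $\AR^k(\St_v\varDelta)\to\AR^{k+1}(\St_v^\circ\varDelta)$. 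Transposing through Weil duality then gives the stated isomorphism of stress spaces, exactly as you set out to do; the rest of your outline stands.
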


	\subsection{Partition of unity and Poincar\'e duality}\label{sec:part}
	
	There is another fact that provides a good intuition to what we do, hidden in the following derivation of Poincar\'e duality:
	
	\begin{lem}[Partition of unity]\label{lem:partyyyyy}
		Consider a Cohen--Macaulay $(d-1)$-complex $\varDelta$ in $\R^{d}$. Then, for every $k<d$, we have a surjection
		\[\bigoplus_{v\in \varDelta^{(0)}} \AR_k(\St_v \varDelta)\ \longtwoheadrightarrow\ \AR_k(\varDelta).\]
	\end{lem}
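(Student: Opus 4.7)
The plan is to work dually, via Weil duality.  By the perfect pairing $\AR^k(\varDelta)\times\AR_k(\varDelta)\to\R$ of Section~\ref{sec:basics}, the asserted surjection is equivalent to the injectivity, for $k<d$, of the natural restriction map
\[\iota_k\colon\AR^k(\varDelta)\ \longrightarrow\ \prod_{v\in\varDelta^{(0)}}\AR^k(\St_v\varDelta),\]
whose $v$-th component is the quotient $\AR^*(\varDelta)\twoheadrightarrow\AR^*(\St_v\varDelta)$ restricted to degree $k$.  Writing $K_v$ for the kernel of this quotient, the task reduces to showing $\bigcap_{v}K_v\cap\AR^k(\varDelta)=0$ for $k<d$.

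First I would identify $K_v$ combinatorially: it is the graded ideal of $\AR^*(\varDelta)$ generated by the squarefree monomials $x_S$ attached to faces $S$ of $\varDelta$ with $v\notin S$ and $S\cup\{v\}\notin\varDelta$ (equivalently the minimal generators of $I_{\St_v\varDelta}/I_\varDelta$).  The crux is then the identity
\[x_v\cdot K_v\ =\ 0\quad\text{in}\quad\AR^*(\varDelta),\]
which is immediate on generators: for each such $x_S$, the product $x_v\cdot x_S=x_{S\cup\{v\}}$ vanishes because $S\cup\{v\}$ is a nonface of $\varDelta$.  Consequently any $\alpha\in\bigcap_vK_v$ is annihilated by every variable $x_v$, so $\alpha$ lies in the socle of $\AR^*(\varDelta)$ with respect to the irrelevant ideal $\mathfrak{m}=(x_v)_v$.

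It remains to verify that no nonzero socle element in degrees $k<d$ can sit inside $\bigcap_vK_v$.  For Gorenstein $\varDelta$---in particular for the PL (homology) spheres and closed orientable manifolds that are the paper's main concern---the socle is one-dimensional and lives entirely in top degree $d$, and the claim is immediate.  For general Cohen--Macaulay $\varDelta$, one invokes the Hochster-type description of the graded socle pieces in terms of reduced cohomology of restricted subcomplexes, or equivalently the Ishida-complex description of stress spaces from Section~\ref{sec:ish}; an $\alpha$ vanishing on every vertex star then represents a cocycle for the cover $\varDelta=\bigcup_v\St_v\varDelta$ that is trivial on each cover piece, and a Mayer--Vietoris-type argument on this star cover eliminates such contributions in all degrees $k<d$.

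The main obstacle is this final socle analysis; everything earlier is formal manipulation in the face ring, while only the last step uses the Cohen--Macaulay hypothesis in an essential way.
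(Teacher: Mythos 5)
Your dualization to the injectivity of $\AR^k(\varDelta)\to\bigoplus_v\AR^k(\St_v\varDelta)$ is exactly the paper's starting move, and your intermediate observation is correct and pleasant: $K_v$ is generated by the squarefree monomials $x_S$ with $S\cup\{v\}\notin\varDelta$, these are killed by $x_v$, so $x_v K_v=0$, and hence $\bigcap_v K_v$ sits inside the socle. This reduction does not appear in the paper's proof, which goes directly via a Koszul--\v{C}ech double complex.

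However, the reduction does not close the argument, and the place where you hand-wave is precisely where all the content lives. Two issues. First, for Cohen--Macaulay but non-Gorenstein $\varDelta$, the socle of $\AR^\ast(\varDelta)$ is \emph{not} concentrated in degree $d$: take $\varDelta=\partial\Delta_3$ minus one facet (a triangulated disk, CM, $d=3$); its $h$-vector is $(1,1,1)$, so the Artinian reduction has nonzero socle in degree $2<d$. (The lemma still holds there, but because $\bigcap_v K_v=0$ for a separate reason --- the star of a cone vertex is all of $\varDelta$, forcing one $K_v=0$ --- not because the socle is absent.) So knowing that $\bigcap_v K_v$ lies in the socle gives you no traction in degrees $<d$. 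Second, the sentence ``a Mayer--Vietoris-type argument on this star cover eliminates such contributions in all degrees $k<d$'' is not a proof but a restatement of the lemma: the object you have to eliminate is by definition a class vanishing on each star, i.e.\ exactly a degree-$k$ element of $H^0$ of the star-cover \v{C}ech complex $\mc{P}^\bullet$. Identifying this with reduced topological cohomology of $\varDelta$, and seeing it is concentrated in degree $d$, requires an actual computation; the paper does it by tensoring the complex $\widetilde{\mc{P}}^\bullet$ of full face rings $\R^\ast[\St_F\varDelta]$ with the Koszul complex $K^\bullet(\Theta)$ and running the spectral sequence of the resulting double complex, using Reisner's theorem (this is where the Cohen--Macaulay hypothesis genuinely enters, as regularity of $\Theta$) to collapse one of the filtrations. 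Without carrying this out, your proof has a gap at exactly the step you flagged as ``the main obstacle.'' The socle observation, while correct, reduces the lemma to a statement of comparable difficulty rather than to something simpler.
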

Recall that  $\cdot^{(i)}$ denotes the set of $i$-dimensional faces of a simplicial complex.

	As a general convention, all our simplicial complexes (unless they are relative) have the empty set as a face, so that homology and cohomology of simplicial complexes is naturally always reduced.
	
	\begin{proof}
		We may equivalently prove an injection
		\begin{equation}\label{eq:cohomology} \AR^k(\varDelta)\ \longhookrightarrow\ \bigoplus_{v\in \varDelta^{(0)}} \AR^k(\St_v \varDelta)\end{equation}
		for $k< d$.
		
		Let $\Theta$ denote the coordinate ideal of parameters for $\varDelta$. For the proof, we consider the Koszul complex $K^\bullet\coloneqq K^\bullet(\Theta)$
		and the chain complex $\widetilde{\mc{P}}^\bullet=\widetilde{\mc{P}}^\bullet(\varDelta)$ defined as
		\[0\ \longrightarrow\ \R^\ast[\varDelta]\ \longrightarrow\ \bigoplus_{v\in \varDelta^{(0)}} \R^\ast[\St_v \varDelta]\ \longrightarrow\ \cdots\ \longrightarrow\  \bigoplus_{F \in \varDelta^{(d-1)}} 
		\R^\ast[\St_F \varDelta] \  \longrightarrow\ 0\]
with a natural choice of maps for the boundary operator. Indeed, this choice of map can be found easily once we realize that the degree zero component is naturally the \u{C}ech complex of $\varDelta$ covered by $\st_v  \varDelta$. With this, we obtain a complex as desired, and the complex is exact in positive degree.
		Let ${\mc{P}}^\bullet={\mc{P}}^\bullet(\varDelta)$ denote the complex
		\[0\ \longrightarrow\ \AR^\ast({\varDelta})\ \longrightarrow\ \bigoplus_{v\in \varDelta^{(0)}} \AR^\ast(\St_v {\varDelta})\ \longrightarrow\ \cdots\ \longrightarrow\  \bigoplus_{F \in \varDelta^{(d-1)}} 
		\AR^\ast(\St_F {\varDelta}) \  \longrightarrow\ 0.\]
		Considering the double complex $K^\bullet \otimes \widetilde{\mc{P}}^\bullet$ and computing the total complex using Reisner's theorem that the realization induces a regular linear system of parameters (see~\cite{Stanley96}), we get
		\[H^k(\mc{P}^\bullet)\ \cong\ H^k(\mathrm{tot}(K^\bullet \otimes \widetilde{\mc{P}}^\bullet)) \ \cong \ (H^{d-1})^{\binom{d}{k}}(\varDelta)(-k),\] where $(+j)$ denotes a shift in degree by $j$, so that $H^{d}(\mc{P}^\bullet)\cong(H^{d-1}(\varDelta))(-d)$, which is nontrivial only in degree $d$.
		But the degree $k$ component of
		$H^{d}(\mc{P}^\bullet)$
		is isomorphic to the kernel of the Map~\eqref{eq:cohomology}, as desired.
	\end{proof}
	
	An important corollary (and equivalent restatement for spheres) of this fact is Poincar\'e duality for simplicial homology spheres, proved initially by Gr\"abe~\cite{Grabe} using somewhat deeper techniques from commutative algebra; note though that the preceding statement is more general in that it applies beyond the case of spheres to Cohen--Macaulay complexes that do not have a fundamental class.

	\begin{thm}\label{thm:pd}
		Let $\varSigma$ be a  $(d-1)$-dimensional simplicial homology sphere in~$\R^d$. Then $\AR^\ast(\varSigma)$ is a Poincar\'e duality algebra.
	\end{thm}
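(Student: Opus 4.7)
I would proceed by induction on $d$, with base case $d=1$ (where $\varSigma$ is a $0$-sphere and both graded pieces of $\AR^\ast(\varSigma)$ are just $\R$) being immediate. For the inductive step I assume Poincar\'e duality for all homology spheres of dimension below $d-1$. The plan splits into establishing that $\AR^d(\varSigma)$ is one-dimensional and that the multiplication pairing is non-degenerate, using the partition of unity (Lemma~\ref{lem:partyyyyy}) together with the cone lemmas as the main tools.

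For the socle statement I would argue as follows. For each vertex $v$, the link $\Lk_v \varSigma$ is a homology $(d-2)$-sphere, so by the inductive hypothesis $\AR^\ast(\Lk_v\varSigma)$ is a Poincar\'e duality algebra of socle degree $d-1$, whence $\AR^d(\Lk_v\varSigma) = 0$. Cone Lemma~I transports this to $\AR^d(\St_v\varSigma) = 0$, and iterating gives $\AR^d(\St_F\varSigma) = 0$ for every face $F$ of positive dimension. Thus the degree-$d$ strand of the complex $\mc{P}^\bullet$ from the proof of Lemma~\ref{lem:partyyyyy} collapses in positions $\geq 1$, and the cohomology computation $H^d(\mc{P}^\bullet) \cong H^{d-1}(\varSigma)(-d) \cong \R(-d)$ then forces $\AR^d(\varSigma) \cong \R$.

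For perfectness of the pairing, given $0 \neq \alpha \in \AR^k(\varSigma)$ with $k < d$, the partition of unity produces a vertex $v$ with nonzero image $\alpha_v \in \AR^k(\St_v\varSigma)$. Cone Lemma~I identifies $\alpha_v$ with a nonzero class $\bar\alpha \in \AR^k(\Lk_v\varSigma)$, and the inductive Poincar\'e duality on the link yields $\bar\beta \in \AR^{d-1-k}(\Lk_v\varSigma)$ with $\bar\alpha\bar\beta \neq 0$ in $\AR^{d-1}(\Lk_v\varSigma) \cong \R$. Using Cone Lemma~II (multiplication by $x_v$) together with the natural embedding of the relative module $\AR^\ast(\St_v^\circ\varSigma)$ as the principal ideal $(x_v) \subset \AR^\ast(\varSigma)$, I lift $\bar\beta$ to a global element $\gamma \in \AR^{d-k}(\varSigma)$. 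The product $\alpha\gamma \in \AR^d(\varSigma)$ is supported on the closed star of $v$, and traced through the cone-lemma identifications it matches the link product $\bar\alpha\bar\beta$ under $\AR^d(\varSigma) \cong \R \cong \AR^{d-1}(\Lk_v\varSigma)$.

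The main obstacle of this plan is precisely the last compatibility: because $\AR^d(\St_v\varSigma)$ already vanishes, the product cannot be read off inside the star and must be computed at the global level via the relative stress space $\AR^{d-k}(\St_v^\circ\varSigma)$. Verifying that the ambient multiplication in $\AR^\ast(\varSigma)$ faithfully records the link-level pairing requires careful bookkeeping through Weil duality, the two cone lemmas, and the inclusion of $\AR^\ast(\St_v^\circ\varSigma)$ as a principal ideal. Once this compatibility is in hand, non-degeneracy in the first factor of the pairing is immediate, and symmetry together with the socle computation complete the Poincar\'e duality claim.
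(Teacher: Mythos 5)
Your argument reaches the right conclusion and is built from the same ingredients as the paper's proof — partition of unity, the two cone lemmas, and the injectivity fact Lemma~\ref{lem:cl} — but it organizes them differently. The paper avoids both the induction on $d$ and the explicit construction of a pairing partner $\gamma$: it reduces Poincar\'e duality to the assertion that the socle of $\AR^\ast(\varSigma)$ is concentrated in degree $d$, and verifies this in a single chain
\[\AR^k(\varSigma)\ \hookrightarrow\ \bigoplus_{v}\AR^k(\St_v\varSigma)\ \cong\ \bigoplus_{v}\AR^{k+1}(\St_v^\circ\varSigma)\ \hookrightarrow\ \bigoplus_{v}\AR^{k+1}(\varSigma),\]
whose composite is componentwise multiplication by $x_v$; injectivity for $k<d$ says no $\alpha$ of low degree is annihilated by every $x_v$. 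Since $\AR^d(\varSigma)\cong\R$ (from the $H^d(\mc{P}^\bullet)$ computation in Lemma~\ref{lem:partyyyyy}), this suffices. Your approach instead inducts on $d$ and pushes the Poincar\'e pairing of $\Lk_v\varSigma$ through the cone lemmas. This works, but buys nothing over the socle argument and introduces exactly the bookkeeping you flag as the main obstacle.

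Two specific comments. First — and this is the genuine gap — you describe the embedding of $\AR^\ast(\St_v^\circ\varSigma)$ into $\AR^\ast(\varSigma)$ as ``the natural embedding $\ldots$ as the principal ideal $(x_v)$.'' Before Artinian reduction, $\R^\ast[\St_v^\circ\varSigma]=I_{\varSigma-v}/I_\varSigma$ is indeed the principal ideal $(x_v)\subset\R^\ast[\varSigma]$, but Artinian reduction is only right-exact, so the induced map $\AR^\ast(\St_v^\circ\varSigma)\to\AR^\ast(\varSigma)$ is \emph{not} automatically injective. That injectivity is exactly the content of Lemma~\ref{lem:cl}, which the paper proves from the short exact sequence $0\to\R^\ast[\St_v^\circ\varSigma]\to\R^\ast[\varSigma]\to\R^\ast[\varSigma-v]\to 0$ together with Schenzel-type vanishing. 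Your lift of $\bar\beta$ to $\gamma$, and the conclusion $\alpha\gamma\neq 0$, both silently depend on this. You must cite or reprove it. Second, the compatibility you worry about at the end is in fact routine once Lemma~\ref{lem:cl} is in hand: the restriction $\AR^\ast(\varSigma)\to\AR^\ast(\St_v\varSigma)$ is a ring map, Cone Lemma~I is a ring isomorphism (write $\R^\ast[\St_v\varSigma]\cong\R^\ast[\Lk_v\varSigma][x_v]$ and eliminate $x_v$ with one parameter), and Cone Lemma~II is $\AR^\ast(\St_v\varSigma)$-linear, so $\alpha\gamma = x_v(\alpha_v\beta_v)\neq 0$ in $\AR^d(\St_v^\circ\varSigma)$ and hence in $\AR^d(\varSigma)$.
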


 For this, we first notice: \begin{lem}\label{lem:cl}
		In any closed manifold $\Mu$ of dimension $d-1$ in $\R^d$, we have an injection
		\[\AR^\ast(\st_v^\circ \Mu)\ \longhookrightarrow\  \AR^\ast(\Mu) \]
		for any vertex $v$ of $\Mu$. 
\end{lem}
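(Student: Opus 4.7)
The plan is to identify $\R^\ast[\st_v^\circ\Mu]$ concretely as a principal submodule of $\R^\ast[\Mu]$, exhibit its cokernel as the face module of the combinatorial deletion $\Mu-v$, and then reduce the desired injectivity to the vanishing of a Tor connecting homomorphism that is controlled by the sphere structure of the link. First I identify $\R^\ast[\st_v^\circ\Mu]=I_{\Lk_v\Mu}/I_{\st_v\Mu}$ with the principal ideal $x_v\R^\ast[\Mu]\subset\R^\ast[\Mu]$: both are spanned as $\R$-vector spaces by the monomials $x^\alpha$ with $v\in\supp\alpha\in\st_v\Mu$, and since $\{v\}\cup\sigma\in\Mu$ if and only if $\sigma\in\st_v\Mu$, these are exactly the monomials in $x_v\R^\ast[\Mu]$, with matching $\R[\x]$-module structures. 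The cokernel of this inclusion (setting $x_v=0$) is the face module $\R^\ast[\Mu-v]$ of the combinatorial deletion, yielding a short exact sequence of graded $\R[\x]$-modules
\[0\longrightarrow\R^\ast[\st_v^\circ\Mu]\longrightarrow\R^\ast[\Mu]\longrightarrow\R^\ast[\Mu-v]\longrightarrow 0.\]

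Tensoring with $\R[\x]/\Theta$, the injectivity of $\AR^\ast(\st_v^\circ\Mu)\to\AR^\ast(\Mu)$ is equivalent to the vanishing of the connecting map $\partial\colon\mr{Tor}_1^{\R[\x]}(\R^\ast[\Mu-v],\R[\x]/\Theta)\to\AR^\ast(\st_v^\circ\Mu)$. I would compute $\partial$ explicitly. Choose coordinates so that $v$ has position $e_1\in\R^d$; then each parameter splits as $\theta_i=\delta_{i1}x_v+\theta_i^\perp$ with $\theta_i^\perp\in\R[x_w:w\neq v]$. A Tor class is represented by a Koszul $1$-cycle, i.e., a tuple $(r_i)_{i=1}^d\in\R^\ast[\Mu-v]^d$ with $\sum_i\theta_i^\perp r_i=0$; lifting each $r_i$ to the polynomial $\tilde r_i\in\R^\ast[\Mu]$ free of $x_v$ yields $\sum_i\theta_i\tilde r_i=x_v\tilde r_1$ in $\R^\ast[\Mu]$, so $\partial[(r_i)]=[x_v\tilde r_1]\in\AR^\ast(\st_v^\circ\Mu)$. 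Via the Cone Lemmas (I) and (II), this class transports to $[\bar r_1]\in\AR^{\ast-1}(\Lk_v\Mu)$, where $\bar r_1$ is the image of $r_1$ under the projection $\R^\ast[\Mu-v]\twoheadrightarrow\R^\ast[\Lk_v\Mu]$.

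To conclude $[\bar r_1]=0$ I use the closed-manifold hypothesis: $\Lk_v\Mu$ is a $(d-2)$-sphere, hence Cohen--Macaulay, and the paper's convention of realizing the link by orthogonal projection onto $\mr{span}(v)^\perp$ forces the unique linear dependency among the restrictions $\theta_i^\perp|_{\Lk_v\Mu}$ to be $\sum_i(v_v)_i\theta_i^\perp=0$, which in our basis reads $\theta_1^\perp|_{\Lk_v\Mu}=0$. The push-forward of the syzygy to the link therefore reads $\sum_{i\geq 2}\theta_i^\perp\bar r_i=0$, with $\theta_2^\perp,\ldots,\theta_d^\perp$ forming a regular sequence on the CM sphere face ring, so $(\bar r_2,\ldots,\bar r_d)$ is a Koszul boundary. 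Combining this triviality with the original global syzygy $\theta_1^\perp r_1=-\sum_{i\geq 2}\theta_i^\perp r_i$ on $\R^\ast[\Mu-v]$ and absorbing the resulting Koszul boundaries produces the desired membership $\bar r_1\in\Theta^\perp\R^\ast[\Lk_v\Mu]$, so $[\bar r_1]=0$ and hence $\partial=0$.

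The main obstacle is this last step: transferring the global syzygy on the generally non-Cohen--Macaulay Buchsbaum ring $\R^\ast[\Mu-v]$ (since $\Mu-v$ is a manifold with nontrivial sphere boundary $\Lk_v\Mu$) into ideal membership on the Cohen--Macaulay link. A cleaner approach might adapt the Koszul double complex argument underlying the proof of Lemma~\ref{lem:partyyyyy} to the relative pair $(\st_v\Mu,\Lk_v\Mu)$ inside $\Mu$, thereby bypassing the explicit Koszul-boundary manipulation.
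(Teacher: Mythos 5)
Your opening moves are correct and match the paper's set-up: the identification $\R^\ast[\st_v^\circ\Mu]\cong x_v\R^\ast[\Mu]$, the resulting short exact sequence $0\to\R^\ast[\st_v^\circ\Mu]\to\R^\ast[\Mu]\to\R^\ast[\Mu-v]\to 0$, the reduction of injectivity to the vanishing of the Tor connecting map, and the Koszul computation $\partial[(r_i)]=[x_v\tilde r_1]$ are all fine.

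The argument then has two gaps, the second of which is fatal. First, the claim $\theta_1^\perp|_{\Lk_v\Mu}=0$ is not a consequence of the link convention. The paper's orthogonal projection defines a \emph{new} realization of $\Lk_v\Mu$ inside $v^\perp$ for the purpose of the Cone Lemmas; it does not impose a constraint on the original coordinates, and $\theta_1^\perp|_{\Lk_v\Mu}=\sum_{w\in\Lk_v\Mu}(v_w)_1 x_w$ vanishes only if the neighbours of $v$ happen to lie in $v^\perp$, which a proper realization does not and need not provide. The push-forward of your syzygy actually reads $\theta_1^\perp|_{\Lk_v\Mu}\,\bar r_1+\sum_{j\ge2}\theta'_j\bar r_j=0$, which exhibits $\theta_1^\perp|_{\Lk_v\Mu}\,\bar r_1\in(\theta'_2,\dots,\theta'_d)$, not $\bar r_1\in(\theta'_2,\dots,\theta'_d)$; multiplication by a nonzero linear form on a Poincar\'e duality algebra is not in general injective, so the desired ideal membership does not follow (and you acknowledge the handoff from global syzygy to local membership as unresolved).

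Second, and more basically, the mechanism you are trying to exploit — Cohen--Macaulayness of the sphere $\Lk_v\Mu$ — cannot by itself suffice: an interior vertex of a triangulated ball also has a sphere link, yet the lemma fails there. For $\Mu$ the path on vertices $1$--$v$--$2$ properly realized in $\R^2$, the $h$-vectors give $\dim\AR^\ast(\Mu)=2$, $\dim\AR^\ast(\st_v^\circ\Mu)=2$, $\dim\AR^\ast(\Mu-v)=1$, so $\AR^\ast(\st_v^\circ\Mu)\to\AR^\ast(\Mu)$ cannot be injective. The input that singles out the closed case — and which the paper's proof uses and yours never invokes — is that for a closed orientable $(d-1)$-manifold the restriction $H^\ast(\Mu)\twoheadrightarrow H^\ast(\Mu-v)$ is surjective (since $\Mu-v\simeq\Mu\setminus\{\text{pt}\}$); feeding this through Schenzel's formulas for Artinian reductions of Buchsbaum modules (and their relative extension), which express the relevant Tor groups via local, i.e.\ simplicial, cohomology, is exactly what forces $\partial=0$. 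Your closing suggestion to adapt the partition double complex points in roughly this direction, but without identifying the cohomological surjection as the driver the argument cannot close.
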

\begin{proof}
This follows immediately from Schenzel's calculations
\cite[Section 4.1 and Lemma 6.3.4]{Schenzel82} and a basic extension to the relative case made explicit in~\cite[Proposition 4.2 and Theorem 4.8]{Adiprasitorelative}). Apply the last two results directly to the short exact sequence
		\[0\ \longrightarrow\ \R^\ast[\st_v^\circ \Mu]\ \longrightarrow\ \R^\ast[\Mu]\ \longrightarrow\ \R^\ast[\Mu-v]\ \longrightarrow\ 0,\]
computing the Artinian reduction using the above facts and by exploiting that the inclusion of manifolds \[\Mu-v\ \longhookrightarrow\ \Mu\] induces an surjection in cohomology
\[H^\ast(\Mu)\ \longtwoheadrightarrow\ H^\ast(\Mu-v). \qedhere\]
		\end{proof}		
	
	\begin{proof}[\textbf{Proof of Theorem~\ref{thm:pd}}]
		It suffices to observe that the annihilator of the irrelevant ideal in $\AR^\ast(\varSigma)$ is concentrated in degree $d$.
		 But then partition of unity implies for all $k<d$ an injection 
		\[\AR^k(\varSigma)\ \longhookrightarrow\ \bigoplus_{v\in \varSigma^{(0)}} \AR^k(\St_v \varSigma)\ \cong\ \bigoplus_{v\in \varSigma^{(0)}} \AR^{k+1}(\St_v^\circ \varSigma)\ \longhookrightarrow \bigoplus_{v\in \Mu^{(0)}}\AR^{k+1}(\varSigma) \]
		where the middle map is induced by the second cone lemma for each vertex $v$ of $\varSigma$.
	\end{proof}
	
	\subsection{Manifolds and socles}\label{sec:part2}
	
We will simply write manifolds to mean orientable (over~$\mathbb{Q}$), closed, connected combinatorial (also known as PL) manifolds.
	 If an object satisfies all these conditions except closedness, we will call it a manifold with boundary. Sometimes we will also consider rational homology manifolds, and make clear that the result extends immediately to this case. We will denote this by the adjective "rational".
	
	  For instance, rational spheres will be rational homology manifolds that are rational homology equivalent to the standard spheres.	  
 A combinatorial rational homology sphere is a combinatorial manifold rationally homology equivalent to the standard sphere.
 
   A PL sphere finally is a sphere PL homeomorphic to the standard sphere. This is the classical definition that may be more familiar. For us, it is enough to ensure that the link of every face is homeomorphic to a sphere of the appropriate dimension. Modulo the smooth Poincar\'e conjecture in dimension $4$, these two notions are equivalent, but for us either notion works. Balls are distinguished analogously.

	For triangulated rational spheres, we will be interested mostly in the intersection rings~$\AR^\ast$, as the above section shows that this results in Poincar\'e duality algebras already. For rational manifolds, we will have to go beyond that because of our desire to work strongly with the Poincar\'e pairing. For a simplicial $(d-1)$-manifold $\Mu$ that is realized in $\R^d$, \[\Soc (\Mu)\subset \AR^\ast(\Mu)\] shall denote the annihilator of the irrelevant ideal in $\AR^\ast(\Mu)$, also known as the \Defn{socle} of~$\AR^\ast(\Mu)$. If $\Mu$ is closed, orientable and connected, then $\Soc^d$ is generated by one element, the fundamental class. We will therefore denote by $\Socl(\Mu)$ the restriction of $\Soc(\Mu)$ to degrees $\le d-1$. We call this the \Defn{interior socle}, or the space of \Defn{mobile cycles}. Note that with this definition, we have immediately
\begin{lem}\label{lem:cl2}
For closed $(d-1)$-manifolds $\Mu$ in $\R^d$	
we have
\begin{equation*}
\Soc(M)\ \cong\  H^{d}(\mc{P}^\bullet(\Mu))
\end{equation*}
\end{lem}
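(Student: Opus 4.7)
The plan is to identify $\Soc(\Mu)$ with the kernel of the zeroth coboundary $d^0 \colon \AR^\ast(\Mu) \to \bigoplus_v \AR^\ast(\St_v \Mu)$ of $\mc{P}^\bullet(\Mu)$ degree by degree, and then to invoke the identification $H^d(\mc{P}^\bullet(\Mu)) \cong \ker(d^0)$ that is already produced in the course of proving Lemma~\ref{lem:partyyyyy}. The lemma then falls out by stringing the two isomorphisms together.

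First I would reinterpret the $v$-component of $d^0$ via the second cone lemma. That lemma provides, for every vertex $v$, an isomorphism $x_v \colon \AR^k(\St_v \Mu) \xrightarrow{\sim} \AR^{k+1}(\St_v^\circ \Mu)$, and Lemma~\ref{lem:cl} lets us view $\AR^{k+1}(\St_v^\circ \Mu)$ as a submodule of $\AR^{k+1}(\Mu)$. Under these identifications, the restriction to the star defining the $v$-component of $d^0$ sends $x \in \AR^k(\Mu)$ to the unique preimage of $x_v \cdot x$ in $\AR^k(\St_v \Mu)$; it vanishes on $x$ precisely when $x_v \cdot x = 0$ in $\AR^{k+1}(\Mu)$. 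Since the vertex variables generate the irrelevant ideal of $\AR^\ast(\Mu)$, this pins down $\ker(d^0)_{[k]}$ as the degree-$k$ piece of the annihilator of the irrelevant ideal, namely $\Soc^k(\Mu)$. The case $k = d$ is automatic because $\AR^{d+1}(\Mu) = 0$.

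Second I would appeal to the double-complex computation $K^\bullet \otimes \widetilde{\mc{P}}^\bullet$ carried out inside the proof of Lemma~\ref{lem:partyyyyy}. One of its spectral sequences collapses the total complex to $\mc{P}^\bullet$ itself, and the accompanying edge map identifies $H^d(\mc{P}^\bullet(\Mu))_{[k]}$ with the kernel of $d^0$ in internal degree $k$. The formal bookkeeping of this identification is independent of Cohen--Macaulayness: the latter entered the partition-of-unity lemma only afterwards, in order to force $H^d(\mc{P}^\bullet)$ to be concentrated in degree $d$ and hence to conclude the kernel is zero in lower degrees. Chaining the two steps yields $\Soc(\Mu) \cong H^d(\mc{P}^\bullet(\Mu))$ as graded vector spaces.

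The main subtlety is therefore ensuring that the isomorphism $\ker(d^0) \cong H^d(\mc{P}^\bullet)$ extracted from the Koszul-based double complex really survives the failure of Cohen--Macaulayness when $\Mu$ is a manifold with nontrivial interior socle. Schenzel's depth computation --- already invoked in Lemma~\ref{lem:cl} to obtain the relative injectivity statement --- supplies the precise substitute for Reisner's theorem needed to keep the relevant page of the horizontal spectral sequence sufficiently acyclic. Once this is granted, the lemma is, as the author advertises, immediate from the definitions.
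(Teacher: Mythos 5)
Your first step is exactly the paper's intended argument: the $v$-th component of $d^0$ agrees, after composing with the second cone lemma and the injection of Lemma~\ref{lem:cl}, with multiplication by $x_v$ on $\AR^\ast(\Mu)$, and since the cone lemma and the injection are both injective, $x\in\AR^k(\Mu)$ dies under restriction to $\St_v\Mu$ if and only if $x_v\cdot x=0$ in $\AR^{k+1}(\Mu)$. Intersecting over all $v$ gives $\ker(d^0)=\ann(x_1,\dots,x_n)=\Soc(\Mu)$. That is precisely what ``immediate from Lemma~\ref{lem:cl}'' means, and you have filled it in correctly.

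Your second step, however, is unnecessary and somewhat off-target. In the paper's conventions $H^{d}(\mc{P}^\bullet(\Mu))$ already \emph{is} the kernel of $d^0\colon\AR^\ast(\Mu)\to\bigoplus_v\AR^\ast(\St_v\Mu)$: it is used throughout as a graded subspace of $\AR^\ast(\Mu)=\mc{P}^0$ (for instance, $\BR^\ast(\Mu)$ is defined as the quotient of $\AR^\ast(\Mu)$ by the part of $H^d(\mc{P}^\bullet(\Mu))$ of degree below $d$, and Proposition~\ref{prp:part} records its graded pieces as the kernel of the partition map). No spectral sequence or double complex is required to make this identification, so there is nothing that ``survives the failure of Cohen--Macaulayness'' to worry about. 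The appeal to Schenzel's computation as a replacement for Reisner's theorem in the Koszul spectral sequence is also not quite right: for a non-CM manifold the Koszul-first page of that spectral sequence does \emph{not} become acyclic, and Schenzel's theorem is precisely what describes the nonvanishing extra terms rather than eliminating them. Since the identification you want is built into the notation, you can drop Step~2 entirely and the proof is complete after Step~1.
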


\begin{proof}
This is immediate from Lemma~\ref{lem:cl}. 
\end{proof}

We have the following useful observation of Novik and Swartz:
	
	\begin{prp}[\cite{NS2}]\label{ns}
		For a triangulated $(d-1)$-dimensional rational manifold $\Mu$  in~$\R^d$, 
		\[\bigslant{\AR^\ast(\Mu)}{\Socl(\Mu)}\]
		is a Poincar\'e duality algebra. Equivalently, it satisfies partition of unity as stated above, that is, 
		\[\bigslant{\AR^\ast(\Mu)}{\Socl(\Mu)}\ \longhookrightarrow\ \bigoplus_{v\in \varDelta^{(0)}} \AR^\ast(\st_v\Mu) .\]
	\end{prp}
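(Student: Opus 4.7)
\emph{Plan.} My approach is to establish the partition-of-unity injection directly; the Poincar\'e duality formulation then follows by dualizing against the PD structure of the vertex stars.

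For each vertex $v$, the restriction map $\AR^k(\Mu) \to \AR^k(\st_v \Mu)$ has the same kernel as multiplication by $x_v$. The composition
\[
\AR^k(\Mu) \xrightarrow{\mathrm{restr.}} \AR^k(\st_v \Mu) \xrightarrow{\,\cdot x_v\,} \AR^{k+1}(\st_v^\circ \Mu) \hookrightarrow \AR^{k+1}(\Mu),
\]
where the middle arrow is the isomorphism of Cone Lemma II and the last is the injection of Lemma~\ref{lem:cl}, equals the map $a\mapsto x_va$ in $\AR^{k+1}(\Mu)$. This is because $x_v$ kills every variable $x_u$ with $\{u,v\}$ a non-edge of $\Mu$, so $x_v a = x_v \cdot a|_{\st_v \Mu}$ already lies in the principal ideal $(x_v) \subset \AR^\ast(\Mu)$, which under Lemma~\ref{lem:cl} is the image of $\AR^\ast(\st_v^\circ \Mu)$. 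Since the two right-hand arrows are respectively an isomorphism and an injection, the kernel of restriction equals the kernel of multiplication by $x_v$. Summing over vertices, the kernel of the partition-of-unity map $\AR^k(\Mu) \to \bigoplus_v \AR^k(\st_v \Mu)$ is $\{a : x_v a = 0 \ \forall v\} = \Soc^k(\Mu)$, which coincides with $\Socl^k(\Mu)$ for $k < d$. Hence the partition-of-unity map descends to an injection $(\AR/\Socl)^k \hookrightarrow \bigoplus_v \AR^k(\st_v \Mu)$ for all $k \le d - 1$; in top degree, $(\AR/\Socl)^d = \AR^d$ is one-dimensional (the fundamental class).

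To promote partition of unity to Poincar\'e duality, I would use that each $\AR^\ast(\st_v \Mu) \cong \AR^\ast(\Lk_v \Mu)$ is itself a Poincar\'e duality algebra of top degree $d - 1$, since $\Lk_v \Mu$ is a simplicial sphere and Theorem~\ref{thm:pd} is in hand. Dualizing the partition-of-unity injection against this PD structure and transporting back through Cone Lemma II and Lemma~\ref{lem:cl} yields a surjection
\[
\bigoplus_v \AR^{d-k}(\st_v^\circ \Mu) \twoheadrightarrow ((\AR/\Socl)^k)^{\ast},
\]
which factors through the addition map $\bigoplus_v \AR^{d-k}(\st_v^\circ \Mu) \twoheadrightarrow \AR^{d-k}(\Mu) \twoheadrightarrow (\AR/\Socl)^{d-k}$. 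The induced map $(\AR/\Socl)^{d-k} \to ((\AR/\Socl)^k)^{\ast}$ is thus surjective and is exactly the multiplication pairing; matching dimensions via the Dehn--Sommerville symmetry of the $h$-vector for closed orientable manifolds closes the argument.

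The main obstacle is checking that the dualized surjection is genuinely the pairing, which requires compatibility of the top-degree identifications $\AR^{d-1}(\st_v \Mu) \cong \R$ (from PD of the star) and $\AR^d(\Mu) \cong \R$ (fundamental class) along the chain $\AR^{d-1}(\st_v \Mu) \to \AR^d(\st_v^\circ \Mu) \to \AR^d(\Mu)$. This is a coherent-orientation statement about $\Mu$ and its vertex stars, and is precisely where the closed-orientable-connected hypothesis enters in an essential way.
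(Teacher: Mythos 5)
Your proposal is essentially correct and takes a noticeably different route from the paper. The paper identifies the kernel of partition of unity with the socle via the cohomology of the partition complex $\mc{P}^\bullet$ (Lemma~\ref{lem:cl2}), and then derives the Poincar\'e duality statement from a nine-lemma diagram chase comparing $\mc{P}^\bullet(\Mu)$, $\mc{P}^\bullet(\Mu-v)$ and $\AR^\ast(\st_v^\circ\Mu)$, which yields Corollary~\ref{lem:cl3} and from there the socle-concentration argument of Theorem~\ref{thm:pd}. You instead compute the partition-of-unity kernel directly: the observation that restriction to $\st_v\Mu$ followed by the Cone Lemma~II isomorphism and the injection of Lemma~\ref{lem:cl} is exactly $\cdot x_v$ is a clean, elementary way to see that $\ker[\AR^k(\Mu)\to\AR^k(\st_v\Mu)]=\ker[\cdot x_v]$, so the kernel of partition of unity is $\bigcap_v\ker(\cdot x_v)=\Soc^k$, recovering Lemma~\ref{lem:cl2} without mentioning $\mc{P}^\bullet$. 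Your derivation of duality by dualizing against the PD structure of the vertex links (Theorem~\ref{thm:pd}) and transporting back through Cone Lemma~II and Lemma~\ref{lem:cl} is the right idea, and the compatibility you flag as ``the main obstacle'' is actually automatic: the map $\AR^{d-1}(\st_v\Mu)\cong\AR^d(\st_v^\circ\Mu)\hookrightarrow\AR^d(\Mu)$ is an injection between one-dimensional spaces (Lemma~\ref{lem:cl} in top degree), hence an isomorphism, which pins down the orientation of $\AR^{d-1}(\st_v\Mu)\cong\R$ coherently and makes the transpose of restriction literally the multiplication pairing.

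The one genuine blemish is the final sentence. Invoking ``Dehn--Sommerville symmetry of the $h$-vector'' to match dimensions is either circular or incomplete: the needed symmetry $\dim(\AR/\Socl)^k=\dim(\AR/\Socl)^{d-k}$ is the Novik--Swartz $h''$-symmetry, which is a \emph{consequence} of the proposition you are proving; deriving it instead from Klee's relations for the $h$-vector requires combining Schenzel's formula, Proposition~\ref{prp:part} and topological Poincar\'e duality, none of which you spell out. You do not need any of this. Your argument produces, for every $1\le k\le d-1$, a surjection $(\AR/\Socl)^{d-k}\twoheadrightarrow\bigl((\AR/\Socl)^k\bigr)^{\ast}$; applying the same reasoning with $k$ replaced by $d-k$ gives a surjection the other way, forcing equality of dimensions and hence the isomorphism, entirely internally. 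With that replacement the proof is complete.
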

	
	While this fact on its own is interesting, trying to prove partition of unity for ${\AR^\ast(\Mu)}$ reveals an even more remarkable fact, as it reveals that the socles can be naturally identified using the spectral sequence coming from the two filtrations of the double complex in the proof of Lemma~\ref{lem:partyyyyy} above. Computing the second page of both filtrations, and using the fact that stars of nontrivial faces are Cohen-Macaulay, we obtain:
	\begin{prp}[\cite{NS}]\label{prp:part}
		For a triangulated $(d-1)$-dimensional rational manifold $\Mu$ (possibly with boundary) in $\R^d$, we have an exact sequence
		\begin{equation}\label{eq:homol}
		0\ \longrightarrow\ (H^{k-1})^{\binom{d}{k}}(\Mu)\ \longrightarrow \AR^k(\Mu)\ \longrightarrow\ \bigoplus_{v\in \Mu^{(0)}} \AR^k(\St_v \Mu).
		\end{equation}
		More generally, the degree $(k-i)$-component of 
		$H^{d-i}(\mc{P}^\bullet(\Mu))$
		is isomorphic to $(H^{k-1})^{\binom{d}{k-i}}(\Mu).$
	\end{prp}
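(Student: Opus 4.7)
The strategy is to rerun the spectral-sequence argument of Lemma~\ref{lem:partyyyyy} on the double complex $K^\bullet(\Theta) \otimes \wt{\mc{P}}^\bullet(\Mu)$, but this time extracting information from \emph{both} filtrations. The new ingredient compared to the Cohen--Macaulay case is that $\Mu$ itself need not be Cohen--Macaulay: the only column of $\wt{\mc{P}}^\bullet$ that can fail to be Cohen--Macaulay is the zeroth one, $\R^\ast[\Mu]$, so all of the new local-cohomology contributions are confined there.

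For the first filtration, take Koszul cohomology first. For any nonempty face $F$ of $\Mu$, the star $\st_F\Mu$ is a cone on $\Lk_F\Mu$, and the latter is a sphere because $\Mu$ is a manifold; hence $\R^\ast[\st_F\Mu]$ is Cohen--Macaulay and
\[H^q(K^\bullet \otimes \R^\ast[\st_F\Mu]) \cong \begin{cases} \AR^\ast(\st_F\Mu) & q = d, \\ 0 & q \ne d.\end{cases}\]
For the column $p=0$, the face ring of a manifold is Buchsbaum, and Schenzel's formula, combined with Reisner's identification of local cohomology with reduced simplicial cohomology, gives
\[H^q(K^\bullet \otimes \R^\ast[\Mu])_j \cong \tbinom{d}{j}\,\wt{H}^{q-1}(\Mu) \qquad (q < d),\]
with the top-degree entry $H^d$ being $\AR^\ast(\Mu)$. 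Consequently the $E_2$-page of this filtration has the row $q = d$ equal to the complex $\mc{P}^\bullet(\Mu)$ (whose cohomology is what we want to compute), the column $p = 0$ carrying the listed Schenzel pieces in rows $q < d$, and all other entries zero.

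Next I would take the second filtration, computing $\wt{\mc{P}}$-cohomology first. In each fixed internal degree, $\wt{\mc{P}}^\bullet$ is a \u{C}ech-type resolution with respect to the open-star cover of $\Mu$, so its cohomology is directly computable in terms of $\wt{H}^\bullet(\Mu)$ with specific multiplicities. Since both filtrations converge to the total cohomology of the double complex, comparing the two $E_2$-pages forces the surviving differentials to identify the cohomology of $\mc{P}^\bullet(\Mu)$, bidegree by bidegree, with the Schenzel contributions from the column $p=0$. Tracking bidegrees then yields the claimed isomorphism of the degree-$(k-i)$ component of $H^{d-i}(\mc{P}^\bullet(\Mu))$ with $\tbinom{d}{k-i}$ copies of $\wt{H}^{k-1}(\Mu)$, and the exact sequence~\eqref{eq:homol} falls out as the lowest-cohomological-degree instance (i.e., the statement about the kernel of the first differential of $\mc{P}^\bullet$).

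The main obstacle is bookkeeping rather than conceptual: one must invoke Schenzel's theorem in the precisely correct graded form for Buchsbaum face rings of manifolds, and then carefully follow how the two bidegrees (Koszul degree and $\wt{\mc{P}}$-degree) together with the internal grading translate under the comparison of the two spectral sequences, so that the binomial factor $\binom{d}{k-i}$ and the topological index $k-1$ appear where they should. The topological input that stars of nonempty faces are Cohen--Macaulay — a consequence of links being spheres — is what quarantines the Buchsbaum defect into the single column $p=0$ and thereby makes the comparison tractable.
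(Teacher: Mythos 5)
Your proposal follows exactly the route the paper takes: the paper obtains Proposition~\ref{prp:part} by running both filtrations of the double complex $K^\bullet \otimes \widetilde{\mc{P}}^\bullet$ from the proof of Lemma~\ref{lem:partyyyyy}, using that stars of nontrivial faces are Cohen--Macaulay to confine the Buchsbaum defect to the column $p=0$, and refers the reader to Novik--Swartz for a more commutative-algebraic write-up. So the architecture of your argument is right.

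Two details are off, though. First, the displayed Koszul-cohomology formula for the column $p=0$ is not in the correct graded form: the internal degree $j$ must also shift the topological index, not just supply the binomial factor. What the \u{C}ech filtration actually forces is, for $q<d$,
\[
H^q\bigl(K^\bullet \otimes \R^\ast[\Mu]\bigr)_j \;\cong\; \bigl(\widetilde H^{\,q-d+j-1}(\Mu)\bigr)^{\binom{d}{j}},
\]
so the simplicial index depends on both $q$ and $j$. A quick sanity check: for a triangulated surface ($d=3$, $q=2$), the first Koszul homology of the l.s.o.p.\ is concentrated in internal degree $3$ and equals $\widetilde H^1(\Mu)$, whereas your version $\binom{3}{j}\widetilde H^{1}(\Mu)$ would be nonzero in every degree $0\le j\le 3$. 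You flagged this as the delicate bookkeeping step, and indeed this is precisely where it bites.

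Second, the mechanism of the comparison is a bit different from what you describe. Both filtrations degenerate for degree reasons (the $E_2$-page of the Koszul-first filtration is $L$-shaped; the $E_1$-page of the \u{C}ech-first filtration has all $d_1$-differentials vanishing in each fixed internal degree), so there are no surviving differentials to force any identification. The $H^\ast(\mc{P}^\bullet)$ strand and the column-$p=0$ Schenzel strand sit at disjoint ranges of total degree and are never identified with each other. What happens instead is that the \u{C}ech filtration computes the total cohomology outright in terms of $\widetilde H^\bullet(\Mu) \otimes \Lambda^\bullet(\R^d)$, and the Koszul filtration then tells you which slice of that total cohomology is $H^\ast(\mc{P}^\bullet)$. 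In particular, the graded Schenzel formula above is a byproduct of the computation, not an input to it — you do not actually need to invoke Schenzel independently, which is cleaner and removes the risk of mis-stating it.

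With these corrections the argument goes through, and it is the argument the paper has in mind.
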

We refer to Novik-Swartz~\cite{NS} for a derivation of Proposition~\ref{prp:part} using more commutative algebra (though their argument needs a minor adaption for manifolds with boundary). Historically, the use of the partition complex, and basic versions of Proposition~\ref{prp:part}, can be traced back to Kalai~\cite{KalaiRig}, see also \cite[Section~2.4]{Gromov}, though of course if we see it in context of \u{C}ech complexes the history of course becomes much richer. 
	
	\subsection{The combinatorial Ishida complex and the map $\mathrm{hom}$:}\label{sec:ish}
	It is possible to refine the understanding of this isomorphism as follows: we note that the map is given in terms of an elementary map of weighted simplicial homology.
	
	 To this end, it is useful to recall the combinatorial Ishida complex of Tay and Whiteley~\cite{Ishida, TW}. For a vector space~$V$, let us denote the $j$-th exterior power by $V^{[j]}$. If $\sigma$ is an $i$-tensor, then multiplication with it in the exterior algebra induces a natural map \[V^{[j]}\rightarrow V^{[j+i]}.\] Consider a geometric complex $\varDelta$ in $V$, and a non-negative integer~$k\le \dim V$, we then define the stress complex (or combinatorial Ishida complex) $I_\bullet[k]$ as
	\[0\ \rightarrow\ \bigoplus_{\sigma \in \varDelta^{(k-1)}} \bigslant{V^{[0]}}{\ker \sigma}\ \longrightarrow\ \bigoplus_{\tau \in \varDelta^{(k-2)}} \bigslant{V^{[1]}}{\ker \tau}\ \rightarrow\ \cdots\]
	where we identify $\sigma$ with its $|\sigma|$-tensor and
	\[\partial c[\sigma]\ = \ \sum_{x\in \sigma^{(0)}} cx [\sigma{\setminus}x].\]
	The top cohomology of this complex is isomorphic to $\AR_k(\varDelta)$~\cite{Oda}. Moreover, this homology provides a natural interpretation of stresses as simplicial cycles.
	For instance, we obtain immediately that for a complex $\varDelta$ in $\R^k$, the complex $I_\bullet[k]$ naturally defines a simplicial homology theory, with a boundary operator perturbed by a weighting from the standard one, so we have a natural map
	\[\mr{hom}:\AR_k(\varDelta)\ = \ Z_{k-1}(\varDelta;\R)\ \longrightarrow\ H_{k-1}(\varDelta;\R).\]
	Naturally, if $\varDelta$ is realized in $\R^{[d]}$, then we obtain a natural map as composition
	\begin{equation}\label{eq:maphom}
	\mr{hom}:\AR_k(\varDelta)\ \longhookrightarrow \ (Z_{k-1})^{\binom{[d]}{k}}(\varDelta;\R)\ \longrightarrow\ (H_{k-1})^{\binom{[d]}{k}}(\varDelta;\R).
	\end{equation}
	This is indeed the map inducing Isomorphism~\eqref{eq:homol}, as
	the cokernel of \[\bigoplus_{v\in \varDelta^{(0)}} \AR_i(\St_v \varDelta)\ \longtwoheadrightarrow\ \AR_i(\varDelta)\] is naturally the cokernel associated to the \u{C}ech filtration of $\varDelta$ induced by stars of vertices. We will later see that it is useful to pay close attention to this map.

	As a general rule, we will use \[\BR^{\ast}(\Mu)\ \coloneqq\  \bigslant{\AR^{\ast}(\Mu)}{\kerc\left[\AR^\ast(\Mu)\ \rightarrow\ \bigoplus_{v\in \Mu^{(0)}} \AR^\ast(\st_v \Mu) \right]}\]
	to denote the reduction of $\AR^\ast(\Mu)$ of a manifold $\Mu$ (possibly with boundary) to the associated Gorenstein ring,
	where
\[\kerc\left[\AR^\ast(\Mu)\ \rightarrow\ \bigoplus_{v\in \Mu^{(0)}} \AR^\ast(\st_v \Mu) \right]\ =\ (H^{d})^\circ(\mc{P}^\bullet(\Mu))	\]
is the restriction of	$H^{d}(\mc{P}^\bullet(\Mu))$, to elements of degree at most $\dim \Mu$.

With this notation, we obtain also immediately the following version of Lemma~\ref{lem:cl}.
\begin{cor}\label{lem:cl3}
		In any closed manifold $\Mu$ of dimension $d-1$ in $\R^d$, we have an injection
		\[\AR^\ast(\st_v^\circ \Mu)\ \longhookrightarrow\  \BR^\ast(\Mu) \]
		for any vertex $v$ of $\Mu$. 
\end{cor}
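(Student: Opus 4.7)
The plan is to upgrade the injection $\iota:\AR^\ast(\st_v^\circ \Mu)\hookrightarrow \AR^\ast(\Mu)$ of Lemma~\ref{lem:cl} by showing that its image has trivial intersection with the interior socle $\kerc\bigl[\AR^\ast(\Mu)\to\bigoplus_w\AR^\ast(\st_w\Mu)\bigr]$. Because that interior socle is by definition killed under restriction to \emph{every} vertex star, it already suffices to show that the composition of $\iota$ with the restriction to a single well-chosen star, namely $\AR^\ast(\Mu)\to\AR^\ast(\st_v\Mu)$, is injective.

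To make this composition concrete, I would identify $\iota$ explicitly. The short exact sequence used in the proof of Lemma~\ref{lem:cl},
\[0\ \longrightarrow\ \R^\ast[\st_v^\circ\Mu]\ \longrightarrow\ \R^\ast[\Mu]\ \longrightarrow\ \R^\ast[\Mu-v]\ \longrightarrow\ 0,\]
identifies $\R^\ast[\st_v^\circ\Mu]=I_{\Mu-v}/I_\Mu$ with the principal ideal $(x_v)\subset\R^\ast[\Mu]$, since $I_{\Mu-v}=I_\Mu+(x_v)$. The analogous sequence with $\Mu$ replaced by $\st_v\Mu$ identifies $\R^\ast[\st_v^\circ\Mu]$ with the principal ideal $(x_v)\subset\R^\ast[\st_v\Mu]$. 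The restriction $\R^\ast[\Mu]\to\R^\ast[\st_v\Mu]$ sends $(x_v)$ onto $(x_v)$ compatibly with both identifications: any non-face $\gamma$ of $\st_v\Mu$ satisfies $\gamma\cup\{v\}\notin\Mu$, so $x_v x_\gamma=0$ in $\R^\ast[\Mu]$, which ensures that $x_v$ times any lift from $\R^\ast[\st_v\Mu]$ is well defined modulo $I_\Mu$. Because the LSOP induced by the realization of $\Mu$ in $\R^d$ restricts to LSOPs of all the relevant subcomplexes and pairs, the triangle descends to Artinian reductions, and the composition
\[\AR^\ast(\st_v^\circ\Mu)\ \xrightarrow{\ \iota\ }\ \AR^\ast(\Mu)\ \xrightarrow{\ \mr{res}\ }\ \AR^\ast(\st_v\Mu)\]
coincides with the natural ideal inclusion $j:\AR^\ast(\st_v^\circ\Mu)\hookrightarrow\AR^\ast(\st_v\Mu)$ realizing the source as the principal ideal $(x_v)\subset\AR^\ast(\st_v\Mu)$.

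The injectivity of $j$ then follows directly from the dual form of Cone Lemma~II: multiplication by $x_v$ is an isomorphism $\AR^k(\st_v\Mu)\xrightarrow{\ \cong\ }\AR^{k+1}(\st_v^\circ\Mu)$, so its inverse composed with the structural inclusion of $(x_v)$ into $\AR^\ast(\st_v\Mu)$ is exactly $j$. Given any $\alpha\in\AR^\ast(\st_v^\circ\Mu)$ with $\iota(\alpha)$ in the interior socle, Lemma~\ref{lem:cl2} implies that $\iota(\alpha)$ vanishes under restriction to $\AR^\ast(\st_v\Mu)$; by the commuting triangle, $j(\alpha)=0$, whence $\alpha=0$. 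The top degree $d$ needs no extra argument because the interior socle lives in degrees $\le d-1$, so $\BR^d(\Mu)=\AR^d(\Mu)$ and the claim reduces to Lemma~\ref{lem:cl} there. The principal obstacle in executing the plan is bookkeeping the commutativity of the triangle at the level of Artinian reductions; this comes down to the simple observation that $x_v$ annihilates every non-face of $\st_v\Mu$ in $\R^\ast[\Mu]$, which makes the two identifications of $\R^\ast[\st_v^\circ\Mu]$ as principal ideals compatible with restriction.
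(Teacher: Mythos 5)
The key claim in your proof --- that the ideal inclusion $j\colon\AR^\ast(\st_v^\circ\Mu)\to\AR^\ast(\st_v\Mu)$ is injective --- is false in general, and the cone lemma does not give it. Cone Lemma~II (dualized) says that $x_v\colon\AR^k(\st_v\Mu)\to\AR^{k+1}(\st_v^\circ\Mu)$ is an isomorphism; this only identifies $j$ in degree $k+1$ as the composite of $(x_v)^{-1}$ with multiplication by $x_v$ \emph{inside} $\AR^\ast(\st_v\Mu)$, so injectivity of $j$ there is equivalent to injectivity of multiplication by $x_v$ on $\AR^k(\st_v\Mu)$. Under the Cone Lemma~I identification $\AR^\ast(\st_v\Mu)\cong\AR^\ast(\lk_v\Mu)$, this is multiplication by a linear form in a Poincar\'e duality algebra whose socle sits in degree $d-1$; a dimension count kills it above the middle. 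For instance in degree $k+1=d-1$ the target is $\AR^{d-1}(\lk_v\Mu)\cong\R$, while the source $\AR^{d-2}(\lk_v\Mu)\cong\AR^1(\lk_v\Mu)$ generically has dimension larger than one, so $j$ has a kernel there --- a degree you have not set aside. (It is precisely the failure of $j$ to be injective that makes the statement nontrivial; if $j$ were an inclusion, the corollary would be a tautology.)

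The deeper issue is that the plan of testing injectivity by restriction to the single star $\st_v\Mu$ cannot work: the interior socle is the intersection of kernels of restriction to \emph{all} stars, and to see that it meets $\im\iota$ trivially one genuinely needs information from the other stars, equivalently from the deletion $\Mu-v$. This is exactly what the paper does: it runs a $3\times 3$ diagram relating $\st_v^\circ\Mu$, $\Mu$, and $\Mu-v$, with horizontal short exact sequences from Proposition~\ref{prp:part} and the middle column from Lemma~\ref{lem:cl}. The crucial input is the isomorphism $(H^{d})^\circ(\mc{P}^\bullet(\Mu))\xrightarrow{\,\sim\,}(H^{d})^\circ(\mc{P}^\bullet(\Mu-v))$, which says the interior socle of $\Mu$ is detected entirely by $\Mu-v$ and therefore meets $\im\iota=\ker\bigl[\AR^\ast(\Mu)\to\AR^\ast(\Mu-v)\bigr]$ trivially. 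That homological isomorphism is the ingredient your argument is missing, and it cannot be replaced by any statement about the single star $\st_v\Mu$.
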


Hence, we also obtain a new proof of Propositon~\ref{ns}.
\begin{proof}
Consider the following commutative diagram:
\[\begin{tikzcd}
0 \arrow{r}{} &(H^{d})^\circ(\mc{P}^\bullet(\Mu-v)) \arrow{r}{}  & \AR^{\ast}(\Mu-v) \arrow{r}{}& \BR^{\ast}(\Mu-v) \arrow{r}{}& 0 \\
0 \arrow{r}{} &(H^{d})^\circ(\mc{P}^\bullet(\Mu)) \arrow{r}{} \arrow{u}{\wr} & \AR^{\ast}(\Mu) \arrow{r}{}\arrow{u}{}& \BR^{\ast}(\Mu) \arrow{r}{}\arrow{u}{}& 0 \\
 &0 \arrow{r}{} \arrow{u}{}&  \AR^{\ast}(\st_v^\circ \Mu) \arrow{r}{}\arrow{u}{} &  \AR^{\ast}(\st_v^\circ \Mu) \arrow{r}{} \arrow{u}{}& 0 
\end{tikzcd}
\]
where we use Proposition~\ref{prp:part} for the horizontal sequences and the fact that the left top horizontal map is an isomorphism.
\end{proof}	
	 
We denote by $\BR_\ast(\Mu)$ the corresponding subspace of the stress space~$\AR_\ast(\Mu)$. Note that for closed manifolds, Lemma~\ref{lem:cl2} implies that $\BR^{\ast}(\Mu)$ is the quotient of $\AR^{\ast}(\Mu)$ by its interior socle.
	 
	  We will generally define either define the Weil primal or the Weil dual of a space if the definition of the other is clear, and use both.
	 
	\section{The Gr\"unbaum-Kalai-Sarkaria conjecture and Laman's theorem}\label{sec:Laman}
	
	The Gr\"unbaum-Kalai-Sarkaria conjecture is not only solved by our work, but it also provides critical intuition towards our research, which is why we reserve a special section towards it here.
	
	\subsection{Laman's theorem}
	We discuss now the "original" generic Lefschetz theorem, known as Laman's rigidity criterion for planar graphs. Recall that a \Defn{Laman graph} is a graph $G$ on $n$ vertices and $2n-3$ edges such that every subgraph on $x$ vertices has at most $2x-3$ edges.
	
	\begin{thm}[Laman,~\cite{Laman}]
		Consider a Laman graph $G$. Then a generic realization of $G$ in $\R^2$ with straight edges is infinitesimally rigid, that is, every infinitesimal deformation of $G$ that preserves edge-lengths in the first order is a restriction of a global isometry.
	\end{thm}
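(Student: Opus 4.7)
The plan is to prove Laman's theorem by induction on the number of vertices via the Henneberg inductive scheme for Laman graphs. First I would establish that every Laman graph $G$ on $n\ge 3$ vertices admits a Henneberg reduction: either it contains a vertex of degree two whose deletion produces a smaller Laman graph, or it contains a vertex of degree three such that its deletion combined with the insertion of an appropriate edge between two of its neighbors produces a smaller Laman graph. The existence of a vertex of degree two or three is immediate from the degree count $\sum_v \deg(v)=2(2n-3)<4n$; the existence of a \emph{good} edge choice in the degree-three case is the classical combinatorial lemma, proved by showing that among the three possible pairs at least one yields a subgraph in which the Laman sparsity condition is preserved.

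The base case is the single edge $K_2$, which is trivially infinitesimally rigid in the plane. For the inductive step I would rephrase infinitesimal rigidity as the statement that the rigidity matrix has the maximal possible rank $2n-3$, or equivalently that the equilibrium stress space is trivial. For a Henneberg-I move (attaching a new vertex $v$ of degree two to an already rigid $G'$), the argument is direct: generically the two new neighbors of $v$ are not collinear with $v$, so the two new rows of the rigidity matrix are linearly independent from one another and from the rows of $G'$, which yields the desired rank bound.

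The main obstacle, and really the heart of Laman's theorem, is the Henneberg-II move. Here one starts from a generically rigid $G'$ containing an edge $ab$, deletes $ab$, adds a new vertex $v$ placed generically in the plane, and inserts edges $va$, $vb$, $vc$ for some further vertex $c$. To show that the new graph $G$ is still generically rigid, one argues by contradiction: a nontrivial equilibrium stress $\omega$ on $G$ must satisfy the balance condition at $v$, which determines the stresses on $va,vb,vc$ up to a one-parameter family through the position of $v$ relative to $a,b,c$. One then transfers this local data: replacing the stresses on $va$ and $vb$ by an induced stress on the deleted edge $ab$ produces a candidate equilibrium stress on $G'$. The genericity of the position of $v$ is precisely what ensures that this transfer sends $\omega \neq 0$ to a nonzero stress on $G'$, contradicting the inductive hypothesis. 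The delicate point, and where the proof requires real care, is verifying that the generic choice of $v$ avoids the measure-zero locus on which the transfer map degenerates, so that the only infinitesimal motions of $G$ remain the three-dimensional space of planar isometries.
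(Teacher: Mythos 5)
Your proof follows the same route as the paper: induction along the Henneberg reduction of a Laman graph down to a single edge, with the degree-two vertex addition handled directly and the degree-three edge-splitting handled by a transfer argument. The only difference is framing — the paper phrases the inductive step as an embedding of algebras $\AR^\ast(G)\hookrightarrow\AR^\ast(G')$ plus a Lefschetz statement on the cokernel (the "decomposition principle"), whereas you use the equivalent classical language of rigidity matrices and equilibrium stresses; these are dual formulations of the same argument.
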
 
	
	Let us restate Laman's theorem in another form more familiar to us:
	
	\begin{thm}\label{thm:laman}
		For any generic embedding of a Laman graph $G$ into $\R^2$, we have an isomorphism
		\[\AR^1(G)\ \xrightarrow{\ \cdot \ell\ }\ \AR^2(G),\]
		where $\ell$ denotes any generic element of $\AR^1(G)$.
		Conversely, any graph on $v$ vertices and $2v-3$ edges admits such an embedding if and only if it is a Laman graph.
	\end{thm}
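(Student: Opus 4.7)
The plan is to translate the statement into classical infinitesimal rigidity of bar frameworks and then invoke Laman's rigidity theorem~\cite{Laman}. First I would check that both sides of the multiplication map have the same dimension. A Laman graph $G$ on $n$ vertices and $2n-3$ edges is necessarily connected: otherwise splitting into two components of sizes $n_1,n_2$ would cap the total edges at $2(n_1+n_2)-6 = 2n-6$. Hence $G$ is Cohen--Macaulay as a $1$-dimensional complex, for generic placements the embedding coordinates form a linear system of parameters, and $\dim \AR^k(G) = h_k(G)$. Direct computation gives $h_1 = f_0-2 = n-2$ and $h_2 = 1-f_0+f_1 = n-2$. So source and target of $\cdot\ell$ both have dimension $n-2$, and it suffices to prove injectivity of $\cdot\ell$ for generic $\ell$.

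Next I would identify the map with a classical rigidity operator. By Weil duality, $\cdot\ell : \AR^1(G)\to \AR^2(G)$ is adjoint to the derivation $\ell(\partial) : \AR_2(G)\to \AR_1(G)$. Proposition~\ref{prp:mw} identifies $\AR_2(G)$ with the space of Minkowski $2$-weights on the edges, and similarly $\AR_1(G)$ with vertex weights annihilating the two coordinate forms. A generic $\ell=\sum_v \ell_v x_v$ plays the role of a generic height function on $V$, and $\ell(\partial)$ assigns to an edge weight $c$ the vertex weight $v\mapsto \sum_{w\sim v} c(\{v,w\})\ell_w$, normalized to land in $\MW_1$. Unpacked, this is essentially the transpose of the classical rigidity matrix of the planar bar framework $(G,\V_G)$ enriched by the height $\ell$; surjectivity of $\ell(\partial)$ is equivalent to the $2n\times m$ rigidity matrix attaining rank $2n-3$.

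Third, I would invoke Laman's theorem: among graphs with exactly $2n-3$ edges, Laman graphs are precisely those whose generic rigidity matrix in $\R^2$ achieves maximal rank $2n-3$. Via Step~2, maximal rank translates to bijectivity of $\cdot\ell$ for generic realizations and generic $\ell$. Both directions of the equivalence in the theorem fall out of this one translation: a Laman graph yields an iso, and a non-Laman graph with $2n-3$ edges contains a subgraph violating the count, producing extra generic self-stresses that prevent the rigidity matrix (hence $\cdot\ell$) from having full rank.

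The main obstacle in this plan is the explicit identification in Step~2 of $\cdot\ell$ with the classical rigidity matrix, where care is needed in tracking the LSOP quotient and the Minkowski versus genuine self-stress interpretations. A clean way to organize it is vertex-by-vertex using the two Cone Lemmas: for each $v\in G^{(0)}$, $\lk_v G$ is a zero-dimensional configuration of points in $\mathrm{span}(v)^\perp$, so $\AR_1(\lk_v G)\cong \AR_1(\st_v G)$ parametrizes local stresses around $v$; glueing these local pieces via the partition of unity surjection (Lemma~\ref{lem:partyyyyy}) recovers the global rigidity operator. Once this identification is established, the remainder is a direct appeal to~\cite{Laman}.
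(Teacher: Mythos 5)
Your proposal is correct but takes a genuinely different route from the paper's own proof sketch. The paper gives a self-contained, constructive argument: every Laman graph is built from a single edge by Henneberg moves, and each move induces an embedding of algebras $\AR^\ast(G)\hookrightarrow\AR^\ast(G')$ whose cokernel admits a direct Lefschetz analysis, so the isomorphism propagates along the decomposition. This is exactly the "decomposition principle" the paper wants to exhibit here (and later contrast with its perturbative method). Your route instead reduces the statement to classical infinitesimal rigidity and invokes Laman's theorem~\cite{Laman} as a black box via the stress-space dictionary of Lee, Tay--Whiteley and Kalai; this is shorter and more modular, but does not reprove Laman. One precision worth supplying beyond the care you already flag: the Step~2 formula for $\ell(\partial)$ acting on a Minkowski weight drops the non-squarefree part of the stress. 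The lsop equations force the coefficient of $x_v^2$ in a lifted $2$-stress to equal $-\tfrac12\sum_{w\sim v}c(vw)$, so the resulting vertex weight is $v\mapsto\sum_{w\sim v}c(vw)(\ell_w-\ell_v)$, not $\sum_{w\sim v}c(vw)\ell_w$; this correction is precisely what upgrades the Minkowski balancing condition to the genuine $\R^2$ equilibrium-stress condition for the planar framework $r_v=(q_v,\ell_v)$, where $q_v$ is the affine normalization of $p_v$. With that in place $\ker[\ell(\partial)\colon\AR_2(G)\to\AR_1(G)]$ is literally the self-stress space of $(G,r)$, the count $h_1=h_2=n-2$ converts injectivity into bijectivity, and the converse follows since a non-Laman graph with $2n-3$ edges has an overloaded subgraph carrying a self-stress that extends by zero and kills the isomorphism for every choice of parameters.
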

	
In the latter form, Laman's rigidity theorem looks suspiciously like a Lefschetz theorem. Its proof follows a very nice idea; we call it the "decomposition principle", named after the decomposition theorem for maps between algebraic varieties \cite{BBDG}.
	
	\begin{proof}[{Proof Sketch}]
		Every Laman graph can be shown to be obtained from a single edge by the Henneberg moves.
		\begin{figure}[h!tb]
			\begin{center}
				\includegraphics[scale = 0.7]{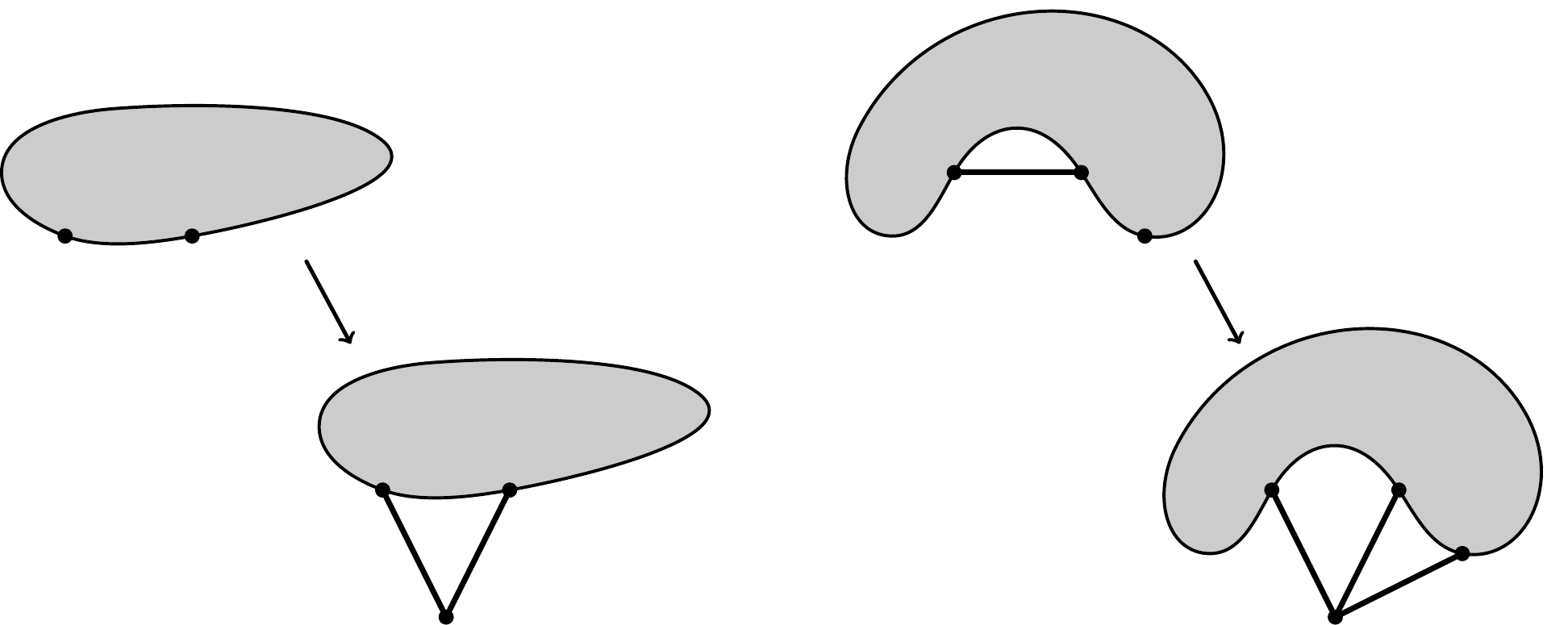}
				\caption{The two Henneberg moves.}
				\label{henneberg}
			\end{center}
		\end{figure}
		These moves preserve the Lefschetz theorem, as seen by an appropriate form of the decomposition theorem: one constructs an embedding of algebras
		\[\AR^\ast(G)\ \longhookrightarrow\ \AR^\ast(G')\]
 encoding the Henneberg move of $G$ to $G'$, give an appropriate basis for the cokernel and prove the Lefschetz theorem on the decomposition, and in particular conclude rigidity for $G'$ from rigidity for $G$.
	\end{proof}

\subsection{Limits of refinement and decomposition}\label{sec:local-move}

While the Laman theorem is a very nice generic Lefschetz theorem, it turns out that that it relies on a principle that is unfortunately of limited use in proving the Lefschetz theorem in general.

This program is also the the easier part of the classical deformation approach that also McMullen used to prove the hard Lefschetz theorem for simple polytopes~\cite{McMullenInvent}: One constructs a local refinement move as, for instance, stellar subdivisions, Whiteley's edge splittings \cite{Whiteley} or the aforementioned Henneberg moves that descends to an embedding of algebras
	\[\AR^\ast\ \longhookrightarrow\ \widetilde{\AR}^\ast.\]
	By splitting the latter cleverly (using different forms of the decomposition theorem) one can then often conclude the Lefschetz theorem for the refinement from the Lefschetz theorem on the preimage. 
	
	On the other hand, one should not underestimate this approach. Indeed, the only cases of Theorem~\ref{mthm:gl}(1) beyond polytopes rely on spheres obtained from the simplex by local refinement moves. See for instance~\cite{MuraiHL}, although the use of this beautiful "decomposition principle" is obscured by the step of algebraic shifting, that is unnecessary for the proof but reduces the splitting to a more pleasant combinatorial one (though we lose sight of the Poincar\'e pairing). Additionally, much of the reasoning of \cite{AHK} can be simplified using this technique, as the proof of the Hodge-Riemann relations for matroids only ever uses the blowup part of the McMullen program.
	
	Unfortunately, these kinds of proofs are not useful for us as they can only treat blowups (and local similar moves implying an embedding of algebras as above), and not all spheres of dimension three and higher can be generated by such local refinement moves.  This is especially drastic in dimension five and higher, where such a result would make the decision problem of distinguishing an arbitrary triangulated five-manifold from a sphere treatable, which contradicts the work of Markov and Novikov.
	
	Indeed, even if we allowed local moves in general, it would force us to restrict to something like bistellar transformations~\cite{Pachner}, and in particular PL spheres, and we would need to incorporate blowdowns (and analogous moves). McMullen got around this issue by proving ingeniously that the Hodge-Riemann relations in lower dimensions imply the preservation of the hard Lefschetz theorem, meaning he only had to take care of the Hodge-Riemann relations in the blowdown, which now was comparatively simple as the hard Lefschetz guaranteed the Hodge-Riemann form did not degenerate in the deformation. But alas, we do not have the Hodge-Riemann relations without ample classes.
	
	Our main interest in Laman's theorem is different: instead of the numerical condition for the faces of simplicial complexes, we want to focus on an algebraic condition for ideals.
	
	\subsection{Numerical consequences of the Lefschetz theorem} 
	Let $\varSigma$ denote a simplicial (rational) sphere and let $\varDelta$ denote a subcomplex of $\varSigma$.
	We define
	\[\upkappa_i(\varSigma,\varDelta)\ \coloneqq \ \dim \KK_i(\varSigma,\varDelta),\]
	where
	\[\KK_\ast(\varSigma,\varDelta)\ \coloneqq \ \coker[\AR_\ast(\varDelta)\rightarrow \AR_\ast(\varSigma)].\] 
	Note that these are just the (duals of) squarefree monomial ideals generated by the monomials supported outside $\varDelta$ referenced in the Hall-Laman relations.
	
	The monotonicity property of the Laman theorem is a consequence of the Lefschetz isomorphism, and this has several applications we can distract ourselves with.
	
	\begin{obs}\label{obs:middle_monotonicity}
		Consider $\varSigma$ a $2k$-dimensional rational sphere in~$\R^{2k+1}$. If $\varSigma$ satisfies the  hard Lefschetz property, then 
		for every subcomplex $\varDelta$ of~$\varSigma$, we have 
		\begin{equation}\label{eq:mid}
		\upkappa_k(\varSigma,\varDelta)\ \le \upkappa_{k+1}(\varSigma,\varDelta) .
		\end{equation}
	\end{obs}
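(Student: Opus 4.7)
The plan is to translate the statement from stress spaces to face rings via Weil duality, and then observe that the required inequality is an immediate consequence of injectivity in the hard Lefschetz isomorphism.

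First, I would dualize: by Weil duality, the cokernel $\KK_k(\varSigma,\varDelta)=\coker[\AR_k(\varDelta)\to \AR_k(\varSigma)]$ has the same dimension as the kernel of the restriction map $\AR^k(\varSigma)\to \AR^k(\varDelta)$ on the cohomological side. This kernel is precisely the degree-$k$ component of the squarefree monomial ideal $\II\subset \AR^\ast(\varSigma)$ generated by monomials corresponding to faces of $\varSigma$ that do not lie in $\varDelta$. Hence
\[
\upkappa_i(\varSigma,\varDelta)\ =\ \dim \II^i \quad \text{for each } i.
\]

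Next, I would exploit the assumed hard Lefschetz property. Since $\varSigma$ has dimension $2k$, we have $d=2k+1$ and $d-2k=1$, so the hard Lefschetz isomorphism in middle degree reads
\[
\AR^k(\varSigma)\ \xrightarrow{\ \cdot \ell\ }\ \AR^{k+1}(\varSigma).
\]
In particular, multiplication by $\ell$ is injective on $\AR^k(\varSigma)$. Because $\II$ is an ideal, the image $\ell\cdot \II^k$ lies in $\II^{k+1}$, so restriction yields an injection $\II^k\hookrightarrow \II^{k+1}$. Comparing dimensions and using the identification from the first step gives
\[
\upkappa_k(\varSigma,\varDelta)\ =\ \dim \II^k\ \le\ \dim \II^{k+1}\ =\ \upkappa_{k+1}(\varSigma,\varDelta),
\]
which is exactly Inequality~\eqref{eq:mid}.

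The only delicate point is the clean identification of $\KK_i(\varSigma,\varDelta)$ with the degree-$i$ component of the squarefree monomial ideal in $\AR^\ast(\varSigma)$; once this is in place (and it follows from the general Weil duality set up in Section~\ref{sec:basics} combined with Poincar\'e duality for rational spheres, Theorem~\ref{thm:pd}), the rest is a one-line application of hard Lefschetz injectivity on an invariant subspace. No perturbation, Hall--Laman machinery, or induction on dimension is needed at this stage.
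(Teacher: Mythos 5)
Your proof is correct, and it is essentially the paper's argument transported to the face-ring side by Weil duality: the paper observes that the hard Lefschetz isomorphism $\AR_{k+1}(\varSigma)\xrightarrow{\,\cdot\ell\,}\AR_k(\varSigma)$ descends to a surjection on the cokernels $\KK_{k+1}(\varSigma,\varDelta)\twoheadrightarrow\KK_k(\varSigma,\varDelta)$, which is exactly the dual of your observation that $\ell$ restricts to an injection on the orthogonal ideal $\II^k\hookrightarrow\II^{k+1}$. Same one-line application of hard Lefschetz, just phrased on opposite sides of the pairing.
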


This is a trivial consequence of the commutative square
\[\begin{tikzcd}
 \AR_{k+1}(\varSigma) \arrow{r}{\ \cdot \ell\ } \arrow[two heads]{d}{} & \AR_{k}(\varSigma) \arrow[two heads]{d}{} \\
\KK_{k+1}(\varSigma,\varDelta) \arrow{r}{\ \cdot \ell\ } & \KK_k(\varSigma,\varDelta)
\end{tikzcd}
\]
so that an isomorphism (and in particular surjection) of the top map implies a surjection of the bottom map.
	
	\subsection{Avocation: A Hall type problem and expansion of ideals}\label{sec:counterexHall}
	This is certainly a more appropriate algebraic version of the Laman condition, as it takes into account the ring instead of the face number. The main objective could be to show that the converse holds as well, that is, to show that if Inequality~\eqref{eq:mid} holds for all $\varDelta \subset \varSigma$, then the Lefschetz property holds for $\varSigma$, that is,
	\[
	\AR^k(\varSigma)\ \xrightarrow{\ \cdot\ell\ } \AR^{k+1}(\varSigma)
	\]
	is an isomorphism for some $\ell$ in $\AR^1$. This would be a fantastic full realization of Hall's marriage theorem~\cite{Hall} for intersection rings of toric varieties (that is, expansion implies existence of an injective morphism). Let us recall the classical version:
	\begin{thm}[Hall's marriage theorem]
		Assume we have two finite sets $X$ and $Y$, and a relation between them so that every subset $A$ of $X$ has at least $|A|$ elements in $Y$ related to it. Then
		the relation supports an injective morphism.
	\end{thm}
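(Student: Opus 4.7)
The plan is to prove this by induction on $|X|$, writing $N(A) \subseteq Y$ for the set of elements related to some element of $A$. The hypothesis reads $|N(A)| \geq |A|$ for every $A \subseteq X$, and we seek an injection $\phi \colon X \hookrightarrow Y$ with $\phi(x) \in N(\{x\})$ for each $x \in X$. The base case $|X| = 1$ is immediate since $N(X)$ is then nonempty. For the inductive step, I would distinguish two cases according to how tight Hall's condition is on proper subsets; identifying this dichotomy is the combinatorial heart of the argument.

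In the first case, suppose every proper nonempty $A \subsetneq X$ satisfies $|N(A)| \geq |A| + 1$. Choose any $x_0 \in X$ and any $y_0 \in N(\{x_0\})$, and pass to the restricted relation on $(X \setminus \{x_0\},\, Y \setminus \{y_0\})$. For any nonempty $A' \subseteq X \setminus \{x_0\}$, the strict-slack hypothesis gives $|N(A')| \geq |A'| + 1$, so $|N(A') \setminus \{y_0\}| \geq |A'|$, preserving Hall's condition. Induction produces a matching on the smaller instance, which extends to one on $X$ by sending $x_0 \mapsto y_0$. In the second case, some proper nonempty $A \subsetneq X$ satisfies $|N(A)| = |A|$. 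Apply induction to $(A, N(A))$ to produce a matching $\phi_1$, and then apply induction to the complementary instance $(X \setminus A,\, Y \setminus N(A))$ after checking Hall's condition there: for any $B \subseteq X \setminus A$, the identity $N(A \cup B) = N(A) \cup N(B)$ together with $|N(A \cup B)| \geq |A| + |B|$ and $|N(A)| = |A|$ yields $|N(B) \setminus N(A)| \geq |B|$. Concatenating the two matchings gives the desired $\phi$.

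The main obstacle is the verification of Hall's condition for the complementary instance in the second case, which rests on the observation that a tight subset must be matched entirely within its neighborhood before addressing the rest; the slack case is essentially formal once one has the right dichotomy. An alternative route would run an augmenting-path argument on a maximum partial matching, extracting augmenting alternating paths from Hall's condition whenever an unmatched $x \in X$ remains; this avoids the dichotomy but requires setting up the bipartite matching machinery before proceeding.
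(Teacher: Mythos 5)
Your argument is correct; it is the standard Halmos--Vaughan induction on $|X|$, splitting on whether some proper nonempty subset is tight for Hall's condition. Both branches are handled cleanly: in the slack case the Hall inequality survives removal of a single matched pair, and in the tight case the identity $N(A\cup B)=N(A)\cup N(B)$ together with $|N(A)|=|A|$ transfers Hall's condition to the complementary instance $(X\setminus A,\,Y\setminus N(A))$, so the two recursively obtained matchings are disjoint and concatenate. Note, however, that the paper does not prove this theorem; it is stated as classical background (with a citation to Hall) solely to motivate the Hall--Laman relations, so there is no in-paper argument to compare against. Your proof is nonetheless a complete and standard one; the alternative you mention (maximum matching plus augmenting paths) is equally valid and generalizes more readily to algorithmic settings, but is not needed here.
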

	It is natural to see Laman's theorem in this context: it postulates the expansion of ideals, that is, it posits that any monomial ideal in a Laman graph is larger (with respect to dimension) in degree two than it is in degree one. One could hope for such a theorem generally in toric geometry. Alas, that does not seem to be true, though we have no explicit counterexample for this special case. However, it is not true that expansion of ideals in commutative graded rings implies the existence of an injective map as the following example found by David Kazhdan (personal communication) shows.
	
	\begin{ex}
		Let $e_i$ be the standard basis of $X = \R^n$, 
		\[\begin{array}{lccc}
		\upalpha:& \R^n&\ \longrightarrow\ & \R^n \\
		& \upalpha(e_i )&\ {{\xmapsto{\ \ \ \ }}}\ & (-1)^i e_i	\ .
		\end{array}
		\]
		Let \[\uppi\hspace{0.3mm}: X^{[2]} \rightarrow Y\] be the quotient by the span of $e_i \wedge e_j$ where $i - j$ are even and
		\[\updelta {:}\ X\ \times\ X\ \longrightarrow\ Y\] be given by 
		\[\updelta(x' ,x'')\ =\ \uppi\hspace{0.3mm}(x' \wedge \upalpha x'' ).\] The map $\updelta$ is symmetric, therefore defining a multiplication on a commutative graded ring generated in degree one, multiplying two degree one elements by applying $\updelta$. Moreover, for $X'\subset X$ a subspace, the subspace $\updelta(X',X') \subset Y$ is of a larger dimension than $X'$ for $n$ large enough. However, 
		\[\updelta(x,\upalpha x)\ \equiv\ 0,\]
		so the map is never an injection.
	\end{ex}
	
	We will repair this by replacing the expansion condition by a more robust condition on the pairing in the next section.
	
	\subsection{Avocation: Bad Artinian reduction}
	
	One could rightfully ask in the situation of Theorem~\ref{mthm:gl} whether it is necessary to ask for genericity of the Artinian reduction, or whether just any linear system of parameters is enough (for an appropriate Lefschetz element).
	
	Observation~\ref{obs:middle_monotonicity} allows us to answer also this question in the negative. Recall: If $\varDelta$ is a simplicial complex, and $\sigma$ is a face in $\varDelta$ then a \Defn{stellar subdivision} of $\varDelta$ at $\sigma$ is the simplicial complex
	\[\varDelta{\uparrow}\sigma \ \coloneqq \ (\varDelta-\sigma)\ \cup\ \bigcup_{\tau \in \partial \st_\sigma \varDelta} (\{v_\sigma\}\ast \tau)
	\]
	where $v_\sigma$ is the new vertex introduced in place of $\sigma$, and $\ast$ is used to denote the free join of two simplices. If $\varDelta$ is geometric, then $v_\sigma$ will be chosen to lie in the linear span of $\sigma$.
	
	\begin{ex}
		Consider $\varSigma$ the boundary of the tetrahedron, and denote its  vertices by $1,2,3$ and $4$.
		
		Perform stellar subdivisions at every triangle of $\varSigma$, call the newly created vertices $1',2',3'$ and $4'$. Call the resulting complex $\varSigma'$.
		
		Realize the vertices in $\R^{[3]}$ as follows: place the vertices $1,2,3$ and $4$ in general position in~$\R^{[2]}\subset \R^{[3]}$. Place the remaining vertices in $\R^{[3]}{\setminus} \R^{[2]}$. The associated linear system is a linear system of parameters for $\R[\varSigma']$, and therefore $\AR^\ast(\varSigma')$ is a Poincar\'e duality algebra.
		
		However, if we consider the subcomplex $\varDelta=\varSigma \cap \varSigma'$,
		the quotient $\KK_\ast(\varSigma',\varDelta)$ satisfies
		\[\upkappa_1(\varSigma',\varDelta)\ =\ 3\ \quad \text{and}\ \quad \upkappa_2(\varSigma',\varDelta)=2\]
		which violates Observation~\ref{obs:middle_monotonicity}. Hence, the associated ring $\AR^\ast(\varSigma')$ cannot satisfy the Lefschetz theorem with respect to any Picard divisor.
	
	\begin{rem}Strictly speaking, we are no longer considering Picard divisors, as there are no varieties any more. Rather, we use this term as a shorthand for the group of degree one elements in our commutative rings.\end{rem}
	\end{ex}
	
	\subsection{The Gr\"unbaum-Kalai-Sarkaria conjecture}\label{sec:GKS}
	Before we briefly discuss the importance of Observation~\ref{obs:middle_monotonicity}, let us note a simple but important corollary.
	
	\begin{cor}\label{cor:grunbaum}
		If $\varSigma$ is a $2k$-dimensional rational sphere in $\R^{2k+1}$ that satisfies the hard Lefschetz property, then, for every subcomplex $\varDelta$ of $\varSigma$, we have Inequality~\ref{eq:grb}, that is,
		\begin{equation*}		
(k+2)\cdot f_{k-1}(\varDelta)\ \ge\ f_k(\varDelta).
		\end{equation*}
			\end{cor}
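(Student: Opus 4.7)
The plan is to apply Observation~\ref{obs:middle_monotonicity} directly to $\varDelta$ and convert the resulting inequality of kernel dimensions into the desired face-number inequality via the Minkowski-weight description of stresses from Proposition~\ref{prp:mw}. First, I would observe that the natural map of stress spaces $\AR_i(\varDelta) \to \AR_i(\varSigma)$ is injective: in terms of Minkowski weights, a balanced weight on $(i-1)$-faces of $\varDelta$ extends by zero to a balanced weight on $\varSigma$, since at any $(i-2)$-face $\tau$ of $\varSigma$ that is not a face of $\varDelta$ no $(i-1)$-face of $\varDelta$ lies above $\tau$ and the extended balancing sum vanishes trivially. Therefore $\upkappa_i(\varSigma,\varDelta) = \dim \AR_i(\varSigma) - \dim \AR_i(\varDelta)$, and combined with Observation~\ref{obs:middle_monotonicity} and the Poincar\'e duality $\dim \AR_k(\varSigma) = \dim \AR_{k+1}(\varSigma)$ (Theorem~\ref{thm:pd} together with Weil duality), the inequality $\upkappa_k \le \upkappa_{k+1}$ becomes
\[\dim \AR_{k+1}(\varDelta)\ \le\ \dim \AR_k(\varDelta).\]

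Next, I would bound each side in terms of face numbers by identifying $\AR_i(\varDelta)$ with the space of Minkowski weights $\MW_i(\varDelta)$, as in Proposition~\ref{prp:mw}. The upper bound on the right is the trivial one: a Minkowski weight of degree $k$ assigns values to the $f_{k-1}(\varDelta)$ faces of dimension $k-1$, hence $\dim \AR_k(\varDelta) \le f_{k-1}(\varDelta)$. For the lower bound on the left, I count the Minkowski balancing conditions: at each $(k-1)$-face $\tau$ of $\varDelta$, the balancing condition requires the weighted sum $\sum_{\sigma \supset \tau} c(\sigma)\, v_{\sigma \setminus \tau}$ to lie in $\mathrm{span}(\tau)$. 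Since the realization of $\varDelta$ inherited from $\varSigma \subset \R^{2k+1}$ is proper, $\mathrm{span}(\tau)$ is $k$-dimensional, so this condition amounts to $(2k+1) - k = k+1$ linear equations. Since $\varDelta$ has $f_{k-1}(\varDelta)$ faces of dimension $k-1$, this produces at most $(k+1) f_{k-1}(\varDelta)$ linear constraints on the $f_k(\varDelta)$ weight variables, yielding the lower bound
\[\dim \AR_{k+1}(\varDelta)\ \ge\ f_k(\varDelta) - (k+1) f_{k-1}(\varDelta).\]

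Chaining the three inequalities produces $f_k(\varDelta) - (k+1) f_{k-1}(\varDelta) \le f_{k-1}(\varDelta)$, which rearranges to the desired bound $f_k(\varDelta) \le (k+2) f_{k-1}(\varDelta)$. I do not anticipate any serious obstacle, as each step is either a direct invocation of a result already established in the excerpt (Observation~\ref{obs:middle_monotonicity}, Theorem~\ref{thm:pd}, Proposition~\ref{prp:mw}) or an elementary count of linear balancing conditions on Minkowski weights. The only subtle point is to ensure that the Minkowski-weight description applies to the subcomplex $\varDelta$ realized inside $\R^{2k+1}$ rather than a complex of full ambient dimension, but this follows directly from the paper's conventions on proper geometric complexes and the injectivity of $\AR_\ast(\varDelta) \hookrightarrow \AR_\ast(\varSigma)$.
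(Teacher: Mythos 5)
Your proof is correct and follows essentially the same route as the paper: Observation~\ref{obs:middle_monotonicity} gives the key inequality, and the two bounds $\dim \AR_k(\varDelta)\le f_{k-1}(\varDelta)$ (generators) and $\dim\AR_{k+1}(\varDelta)\ge f_k(\varDelta)-(k+1)f_{k-1}(\varDelta)$ (relations via Proposition~\ref{prp:mw}) are exactly the paper's. The only difference is that you make explicit the translation of $\upkappa_k\le\upkappa_{k+1}$ into $\dim\AR_{k+1}(\varDelta)\le\dim\AR_k(\varDelta)$ via injectivity of the pushforward and Poincar\'e duality on $\varSigma$; the paper states this directly as a consequence of the observation, silently using those two facts.
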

	
	\begin{proof}
		Observation~\ref{obs:middle_monotonicity} gives 
		\[\dim \AR_k(\varDelta) \ \ge \ \dim \AR_{k+1}(\varDelta).\]
		Moreover, we have the immediate inequality
		\[\dim \AR_{k}(\varDelta)\ \le\ f_{k-1}(\varDelta),\] 
		which follows by estimating the dimension of a vector space by the size of a generating set, in this case the $(k-1)$-faces and
		\[\dim \AR_{k+1}(\varDelta)\ \ge\ f_{k}(\varDelta) - (k+1)f_{k-1}(\varDelta),\]
		which follows by estimating the number of relations from above: By Proposition~\ref{prp:mw}, there are $k+1$ relations for each $(k-1)$-face, so 
		$(k+1)f_{k-1}(\varDelta)$ is an upper bound for the number of nontrivial relations.
	\end{proof}
	
	\begin{rem}\label{rem:kkk}
	Kalai showed that under the assumptions of the previous corollary, the shifting of $\varDelta$ does not contain the Flores complex $\binom{[2k+3]}{\le d+1}$ as a subcomplex, and is contained in turn in the shifting of the infinite cyclic $(2k+1)$-polytope \cite{KDM,KalaiS}. This allows one to prove Kruskal-Katona type theorems for complexes embeddable in spheres, and in particular sharp bounds for the Gr\"unbaum-Kalai-Sarkaria conjecture, the equality cases for which are attained by compressed collections of $k$-faces in cyclic $(2k+1)$-polytopes. In the end, this effort only improves the bounds a little, in an additive error depending on the dimension, so that we shall content ourselves with stating the simplified Bound~\eqref{eq:grb} here.
	
Finally, these bounds also apply to complexes $\varDelta$ embeddable as subcomplexes into general $2k$-dimensional triangulated rational manifolds $\Mu$, with a small caveat: depending on the $k$-th rational Betti number of $\Mu$, we have to allow for an additive error of $\binom{2k+1}{k}\mr{b}_k(\Mu)$. For instance, the inequality of Corollary~\ref{cor:grunbaum} modifies to 
		\[(k+2)\cdot f_{k-1}(\varDelta)\ +\ \binom{2k+1}{k}\mr{b}_k (\Mu) \ge\ f_k(\varDelta),\]
This extends to Kalai's sharpening via the Kruskal-Katona type bound discussed above, and follows from the inequality
		\[\dim \AR_k(\varDelta) \ \ge \ \dim \AR_{k+1}(\varDelta)-\binom{2k+1}{k}\mr{b}_k (\Mu)\]
		that in turn follows from biased Poincar\'e duality in $\Mu$ that is introduced in the next section. This implies at once another conjecture of K\"uhnel \cite{K2}: if a complete $k$-dimensional complex on $n$ vertices embeds into $\Mu$ sufficiently tamely (so that it extends to a triangulation of $\Mu$), then
		\[\binom{n-k-1}{k+1}\ \le\ \binom{2k+1}{k+1}\mr{b}_k (\Mu),\]
		thereby bounding the number of vertices such a complex can have.	
		\end{rem}
	
	Now, as we will prove the hard Lefschetz theorem (first for PL spheres, then for general manifolds in Section~\ref{sec:mani}), the conclusion of the corollary and the remark following it hold without qualification. However, it is useful to note that a simpler, but nonetheless central, property of the intersection ring already implies Inequality~\eqref{eq:mid}, and therefore also~\eqref{eq:grb}. This is our biased Poincar\'e duality we will examine in Section~\ref{sec:pairings}.
	
	\subsection{Stress subspaces and partitioned subspaces}\label{ssc:sop} Before we continue, it is useful to extend the definition of the modules $\KK$ to rational manifolds (possibly with boundary)~$\Mu$. For subcomplexes $\varDelta$ of $\Mu$, we therefore
	write \[\upkappa_i(\Mu,\varDelta)\ \coloneqq \ \dim \KK^i(\Mu,\varDelta),\]
	where
	\[\KK^\ast(\Mu,\varDelta)\ \coloneqq \ \ker[\BR^{\ast}(\Mu)\rightarrow \BR^{\ast}(\varDelta\subset\Mu)]\]
	and
	\[\BR^{\ast}(\varDelta\subset\Mu)\ \coloneqq\  \bigslant{\AR^{\ast}(\varDelta)}{\kerc\left[\AR^\ast(\Mu)\ \rightarrow\ \bigoplus_{v\in \Mu^{(0)}} \AR^\ast(\st_v \Mu) \right]}.\]
	We will also use the Weil dual spaces, denoted by $\KK_\ast$ and $\BR_\ast$ as quotients and subspaces of~$\AR_\ast$, respectively. 
	
	One reason we shall mostly work with stress groups is that kernels and images of envisioned Lefschetz elements (when they do not connect opposite degrees), enjoy nice descriptions. The kernels are ideally of the form $\BR_{\ast}(\varDelta\subset\Mu)$, called \Defn{stress space}.  The Poincar\'e duals of $\KK_\ast(\Mu,\varDelta)$, and in particular the images of Lefschetz maps as we shall see later, can be understood as well, as they are of the form 
	\[\BR_{\ast}(\Mu)_{|\overline{\varDelta}}\ \coloneqq \ \langle\AR_\ast(\st_\sigma \Mu):\ \sigma \in \Mu,\ \sigma \notin \varDelta \rangle\ \subset\ \BR_{\ast}(\Mu) ,\]
	that is, subspaces generated by partitionable classes in the sense of partition of unity. We will call such a subspace therefore a \Defn{partitioned subspace}. They will be rather hard to work with directly if $\varDelta$ is not a codimension zero manifold in $\Mu$, which will be one of the main obstacles to overcome in the section that follows. 
	
	\section{Biased Poincar\'e duality and the Hall-Laman relations}\label{sec:pairings}
	In this section, we introduce a crucial ingredient of the program by introducing a genericity property of the Poincar\'e pairing replacing Laman's criterion. We then proceed to apply this notion to prove the hard Lefschetz theorem for spheres that allow for "nice" decompositions. For later, the perturbation lemma and pairing properties hold special importance.
	
	\subsection{Poincar\'e pairings} Recall: Let $\Mu$ be a triangulated rational manifold of dimension $d-1$ realized in $\mathbb{R}^d$, and $\BR^\ast(\Mu)$ its face ring modulo Artinian reduction and mobile cycles. Then we have a pairing
	\[\BR^k(\Mu)\ \times\ \BR^{d-k}(\Mu)\ \longrightarrow\ \BR^{d}(\Mu)\ \cong\ \R.\]
	The essential observation we wish to make is that, while the signature of the pairing does not change under small perturbations, it is often convenient to have the possibility to make certain subspaces non-degenerate under the pairing. 
	
	\subsection{Biased Poincar\'e duality in ideals} We say that $\Mu$ satisfies \Defn{biased Poincar\'e duality} in degree $k\le \frac{d}{2}$ if for all nonempty subcomplexes $\varDelta$ of $\Mu$, the pairing
	\begin{equation}\label{eq:ospd}
	\KK^{k}(\Mu,\varDelta)\ \times\ \KK^{d-k}(\Mu,\varDelta)\ \longrightarrow\ \KK^{d}(\Mu,\varDelta)\ \cong\ \R 
	\end{equation}
	is nondegenerate on the first factor. Sometimes we will also say that a specific ideal $\mc{I}$ in a graded Poincar\'e duality algebra over $\R$ satisfies \Defn{biased Poincar\'e duality}, if 
	\begin{equation}\label{eq:ospdI}
	\mc{I}^{k}\ \times\ \mc{I}^{d-k}\ \longrightarrow\ \mc{I}^{d}\ \cong\ \R 
	\end{equation}
is nondegenerate in the first factor, where $d$ is the degree of the fundamental class. An important special case is biased Poincar\'e duality at a subcomplex $\varDelta$, synonymous with biased Poincar\'e duality for the ideal $\KK^\ast(\Mu,\varDelta)$.
	
	 Biased Poincar\'e duality is evidently a weaker form of the Hall-Laman relations, with equivalence in the case $k=d-k$. We keep it separate as on its own it already has interesting consequences, even without having to consider a Lefschetz theorem. The following is a direct consequence of Poincar\'e duality.
	
\begin{prp}
For an ideal $\mathcal{I}$ in $\BR^\ast(\Mu)$
the following are equivalent:
\begin{compactenum}[(1)]
\item The map
\[\mathcal{I}\ \longrightarrow\ \bigslant{\BR^\ast(\Mu)}{\ann_{\BR^\ast(\Mu)} \mathcal{I} }\] is an injection (in degree $k$).
\item $\mathcal{I}$ satisfies biased Poincar\'e duality (in degree $k$).
\end{compactenum}
\end{prp}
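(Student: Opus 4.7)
The plan is to exploit the Poincar\'e duality of the ambient ring $\BR^\ast(\Mu)$, established in Proposition~\ref{ns}, and to reduce both conditions to parallel statements about the restriction of the Poincar\'e pairing to $\mathcal{I}$. The key preliminary observation is that the kernel of the quotient map $\mathcal{I}^k \to \bigl(\BR^\ast(\Mu)/\ann_{\BR^\ast(\Mu)}\mathcal{I}\bigr)^k$ is exactly $\mathcal{I}^k \cap \ann_{\BR^\ast(\Mu)}\mathcal{I}$. So condition (1) translates into: every $x \in \mathcal{I}^k$ satisfying $x\mathcal{I}=0$ is zero. Condition (2) translates into the a priori weaker-looking: every $x \in \mathcal{I}^k$ satisfying $x\cdot \mathcal{I}^{d-k}=0$ is zero.

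In this language the implication (2) $\Rightarrow$ (1) is essentially tautological, and I would dispatch it first: an element of $\mathcal{I}^k$ that annihilates all of $\mathcal{I}$ in particular pairs trivially with $\mathcal{I}^{d-k}$, so biased Poincar\'e duality kills it immediately.

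For the reverse direction (1) $\Rightarrow$ (2) the real content is to promote the vanishing of $x\cdot \mathcal{I}^{d-k}$ to the vanishing of $x\cdot \mathcal{I}$ in all degrees. Here I would combine the fact that $\mathcal{I}$ is an ideal with the full Poincar\'e duality of $\BR^\ast(\Mu)$: given $x \in \mathcal{I}^k$ with $x\cdot \mathcal{I}^{d-k}=0$ and any $y \in \mathcal{I}^j$, every $z \in \BR^{d-k-j}(\Mu)$ satisfies $yz \in \mathcal{I}^{d-k}$ (since $\mathcal{I}$ is an ideal), whence $(xy)z = x(yz) = 0$; the perfect pairing on $\BR^\ast(\Mu)$ then forces $xy=0$. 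Doing this for each $j$ and each $y \in \mathcal{I}^j$ shows $x \in \ann_{\BR^\ast(\Mu)}\mathcal{I}$, and condition (1) finishes. The only mild ``obstacle'' in the argument is bookkeeping, namely keeping the three a priori different annihilator conditions (being in $\ann \mathcal{I}$, pairing trivially against $\mathcal{I}^{d-k}$, and pairing trivially against $\BR^{d-k}(\Mu)$) properly distinguished; no input beyond Proposition~\ref{ns} and the ideal property of $\mathcal{I}$ is needed.
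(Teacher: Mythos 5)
Your proof is correct, and it spells out exactly the argument the paper has in mind when it calls the proposition ``a direct consequence of Poincar\'e duality'' (the paper gives no proof at all, so there is nothing to diverge from). Your reduction to the chain of implications
\[
x\cdot\BR^{d-k-j}(\Mu)=0\ \Longrightarrow\ x=0\ \text{(full PD)},\qquad
x\cdot\mathcal{I}^{d-k}=0\ \Longrightarrow\ x\cdot\mathcal{I}^{j}=0\ \text{for all }j\ \text{(ideal property)}
\]
is the right bookkeeping, and the only point worth flagging explicitly (you implicitly handle it) is the range $j>d-k$, where $x\cdot\mathcal{I}^{j}$ lands in degree $>d$ and vanishes automatically because $\BR^\ast(\Mu)$ is a Poincar\'e duality algebra concentrated in degrees $\le d$.
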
	

We obtain immediately an instrumental way to prove biased Poincar\'e duality for monomial ideals.

\begin{cor}\label{cor:map}
$\KK^\ast(\Mu,\varDelta)$ satisfies biased Poincar\'e duality in degree $k$ if and only if
\[\KK^k(\Mu,\varDelta)\ \longrightarrow\ \BR^k(\Mu)_{\mid \overline{\varDelta}}\]
is injective.
\end{cor}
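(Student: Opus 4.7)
The plan is to obtain this as an immediate specialization of the preceding proposition to $\mathcal{I} = \KK^\ast(\Mu,\varDelta)$, once the target of the map in the statement is correctly identified with the quotient appearing in that proposition.

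First I would apply the proposition to $\mathcal{I} = \KK^\ast(\Mu,\varDelta)$, which is legitimate because $\BR^\ast(\Mu)$ is a Poincar\'e duality algebra by Proposition~\ref{ns}. This already gives the equivalence of biased Poincar\'e duality in degree $k$ with injectivity in degree $k$ of
\[\KK^k(\Mu,\varDelta)\ \longrightarrow\ \BR^k(\Mu)\big/\bigl(\ann_{\BR^\ast(\Mu)}\KK^\ast(\Mu,\varDelta)\bigr)^{k}.\]

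Next I would identify this quotient naturally with $\BR^k(\Mu)_{|\overline{\varDelta}}$. Since $\BR^\ast(\Mu)$ carries a perfect Poincar\'e pairing, the degree-$k$ component of the annihilator of any ideal $\mathcal{I}$ equals the Poincar\'e orthogonal complement of $\mathcal{I}^{d-k}$; applied here,
\[\bigl(\ann_{\BR^\ast(\Mu)}\KK^\ast(\Mu,\varDelta)\bigr)^{k}\ =\ \bigl\{\KK^{d-k}(\Mu,\varDelta)\bigr\}^{\perp}\ \subset\ \BR^k(\Mu).\]
The duality recalled in Section~\ref{ssc:sop} identifies the Poincar\'e dual of $\KK_\ast(\Mu,\varDelta)$ inside $\BR_\ast(\Mu)$ as exactly the partitioned subspace $\BR_\ast(\Mu)_{|\overline{\varDelta}} = \langle\AR_\ast(\st_\sigma \Mu):\sigma\notin\varDelta\rangle$. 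Dualizing degrees, $\BR^k(\Mu)/\{\KK^{d-k}\}^{\perp}$ is naturally $\BR^k(\Mu)_{|\overline{\varDelta}}$, and the map of the corollary is then nothing but the composition $\KK^k\hookrightarrow\BR^k\twoheadrightarrow\BR^k_{|\overline{\varDelta}}$.

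The one nontrivial ingredient, and the main (mild) obstacle, is the duality $\{\KK^{d-k}\}^{\perp}\leftrightarrow\BR^k_{|\overline{\varDelta}}$ used in the step above. I would verify it directly from partition of unity: by construction $\BR_\ast(\Mu)_{|\overline{\varDelta}}$ is the image of $\bigoplus_{\sigma\notin\varDelta}\AR_\ast(\st_\sigma \Mu)\to\BR_\ast(\Mu)$, and a form in $\BR^\ast(\Mu)$ vanishes on every such star precisely when it lies in the kernel of the restriction $\BR^\ast(\Mu)\to\BR^\ast(\varDelta\subset\Mu)$, i.e.\ in $\KK^\ast(\Mu,\varDelta)$. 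Under the Weil pairing this matches the two descriptions, using only the partition of unity of Lemma~\ref{lem:partyyyyy} and Proposition~\ref{prp:part} together with the Weil duality between face rings and stress spaces.
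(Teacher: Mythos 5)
Your overall approach is correct and is exactly what the paper intends (the paper gives no explicit proof, stating only that the corollary is obtained ``immediately'' from the preceding proposition), and your reduction of the corollary to the proposition via the identification
\[
\BR^k(\Mu)_{\mid\overline{\varDelta}}\ \cong\ \bigslant{\BR^k(\Mu)}{\bigl(\ann_{\BR^\ast(\Mu)}\KK^\ast(\Mu,\varDelta)\bigr)^{k}}
\]
is the right strategy.

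There is, however, a slip in your final verification paragraph. A form $\alpha\in\BR^k(\Mu)$ Weil-pairs trivially with every element of $\AR_\ast(\st_\sigma\Mu)$ for all $\sigma\notin\varDelta$ exactly when $\alpha$ restricts trivially to each such star, that is, exactly when $\alpha\in\overline{\KK}^k(\Mu,\varDelta)$, not $\KK^k(\Mu,\varDelta)$ as you wrote. These are distinct ideals: the paper defines $\overline{\KK}^\ast(\Mu,\varDelta)$ precisely as the kernel of $\BR^\ast(\Mu)\to\bigoplus_{\sigma\notin\varDelta}\AR^\ast(\st_\sigma\Mu)$, and $\KK^\ast(\Mu,\varDelta)$ is instead the kernel of restriction to $\varDelta$ (the ideal generated by the $x^\sigma$, $\sigma\notin\varDelta$). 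The identity you actually want, and which the rest of your argument implicitly uses, is
\[
\ker\bigl[\BR^k(\Mu)\longrightarrow\BR^k(\Mu)_{\mid\overline{\varDelta}}\bigr]\ =\ \overline{\KK}^k(\Mu,\varDelta)\ =\ \bigl(\ann_{\BR^\ast(\Mu)}\KK^\ast(\Mu,\varDelta)\bigr)^k,
\]
where the first equality is the Weil-duality computation you describe (carried out correctly, it yields $\overline{\KK}^\ast$, not $\KK^\ast$) and the second is the standard fact that the annihilator of the ideal generated by the $x^\sigma$, $\sigma\notin\varDelta$, is exactly the kernel of restriction to the stars of such $\sigma$. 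Since your earlier steps already identify $\bigl(\ann\KK^\ast\bigr)^k$ with the Poincar\'e orthogonal complement of $\KK^{d-k}$, the conclusion is unaffected; only the displayed sentence conflates $\KK^\ast$ with its annihilator $\overline{\KK}^\ast$.
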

	
	\subsection{Avocation: Relation to the Gr\"unbaum-Kalai-Sarkaria conjecture}
	
	Assume that $\Mu$ is a rational manifold of dimension $2k$ realized in $\R^{2k+1}$ so that biased Poincar\'e duality holds with respect to a subcomplex $\varDelta$. Then Inequality~\eqref{eq:mid} follows, that is, we have
	\[ \upkappa_k(\Mu,\varDelta)\ \le\ \upkappa_{k+1}(\Mu,\varDelta) .\]
	In particular, the Gr\"unbaum-Kalai-Sarkaria conjecture holds for $\varDelta$.
	
	\subsection{General properties of biased Poincar\'e duality}
	
	To describe and prove biased Poincar\'e duality in general, we start with a first observation, the following persistence lemma.
	
	\begin{lem}\label{lem:persistence}
		Let $\Mu$ denote a rational $(d-1)$-manifold in $\R^d$, and $k<\frac{d}{2}$. Then biased Poincar\'e duality holds for $\Mu$ in degree $k$ if biased Poincar\'e duality holds for links of all vertices in $\Mu$ and in degree $k$.
	\end{lem}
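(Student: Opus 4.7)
The plan is to reduce biased Poincar\'e duality on $\Mu$ to biased Poincar\'e duality on links of vertices, via partition of unity combined with the two cone lemmas. Fix a nonempty subcomplex $\varDelta \subset \Mu$ and an element $a \in \KK^k(\Mu, \varDelta)$ with $\deg_\Mu(a \cdot b) = 0$ for all $b \in \KK^{d-k}(\Mu, \varDelta)$; the goal is $a = 0$. Since $k < d/2 < d$, Proposition~\ref{ns} gives a partition-of-unity injection
\[\BR^k(\Mu)\ \hookrightarrow\ \bigoplus_{v \in \Mu^{(0)}} \AR^k(\St_v \Mu),\]
so it suffices to show that the restriction $a|_v \in \AR^k(\St_v \Mu) \cong \AR^k(\Lk_v \Mu)$ vanishes at every vertex $v$.

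To probe $a|_v$, I construct test classes in $\KK^{d-k}(\Mu, \varDelta)$ concentrated near $v$. Given $b' \in \AR^{d-1-k}(\Lk_v \Mu)$, lift it by cone lemma I to $\tilde{b}' \in \AR^{d-1-k}(\St_v \Mu)$ and apply cone lemma II to obtain $\tilde{b}' x_v \in \AR^{d-k}(\St_v^\circ \Mu)$, which embeds into $\BR^{d-k}(\Mu)$ by Corollary~\ref{lem:cl3}. Two cases govern when this class lies in the ideal $\KK^{d-k}(\Mu, \varDelta)$. If $v \notin \varDelta$, then $x_v$ maps to zero in $\AR^\ast(\varDelta)$, so $\tilde{b}' x_v$ vanishes on $\varDelta$ for every choice of $b'$. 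If $v \in \varDelta$, then the same lift-and-multiply construction carried out inside $\varDelta$ identifies the restriction of $\tilde{b}' x_v$ to $\varDelta$ with that of $b'$ to $\Lk_v \varDelta$, so $\tilde{b}' x_v \in \KK^{d-k}(\Mu, \varDelta)$ precisely when $b' \in \KK^{d-1-k}(\Lk_v \Mu, \Lk_v \varDelta)$. Because $\tilde{b}' x_v$ is supported on $\St_v^\circ \Mu$, the global pairing collapses locally, and the cone lemma II identification of fundamental classes gives
\[\deg_\Mu\bigl(a \cdot \tilde{b}' x_v\bigr)\ =\ \deg_{\Lk_v \Mu}\bigl(a|_v \cdot b'\bigr),\]
so the hypothesis on $a$ yields $\deg_{\Lk_v}(a|_v \cdot b') = 0$ for every admissible $b'$.

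To conclude $a|_v = 0$ in each case: for $v \notin \varDelta$, admissibility is unrestricted and Poincar\'e duality on the rational $(d-2)$-sphere $\Lk_v \Mu$ (Theorem~\ref{thm:pd}) forces $a|_v = 0$; for $v \in \varDelta$, note first that $a|_v$ automatically lies in $\KK^k(\Lk_v \Mu, \Lk_v \varDelta)$ since $a$ restricts to zero on $\varDelta$, and the assumed biased Poincar\'e duality on $\Lk_v \Mu$ in degree $k$ — valid since $k < d/2$ forces $k \leq (d-1)/2$ for integer $k$ — then gives $a|_v = 0$. Collecting over all vertices yields $a = 0$, establishing biased Poincar\'e duality for $\Mu$ at $\varDelta$. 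The main technical point I expect to require care is the naturality of cone lemma II under restriction from $\Mu$ to the subcomplex $\varDelta$ at a vertex $v \in \varDelta$: one must verify that the lift-and-multiply construction commutes with passage to $\varDelta$, so that \emph{$\tilde{b}' x_v$ vanishes on $\varDelta$} matches \emph{$b'$ vanishes on $\Lk_v \varDelta$}, and that the global-to-local collapse of the degree pairing is unambiguous. Both facts follow from the definitions, but they are the only place where the local-to-global translation could fail.
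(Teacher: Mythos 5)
Your proof is correct and follows the same route as the paper's (which compresses the argument into two clauses): reduce biased Poincar\'e duality on $\Mu$ to biased Poincar\'e duality on vertex links via partition of unity for $\BR^\ast(\Mu)$, using the cone lemmas to transport a nonvanishing local component $a|_v \in \KK^k(\Lk_v\Mu,\Lk_v\varDelta)$ and its dual partner $b'$ back to a global pairing. The explicit case split on $v\in\varDelta$ versus $v\notin\varDelta$, the verification that $k<d/2$ implies $k\le(d-1)/2$, and the flagged naturality check for cone lemma~II under restriction are all details the paper elides but which you have correctly identified and handled.
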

	
	\begin{proof}
		By partition of unity in $\Mu$, any element of $\KK^{k}(\Mu,\varDelta)$ pairs with some prime divisor $x_v$; by biased Poincar\'e duality of $\Lk_v \varDelta$ in $\AR^{k}(\Lk_v \Mu) \cong x_v\BR^k(\Mu) $ the claim follows. 
	\end{proof}

	This allows us to prove biased Poincar\'e duality by restricting to the case of degree $k$ and $d=2k$.
	
	 Notice that in particular  biased Poincar\'e duality in degree $k$ for $2k$ manifolds, and hence the Gr\"unbaum-Kalai-Sarkaria conjecture and K\"uhnel conjectures for manifolds, only require the discussion of the property in spheres (see Remark~\ref{rem:kkk}). 
	
	We nevertheless stay with manifolds for the moment, and make an additional observation simplifying the proof of the property for $(2k-1)$-manifolds and in degree $k$. The first is a folklore geometric observation:
	
	\begin{prp}\label{prp:env}
		If $\varDelta$ is a $(k-1)$-dimensional subcomplex in a $(2k-1)$-dimensional combinatorial manifold $\Mu$ in $\R^{2k}$, then there exists a subdivision $\Mu'$ of $\Mu$, and a $2(k-1)$-dimensional hypersurface $S$ in $\Mu'$ containing $\varDelta$.
	\end{prp}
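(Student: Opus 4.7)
My strategy is to realize $S$ as the zero level set of a generic piecewise-linear function on $\Mu$ that vanishes on $\varDelta$. First I pass to a subdivision $\Mu_0$ of $\Mu$ in which $\varDelta$ sits as a subcomplex (for instance, the second barycentric subdivision suffices). On $\Mu_0$ I then define a PL function $h\colon |\Mu_0| \to \R$ by prescribing $h(v) = 0$ for every $v \in \varDelta^{(0)}$ and $h(v) = \epsilon_v \in \{+1,-1\}$ for each vertex $v \notin \varDelta^{(0)}$, extending affinely over each simplex. The signs $\epsilon_v$ are chosen generically so that in every link $\Lk_\tau \Mu_0$ of a face $\tau$ of $\varDelta$, both signs occur among its vertices; this is automatic for generic sign assignments when $k\ge 2$, since each such link is a PL sphere of positive dimension with at least two vertices available for splitting. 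The case $k=1$ is trivial: $\varDelta$ itself is already codimension one in $\Mu$, so one may take $S := \varDelta$.

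Next, I refine $\Mu_0$ to a subdivision $\Mu'$ in which the zero set $h^{-1}(0)$ becomes a subcomplex. Concretely, one introduces a new vertex at the unique zero of $h$ along each edge $[u,v]$ with $h(u)h(v)<0$, and then inductively extends to higher-dimensional faces by barycentrically subdividing the resulting polyhedral pieces cut out of each simplex. This \emph{adapted triangulation} procedure is a standard PL-topology construction and terminates because $\Mu_0$ is finite.

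Finally, set $S := h^{-1}(0) \subset |\Mu'|$. By construction $S$ is a subcomplex of $\Mu'$; it contains $\varDelta$ because $h \equiv 0$ on the latter; and it has dimension $2k-2$, for each $(2k-1)$-simplex $\sigma$ of $\Mu_0$ meeting $\varDelta$ in a top face $\tau$ has $h|_\sigma$ a nonconstant affine function attaining both signs on $\sigma\setminus\tau$, so $S\cap\sigma$ is a $(2k-2)$-dimensional polytope. The principal obstacle I expect is the sign-coherence in the first step, namely making sure the $\pm 1$ assignment satisfies the sign-splitting condition in every link simultaneously; this is achieved either by a direct genericity argument or, if one prefers an explicit construction, by an inductive choice ordered by dimension of the faces of $\varDelta$, choosing signs on the link-vertices of the highest-dimensional faces of $\varDelta$ first and propagating outwards.
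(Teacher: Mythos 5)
The central step of your construction fails: the zero level set $S = h^{-1}(0)$ of a PL function that vanishes identically on $\varDelta$ and takes generic $\pm 1$ values elsewhere is \emph{not} a PL manifold, so it is not a hypersurface in the sense required. The singular locus is exactly $\varDelta$. Near a point $p$ in the relative interior of a face $\tau$ of $\varDelta$, the local model of $\Mu'$ is $\tau \times c(\Lk_\tau \Mu')$, and $S$ meets the cone factor in $c(Z)$, where
\[
Z \ \coloneqq\ h^{-1}(0)\ \cap\ \Lk_\tau \Mu'.
\]
For $S$ to be a manifold at $p$, you need $Z$ to be a PL $(\dim\Lk_\tau\Mu' - 1)$-sphere (an equator of the link sphere), so that $c(Z)$ is a disk. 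A generic sign assignment only guarantees that $Z$ is a codimension-one cycle in $\Lk_\tau\Mu'$, and this is typically \emph{not} a sphere: already in the case $k=2$ with $\Lk_\tau\Mu' = S^1$, generic $\pm 1$ values at the link vertices give an even number $\ge 2$ of zeros, and $c(Z)$ is a $1$-manifold only when there are exactly two. The ``sign-coherence'' you flag as the obstacle (``both signs occur in each link'') is a dimension condition, not a regularity condition; it ensures $S$ has dimension $2k-2$ but says nothing about smoothness along $\varDelta$.

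The paper's proof avoids the problem by construction: it takes the regular neighbourhood $N$ of $\varDelta$ in a fine subdivision of $\Mu$, whose frontier $\partial N$ is automatically a PL $(2k-2)$-manifold, and then isotopes $\varDelta$ into $\partial N$ by PL general position, using the Bing result to make the embedding facewise linear; the numerology $\dim\partial N = 2k-2 = 2\dim\varDelta$ is exactly what makes this push possible. If you tried to repair your argument by requiring the sign function to be ``regular'' in the sense that every link-restriction splits the link sphere into two hemispheres, you would essentially be demanding that $h$ is a regular-neighbourhood function; but then $\varDelta = h^{-1}(0)$ lies in the interior of the neighbourhood, not on its boundary level set, so the isotopy step the paper performs cannot be skipped.
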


A notion of importance in this context is the \Defn{simplicial neighbourhood} $\mr{N}_\varGamma \varDelta$ of an induced subcomplex $\varGamma$ in a simplicial complex $\varDelta$, defined as \[\mr{N}_\varGamma \varDelta\ \coloneqq\ \bigcup_{w \in \varGamma^{(0)}} \St_w \varDelta.\] We call a simplicial neighbourhood \Defn{regular} if it is regular in the sense of PL topology~\cite{RourkeSanders-new}, that is, it is a PL mapping cylinder over the PL manifold given as the boundary of the simplicial neighborhood, mapped to $\varGamma$. We sometimes use the notion of regular neighbourhood without referring to a specific $\varGamma$, and in this case it will simply be a regular simply neighbourhood with respect to some~${\varGamma}$.
	
	\begin{proof}
	There exists a simplicial homeomorphism that embeds $\varDelta$ into the boundary of its regular neighbourhood in a sufficiently fine refinement of $\Mu$ by a classical general position argument. This map can be assumed to be facewise linear on $\varDelta$ by~\cite[Section I.2]{Bing}. A subdivision of $\Mu$ that realizes the regular neighbourhood therefore gives the desired subdivision.
	\end{proof}
	
	\begin{cor}\label{cor:envelope}
		In the situation of the previous lemma, there exists in a refinement $\Mu'$ of $\Mu$ a hypersurface with boundary ${E}_\varDelta$ so that \[\AR^k(\varDelta)\cong\AR^k({E}_\varDelta).\] We can assume that $\partial {E}_\varDelta$ is an induced subcomplex of~${E}_\varDelta$.
	\end{cor}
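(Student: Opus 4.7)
The plan is to realize $E_\varDelta$ as a codimension-one regular neighbourhood of $\varDelta$ inside the closed hypersurface $S$ produced by Proposition~\ref{prp:env}, and then exploit the coincidence that degree $k$ is simultaneously the top of $\AR^\ast(\varDelta)$ and an intermediate degree of $\AR^\ast(E_\varDelta)$. Concretely, I would first apply Proposition~\ref{prp:env} to obtain a refinement $\Mu'$ of $\Mu$ and a closed $(2k-2)$-dimensional hypersurface $S \subset \Mu'$ containing $\varDelta$ as a subcomplex. After one additional barycentric-type subdivision of $\Mu'$ supported near $\varDelta$, the simplicial neighbourhood
\[
E_\varDelta\ \coloneqq\ \mr{N}_\varDelta S\ =\ \bigcup_{v \in \varDelta^{(0)}}\St_v S
\]
is regular in the PL sense and hence a $(2k-2)$-dimensional manifold with boundary containing $\varDelta$ and collapsing onto it.

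For the isomorphism $\AR^k(\varDelta) \cong \AR^k(E_\varDelta)$, the inclusion $\varDelta \hookrightarrow E_\varDelta$ induces a natural surjection $\AR^k(E_\varDelta) \twoheadrightarrow \AR^k(\varDelta)$, so it suffices to see its kernel vanishes. Passing to Weil duals via Proposition~\ref{prp:mw}, this amounts to showing every Minkowski $k$-weight on $E_\varDelta$ is already supported on the $(k-1)$-faces of $\varDelta$. Each $(k-1)$-face $\sigma$ of $E_\varDelta\setminus \varDelta$ lies in some star $\St_v S$ with $v$ a vertex of $\varDelta$, and has an outward $(k-2)$-subface $\tau$ (namely the one not containing $v$) that sits on $\partial E_\varDelta$; at such a $\tau$ the Minkowski balance relation involves only cofaces that are themselves outward. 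A peeling induction, processing outward $(k-2)$-faces from $\partial E_\varDelta$ inwards through successive stars, forces the weight of every outward $(k-1)$-face to vanish, leaving a weight supported on $\varDelta$.

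To secure the second assertion that $\partial E_\varDelta$ is an induced subcomplex of $E_\varDelta$, I would stellarly subdivide every edge of $E_\varDelta$ whose two endpoints both lie on $\partial E_\varDelta$ but whose relative interior goes through the interior of $E_\varDelta$. This local move does not create new $(k-1)$-faces outside the correspondence of the previous step and therefore preserves the isomorphism.

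The principal obstacle is the peeling argument in the middle step: one must ensure the combinatorial order in which outward $(k-2)$-faces are eliminated is well-defined and terminates cleanly at $\varDelta$. A clean formulation likely runs via a discrete Morse matching on $E_\varDelta$ collapsing it to $\varDelta$, in which each extra $(k-1)$-face is matched to exactly one outward $(k-2)$-subface whose balance relation isolates it; alternatively, one may run the argument inductively on the vertices of $E_\varDelta\setminus \varDelta$ using the cone lemmas, which is precisely how partition of unity is derived in Lemma~\ref{lem:partyyyyy}.
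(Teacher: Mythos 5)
There is a genuine gap in the peeling step. You claim that for a $(k-1)$-face $\sigma$ of $E_\varDelta$ outside $\varDelta$, choosing $v \in \varDelta^{(0)}$ with $\sigma \in \St_v S$, the face $\tau = \sigma \setminus v$ "sits on $\partial E_\varDelta$". This is false in general: $\tau$ may be contained in the stars $\St_w S$ of several vertices $w$ of $\varDelta$, and if these stars cover $\lk_\tau S$ then $\tau$ is interior to $\mr{N}_\varDelta S$. Even when $\tau$ does lie on $\partial E_\varDelta$, the Minkowski balance relation at $\tau$ is computed in the full complex $E_\varDelta$ and involves \emph{all} of its $(k-1)$-cofaces there — typically many, since $\lk_\tau E_\varDelta$ is $(k-1)$-dimensional, and possibly including faces of $\varDelta$ when $\tau$ itself contains further vertices of $\varDelta$. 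Nothing in that relation isolates $\sigma$, so the induction does not force its weight to vanish. Replacing the peeling order by a discrete Morse matching does not repair this: the matching records the unique free coface at the moment of an elementary collapse, whereas the balance relation is a statement about the entire complex $E_\varDelta$, not about any intermediate stage of the collapse.

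A second, structural worry: if your argument worked it would show that the regular neighbourhood $\mr{N}_\varDelta S$ is \emph{automatically} an envelope in degree $k$. But the paper later (in the proof of Proposition~\ref{prp:railways}) has to work considerably to obtain exactly this envelope property for a regular neighbourhood — stellar subdivisions along the collapse, general position for the new vertices, and an appeal to the Lefschetz theorem in the links; that effort would be superfluous if a purely combinatorial peeling sufficed. The paper's own proof of the corollary is much simpler and does not use regular neighbourhoods at all: start with the hypersurface of Proposition~\ref{prp:env}; whenever some $k$-stress carries a nonzero coefficient on a $(k-1)$-face $\sigma \notin \varDelta$, refine $\Mu$ and $E_\varDelta$ away from $\varDelta$ so that $\st_\sigma E_\varDelta$ is a compact ball in the interior, and then delete $\sigma$. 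Each deletion strictly shrinks $\AR_k(E_\varDelta)$ and keeps $E_\varDelta$ a hypersurface with boundary containing $\varDelta$, so the process terminates with the desired envelope.
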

	
	\begin{proof}
	If ${E}_\varDelta$ supports a $k$-stress outside $\varDelta$ then it must be supported in a $(k-1)$-face outside $\varDelta$. Removing that face may affect the manifold property, but we can refine ${E}_\varDelta$ and $\Mu$ outside $\varDelta$ so that the star of that face is a compact ball in the interior of ${E}_\varDelta$, in which case its removal preserves the manifold property.		
	\end{proof}
	
	We shall need a stronger version of this corollary later, but we leave it here anyway because it serves to illustrate our strategy.
	
	We call such a complex, that is, a supercomplex ${E}$ of $\varDelta$ such that $\AR^k(\varDelta)\cong\AR^k({E})$ holds an \Defn{envelope} in degree $k$ for $\varDelta$. Passing from from a subcomplex of $\Mu$ to its envelope (in degree $k$) does not affect the biased Poincar\'e duality (in degree $k$) as in such case
\[\KK^k(\Mu,\varDelta)\ \cong\ \KK^k(\Mu,{E}_\varDelta).\]
	
	The second useful lemma shows a stability property of biased Poincar\'e duality under PL homeomorphism, and in particular subdivision, and hence justifies the preceding results.
	\begin{lem}\label{lem:PLinv}
		A PL homeomorphism $\upvarphi:\Mu\rightarrow \Mu'$ of rational manifolds $\Mu,\ \Mu'$ in $\R^d$ that restricts to the identity on a common subcomplex $\varDelta$ preserves biased Poincar\'e duality, that is, 
$\KK^\ast(\Mu,\varDelta)$ satisfies biased Poincar\'e duality (in degree $k$) if and only if $\KK^\ast(\Mu',\varDelta)$ does (in degree $k$).	
	\end{lem}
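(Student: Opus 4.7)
The strategy is to reduce the general PL homeomorphism to elementary combinatorial moves. First, I would take a common simplicial refinement $\Mu''$ of $\Mu$ and $\Mu'$ containing $\varDelta$ as a subcomplex; this exists because $\upvarphi$ is the identity on $\varDelta$. By Alexander's stellar equivalence theorem, each of the subdivisions $\Mu''\to\Mu$ and $\Mu''\to\Mu'$ factors into a finite sequence of stellar subdivisions at faces $\sigma \notin \varDelta$. It therefore suffices to prove invariance of biased Poincar\'e duality under a single such move $\Mu' = \Mu{\uparrow}\sigma$, where the new vertex $v_\sigma$ is placed inside $\mr{span}(\sigma)$ so that the Artinian reductions on either side of $\upvarphi$ remain consistent.

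For the elementary move I would aim to exhibit a natural identification
\[\KK^\ast(\Mu,\varDelta)\ \cong\ \KK^\ast(\Mu{\uparrow}\sigma,\varDelta)\]
compatible with the identification of the partitioned subspaces $\BR^\ast(\cdot)_{|\overline{\varDelta}}$. This is plausible because the stellar move acts entirely in the complement of $\varDelta$: applying the two cone lemmas at the new vertex $v_\sigma$ expresses the local data $\AR^\ast(\St_{v_\sigma}(\Mu{\uparrow}\sigma))$ in terms of the star of $\sigma$ in $\Mu$. Since $\sigma$ and $v_\sigma$ both lie in $\overline{\varDelta}$, this identification descends cleanly to $\KK^\ast$ and to the partitioned subspace on $\overline{\varDelta}$. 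Corollary~\ref{cor:map} then reduces the problem to showing that the injectivity of
\[\KK^k(\Mu,\varDelta)\ \longrightarrow\ \BR^k(\Mu)_{|\overline{\varDelta}}\]
is preserved under the identification.

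The main obstacle I expect is the compatibility of the Poincar\'e pairings themselves. The degree-$d$ component is controlled by the map $\mr{hom}$ of Section~\ref{sec:ish}, which sends the top stress to the fundamental class in $H_{d-1}$; this identification is manifestly PL invariant. What remains is a diagram chase showing that multiplication in the face ring intertwines with the cone-lemma isomorphisms at $v_\sigma$ up to this fundamental-class identification, and the placement $v_\sigma \in \mr{span}(\sigma)$ is exactly the hypothesis that makes this compatibility hold on the nose. Once this is verified, non-degeneracy of the biased pairing transfers across each stellar move, and by induction on the number of moves, across the entire PL homeomorphism $\upvarphi$.
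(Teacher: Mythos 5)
You correctly reduce to elementary stellar moves, but the central step has a genuine gap. You claim a natural identification $\KK^\ast(\Mu,\varDelta)\cong\KK^\ast(\Mu{\uparrow}\sigma,\varDelta)$ for a stellar subdivision at $\sigma\notin\varDelta$, but these spaces are \emph{not} isomorphic once $\dim\sigma\ge 1$: as the remark after the lemma in the paper records, $\BR^\ast(\Mu{\uparrow}\sigma)\cong\BR^\ast(\Mu)\oplus\bigoplus_{i=1}^{|\sigma|-1}x_{v_\sigma}^i\AR^\ast(\lk_\sigma\Mu)$, and since $v_\sigma\notin\varDelta$ all the new summands sit inside $\KK^\ast(\Mu{\uparrow}\sigma,\varDelta)$, which is therefore strictly larger than $\KK^\ast(\Mu,\varDelta)$. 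No cone-lemma diagram chase can produce the identification you ask for, and the special placement $v_\sigma\in\mr{span}(\sigma)$ does not change this. Your appeal to compatibility of the $\mr{hom}$-map in degree $d$ is also aimed at the wrong target: the normalization of the fundamental class is never in doubt; what must be controlled is the interaction of the \emph{entire} bilinear form with the new pieces.

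The missing idea, and the one the paper uses, is that the pullback embedding $\BR^\ast(\Mu)\hookrightarrow\BR^\ast(\Mu{\uparrow}\sigma)$ splits the Poincar\'e pairing \emph{orthogonally} (the citation to Petersen, Proposition~2.2; in the toric setting this is the decomposition theorem for a blowup). Orthogonality decouples biased Poincar\'e duality for $\KK^\ast(\Mu{\uparrow}\sigma,\varDelta)$ into biased Poincar\'e duality for the summand $\KK^\ast(\Mu,\varDelta)$ together with non-degeneracy on the extra summands $x_{v_\sigma}^i\AR^\ast(\lk_\sigma\Mu)$; the latter is automatic because $\lk_\sigma\Mu$ is a sphere, so $\AR^\ast(\lk_\sigma\Mu)$ is itself a Poincar\'e duality algebra. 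That equivalence, not an isomorphism of ideals, is what transfers biased Poincar\'e duality across the move. As a secondary point, factoring a common refinement into stellar subdivisions is not known to be possible in general; the paper sidesteps this by invoking Alexander's stellar equivalence (a chain of stellar subdivisions and their inverses outside $\varDelta$), which is the safer reduction.
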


	\begin{proof}
The idea is to construct an isomorphism 
\[\overline{\KK}^\ast(\Mu,\varDelta)\ \cong\ \overline{\KK}^\ast(\Mu',\varDelta)\]
where $\overline{\KK}^\ast(\Mu,\varDelta)$ resp.\ $\overline{\KK}^\ast(\Mu',\varDelta)$ are the orthogonal complements of  ${\KK}^\ast(\Mu,\varDelta)$ resp.\ ${\KK}^\ast(\Mu',\varDelta)$.
	
	To this end, notice that we may define a PL homeomorphism more generally as any transformation obtained by stellar subdivisions and their inverses. Following Alexander's classical work (see~\cite{Licksurv} for a survey), this subsumes classical PL equivalences. Recall that a refinement of geometric simplicial complexes
	$\varDelta$ to a complex $\varDelta'$ induces a map
		\[\AR^\ast(\varDelta)\ \longrightarrow\ \AR^\ast(\varDelta').\]
		This is most easily seen by identifying the face ring with the ring of facewise polynomials (see for instance~\cite{Bapp, Brion}) and using the fact that facewise polynomials on $\varDelta$ define facewise polynomials on $\varDelta'$.

Now,	a stellar subdivision of $\Mu$ at a face $\sigma$, denoted by $\Mu{{{{\uparrow}}}} \sigma$, induces a pullback map
		\[\AR^\ast(\Mu)\ \longhookrightarrow\ \AR^\ast(\Mu{{{{\uparrow}}}} \sigma)\]
		descending to an embedding
		\[\BR^\ast(\Mu)\ \longhookrightarrow\ \BR^\ast(\Mu{{{{\uparrow}}}} \sigma).\]
		
		The splitting this induces in the Poincar\'e pairing is orthogonal (see for instance~\cite[Proposition 2.2]{Petersen}), and therefore does not affect biased Poincar\'e duality of $\KK(\Mu,\varDelta)$ if the subdivision is outside $\varDelta$, that is, we have $\sigma \notin \varDelta$.
	\end{proof}
	
	\begin{rem}
		In fact, one can prove rather easily that
		\[\BR^\ast(\Mu{{{{\uparrow}}}} \sigma)\ \cong\ \BR^\ast(\Mu)\oplus \bigoplus_{i=1}^{|\sigma|-1} x_{v_\sigma}^i \AR^\ast(\lk_\sigma \Mu).\]
		For toric varieties, this is but a special case of the decomposition theorem (see~\cite{BBDG}) and the proof in this generality is a straightforward adaption.
	\end{rem}
	

	\subsection{Characterization theorem for biased Poincar\'e duality: spherical envelopes}
	
	Consider now a rational manifold $\Mu$ of dimension $2k-1=d-1$ in $\R^d$; by Corollary~\ref{cor:envelope}, we can assume that $\varDelta$ comes with an envelope ${E}={E}_\varDelta$ that is a hypersurface with boundary. 
	
	We now gradually study such envelopes of increasing topological complexity, until we arrive in the penultimate criterion in Section~\ref{sec:doub}. The three sections are independent, and the hurried reader can skip ahead, but following the gradual thought process of the author may be instructive.
	
	\begin{prp}\label{prp:ospd}
		Assume ${E}$ is a rational sphere. Then $\Mu$ in $\R^d$ satisfies biased Poincar\'e duality in degree $k$ and with respect to $\varDelta$ if and only if
		\[\AR_k({E})\ =\ {0}.\]
	\end{prp}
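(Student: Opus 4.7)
The plan is to first reduce to the case where $\varDelta$ is itself a closed rational-sphere hypersurface $E \subset \Mu$, and then identify the obstruction to biased Poincar\'e duality as precisely $\AR_k(E)$. By Lemma~\ref{lem:PLinv} and Corollary~\ref{cor:envelope}, passing to a suitable refinement of $\Mu$ I may replace $\varDelta$ by its envelope $E$ without altering $\KK^k$ or biased Poincar\'e duality, so it suffices to treat the case $\varDelta = E$.

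By Corollary~\ref{cor:map}, biased Poincar\'e duality in middle degree $k = d/2$ amounts to the injectivity of the natural map $\KK^k(\Mu, E) \to \BR^k(\Mu)_{|\overline{E}}$. Via the Weil pairing on $\BR^\ast$ and the presentation $\KK_k(\Mu, E) = \coker\![\AR_k(E) \to \AR_k(\Mu)]$, this dualizes to the statement that the pushforward
\[\AR_k(E) \ \longrightarrow\ \AR_k(\Mu)\big/\BR_k(\Mu)_{|\overline{E}}\]
is injective. I therefore aim to show that this map is in fact a canonical isomorphism, from which the proposition follows: biased Poincar\'e duality holds precisely when the common source/target $\AR_k(E)$ vanishes.

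To establish the identification I plan to use partition of unity (Lemma~\ref{lem:partyyyyy}) on $\Mu$ together with the two cone lemmas and the hypersurface structure of $E$. Partition of unity writes $\AR_k(\Mu)$ as $\sum_v \AR_k(\St_v \Mu)$; contributions from $v\notin E$ lie entirely in $\BR_k(\Mu)_{|\overline{E}}$ by definition, whereas for $v \in E$ the star $\St_v \Mu$ contains $\St_v E$ as a sub-hypersurface and any stress in $\St_v \Mu$ not coming from $\St_v E$ must be supported on some face of $\St_v \Mu \setminus E$ and hence also lies in $\BR_k(\Mu)_{|\overline{E}}$. Conversely, a nonzero class in $\AR_k(E)$ has support confined to $(k{-}1)$-faces of $E$, so (using partition of unity on $E$, which is a Poincar\'e-duality rational sphere) it restricts nontrivially to the star of some $E$-vertex and therefore cannot be absorbed into $\BR_k(\Mu)_{|\overline{E}}$. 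Assembling these two observations gives the isomorphism $\AR_k(\Mu)/\BR_k(\Mu)_{|\overline{E}} \cong \AR_k(E)$.

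The main obstacle I anticipate is the hypersurface step in the surjectivity argument: cleanly splitting $\AR_k(\St_v \Mu)$ for $v\in E$ modulo $\BR_k(\Mu)_{|\overline{E}}$ into an $E$-contribution. I expect to carry this out by applying Proposition~\ref{prp:part} at the level of the links $\Lk_v \Mu$ and $\Lk_v E$ (one dimension lower, where $\Lk_v E$ is a codimension-one subsphere in the sphere $\Lk_v \Mu$), and then assembling the resulting local splittings via a diagram chase between the partition complexes of $\Mu$ and of $E$. Interior-socle bookkeeping via Corollary~\ref{lem:cl3} will be needed to ensure that this identification descends correctly from $\AR^\ast$ to $\BR^\ast$.
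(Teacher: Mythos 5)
Your approach cannot succeed: the proposed strategy is logically inconsistent with the statement of the proposition, and the geometric step meant to carry it out is the opposite of the truth. You correctly reduce biased Poincar\'e duality to the injectivity of the map $\AR_k(E)\to\BR_k(\Mu)/\BR_k(\Mu)_{|\overline{E}}$, and the source and target do have the same dimension (this follows from the exact sequence $0\to\AR_k(E)\to\BR_k(\Mu)\to\KK_k(\Mu,E)\to 0$ together with the fact that $\KK_k(\Mu,E)$ and $\BR_k(\Mu)_{|\overline{E}}$ are Poincar\'e dual). But you then announce that you will show this map is always a canonical isomorphism, and simultaneously conclude that biased Poincar\'e duality holds precisely when $\AR_k(E)=0$. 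These two claims are incompatible: if the map were always an isomorphism, it would always be injective, so biased Poincar\'e duality would hold unconditionally, contradicting the proposition and Example~\ref{ex:smooth}. The whole content of the proposition is to decide when this map between equidimensional spaces is an isomorphism; no unconditional identification can exist.

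The concrete false step is your assertion that a nonzero $\alpha\in\AR_k(E)$, having support confined to $(k{-}1)$-faces of $E$, ``cannot be absorbed into $\BR_k(\Mu)_{|\overline{E}}$.'' This has it exactly backwards. The partitioned subspace $\BR_k(\Mu)_{|\overline{E}}$ is spanned by $\AR_k(\st_\sigma\Mu)$ for $\sigma\notin E$, and such stars contain plenty of faces of $E$ whenever $\sigma$ is close to $E$; their stresses are by no means supported away from $E$. The correct statement is the reverse containment $\AR_k(E)\subseteq\BR_k(\Mu)_{|\overline{E}}$, and that is precisely what the paper's proof establishes: since $H_{k-1}(E)\to H_{k-1}(\Mu)$ is trivial, Proposition~\ref{prp:part} gives $\BR_k(E\subset\Mu)\cong\AR_k(E)$, and a second application of Proposition~\ref{prp:part} to the compactification of $\Mu\setminus E$ as a manifold with boundary shows that every class in $\AR_k(E)$ is partitionable over stars of faces not in $E$. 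Hence $\AR_k(E)\to\BR_k(\Mu)/\BR_k(\Mu)_{|\overline{E}}$ is the \emph{zero} map, and is injective precisely when $\AR_k(E)=0$. You can see the containment by hand in the setting of Example~\ref{ex:smooth}: for a square $\varSigma\subset\R^2$ with vertices $(\pm1,0),(0,\pm1)$ and $E$ the pair of antipodal vertices $(\pm1,0)$, the unique nonzero stress of $\AR_1(E)$ coincides with the unique stress supported in the star of either remaining vertex, so it already lies in $\BR_1(\varSigma)_{|\overline{E}}$.
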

	
	The proof is very simple, but illustrates a few important principles.
	
	\begin{proof}
		We have a short exact sequence
		\[0\ \longrightarrow\ \AR_k({E})\ \longrightarrow\ \BR_k(\Mu)\ \longrightarrow\ \KK_k(\Mu, {E})\ \longrightarrow\ 0\]
		so we see that \[\BR_k(\Mu)\ =\ \KK_k(\Mu, {E})\ =\ \BR_k(\Mu)_{\mid\overline{{E}}}\] in case the first space is trivial, so the claim simply follows by Poincar\'e duality.
		
			For the converse, we consider the exact sequence
		\[0\ \longrightarrow\	\BR_{k}(E\subset\Mu)\ \longrightarrow\ \BR_k(\Mu)\ \longrightarrow\  \KK_k(\Mu,E)\ \longrightarrow\ 0.
		\]
Notice that because
\[H_{k-1}(E)\ \longrightarrow\ H_{k-1}(\Mu)\]
is the trivial map,
we have
\[\BR_{k}(E\subset\Mu)\ \cong\ \AR_k({E})\]
by Proposition~\ref{prp:part}.
Hence, a nontrivial $\alpha \in \AR_k({E})$ maps to zero in homology of the compactification of $\Mu{\setminus} E$ as a manifold with boundary, so that then $\alpha$ is in the image of $\BR_k(\Mu)_{\mid\overline{{E}}}$, again by Proposition~\ref{prp:part}.
Hence \[\BR_k(\Mu)_{\mid\overline{{E}}}\ \longrightarrow\ \KK_k(\Mu, {E})\] has a nontrivial kernel as $\alpha=0$  in the latter space. Because $\BR_k(\Mu)_{\mid\overline{{E}}}$ and $\KK_k(\Mu, {E})$ are isomorphic as vectorspaces (they are dual under the Poincar\'e pairing) the map also has a nontrivial cokernel, which implies that $\Mu$ does not satisfy biased Poincar\'e duality with respect to $\varDelta$ by Corollary~\ref{cor:map}.
	\end{proof}
	
\begin{ex}\label{ex:smooth}
Consider the case $k=1$, and $\varSigma$ a sphere of dimension $1$, realized in $\R^2$. If $\varDelta$ is a $0$-dimensional sphere in $\varSigma$, then $\KK(\varSigma,\varDelta)$ satisfies biased Poincar\'e duality if and only if $\varDelta$ does not lie on a line through the origin in $\R^2$.

		\begin{figure}[h!tb]
			\begin{center}
				\includegraphics[scale = 1]{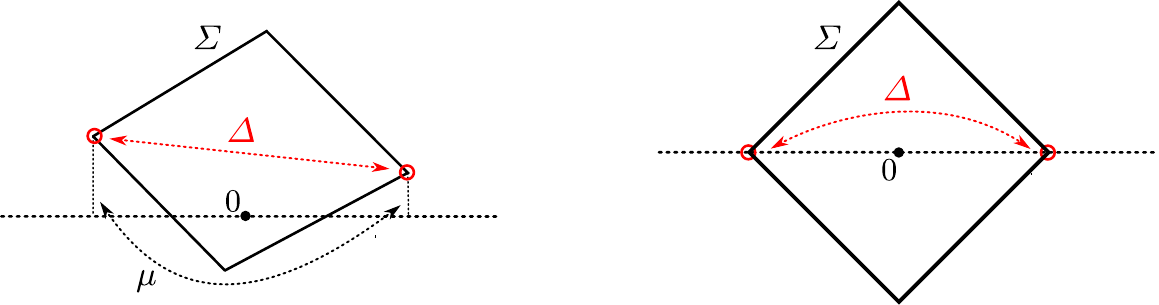}
				\caption{Biased Poincar\'e duality in a sphere $\varSigma$ and with respect to a codimension one sphere $\varDelta$ is related to the Lefschetz theorem on the latter.}
				\label{1sphere}
			\end{center}
		\end{figure}
	
On the other hand, this means that very nice and even smooth varieties do not satisfy biased Poincar\'e duality, for instance products of lower-dimensional varieties like the product of two projective lines $\mathbb{P}^1\times \mathbb{P}^1$, shown in Figure~\ref{1sphere} on the right.
\end{ex}	
	
\subsection{Characterization theorem for closed hypersurfaces}	
	We next want to deal with general closed rational hypersurfaces. Before we do that, we make a small intermission.

We are almost in the same situation as before: $\Mu$ is a rational manifold of dimension $2k-1=d-1$ in $\R^d$, but this time $E$ is a closed rational hypersurface in it that is not necessarily a sphere.
	
For simplicity, we shall assume such a hypersurface to be relatively $(k-2)$-acyclic, that is, 
	the map ${E} \hookrightarrow \Mu$ is injective in rational homology up to dimension $k-2$. We will see that this is sufficiently general for our purposes.
	
	 Indeed, following the proof of Proposition~\ref{prp:env}, this is a valid assumption we can make for combinatorial manifolds: every $\varDelta$ of dimension $k-1$ in $\Mu$ lies in a relatively $(k-2)$-acyclic closed hypersurface after subdivisions of $\Mu$ outside of $\varDelta$. 
	 
	 We shall also assume that the map ${E} \hookrightarrow \Mu$ is the trivial map in homology in dimension $d-2$, and shall treat the remaining case using the culminating methods of the next Section~\ref{sec:doub}.
		
Note that then $\Mu$ is parted into a finite number of compact simplicial manifolds $(\Mu_i)_{i\in I}$ such that the union of their boundaries is the hypersurface ${E}$. 
	
	Note that if $k>1$, then there are at most two of these rational manifolds. Note further that biased Poincar\'e duality with respect to ${E}$ is equivalent to biased Poincar\'e duality with respect to \[\overline{\Mu}_j\ =\ \bigcup_{i\in I{\setminus} \{j\} }\Mu_i\] for all $j\in I$ if $|I|\ge 2$ as the ideals $\KK(\Mu, \overline{\Mu}_j)$ are orthogonal to each other in the Poincar\'e pairing.
	
	We may therefore restrict to examining $\overline{\Mu}_1$ without loss of generality, and may assume that
	\[E\ =\ \partial \Mu_1\ =\ \partial \overline{\Mu}_1.\]
	
	\begin{thm}\label{thm:char_ospd}
		If $|I|\ge 2$, then $\Mu$ in $\R^d$ satisfies biased Poincar\'e duality with respect to $\overline{\Mu}_1$ and degree $k=\nicefrac{d}{2}$ as above if and only if the sequence
		\[0\ \longrightarrow\ \BR_k({E} \subset\Mu)\ \longrightarrow\ (H_{k-1})^{\binom{d}{k}}(\Mu_1)\ \longrightarrow\ (H_{k-1})^{\binom{d}{k}}(\Mu)\]
		is exact.
	\end{thm}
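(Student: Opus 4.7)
The plan is to translate biased Poincar\'e duality into a statement about partition complexes, and then to read the claimed exact sequence off a Mayer--Vietoris-style diagram attached to the decomposition $\Mu = \Mu_1 \cup_E \overline{\Mu}_1$.

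First, by Corollary~\ref{cor:map}, biased Poincar\'e duality with respect to $\overline{\Mu}_1$ in the middle degree $k=\nicefrac{d}{2}$ is equivalent to injectivity of
\[\KK^k(\Mu,\overline{\Mu}_1)\ \longhookrightarrow\ \BR^k(\Mu)_{|\Mu_1^\circ}.\]
Because $d=2k$, Proposition~\ref{ns} supplies Poincar\'e duality on $\BR^\ast(\Mu)$, under which this injection dualizes to the surjectivity of $\BR_k(\Mu)_{|\Mu_1^\circ}\twoheadrightarrow\KK_k(\Mu,\overline{\Mu}_1)$; equivalently, $\BR_k(\overline{\Mu}_1\subset\Mu)+\BR_k(\Mu)_{|\Mu_1^\circ}=\BR_k(\Mu)$.

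Next I identify the two summands concretely. Any face $\sigma\in\Mu_1^\circ$ has $\st_\sigma \Mu\subset \Mu_1$, so the generators of $\BR_k(\Mu)_{|\Mu_1^\circ}$ factor through $\AR_k(\Mu_1)\to \BR_k(\Mu)$; conversely, partition of unity applied to the manifold-with-boundary $\Mu_1$ exhibits every class in $\AR_k(\Mu_1)$ as such a partitioned contribution modulo stars of boundary vertices, whose images already lie in $\BR_k(E\subset\Mu)\subset\BR_k(\overline{\Mu}_1\subset\Mu)$. The symmetric argument for $\overline{\Mu}_1$ then yields
\[\BR_k(\overline{\Mu}_1\subset\Mu)\,\cap\,\BR_k(\Mu)_{|\Mu_1^\circ}\ =\ \BR_k(E\subset\Mu).\]
Combined with the first step, the biased Poincar\'e duality condition becomes the dimension identity
\[\dim\AR_k(\Mu_1)+\dim\BR_k(\overline{\Mu}_1\subset\Mu)-\dim\BR_k(E\subset\Mu)\ =\ \dim\BR_k(\Mu).\]
Applying Proposition~\ref{prp:part} to $\Mu_1$ and to $\Mu$ identifies the gap between $\dim \AR_k$ and $\dim \BR_k$ with the dimension of $(H_{k-1})^{\binom{d}{k}}$ of the respective space, and the above identity then rephrases precisely as the exactness of
\[0\ \longrightarrow\ \BR_k(E\subset\Mu)\ \longrightarrow\ (H_{k-1})^{\binom{d}{k}}(\Mu_1)\ \longrightarrow\ (H_{k-1})^{\binom{d}{k}}(\Mu),\]
the first arrow coming from the socle interpretation of $E$-supported classes in $\Mu_1$ furnished by Proposition~\ref{prp:part}, and the second being the natural pushforward along $\Mu_1\hookrightarrow\Mu$.

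The main obstacle will be the middle step---cleanly identifying the intersection with $\BR_k(E\subset\Mu)$. This relies essentially on both hypotheses in force before the theorem: the relative $(k-2)$-acyclicity of $E$ in $\Mu$, and the triviality of $H_{d-2}(E)\to H_{d-2}(\Mu)$. These kill the potential Mayer--Vietoris obstructions in the degrees adjacent to $k-1$, so that no additional homology of $E$ contributes to the intersection; without them, further correction terms coming from $H_{k-2}(E)$ and $H_{d-2}(E)$ would need to be inserted into the exact sequence.
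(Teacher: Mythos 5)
Your opening translation is fine: by Corollary~\ref{cor:map} and Poincar\'e duality, biased Poincar\'e duality at $\overline{\Mu}_1$ in degree $k=\nicefrac{d}{2}$ is equivalent to $\BR_k(\overline{\Mu}_1\subset\Mu)+\BR_k(\Mu)_{|\Mu_1^\circ}=\BR_k(\Mu)$. The problem starts with how you handle the partitioned subspace $\BR_k(\Mu)_{|\Mu_1^\circ}$, and your pivotal ``intersection'' claim is in fact false, which collapses the argument.

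Concretely, $\BR_k(\Mu)_{|\Mu_1^\circ}$ is generated only by $\AR_k(\st_v\Mu)$ for $v$ interior to $\Mu_1$; it is a (generally proper) subspace of $\BR_k(\Mu_1)$, which is a (generally proper) subspace of $\AR_k(\Mu_1)$. Replacing it by $\AR_k(\Mu_1)$ in your dimension identity is therefore not justified, and the intermediate claim that ``stars of boundary vertices'' of $\Mu_1$ contribute classes in $\BR_k(E\subset\Mu)\subset\BR_k(\overline{\Mu}_1\subset\Mu)$ does not hold: $\AR_k(\st_w\Mu_1)$ for $w\in E$ is supported in $\Mu_1$, not in $\overline{\Mu}_1$, and typically not in $E$. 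Worse, the asserted identity $\BR_k(\overline{\Mu}_1\subset\Mu)\cap\BR_k(\Mu)_{|\Mu_1^\circ}=\BR_k(E\subset\Mu)$ cannot be correct as an unconditional statement. Since $\BR_k(\Mu)_{|\Mu_1^\circ}$ and $\overline{\KK}^k(\Mu,\overline{\Mu}_1)$ are Weil-orthogonal and $\KK^k,\overline{\KK}^k$ are Poincar\'e-orthogonal complements in $\BR^k(\Mu)$, one has $\dim\BR_k(\Mu)_{|\Mu_1^\circ}=\dim\BR_k(\Mu)-\dim\BR_k(\overline{\Mu}_1\subset\Mu)$, so the condition ``sum equals everything'' is automatically equivalent to ``intersection is $0$''; if the intersection were always $\BR_k(E\subset\Mu)$, biased Poincar\'e duality would reduce to $\BR_k(E\subset\Mu)=0$, which is not what the theorem asserts. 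Finally, even granting a correct dimension identity, that alone does not yield exactness of a three-term sequence: you still need injectivity of $\BR_k(E\subset\Mu)\to(H_{k-1})^{\binom{d}{k}}(\Mu_1)$ and that its image fills the kernel, and neither follows from counting.

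The paper avoids all of this by never trying to describe the partitioned subspace $\BR_k(\Mu)_{|\Mu_1^\circ}$ directly (it explicitly warns that such subspaces are hard to work with). Instead it builds a commutative diagram whose horizontal row is the four-term exact sequence of Proposition~\ref{prp:part} applied to $\Mu_1$ relative to $\Mu$, and whose vertical column is $0\to\BR_k(E\subset\Mu)\to\BR_k(\Mu_1\subset\Mu)\to\widetilde{\KK}_k(\Mu_1,E)\to0$. A diagram chase shows that exactness of the stated sequence is equivalent to the diagonal $\BR_k(\Mu_1)\to\widetilde{\KK}_k(\Mu_1,E)$ being an isomorphism; combining with the canonical injection $\widetilde{\KK}_k(\Mu_1,E)\hookrightarrow\KK_k(\Mu,\overline{\Mu}_1)$ and the equality $\dim\BR_k(\Mu_1)=\dim\KK_k(\Mu,\overline{\Mu}_1)$ turns this into the surjectivity statement that Corollary~\ref{cor:map} identifies with biased Poincar\'e duality. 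That injection and dimension count are the right substitute for your intersection computation, and they do the work that your proposal leaves unjustified.
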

	
	The proof is a direct combination of Proposition~\ref{prp:part} and Corollary~\ref{cor:map}.
	
	\begin{proof}
		The theorem follows at once from the commutative diagram:
		\[\begin{tikzcd}
		&  & 0 \arrow{d}{} &  &\\
		&  & \BR_k({E}\subset\Mu)   \arrow{d}{} \arrow[dashed]{dr}{\mathrm{hom}} &  &\\
		0 \arrow{r}{} & \BR_k(\Mu_1) \arrow{r}{} \arrow[dashed]{dr}{} & \BR_k(\Mu_{1} \subset\Mu) \arrow{r}{} \arrow{d}{} & (H_{k-1})^{\binom{d}{k}}(\Mu_1) \arrow{r}{} &(H_{k-1})^{\binom{d}{k}}(\Mu)\\
		&  & \widetilde{\KK}_k(\Mu_1, {E}) \arrow{d}{} &  &\\
		&  & 0 &  &
		\end{tikzcd}
		\]
		where the bottom space $ \widetilde{\KK}_k(\Mu_1, {E})$ is defined by the vertical exact sequence. It injects into $\KK_k(\Mu, \overline{\Mu}_1)$, which is the Poincar\'e dual to $\BR_k(\Mu_1)$ and therefore isomorphic to it as vectorspaces.
		Hence, the exact sequence of the assumption is equivalent to an isomorphism
		\[\BR_k(\Mu_1)\ \cong\  \widetilde{\KK}_k(\Mu_1, {E})\ \longhookrightarrow\ \KK_k(\Mu, \overline{\Mu}_1),\]
where the spaces on the extreme left and extreme right are of the same dimension. Hence, the first isomorphism is therefore equivalent to an isomorphism between the extreme spaces, which is exactly biased Poincar\'e duality.
	\end{proof}
	
	\subsection{The doubling trick and the general characterization theorem}\label{sec:doub}
	
	To prove biased Poincar\'e duality, the above criterion will not be enough, unfortunately. In particular, we cannot always assume that ${E}$ is a closed hypersurface. Instead, the criterion below will suffice in general by Corollary~\ref{cor:envelope}.

Again, we are almost in the same situation as before, gradually generalizing until we can now formulate the main theorem: $\Mu$ is again a rational manifold of dimension $2k-1=d-1$ in $\R^d$, but this time around we allow for rational hypersurfaces in it to have boundary.
	
	Consider for this purpose the complement $\widetilde{\Mu}$ of ${E}$ in $\Mu$, where  ${E}$ is a relatively $(k-2)$-acyclic rational hypersurface with induced boundary in $\Mu$, and the double $\mathrm{D}{E}$ of ${E}$. Notice that the proof of Corollary~\ref{cor:envelope} guarantees the existence of such an envelope in degree $k$ for any $(k-1)$-dimensional subcomplex of a combinatorial $\Mu$ (potentially having to subdivide $\Mu$ outside of that complex).
	
	 There is a \Defn{folding map} 
	\[\uptau:\mathrm{D}{E}\ \longrightarrow\ {E}\]
	for doubles that will be tremendously useful, which identifies the two copies of ${E}$ that we call the \Defn{charts} of $\mathrm{D}{E}$.

	The open manifold $\widetilde{\Mu}$ can be compactified canonically to a compact manifold $\widetilde{\mr{D}}\Mu$ with boundary $\mathrm{D}{E}$, and the folding map extends to a map
	\[\uptau:\widetilde{\mr{D}}\Mu\ \longrightarrow\ \Mu.\]
	
		Let us also introduce the following useful concept: Let $\uppi$ denote the general position projection to a hyperplane $H$, and ${\tet}$ the height over that projection, so that
	\[\AR_k(X)\ = \ \ker[\AR_{k}(\uppi\hspace{0.3mm} X) \ \xrightarrow{\ \cdot{\tet}\ }\ \AR_{k-1}(\uppi\hspace{0.3mm} X)]\]
	for a complex $X$ in $\R^d$.
	Let us first observe the following helpful lemma that translates the problem to the case of doubles.

	\begin{lem}\label{lem:reductodouble}
		Consider $\uppi$ a general position projection to a hyperplane, and let $\vartheta$ denote the height over that hyperplane. The following are equivalent:
		\begin{compactenum}[(1)]
		\item The map
		\[\BR_k(\uppi\hspace{0.3mm} {E}\subset \uppi\hspace{0.3mm} \Mu)\ \xrightarrow{\ \cdot {\tet}\ }\ \BR_{k-1}(\uppi\hspace{0.3mm} {E},\partial \uppi\hspace{0.3mm} {E}),\]
		which factors as
			\[\begin{tikzcd}
		\BR_k(\uppi\hspace{0.3mm} {E}\subset \uppi\hspace{0.3mm} \Mu)  \arrow{r}{}\arrow{dr}{\ \cdot {\tet}\ }&  \BR_{k-1}(\uppi\hspace{0.3mm} {E},\partial \uppi\hspace{0.3mm} {E})\\
		&\BR_{k-1}(\uppi\hspace{0.3mm} {E}\subset \uppi\hspace{0.3mm} \Mu) \arrow{u}{}
		\end{tikzcd}
		\]
		is a surjection.
\item	The isomorphism
				\[\AR_k({\widetilde{\mr{D}}\Mu}, \mathrm{D}{E} )\ \xrightarrow{\ \sim\ }\ \AR_k({\Mu}, {E})\]
		descends to an isomorphism of the image $\KK(\widetilde{\mr{D}}\Mu, \mathrm{D}{E}) $ of 
		\[\BR_k({\widetilde{\mr{D}}\Mu} \subset \Mu)\ \longrightarrow\ \AR_k({\widetilde{\mr{D}}\Mu}, \mathrm{D}{E} )\] to the image $\KK_k(\Mu,{E})$ of
		\[\BR_k({\Mu})\ \longrightarrow\ \AR_k({\Mu}, {E}).
		\]
		\end{compactenum}
	\end{lem}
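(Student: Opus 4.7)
\emph{Proof proposal.} The plan is to recognize both conditions as descriptions of the same obstruction: the possibility of lifting a stress from $\Mu$ (modulo those supported on $E$) to a stress on the compactified complement $\widetilde{\mr{D}}\Mu$ (modulo those supported on $\mathrm{D}E$), via the folding map $\uptau$. First I would build the commutative square
\[\begin{tikzcd}
\BR_k(\widetilde{\mr{D}}\Mu\subset\Mu) \arrow{r}{} \arrow{d}{\uptau_\ast} & \AR_k(\widetilde{\mr{D}}\Mu,\mathrm{D}E) \arrow{d}{\sim}\\
\BR_k(\Mu) \arrow{r}{} & \AR_k(\Mu,E)
\end{tikzcd}\]
whose right vertical is the claimed canonical isomorphism (arising because $\uptau$ identifies faces of $\mathrm{D}E$ in pairs that collapse upon pushforward). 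The images of the two horizontal maps are precisely $\KK(\widetilde{\mr{D}}\Mu,\mathrm{D}E)$ and $\KK_k(\Mu,E)$, so condition (2) asks that the canonical isomorphism send one onto the other.

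Next I would analyze the defect. A stress $\alpha$ on $\widetilde{\mr{D}}\Mu$ pushes forward under $\uptau$ to a stress on $\Mu$ if and only if the two restrictions of $\alpha$ to the charts of $\mathrm{D}E$ sum, after folding, to a balanced configuration on $E$ in the sense of the Minkowski balancing condition in Proposition~\ref{prp:mw}. Conversely, given $\beta\in\KK_k(\Mu,E)$, lifting it to $\KK(\widetilde{\mr{D}}\Mu,\mathrm{D}E)$ amounts to choosing chart-data on $\mathrm{D}E$ that, under $\uptau$, matches the restriction of the Ishida-complex boundary of $\beta$ to $E$. Thus the descent in (2) is exactly the solvability of this boundary matching problem; its obstruction lives in the cokernel of a map whose source is the space of permissible chart-data on $E$ and whose target is the relative space $\BR_{k-1}(E,\partial E)$ encoding the boundary configurations that need to be realized.

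The final step is to translate this obstruction from $E$ to $\uppi E$. Since $\uppi$ is a generic hyperplane projection and $E$ has dimension $d-2$, the Weil dual of multiplication by $\vartheta$ is an isomorphism between the stress spaces in adjacent degrees (this is the content of the cone lemmas applied one dimension lower), and it intertwines the boundary matching problem on $E$ with the map
\[\BR_k(\uppi E\subset\uppi\Mu) \xrightarrow{\ \cdot\vartheta\ } \BR_{k-1}(\uppi E,\partial\uppi E).\]
The factoring displayed in (1) then shows that permissible chart-data are precisely the elements of $\BR_k(\uppi E\subset\uppi\Mu)$, and that the obstruction vanishes for every $\beta$ if and only if this $\vartheta$-multiplication is surjective onto $\BR_{k-1}(\uppi E,\partial\uppi E)$.

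The main obstacle will be the very last identification: verifying that projection genuinely converts lifting of stresses across the double into multiplication by $\vartheta$ at one dimension lower, without losing or introducing extra information. Concretely, one must show that the chart-data on $\mathrm{D}E$ that compatibly folds are parametrized by $\BR_k(\uppi E\subset\uppi\Mu)$ and that the folding defect matches $\vartheta\cdot(-)$; this should follow from the Ishida-complex presentation together with the partition of unity applied to $\uppi\Mu$, but requires a careful bookkeeping of the ambient coordinatization and of the relative stress space $\BR_{k-1}(\uppi E,\partial\uppi E)$ via its Weil dual.
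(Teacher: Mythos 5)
The overall framing is right: you correctly reformulate condition (2) as "the canonical isomorphism $\AR_k(\widetilde{\mr{D}}\Mu,\mathrm{D}E)\to\AR_k(\Mu,E)$ carries one image onto the other," and your commutative square is essentially the diagram the paper reduces to. Your intuition that the defect is an obstruction to "folding" a stress on $\widetilde{\mr{D}}\Mu$ down to $\Mu$, and that this obstruction is tracked by boundary data on $E$, also matches the spirit of the argument.

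However, the step you explicitly flag as "the main obstacle" is precisely the step that carries the whole lemma, and your sketch of it contains two problems. First, the fact you invoke — that $\vartheta$-multiplication is the Weil dual of an isomorphism "by the cone lemmas applied one dimension lower" — is a misattribution. The cone lemmas relate stars and links of vertices; the relevant fact here is the identity
\[\AR_k(X)\ =\ \ker\bigl[\AR_k(\uppi\hspace{0.3mm}X)\ \xrightarrow{\ \cdot\vartheta\ }\ \AR_{k-1}(\uppi\hspace{0.3mm}X)\bigr],\]
stated in the paper just before the lemma, which is what actually relates stress spaces across the projection $\uppi$. Second, and more substantively, you never actually carry out the "bookkeeping": you need to explain why condition (1), a surjectivity statement downstairs (for the projected complexes), is equivalent to the matching of images upstairs. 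The paper does this by working entirely with the projected complexes from the start: it writes down the two short exact sequences $0\to\BR_\ast(\uppi\mathrm{D}E\subset\uppi\Mu)\to\BR_\ast(\uppi\widetilde{\mr{D}}\Mu\subset\uppi\Mu)\to\AR_\ast(\uppi\widetilde{\mr{D}}\Mu,\uppi\mathrm{D}E)\to 0$ and $0\to\BR_\ast(\uppi E\subset\uppi\Mu)\to\BR_\ast(\uppi\Mu)\to\AR_\ast(\uppi\Mu,\uppi E)\to 0$, observes that condition (1) is exactly the statement that the cokernels of the $\vartheta$-multiplication maps on $\BR_\ast(\uppi E\subset\uppi\Mu)$ and $\BR_\ast(\uppi\mathrm{D}E\subset\uppi\Mu)$ are generated by $\AR_{k-1}(\partial\uppi E)$ (giving an injection between those cokernels), and then reduces the whole diagram by $\vartheta$ to obtain a commutative diagram whose right column is an injection and middle column an isomorphism. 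The desired surjection of images drops out by a diagram chase. Your proposal recognizes the shape of the answer but stops where the actual proof begins; you would need to spell out the exact sequences for the projected complexes and perform the $\vartheta$-reduction to close the gap.
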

	
	Here we read $\BR_k(X\subset\Mu)$ for $X \subset \widetilde{\mr{D}}\Mu$ as the kernel of the composition
	\[\begin{tikzcd}
	\AR_k(X)  \arrow{r}{}\arrow{dr}{}&  (H_{k-1})^{\binom{d}{k}}(\Mu)\\
	& (H_{k-1})^{\binom{d}{k}}(\widetilde{\mr{D}}\Mu)\arrow{u}{\uptau}
	\end{tikzcd}
	\]
	Let us briefly take a moment to explain why we require the above lemma: We wish to prove the existence of a surjection
	\[\BR_{k}(\Mu)_{|\overline{{E}}}\ \longtwoheadrightarrow\ \KK_k(\Mu,{E}).\]
	Unfortunately, the first space, a partitioned subspace of $\BR_{k}(\Mu)$, lacks a nice descripition. Therefore, we look at doubles.
	Notice that \[\AR_k(\st_\sigma \widetilde{\mr{D}}\Mu)\ =\ 0\] unless $\sigma$ is an interior $(k-1)$-face of $\widetilde{\mr{D}}\Mu$, so that
	we have a well-defined map
	\[\bigoplus\limits_{\sigma \in \widetilde{\mr{D}}\Mu^{(k-1)}} \AR_k(\st_\sigma \widetilde{\mr{D}}\Mu) \ \longrightarrow\ \bigoplus\limits_{\substack{\sigma \in \Mu^{(k-1)}, \\ \sigma \notin {{E}}} }  \AR_k(\st_\sigma \Mu)\]
	and in particular a map
		\[\BR_k(\widetilde{\mr{D}}\Mu)\ \longrightarrow\ \BR_{k}(\Mu)_{\mid\overline{{E}}}.\]
	We can now see how the doubling construction helps:
	 Consider the commutative diagram
	\[\begin{tikzcd}
	\KK_k(\Mu,{E}) &    \KK_k(\widetilde{\mr{D}}\Mu, \mathrm{D}{E}) \arrow{l}{} \\
	\BR_{k}(\Mu)_{\mid\overline{{E}}} \arrow{u}{} &  \BR_k(\widetilde{\mr{D}}\Mu) \arrow{l}{} \arrow{u}{}\\
	\bigoplus\limits_{\substack{\sigma \in \Mu^{(k-1)}, \\ \sigma \notin {{E}}} }  \AR_k(\st_\sigma \Mu)  \arrow[two heads]{u}{} &  \bigoplus\limits_{\sigma \in \widetilde{\mr{D}}\Mu^{(k-1)}} \AR_k(\st_\sigma \widetilde{\mr{D}}\Mu). \arrow{l}{} \arrow[two heads]{u}{} 
	\end{tikzcd}
	\]
	The previous lemma ensures that the top horizontal map is a surjection. Hence, proving a surjection of the left top vertical map is reduced to proving a surjection on the right top vertical map.

	\begin{proof}[\textbf{Proof of Lemma~\ref{lem:reductodouble}}]
		The map is clearly an injection as it is the specialization of an injective map. But the conditions ensure that the cokernels of
		\[\BR_k(\uppi\hspace{0.3mm} {E}\subset \uppi\hspace{0.3mm} \Mu)\ \xrightarrow{\ \cdot {\tet}\ }\ \BR_{k-1}(\uppi\hspace{0.3mm} {E}\subset \uppi\hspace{0.3mm} \Mu)\]
		and
		\[\BR_k(\uppi\hspace{0.3mm} \mathrm{D}{E}\subset \uppi\hspace{0.3mm}\Mu)\ \xrightarrow{\ \cdot {\tet}\ }\ \BR_{k-1}(\uppi\hspace{0.3mm} \mathrm{D}{E}\subset \uppi\hspace{0.3mm}\Mu)\]
		are generated by elements in $\AR_{k-1}(\partial \uppi\hspace{0.3mm} {E})$, which in particular implies an injection (and in fact an isomorphism)
		\[ \bigslant{\BR_{k-1}(\uppi\hspace{0.3mm} \mathrm{D}{E}\subset \uppi\hspace{0.3mm}\Mu)}{{\tet}\BR_{k}(\uppi\hspace{0.3mm} \mathrm{D}{E} \subset \uppi\hspace{0.3mm}\Mu )}\ \longhookrightarrow\  \bigslant{\BR_{k-1}(\uppi\hspace{0.3mm} {E}\subset \uppi\hspace{0.3mm} \Mu)}{{\tet}\BR_{k}(\uppi\hspace{0.3mm} {E}\subset \uppi\hspace{0.3mm} \Mu)}.\]
		Now we have the following commutative diagram for degrees at most $k$:H
		\[\begin{tikzcd}
		0\arrow{r}{}  & \BR_\ast(\uppi\hspace{0.3mm} \mathrm{D}{E} \subset \uppi\hspace{0.3mm}\Mu) \arrow{r}{} \arrow{d}{} & \BR_\ast(\uppi\hspace{0.3mm}{\widetilde{\mr{D}}\Mu} \subset \uppi\hspace{0.3mm}\Mu ) \arrow{r}{} \arrow{d}{}&  {\AR}_\ast(\uppi\hspace{0.3mm} {\widetilde{\mr{D}}\Mu},\uppi\hspace{0.3mm} \mathrm{D}{E} ) \arrow{r}{} \arrow{d}{} & 0 \\
		0\arrow{r}{} & \BR_\ast(\uppi\hspace{0.3mm} {E} \subset \uppi\hspace{0.3mm} \Mu) \arrow{r}{} & \BR_\ast(\uppi\hspace{0.3mm}\Mu) \arrow{r}{} &  \AR_\ast(\uppi\hspace{0.3mm} \Mu,\uppi\hspace{0.3mm} {E}) \arrow{r}{} & 0 
		\end{tikzcd}
		\]
		which after reduction by $\tet$
		implies a commutative diagram
		\[\begin{tikzcd}
		\BR_k({\widetilde{\mr{D}}\Mu} \subset \Mu ) \arrow{r}{} \arrow{d}{}&  {\AR}_k({\widetilde{\mr{D}}\Mu},\mathrm{D}{E} ) \arrow{r}{} \arrow{d}{} & \bigslant{\BR_{k-1}(\uppi\hspace{0.3mm} \mathrm{D}{E}\subset \uppi\hspace{0.3mm}\Mu)}{{\tet}\BR_{k}(\uppi\hspace{0.3mm} \mathrm{D}{E} \subset \uppi\hspace{0.3mm}\Mu )}\arrow{r}{} \arrow[hook]{d}{} &0 \\
		\BR_k(\Mu) \arrow{r}{} &  \AR_k(\Mu,{E}) \arrow{r}{} &   \bigslant{\BR_{k-1}(\uppi\hspace{0.3mm} {E}\subset \uppi\hspace{0.3mm} \Mu)}{{\tet}\BR_{k}(\uppi\hspace{0.3mm} {E}\subset \uppi\hspace{0.3mm} \Mu)} \arrow{r}{} &0 
		\end{tikzcd}
		\]
		which in turn implies that the desired map is also a surjection of the middle isomorphism
		\[{\AR}_k({\widetilde{\mr{D}}\Mu},\mathrm{D}{E} ) \ \longrightarrow\ \AR_k(\Mu,{E})\]
		when restricted to the images of
		$\BR_k({\widetilde{\mr{D}}\Mu} \subset \Mu )$ and $\BR_k(\Mu)$, respectively.
	\end{proof}
		
	This takes us rather far, as we now have to prove that 
	$\BR({\widetilde{\mr{D}}\Mu})$ surjects onto $\KK(\widetilde{\mr{D}}\Mu, \mathrm{D}{E})$ in degree $k$. We combine this with the arguments for Theorem~\ref{thm:char_ospd} to obtain:
		
	\begin{thm}\label{thm:ospd2}
		In the situation of the previous Lemma~\ref{lem:reductodouble},
		\[\BR_k(\mathrm{D}{E}\subset\Mu)\ \longrightarrow\ (H_{k-1})^{\binom{d}{k}}({\widetilde{\mr{D}}\Mu})\ \longrightarrow\  (H_{k-1})^{\binom{d}{k}}(\Mu)\]
		is exact if and only if 
		\[\BR_k({\widetilde{\mr{D}}\Mu})\ \longrightarrow\ \KK_k(\widetilde{\mr{D}}\Mu, \mathrm{D}{E})\]
		is surjective. 
	\end{thm}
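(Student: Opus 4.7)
The plan is to mirror the proof of Theorem~\ref{thm:char_ospd}, but now with the double $\widetilde{\mr{D}}\Mu$ in place of $\Mu_1$ and the hypersurface with boundary $\mathrm{D}E$ in place of the closed ${E}$. The key engine is again Proposition~\ref{prp:part} (now applied to the manifold-with-boundary $\widetilde{\mr{D}}\Mu$), which identifies the defect of the composition
\[ \BR_k(\widetilde{\mr{D}}\Mu \subset \Mu)\ \longrightarrow\ \AR_k(\widetilde{\mr{D}}\Mu, \mathrm{D}E)\]
with the contribution coming from the interior homology of $\widetilde{\mr{D}}\Mu$ that survives in $(H_{k-1})^{\binom{d}{k}}(\Mu)$ after mapping by $\uptau$, together with Corollary~\ref{cor:map}, which reinterprets surjectivity onto $\KK_k$ as biased Poincar\'e duality on the partitioned side.

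Concretely, I would assemble the diagram
\[
\begin{tikzcd}[column sep = small]
&  & 0 \arrow{d}{} &  &\\
&  & \BR_k(\mathrm{D}E \subset\Mu)   \arrow{d}{} \arrow[dashed]{dr}{\mathrm{hom}} &  &\\
0 \arrow{r}{} & \BR_k(\widetilde{\mr{D}}\Mu) \arrow{r}{} \arrow[dashed]{dr}{} & \BR_k(\widetilde{\mr{D}}\Mu \subset\Mu) \arrow{r}{} \arrow{d}{} & (H_{k-1})^{\binom{d}{k}}(\widetilde{\mr{D}}\Mu) \arrow{r}{\uptau} &(H_{k-1})^{\binom{d}{k}}(\Mu)\\
&  & \KK_k(\widetilde{\mr{D}}\Mu, \mathrm{D}E) \arrow{d}{} &  &\\
&  & 0 &  &
\end{tikzcd}
\]
whose middle vertical column is the defining short exact sequence of $\KK_k(\widetilde{\mr{D}}\Mu, \mathrm{D}E)$ and whose horizontal row is the tail of the partition complex for $\widetilde{\mr{D}}\Mu$ pushed to $\Mu$. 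Using the relative $(k-2)$-acyclicity of $E$ in $\Mu$, Proposition~\ref{prp:part} identifies $\BR_k(\mathrm{D}E \subset\Mu)$ with the kernel of the map $\uptau \circ \mathrm{hom}$ on the displayed row.

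With the diagram in place, the equivalence is purely diagrammatic. If the top row is exact at $(H_{k-1})^{\binom{d}{k}}(\widetilde{\mr{D}}\Mu)$ in the stated sense, then every class of $\BR_k(\widetilde{\mr{D}}\Mu \subset\Mu)$ whose image in $\KK_k(\widetilde{\mr{D}}\Mu, \mathrm{D}E)$ is nontrivial can be lifted, modulo $\BR_k(\mathrm{D}E\subset\Mu)$, to an element of $\BR_k(\widetilde{\mr{D}}\Mu)$; this yields the required surjection $\BR_k(\widetilde{\mr{D}}\Mu) \twoheadrightarrow \KK_k(\widetilde{\mr{D}}\Mu, \mathrm{D}E)$. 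Conversely, surjectivity forces the hom-image of $\BR_k(\mathrm{D}E\subset\Mu)$ to exhaust the kernel of $\uptau$ in the appropriate degree, giving the claimed exactness by a dimension count against Poincar\'e duality inside $\widetilde{\mr{D}}\Mu$.

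The main obstacle will be the bookkeeping at the boundary $\partial E$: because $E$ only has induced boundary in $\Mu$ and may have nontrivial topology, the identification of $\BR_k(\mathrm{D}E\subset\Mu)$ with the kernel of $\mathrm{hom}$ along $\uptau$ requires care, and one must verify that $(H_{k-1})^{\binom{d}{k}}(\widetilde{\mr{D}}\Mu)$ indeed computes the socle in degree $k$ of the relevant partition complex for the manifold-with-boundary $\widetilde{\mr{D}}\Mu$. This is exactly the content of the relative form of Proposition~\ref{prp:part} referenced in Lemma~\ref{lem:cl}, so after invoking that extension the rest of the argument is a straightforward diagram chase.
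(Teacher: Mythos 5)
Your proof reproduces the paper's argument essentially verbatim: you assemble the same two-by-two commutative diagram, with the vertical column exact by the definition of $\KK_k(\widetilde{\mr{D}}\Mu,\mathrm{D}E)$ and the horizontal row exact by Proposition~\ref{prp:part} applied to $\widetilde{\mr{D}}\Mu$, and then chase the diagonal maps exactly as the paper does in its proof of Theorem~\ref{thm:char_ospd}. The caveats you flag (boundary bookkeeping, relative form of Proposition~\ref{prp:part}) are already handled by the paper's framework and don't change the structure; the approach is the same.
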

	
	\begin{proof}
		Consider as in the proof of Theorem~\ref{thm:char_ospd} the commutative diagram
		\[\begin{tikzcd}
		&  & 0 \arrow{d}{} &  &\\
		&  & \BR_k(\mathrm{D}{E}\subset\Mu)   \arrow{d}{}\arrow[dashed]{dr}{\mathrm{hom}} &  &\\
		0 \arrow{r}{} & \BR_k({\widetilde{\mr{D}}\Mu}) \arrow{r}{} \arrow[dashed]{dr}{} & \BR_k(\widetilde{\mr{D}}\Mu \subset\Mu) \arrow{r}{} \arrow{d}{} & (H_{k-1})^{\binom{d}{k}}(\widetilde{\mr{D}}\Mu) \arrow{r}{} &(H_{k-1})^{\binom{d}{k}}(\Mu)\\
		&  & \KK_k(\widetilde{\mr{D}}\Mu, \mathrm{D}{E}) \arrow{d}{} &  &\\
		&  & 0 &  &
		\end{tikzcd}
		\]
		The vertical sequence is exact by definition, and the horizontal sequence is exact by Proposition~\ref{prp:part}. The claim follows at once by considering the diagonal maps.
	\end{proof}
		
	\subsection{The Hall-Laman relations}
	
	Finally, biased Poincar\'e duality allows us to formulate a Lefschetz property at ideals. We say that $\Mu$ a rational manifold of dimension $d-1$ in $\R^d$ satisfies the \Defn{Hall-Laman relations} in degree $k\le \frac{d}{2}$ and with respect to an ideal $\mathcal{I}^\ast\subset \BR^\ast(\Mu)$ if there exists an $\ell$ in $\BR^1(\Mu)$, the pairing
	\begin{equation}\label{eq:sl}
	\begin{array}{ccccc}
	\mathcal{I}^k& \times &\mathcal{I}^k & \longrightarrow &\ \mathcal{I}^d\cong \R \\
	a	&		& b& {{\xmapsto{\ \ \ \ }}} &\ \mr{deg}(ab\ell^{d-2k})
	\end{array}
	\end{equation}
	is nondegenerate. Usually interested in face-rings, we will say $\Mu$ satisfies the \Defn{Hall-Laman relations} if it does so at all squarefree monomial ideals, that is, for all $\KK^\ast(\Mu,\varDelta)$, where $\varDelta$ is any subcomplex of $\Mu$, and their annihilators, that is,
\[\overline{\KK}^\ast(\Mu,\varDelta)\ \coloneqq\ \ker\left[\BR^\ast(\Mu)\ \longrightarrow\ \bigoplus_{\substack{\sigma \in \Mu\\ \sigma \notin \varDelta}} \AR^\ast(\st_\sigma \Mu) \right] .\]	
Notice that the two statements are equivalent for the middle pairing, that is,
when $2k~=~d$, because ${\KK}^k(\Mu,\varDelta)$ and $\overline{\KK}^k(\Mu,\varDelta)$ are orthogonal complements in $\BR^k(\Mu)$.
	
	\section{Perturbations via biased Poincar\'e duality}\label{sec:perturbation}
	
	To prove biased Poincar\'e duality in higher degrees, we need to understand the Lefschetz theorem for face rings of manifolds; this in itself is not a hard task, but requires some care as to how we actually construct Lefschetz elements. The idea is provided in the following lemma.
	
	\subsection{The perturbation lemma} The following lemma is, together with biased Poincar\'e duality, the heart of this paper:
	
	\begin{lem}\label{lem:perturbation}
		Consider two linear maps 
		\[\upalpha, \upbeta: \mathcal{X}\ \longrightarrow\ \mathcal{Y}\]
		of two vector spaces $\mathcal{X}$ and $\mathcal{Y}$ over $\R$.
		\begin{compactenum}[(1)]
			\item Assume that $\upbeta$ has image transversal to the image of $\upalpha$, that is,
			\[\upbeta(\ker\upalpha)\ \cap\ \im\upalpha\ =\ {0}\ \subset\ \mathcal{Y} .\]
			Then a generic linear combination 
			$\upalpha ``{+}" \upbeta$ of $\upalpha$ and $\upbeta$
			has kernel 
			\[\ker (
			\upalpha\ ``{+}"\ \upbeta)\ = \ \ker \upalpha\ \cap\ \ker \upbeta.\]
			\item 
			Similarly, if 
			\[\upbeta^\diamondsuit(\ker(\upalpha^\diamondsuit))\ \cap\ \im(\upalpha^\diamondsuit)\  =\ {0}\ \subset\ \mathcal{X}^\diamondsuit \]
			or equivalently
			\[\upbeta^{-1}(\im\upalpha)\ +\ \ker\upalpha\ =\ \mathcal{X},\]
			then
			\[\im (
			\upalpha\ ``{+}"\ \upbeta)\ = \  \im \upalpha + \im \upbeta .\]
		\end{compactenum}
	\end{lem}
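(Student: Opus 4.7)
The plan is to prove Part~(1) directly via a reduction to generic injectivity on a carefully chosen subspace, and then to deduce Part~(2) by dualizing. For Part~(1), the inclusion $\ker\upalpha \cap \ker\upbeta \subset \ker(\upalpha\,``{+}"\,\upbeta)$ holds for every coefficient, so only the reverse inclusion is at stake. First, I would pass to the quotient $\mathcal{X}/(\ker\upalpha \cap \ker\upbeta)$: both maps descend, and the transversality hypothesis $\upbeta(\ker\upalpha)\cap\im\upalpha = 0$ is preserved, so we may assume $\ker\upalpha \cap \ker\upbeta = 0$ and aim to show that $\upalpha + t\upbeta$ is injective for generic scalar $t$.

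The key localization step is the observation that, for $t\neq 0$, any $x \in \ker(\upalpha + t\upbeta)$ satisfies $\upbeta(x) = -\upalpha(x)/t \in \im\upalpha$, so $x$ lies in $V \coloneqq \upbeta^{-1}(\im\upalpha)$. On $V$, both $\upalpha$ and $\upbeta$ take values in $\im\upalpha$. The transversality hypothesis forces $\ker\upalpha \cap V = \ker\upalpha \cap \ker\upbeta = 0$ (since any $x \in V \cap \ker\upalpha$ has $\upbeta(x)\in \upbeta(\ker\upalpha)\cap \im\upalpha=0$), so $\upalpha|_V : V \to \im\upalpha$ is injective. Fixing any left inverse $L : \im\upalpha \to V$ with $L\upalpha|_V = \id_V$, the pencil $L(\upalpha + t\upbeta)|_V = \id_V + tL\upbeta|_V$ is invertible except when $-1/t$ is an eigenvalue of the endomorphism $L\upbeta|_V$, which excludes only finitely many values of $t$. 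Since $\ker((\upalpha + t\upbeta)|_V) \subset \ker(L(\upalpha + t\upbeta)|_V)$, Part~(1) follows at once.

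For Part~(2), I would first verify the asserted equivalence of the two hypothesis formulations: using $\ker\upalpha^\diamondsuit = (\im\upalpha)^\bot$ and $\im\upalpha^\diamondsuit = (\ker\upalpha)^\bot$, a short annihilator calculation shows that both $\upbeta^\diamondsuit(\ker\upalpha^\diamondsuit) \cap \im\upalpha^\diamondsuit = 0$ and $\upbeta^{-1}(\im\upalpha) + \ker\upalpha = \mathcal{X}$ are equivalent to $\im\upbeta \subset \im\upalpha + \upbeta(\ker\upalpha)$. Given this equivalence, Part~(2) follows from Part~(1) applied to the dual maps $\upalpha^\diamondsuit, \upbeta^\diamondsuit : \mathcal{Y}^\diamondsuit \to \mathcal{X}^\diamondsuit$: the resulting equality $\ker((\upalpha + t\upbeta)^\diamondsuit) = \ker\upalpha^\diamondsuit \cap \ker\upbeta^\diamondsuit = (\im\upalpha + \im\upbeta)^\bot$ dualizes, via annihilators, to $\im(\upalpha + t\upbeta) = \im\upalpha + \im\upbeta$. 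The only genuinely subtle point in the whole argument is the localization step in Part~(1), namely recognizing that the transversality hypothesis is precisely what makes $\upalpha|_V$ injective on $V = \upbeta^{-1}(\im\upalpha)$; once this is in place, the finiteness-of-eigenvalues argument is entirely routine, and Part~(2) is a formal dualization.
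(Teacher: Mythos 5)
Your proof is correct, but it takes a genuinely different route from the paper's. The paper proves Part~(1) by an analytic perturbation argument: it invokes the implicit function theorem, and in the "more instructive" version chooses a complement $A$ with $\mathcal{X} = A \oplus \ker\upalpha$, observes that $\im(\upalpha + \varepsilon\upbeta) = (\upalpha+\varepsilon\upbeta)(A) + \upbeta(\ker\upalpha)$, and then uses a norm on $\mathcal{Y}$ to argue that $(\upalpha+\varepsilon\upbeta)(A)$ approximates $\upalpha(A)$ as $\varepsilon \to 0$ and hence stays linearly independent from $\upbeta(\ker\upalpha)$; the dimension count then forces the kernel to equal $\ker\upalpha \cap \ker\upbeta$. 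Your argument, by contrast, is purely linear-algebraic: you reduce modulo the common kernel, localize the problem to $V = \upbeta^{-1}(\im\upalpha)$ by noting any element of $\ker(\upalpha + t\upbeta)$ with $t\neq 0$ must land there, show the transversality hypothesis makes $\upalpha|_V$ injective, and then turn the question into avoiding the finitely many eigenvalues of the endomorphism $L\upbeta|_V$. Both Part~(2) arguments are the same formal dualization. The paper's approach is quicker to state and has a clean geometric picture (a family of subspaces converging); yours avoids norms, limits, and the implicit function theorem entirely, which means it works verbatim over any infinite field — a point the paper has to address separately in a follow-up remark by invoking Hasse derivatives or Kronecker-quiver representation theory. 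Your "localization to $V$" step is the genuine content and is a nice self-contained way to see why the transversality hypothesis is exactly the right one; the one thing you should say explicitly (which the paper also leaves implicit) is that finite-dimensionality of $V$ is what makes the eigenvalue-avoidance step produce a Zariski-open dense set of good $t$.
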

	
Here we use $(\cdot)^\diamondsuit$ to denote dual maps and vector spaces in the basic linear-algebra sense of the notion. 
	
	\begin{proof}
		This is an immediate consequence of the implicit function theorem, applied to the map $\upalpha+\varepsilon \upbeta$ at $\varepsilon=0$.
		
		More instructively, let us take a brief moment to visualize Statement~(1).
		Find a subspace $A$ of $\mc{X}$ such that \[\upalpha A\ =\ \upalpha \mc{X}\quad \text{and}\ \quad \mc{X}\ \cong\ A \oplus \ker\upalpha\]
		so that in particular $\upalpha$ is injective on $A$. 
		Then, for $\varepsilon>0$ small enough, the image of
		\[\upalpha\ +\ \varepsilon \upbeta{:}\ \mc{X}\ \longrightarrow\ \mc{Y}\]
		is 
		\[(\upalpha\ +\ \varepsilon \upbeta)(A)\ +\ \upbeta\ker\upalpha\ \subset\ \mathcal{Y}.\]
		But if we norm $\mc{Y}$ in any way, then 
		$(\upalpha+\varepsilon \upbeta)(A)$ approximates $\upalpha A$ as $\varepsilon$ tends to zero, which is linearly independent from $\upbeta\, \ker\upalpha$ by assumption. Claim (1) follows directly, and so does (2) by duality, although we encourage the reader to develop a direct geometric intuition for~it.
	\end{proof}

Note that this simple lemma immediately extends to any infinite field, independent of characteristic, by computing the Hasse derivative of $(\upalpha+\varepsilon \upbeta)(A)$ at $0$. Alternatively, one can use representation theory of the Kronecker quiver \cite{Ringel}. Compare also to Gurvits' Quantum Matching theory \cite{Gurvits} for another approach towards Hall type theorems over vectorspaces.
	
	This is an amazingly innocent and powerful lemma, as it allows us to control the kernel resp.\ the image of a linear combination of two maps. Note that the conditions are naturally dual to each other, so that when we apply the construction in Gorenstein rings associated to simplicial manifolds, we usually get both conclusions for the price of one. Let us also remark that the proofs of the hard Lefschetz theorem via the decomposition principle, such as those of Murai, Laman and Whiteley discussed in Section~\ref{sec:local-move}, can be thought of as employing a very special case of the perturbation lemma.

	\subsection{The approximation lemma}
	
	Before we continue, let us note that we can actually do a little better, and in fact approximate image and kernel of $\upalpha ``{+}" \upbeta$: Note quite simply the following lemma, which follows immediately from the proof of the Perturbation Lemma~\ref{lem:perturbation}.
	
	\begin{lem}\label{lem:approx}
		Consider $\upalpha$ and $\upbeta$ as in the perturbation lemma, and assume $\mathcal{X}$ and $\mathcal{Y}$ are normed. Then
		\begin{compactenum}[(1)]
			\item Assume that $\upbeta$ has image transversal to the image of $\upalpha$, that is,
			\[\upbeta(\ker\upalpha)\ \cap\ \im\upalpha\ =\ {0}\ \subset\ \mathcal{Y}.\]
			Then  
			\[ \lim_{\varepsilon \rightarrow 0} \im (
			\upalpha + \varepsilon \upbeta)\ = \ \langle  \upalpha(\mathcal{X}), \upbeta(\ker \upalpha)\rangle.\]
			\item 
			Similarly, if 
			\[\upbeta^\diamondsuit(\ker(\upalpha^\diamondsuit))\cap \im(\upalpha^\diamondsuit)\   =\ {0} \subset\ \mathcal{X}^\diamondsuit \]
			then
			\[ \lim_{\varepsilon \rightarrow 0} \ker (\upalpha + \varepsilon \upbeta)\ = \ \ker  \upalpha \cap \ker [\upbeta: \mathcal{X} \rightarrow \mathcal{Y}/ \im \upalpha]. \qed\]
		\end{compactenum}
	\end{lem}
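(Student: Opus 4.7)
The plan is to refine the argument from the proof of the Perturbation Lemma~\ref{lem:perturbation}, this time tracking the image (resp.\ kernel) of $\upalpha + \varepsilon \upbeta$ explicitly as a point in the Grassmannian of $\mathcal{Y}$ (resp.\ $\mathcal{X}$) and taking the limit $\varepsilon \to 0$ there. The norms on $\mathcal{X}$ and $\mathcal{Y}$ give induced metrics on these Grassmannians (for instance, via the Hausdorff distance between unit spheres), but any of the equivalent topologies on the Grassmannian works: the point is simply that convergence of subspaces of constant dimension is well-defined.

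For part (1), I would use the same decomposition as in Lemma~\ref{lem:perturbation}(1): choose a complement $A$ of $\ker\upalpha$ in $\mathcal{X}$, so that $\mathcal{X} = A \oplus \ker\upalpha$ and $\upalpha|_A$ is injective onto $\im\upalpha$. For each $\varepsilon > 0$ one then has the splitting
\[
\im(\upalpha + \varepsilon\upbeta)\ =\ (\upalpha + \varepsilon \upbeta)(A)\ +\ \varepsilon\,\upbeta(\ker\upalpha)
\]
as subspaces of $\mathcal{Y}$. The second summand, viewed as a linear subspace, is independent of $\varepsilon > 0$ and equals $\upbeta(\ker\upalpha)$. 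The first summand is the image of $A$ under a map whose restriction is injective at $\varepsilon = 0$ and depends continuously on $\varepsilon$, hence in the Grassmannian $(\upalpha + \varepsilon\upbeta)(A) \to \upalpha(A) = \im\upalpha$ as $\varepsilon \to 0$. The transversality hypothesis $\upbeta(\ker\upalpha) \cap \im\upalpha = 0$ ensures, again by continuity, that for all sufficiently small $\varepsilon > 0$ the sum remains direct and hence $\dim \im(\upalpha + \varepsilon\upbeta)$ is the constant value $\dim\im\upalpha + \dim\upbeta(\ker\upalpha)$; therefore the limit in the Grassmannian exists and is the direct sum $\im\upalpha \oplus \upbeta(\ker\upalpha) = \langle \upalpha(\mathcal{X}), \upbeta(\ker\upalpha)\rangle$, as claimed.

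For part (2), the cleanest route is duality: the kernel of a linear map and the image of its dual are annihilators of each other under the natural pairing $\mathcal{X}\times\mathcal{X}^\diamondsuit \to \R$. Applying part (1) to the dual maps $\upalpha^\diamondsuit, \upbeta^\diamondsuit : \mathcal{Y}^\diamondsuit \to \mathcal{X}^\diamondsuit$, whose transversality hypothesis is precisely the assumption of part (2), and then taking annihilators of a continuously varying family of subspaces (which commutes with the Grassmannian limit since the ambient pairing is nondegenerate), one obtains
\[
\lim_{\varepsilon \to 0} \ker(\upalpha + \varepsilon\upbeta)\ =\ \bigl(\im\upalpha^\diamondsuit + \upbeta^\diamondsuit(\ker\upalpha^\diamondsuit)\bigr)^{\perp}\ =\ \ker\upalpha\ \cap\ \upbeta^{-1}(\im\upalpha),
\]
which is exactly $\ker\upalpha \cap \ker[\upbeta : \mathcal{X} \to \mathcal{Y}/\im\upalpha]$. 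Alternatively one can argue directly: any $x \in \ker\upalpha$ with $\upbeta x = \upalpha y$ is approximated by $x_\varepsilon = x - \varepsilon y$ modulo $O(\varepsilon^2)$-corrections inside $\ker(\upalpha + \varepsilon\upbeta)$, and the dual transversality plus the rank-nullity count (already fixed by part (1)) matches dimensions.

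The only real subtlety is the Grassmannian-convergence bookkeeping — i.e.\ verifying that the dimension of $\im(\upalpha + \varepsilon\upbeta)$ is locally constant near $\varepsilon = 0$ so that a limit actually exists in a single Grassmannian component. That follows immediately from the transversality assumption by semi-continuity of rank, so there is no real obstacle; the result is essentially a repackaging of the Perturbation Lemma with explicit limit points.
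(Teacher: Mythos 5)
Your proof is correct and is essentially the paper's own argument: the paper derives Lemma~\ref{lem:approx} directly from the decomposition $\mathcal{X} = A \oplus \ker\upalpha$ used in the proof of the Perturbation Lemma~\ref{lem:perturbation}, noting that $(\upalpha+\varepsilon\upbeta)(A)$ approximates $\upalpha A$ as $\varepsilon\to 0$ while $\upbeta(\ker\upalpha)$ stays fixed and transversal, and then invokes duality for part (2). Your Grassmannian bookkeeping makes the notion of limit explicit but adds no new idea.
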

	
	Why is this lemma useful? Consider a sphere $\varSigma$ of dimension $d-1$, let $k\le \nicefrac{d}{2}$, and let $i < d-2k$
	then we do not expect
	\[\AR^k(\varSigma)\ \xrightarrow{\ \cdot\ell^{i}\ }\ \AR^{k+i}(\varSigma)\]
	to be an isomorphism, but only an injection at best. Lemma~\ref{lem:approx}(1) allows us to describe the image. Similarly, the second part of the same lemma will allow us to study the kernel of 
	\[\AR^{d-k-i}(\varSigma)\ \xrightarrow{\ \cdot\ell^{i}\ }\ \AR^{d-k}(\varSigma).\]
	Hence, we obtain a quite explicit way to study primitive classes under the Lefschetz map.
	We explore this later in an avocation.
	
	\subsection{Transversal prime property} The strategy for the Lefschetz theorem is to prove the following property:
	
	Let $\Mu$ denote a $(d-1)$-manifold (possibly with boundary) in $\R^d$, and $k\le \frac{d}{2}$. Let $W$ denote a subset of the vertices of $\Mu$. Then $\Mu$ has the \Defn{transversal prime property} in degree $k$ and with respect to $W$ if
	\begin{align*}&\ \ker\left[(``{\sum_{w\in W}}" x_w)^{d-2k}: \BR_{d-k}(\Mu)\ \longrightarrow\ \BR_{k}(\Mu,\partial \Mu)\right]\\
	=&\ \ann_{\BR_{d-k}(\Mu)} \langle x_w| w\in W \subset \Mu^{(0)}\rangle \\
=&\ \bigcap_{w\in W} \ker\left[x_w: \BR_{d-k}(\Mu)\ \longrightarrow\ \BR_{d-k-1}(\Mu)\right] 
	\end{align*}
	We remind the reader that 
	\[``{\sum_{w\in W}}" x_w\]
	stands for a generic linear combination of prime divisors $x_w$ with $w\in W$.
	
	Note that if the transversal prime property holds for $W=\Mu^{(0)}$, then we conclude the hard Lefschetz theorem for $\Mu$. The general strategy for us is to prove the transversal prime property by linearly ordering the vertices in any way, and proceeding by induction on the size of initial segments. 
		
	\subsection{Transversal primes as another version of the Hall-Laman relations}\label{ssc:stable}
	
	Consider a subset of vertices $W$ in the interior $\Mu^\circ$ of $\Mu$, any $(d-1)$-manifold as above. Then $\KK^\ast(\Mu,\Mu-W)$ is generated in degree one, and if $\Mu$ satisfies the transversal prime property with respect to $W$ in degree $k$, then we obtain an isomorphism
	\[\KK_{d-k}(\Mu,\Mu-W)\ \xrightarrow{\ \cdot(``{\sum_{w\in W}}" x_w)^{d-2k}\ }\ \im\left[\bigoplus_{w\in W} \AR_k (\St_w \Mu) \rightarrow \BR_k (\Mu,\partial \Mu) \right].\]
	This in itself forms another Lefschetz theorem, for the ideal $\KK^\ast(\Mu,\Mu-W)$ instead of its annihilator. Note that in particular, the transversal prime property in degree $k$ is equivalent to saying that
	\begin{align*}&\ \im\left[(``{\sum_{w\in W}}" x_w)^{d-2k}: \BR_{d-k}(\Mu)\ \longrightarrow\ \BR_{k}(\Mu,\partial \Mu)\right]\\
	=&\ \im\left[\bigoplus_{w\in W} \AR_k (\st_w \Mu) \rightarrow \BR_k (\Mu,\partial \Mu) \right]\\
		=&\ \im\left[\BR_k (\Mu)_{\mid\overline{\Mu-W}} \rightarrow \BR_k (\Mu,\partial \Mu) \right].
\end{align*}
	Alas, we can prove the statement inductively, and directly so in some cases, for instance for nicely decomposable spheres we discuss now.
	
	\subsection{Avocation: Arranged marriage and Lefschetz theorems for decomposable spheres}\label{sec:vd}

Ideally we want to prove the transversal prime property by induction on the size of $W$. This follows an greedy proof of Hall's marriage theorem: Given two finite sets $X,\ Y$, and a relation $\mc{R}$ between them so that every subset $A$ of $X$ has at least $|A|$ elements $A\mc{R} \subset Y$ related to it. To find the injective map supported by it,  start with an element $x_1$ of $X$, and "marry" it to an element $y_1$ of $x_1\mc{R} \subset Y$. Then proceed to marry another element $x_2$, then the next, and so on. At some point this proof may run into trouble, as all elements of $x_i\mc{R} \subset Y$ may already be married to $x_j$ for some $j<i$. Now, the expansive property has to be employed to reshuffle the previously married element to accommodate $x_i$ by constructing an alternating paths to an element of $\{x_j:j\in [i]\}\mc{R} \subset Y$ not matched. 

We now discuss how this is employed in our context of simplicial spheres and their face rings. Essentially, we want to avoid the reshuffling step, as rings seem to rigid to handle it, and because the perturbation Lemma~\ref{lem:perturbation} is not designed to describe higher order perturbations, or equivalently global properties, immediately. Indeed, reshuffling would correspond to analyzing the higher derivatives (or global properties) of $\upalpha+\varepsilon\upbeta$, which gets increasingly tricky and we want to avoid initially. Instead, we notice that reshuffling is not necessary in the algebraic setting for a simple case, and then describe a homological way to analyse the global structure nevertheless.

Without further ado, let us illustrate our strategy with a toy example. A pure simplicial $d$-manifold is \Defn{$L$-decomposable} if it is a simplex or there exists a vertex whose deletion is $L$-decomposable of dimension $d$, and such that the boundary of the link of the same vertex is empty, or $L$-decomposable of codimension one, compare also~\cite{BP}.

	\begin{thm}\label{thm:vd}
		$L$-decomposable spheres satisfy Theorem~\ref{mthm:gl}(1), the generic Lefschetz property.
	\end{thm}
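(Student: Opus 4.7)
The strategy is to induct on the $L$-decomposition, strengthening the inductive hypothesis to include the Hall-Laman relations at all squarefree monomial ideals and biased Poincar\'e duality for the ideals arising from $L$-decomposable ball subcomplexes. The base case, where $\varSigma$ is the boundary of a simplex, is immediate (from Stanley's theorem or by direct computation). For the inductive step, let $v$ be the vertex witnessing the $L$-decomposability, so that $\varSigma - v$ is an $L$-decomposable $(d-1)$-ball and $\lk_v \varSigma$ is an $L$-decomposable $(d-2)$-sphere, both of strictly smaller complexity.

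The central short exact sequence
\[0 \longrightarrow \AR^\ast(\st_v^\circ \varSigma) \longrightarrow \AR^\ast(\varSigma) \longrightarrow \AR^\ast(\varSigma - v) \longrightarrow 0,\]
together with the cone lemmas identifying $\AR^\ast(\st_v^\circ \varSigma)$ with $x_v \AR^\ast(\varSigma) \cong \AR^\ast(\lk_v \varSigma)$ up to a degree shift, decomposes the Lefschetz problem on $\varSigma$ into a Lefschetz-type statement on the ball $\varSigma - v$ and a genuine Lefschetz statement on the link $\lk_v \varSigma$. I would take the candidate Lefschetz element to be $\ell = \ell_0 + c \cdot x_v$, where $\ell_0 \in \AR^1(\varSigma)$ is a generic lift of a Lefschetz element for the quotient $\AR^\ast(\varSigma - v)$ and $c \in \R$ is generic, and then apply the Perturbation Lemma~\ref{lem:perturbation} with $\upalpha$ the multiplication by $\ell_0^{d-2k}$ and $\upbeta$ the correction contributed by $x_v$ in the binomial expansion of $\ell^{d-2k}$.

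The transversality condition driving the perturbation lemma demands that $\upbeta$ fill in what $\upalpha$ misses: the image of $x_v$ on $\ker(\cdot \ell_0^{d-2k})$ must be complementary to $\im(\cdot \ell_0^{d-2k})$ in the target. Under the cone lemma identification, this is exactly biased Poincar\'e duality at the ideal $\KK(\varSigma, \varSigma - v) \cong x_v \AR^\ast(\varSigma)$, which via Weil duality is equivalent to the Lefschetz property on $\lk_v \varSigma$---known by induction on dimension. The main obstacle I expect is formulating and propagating the inductive hypothesis for the ball $\varSigma - v$: since $\AR^\ast(\varSigma - v)$ is not Gorenstein, the ``Lefschetz'' statement for it must be encoded as biased Poincar\'e duality for the complementary monomial ideal in the ambient sphere, precisely in the language of the transversal prime property of Section~\ref{ssc:stable}. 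Carrying this strengthened hypothesis through every step of the $L$-decomposition, and verifying that both it and the Hall-Laman relations survive each perturbation, is the technical heart of the argument.
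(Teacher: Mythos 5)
Your proposal follows essentially the same strategy as the paper: a double induction (on dimension and on the size of the decomposition) with the two inputs to the Perturbation Lemma supplied by the link Lefschetz theorem (dimension induction, via the cone lemmas) and the ball Lefschetz theorem (induction along the $L$-decomposition, encoded as the Hall-Laman relations at the interior-of-ball ideal, which is exactly the transversal-prime/biased-PD reformulation you identify). The one place where the paper makes a cleverer choice is in the roles of $\upalpha$ and $\upbeta$: the paper sets $\upalpha = x_v^{d-2k}$, so that $\ker\upalpha = \AR^k(\varDelta{-}v,\partial(\varDelta{-}v))$ and $\coker\upalpha = \AR^{d-k}(\varDelta{-}v)$ drop out directly from the cone lemmas and the link induction, and then $\upbeta = (\ell')^{d-2k}$ from the ball induction manifestly satisfies the transversality condition; your choice $\upalpha = \ell_0^{d-2k}$ forces you to first describe the kernel and image of the lifted ball element on the whole sphere, which is less transparent (the ball Lefschetz is a relative-to-boundary statement, not an injectivity statement on the quotient $\AR^\ast(\varSigma{-}v)$, so you cannot read off $\ker\upalpha$ without essentially re-deriving the paper's splitting anyway). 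Also, your strengthened inductive hypothesis ``Hall-Laman at all squarefree monomial ideals'' is more than is needed or provable by this argument; the paper carries only the Hall-Laman relations at the specific interior-of-ball ideals arising along the decomposition, which is precisely what the $L$-decomposition structure lets you propagate.
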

	
	\begin{proof}
		We prove this theorem by double induction on dimension and the number of vertices, the base case of zero-dimensional spheres being clear. 
		
		We prove stronger that any $L$-decomposable $(d-1)$-ball $\varDelta$ in a $(d-1)$-sphere $\varSigma$ satisfies the hard Lefschetz property with respect to its boundary, that is, with respect to a suitable Artinian reduction and Picard divisor $\ell$, we have an isomorphism
		\[\AR^k(\varDelta, \partial \varDelta)\ \xrightarrow{\ \cdot \ell^{d-2k} \ }\ \AR^{d-k}(\varDelta)\]
which is obtained from the composition
			\[\begin{tikzcd}
		\AR^k(\varDelta, \partial \varDelta)  \arrow{r}{}\arrow{dr}{\ \cdot \ell^{d-2k}\ }&  \AR^{d-k}(\varDelta)\\
		&\AR^{d-k}(\varDelta, \partial \varDelta)  \arrow{u}{}
\end{tikzcd}
\]

		In other words, the sphere $\varSigma$ satisfies the Hall-Laman relations with respect to the monomial ideal generated by the faces in the interior of $\varDelta$.
				\begin{figure}[h!tb]
			\begin{center}
				\includegraphics[scale = 1.2]{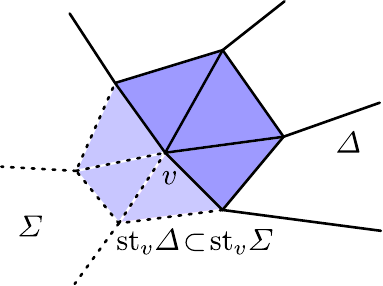}
				\caption{The case of $L$-decomposable spheres.}
				\label{vd}
			\end{center}
		\end{figure}
We want to apply Lemma~\ref{lem:perturbation} to the spaces
\[\mathcal{X}\ = \ \AR^k(\varDelta, \partial \varDelta)\ \quad \text{and}\ \mathcal{Y}\ = \ \AR^{d-k}(\varDelta).\]	
		
		For this purpose we consider such a ball $\varDelta$, and consider $v$ the first vertex removed in a decomposition to a smaller ball. Now, by the cone lemmas, the map
		\begin{equation*}\label{eq:vd2}
\AR^k(\st_v\varDelta,\st_v \partial \varDelta)\ \xrightarrow{\ \cdot x_v^{d-2k}\ }\ \AR^{d-k}(\varDelta, \varDelta-v) \subset \AR^{d-k}(\varDelta)	\end{equation*}
		is an isomorphism if and only if the map
		\[\AR^{k}(\Lk_v\varDelta,\Lk_v \partial \varDelta)\ \xrightarrow{\ \cdot\tet_v^{d-2k-1}\ }\ \AR^{d-k-1}(\Lk_v\varDelta)\]
		is an isomorphism, where $\tet_v$ is the height over the projection along $v$.
But that is just the Hall-Laman relations for $\Lk_v \varSigma$ with respect to the monomial ideal generated by faces interior to $\Lk_v \varDelta$. But, with a sufficiently generic choice of linear system of parameters, we may assume exactly this by induction on the dimension. 

Hence, the image of 		\[\AR^k(\varDelta, \partial \varDelta)\ \xrightarrow{\ \cdot x_v^{d-2k} \ }\ \AR^{d-k}(\varDelta)\]
 is described as $x_v\AR^{d-k-1}(\varDelta)$, and its cokernel is precisely $\AR^{d-k}(\varDelta-v)$. It follows by Poincar\'e duality that the kernel is then
 $\AR^k(\varDelta-v, \partial (\varDelta-v))$.

		We can in addition assume that $\varDelta-v$ satisfies the hard Lefschetz theorem with respect to its boundary already by induction on the size of $\varDelta$ measured in number of vertices, so that by virtue of a Picard divisor $\ell' \in \AR^1(\varDelta)$ we have an isomorphism
		\[\AR^k(\varDelta-v, \partial (\varDelta-v))\ \xrightarrow{\ \cdot (\ell')^{d-2k} \ }\ \AR^{d-k}(\varDelta-v).\]
		Notice now that 
		\[\ker\left[x_v^{d-2k}:\AR^k(\varDelta, \partial \varDelta)\ \rightarrow\ \AR^{d-k}(\varDelta)\right]\ =\ \AR^k(\varDelta-v, \partial (\varDelta-v))\]
		and 
		\[\coker\left[x_v^{d-2k}:\AR^k(\varDelta, \partial \varDelta)\ \rightarrow\ \AR^{d-k}(\varDelta)\right]\ =\ \AR^{d-k}(\varDelta-v).\]
		Hence $x_v=:\upalpha$ and $\ell'=:\upbeta$ satisfies the first condition of Lemma~\ref{lem:perturbation}.
		
		\begin{rem}
		Really, we are considering the $(d-2k)$th powers of the multiplications with these elements here, but the adaption of Lemma~\ref{lem:perturbation} is straightforward. We will later see that the case of $d-2k> 1$ can also be straightforwardly and directly reduced to the case $d-2k=1$, justifying our handwaving in this step.
				\end{rem}

		 In conclusion, the Hall-Laman relations are true at $\varDelta$ in $\varSigma$ if they are true at $\varDelta-v$.		
	\end{proof}
	
	We can actually use Lemma~\ref{lem:approx} to say something more. For this, we look at the Weil dual situation.

Given a geometric simplicial complex $\varGamma$ in $\R^d$, and $\alpha$ an element of $\AR_k(\varGamma)$, where $k\le d$, then we define the $p$-norm of $\alpha$ as 
\[||\alpha||_p\ =\ \left(\sum_{\sigma \in \varDelta^{(k-1)}}  |\mathbf{x}^\sigma \alpha|^p\right)^{\frac{1}{p}}.\]	
For us, the $2$-norm shall generally suffice, and we shall refer to that norm in calculations.

Consider, with the intersection ring as above and $ 1, 2, \dots$ an initial segment of vertices in the $(d-1)$-sphere $\varSigma$ in $\R^d$ that form a vertex-decomposition of the same. We can then consider the divisor
	\[\ell\ \coloneqq \ \sum \varepsilon_i x_{i}\]
	where $|\varepsilon_i| \gg |\varepsilon_{i+1}|$.
	\begin{prp}\label{prp:approx}
		With respect to $\ell$ as above, and for any positive $j\le d$, its kernel in 
		\[\AR_{j}(\varSigma)\ \xrightarrow{\ \cdot\ell\ }\ \AR_{j-1}(\varSigma)\] 
approaches a stress subspace, and its image approaches a partitioned subspace as $\nicefrac{|\varepsilon_i|}{|\varepsilon_{i+1}|}\rightarrow \infty$.
	\end{prp}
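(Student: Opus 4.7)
The plan is to apply the Approximation Lemma~\ref{lem:approx} iteratively along the vertex-decomposition, inducting on the length of the initial segment and reusing the transversality already verified in the proof of Theorem~\ref{thm:vd}. Split $\ell = \varepsilon_1 x_1 + \tilde\ell$ where $\tilde\ell = \sum_{i\ge 2}\varepsilon_i x_i$ involves only vertices of $\varSigma - 1$. Since the L-decomposition makes $\varSigma - 1$ again L-decomposable with initial segment $2,3,\ldots$, the inductive hypothesis will apply to $\tilde\ell$ acting on $\AR_*(\varSigma-1)$. The base case is the one-vertex situation $\ell = \varepsilon_1 x_1$, for which Cone Lemma~II and Weil duality immediately identify $\ker(\cdot x_1)$ as the stress subspace $\AR_j(\varSigma-1)\subset \AR_j(\varSigma)$ and $\im(\cdot x_1)$ as a partitioned subspace supported on $\st_1\varSigma$.

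For the inductive step, apply Lemma~\ref{lem:approx} with $\alpha = \cdot x_1$ and $\beta = \cdot\tilde\ell$ as $\varepsilon_2/\varepsilon_1 \to 0$. The transversality $\tilde\ell(\ker x_1)\cap \im x_1 = 0$ and its dual condition are precisely those established in the perturbation step of Theorem~\ref{thm:vd}: the L-decomposition together with the inductive Hall-Laman relations on lower-dimensional links ensures that $\tilde\ell$ restricted to $\AR_j(\varSigma-1)$ lands in a subspace of $\AR_{j-1}(\varSigma)$ complementary to $\AR_{j-1}(\st_1\varSigma)\supseteq \im(\cdot x_1)$. Because transversality is a Zariski-open condition, it persists through the simultaneous limits of the remaining ratios.

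Lemma~\ref{lem:approx} then yields
\begin{align*}
\lim\ \ker(\cdot \ell) &\ =\ \ker(\cdot x_1)\ \cap\ \ker\bigl[\tilde\ell : \AR_j(\varSigma) \to \AR_{j-1}(\varSigma)/x_1\AR_j(\varSigma)\bigr],\\
\lim\ \im(\cdot \ell) &\ =\ \bigl\langle x_1\AR_j(\varSigma),\ \tilde\ell(\AR_j(\varSigma-1))\bigr\rangle.
\end{align*}
Since $\tilde\ell$ involves no $x_1$, its image from $\AR_j(\varSigma-1)$ lies in $\AR_{j-1}(\varSigma-1)$, which meets $x_1\AR_j(\varSigma)$ trivially; hence the limit kernel simplifies to the kernel of $\tilde\ell$ acting on $\AR_j(\varSigma-1)$. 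By the inductive hypothesis this approaches a stress subspace $\BR_j(\varGamma\subset \varSigma-1)$, which embeds in $\AR_j(\varSigma)$ as the stress subspace $\BR_j(\varGamma\subset\varSigma)$. Likewise, $\tilde\ell(\AR_j(\varSigma-1))$ approaches a partitioned subspace by induction, and $x_1\AR_j(\varSigma)$ is itself a partitioned subspace generated by stars at vertex $1$; their sum remains partitioned.

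The main obstacle is the verification of the transversality hypothesis at each inductive step. This however is exactly the content of the perturbation step in Theorem~\ref{thm:vd}: the L-decomposition together with biased Poincar\'e duality in lower-dimensional links supplies the transversality, and the nested multi-scale limit adds no new difficulty since only the dimension and support of the limiting kernel and image matter, and both are determined by open conditions satisfied throughout the limiting regime.
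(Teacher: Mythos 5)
Your overall strategy — peeling off the leading term and iterating the approximation Lemma~\ref{lem:approx} along the decomposition — is almost certainly the intended argument; the paper provides no explicit proof and later (Lemma~\ref{lem:metros}) points back to this proposition as a model. The base-case identifications $\ker(\cdot\, x_1) = \AR_j(\varSigma-1)$ and $\im(\cdot\, x_1) = \AR_{j-1}(\st_1\varSigma)$ (via Cone Lemma~II and the short exact sequence relating $\varSigma$, $\varSigma-1$ and $\st_1^\circ\varSigma$) and the induction down the L-decomposition are the right moves.

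The step that fails is the support argument used to simplify the limit kernel. You assert that $\AR_{j-1}(\varSigma-1)$ "meets $x_1\AR_j(\varSigma)$ trivially." This is false: as subspaces of $\AR_{j-1}(\varSigma)$, $\AR_{j-1}(\varSigma-1)$ and $\im(\cdot\, x_1)=\AR_{j-1}(\st_1\varSigma)$ are the stresses supported on $\varSigma-1$ and on $\st_1\varSigma$ respectively, so their intersection is the space of stresses of $\varSigma$ supported on $\lk_1\varSigma$, i.e.\ $\ker[\vartheta\colon\AR_{j-1}(\lk_1\varSigma)\to\AR_{j-2}(\lk_1\varSigma)]$ for $\vartheta$ the height over the projection along $v_1$. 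By hard Lefschetz in the link this has generic dimension $g_{j-1}(\lk_1\varSigma)>0$ exactly when $j-1\le \frac{d-1}{2}$, which is precisely the range where the proposition has content. The conclusion you want (the limit kernel is $\ker[\tilde\ell\colon\AR_j(\varSigma-1)\to\AR_{j-1}(\varSigma-1)]$) still holds, but it must be deduced directly from the transversality $\tilde\ell(\AR_j(\varSigma-1))\cap\im(\cdot\,x_1)=0$ that you invoke earlier, not from a support argument. And that transversality is, by a Theorem~\ref{thm:perturb}-type equivalence, biased Poincar\'e duality in $\lk_1\varSigma$ at degree $j-1$; since $j$ ranges over all degrees, this is the full lower-dimensional Hall--Laman package (equivalently, a reduction to the middle degree as in Lemma~\ref{lem:midred}), rather than only what the perturbation step of Theorem~\ref{thm:vd} verbatim provides for the middle Lefschetz maps. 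So the skeleton of your argument is sound, but the stated reason for the key simplification is wrong, and the transversality should be credited to the full inductive hypothesis on links, not to Theorem~\ref{thm:vd} alone.
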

	
	This is exceptionally confounding , allowing us to reduce to the discussion of stress and partitioned spaces as announced in Section~\ref{ssc:sop}, and allowing us to even describe and discuss primitive classes in an especially beautiful way. Let us therefore define, given a sequence of indeterminates $(x_1,x_2,\dots)$ a \Defn{decaying} sequence of linear forms as a sequence 
	\[\ell_t\ \coloneqq \ \sum \varepsilon_{t,i} x_{i}\]
	with 
	\[\frac{|\varepsilon_{t,i}|}{|\varepsilon_{t,i+1}|}\ \xrightarrow{\ t\rightarrow \infty\ }\ \infty\]
	sufficiently fast (for instance, faster than any algebraic function).

	\subsection{The general perturbative approach}\label{sec:genperap}
	
	Let us briefly summarize and generalize the observation employed in the proof of Theorem~\ref{thm:vd}.
	
	Consider $\Mu$ a rational manifold with boundary of dimension $(d-1)$ realized in $\R^d$, and assume that we proved the transversal prime property for a set of vertices $W'$ of $\Mu$ for the map between degree $d-k$ and degree $k$. For simplicity of notation, we will assume that $d=2k+1$, the other Lefschetz maps will be reduced to that case.
		 Consider an additional vertex of $\Mu$ not in $W'$.
	
	When we want to prove the transversal prime property for $W=W'\cup \{w\}$, we want to use Lemma~\ref{lem:perturbation}, applied to the maps 
	\[\upalpha\ = \ ``{\sum_{v\in W'}}" x_{v}\ \quad \text{and}\ \quad \upbeta\ =\ x_w\]
	and to the spaces 
	\[\mathcal{X}\ = \ \BR_{k+1}(\Mu)\ \quad \text{and}\ \mathcal{Y}\ = \ \BR_{k}(\Mu,\partial \Mu).\]	
Now, note that if we have already proven the transversal prime property with respect to $W'$, then kernel and image of $\upalpha$ have a particularly nice description:
\[\ker\upalpha\ =\ \bigcap_{v\in W'} \ker\left[x_v: \BR_{k+1}(\Mu)\ \longrightarrow\ \BR_{k}(\Mu)\right]\]
and
\[\im\upalpha\ =\ \im\left[\bigoplus_{v\in W'} \AR_k (\st_v \Mu) \rightarrow \BR_k (\Mu,\partial \Mu) \right].\]
	We have the following central theorem:
	\begin{thm}\label{thm:perturb}
		In the situation above, the following are equivalent.
		\begin{compactenum}[(1)]
			\item The assumptions of Lemma~\ref{lem:perturbation}(1) hold, that is, 
			\[\upbeta(\ker\upalpha)\ \cap\ \im\upalpha\ =\ {0}\ \subset\ \mathcal{Y}.\]
			\item The assumptions of Lemma~\ref{lem:perturbation}(2) hold, that is, 
			\[\upbeta^{-1}(\im\upalpha)\ +\ \ker\upalpha\ =\ \mathcal{X}.\]
			\item The pullback of $\ker \upalpha$ to 
			\[\AR_{k}(\st_w \Mu)\ \cong\ \AR_{k}(\lk_w \Mu)\]
			along $x_w$ satisfies biased Poincar\'e duality in the $(d-2)$-sphere $\lk_w \Mu$.
		\end{compactenum}
	\end{thm}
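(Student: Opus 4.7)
The three-way equivalence splits naturally into two pieces: (1) $\Leftrightarrow$ (2), which is a formal Poincar\'e-duality swap, and (1) $\Leftrightarrow$ (3), which is the substantive geometric content relating transversality in $\Mu$ to biased Poincar\'e duality in the link of $w$.

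\textbf{Step 1: (1) $\Leftrightarrow$ (2).} Poincar\'e--Lefschetz duality for the manifold-with-boundary $\Mu$, restricted to the Gorenstein quotient $\BR^\ast(\Mu)$, supplies a perfect pairing
\[\mathcal{X}\ \times\ \mathcal{Y}\ =\ \BR_{k+1}(\Mu)\ \times\ \BR_k(\Mu,\partial \Mu)\ \longrightarrow\ \R\]
(since $d-(k+1)=k$). Multiplication by any linear form is self-adjoint under this pairing, so after the identifications $\mathcal{X}^{\diamondsuit} \cong \mathcal{Y}$ and $\mathcal{Y}^{\diamondsuit} \cong \mathcal{X}$ one has $\alpha^{\diamondsuit} = \alpha$ and $\beta^{\diamondsuit} = \beta$. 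The hypotheses of Lemma~\ref{lem:perturbation}(1) and Lemma~\ref{lem:perturbation}(2) are then interchanged by this duality, giving (1) $\Leftrightarrow$ (2).

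\textbf{Step 2: (1) $\Leftrightarrow$ (3).} By the cone lemmas, multiplication by $x_w$ sends $\BR_{k+1}(\Mu)$ into $\AR_k(\St_w\Mu)$, which is canonically isomorphic to $\AR_k(\Lk_w\Mu)$. Set $\mathcal{I} \coloneqq x_w(\ker\alpha) \subset \AR_k(\Lk_w\Mu)$; this is the pullback of $\ker\alpha$ appearing in~(3). By the transversal prime property assumed for $W'$, the image $\im\alpha$ equals the partitioned subspace $\BR_k(\Mu)_{|\overline{\Mu - W'}}$, and its intersection with $\AR_k(\St_w\Mu)$ transports via the cone isomorphism to the partitioned subspace
\[\mathcal{P}\ \coloneqq\ \AR_k(\Lk_w\Mu)_{|\overline{\Lk_w\Mu - W'}}\ \subset\ \AR_k(\Lk_w\Mu),\]
spanned by stars in the link of the vertices of $W' \cap \Lk_w\Mu^{(0)}$. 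Hence condition~(1) reduces to the assertion $\mathcal{I} \cap \mathcal{P} = 0$ inside $\AR_k(\Lk_w\Mu)$.

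It remains to match this with biased Poincar\'e duality for $\mathcal{I}$ in the $(2k-1)$-sphere $\Lk_w\Mu$. The key claim is that $\mathcal{P}$ is the Poincar\'e orthogonal complement of $\mathcal{I}$ under the middle pairing on $\AR_k(\Lk_w\Mu)$: one identifies the middle pairing on $\Lk_w\Mu$, via the cone isomorphism, with an $x_w$-twisted restriction of the pairing on $\Mu$, and uses $\alpha(\xi')=0$ for $\xi'\in\ker\alpha$ to force the pairing of $\mathcal{I}=x_w\ker\alpha$ against $\mathcal{P}$ to vanish, so $\mathcal{P}\subset\mathcal{I}^{\perp}$. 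A dimension count using partition of unity in $\Lk_w\Mu$ (comparing $\dim \mathcal{P}$ to the codimension of $\mathcal{I}$) then promotes the containment to equality $\mathcal{P}=\mathcal{I}^{\perp}$. With this identification in hand, the equivalent characterisation of biased Poincar\'e duality (Corollary~\ref{cor:map}) yields that $\mathcal{I} \cap \mathcal{P} = \mathcal{I} \cap \mathcal{I}^{\perp} = 0$ is precisely biased Poincar\'e duality for $\mathcal{I}$, namely~(3).

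\textbf{Main obstacle.} The substantive difficulty is entirely in Step 2, specifically the pairing bookkeeping: verifying that the cone isomorphism intertwines the middle Poincar\'e pairing of $\Lk_w\Mu$ with the relevant piece of the Poincar\'e--Lefschetz pairing on $\Mu$, and exploiting the inductive description $\im\alpha = \BR_k(\Mu)_{|\overline{\Mu-W'}}$ together with $\alpha(\ker\alpha)=0$ to pin down $\mathcal{P}$ as exactly the Poincar\'e dual of $\mathcal{I}$. Once this dual identification is secured, both equivalences reduce to formal applications of the perturbation lemma and Corollary~\ref{cor:map}.
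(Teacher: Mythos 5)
Your proof follows essentially the same strategy as the paper's: (1) $\Leftrightarrow$ (2) is dismissed as formal Poincar\'e--Lefschetz duality because multiplication by a linear form is self-adjoint, and (1) $\Leftrightarrow$ (3) is established by identifying $x_w\ker\upalpha$ and the pullback of $\im\upalpha$ as orthogonal complements in $\AR_k(\lk_w\Mu)$, with orthogonality coming for free from the self-adjointness of $\upalpha$ and the promotion to complementarity secured by a dimension count (the paper pins this down via the short exact sequence $0\rightarrow\BR_{k+1}(\Mu-w\subset\Mu)\rightarrow\BR_{k+1}(\Mu)\rightarrow\AR_k(\st_w\Mu)\rightarrow0$, while you invoke partition of unity in the link, which amounts to the same thing). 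One small imprecision: your $\mathcal{P}$ is defined as $\im\upalpha\cap\AR_k(\St_w\Mu)$, whereas the paper's $\mathcal{I}_W$ is $(\im\upalpha+\BR_k(\partial\Mu\subset\Mu))\cap\AR_k(\st_w\Mu)$; since $\im\upalpha$ lives in the relative space $\BR_k(\Mu,\partial\Mu)$ one needs the boundary correction to make the intersection with $\AR_k(\st_w\Mu)$ behave correctly when $\Mu$ has nonempty boundary. This does not affect the structure of the argument but should be added.
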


Statement (3) can be interpreted for stresses using Corollary~\ref{cor:map} or equivalently by defining the product on $\AR_{\ast}(\lk_w \Mu)$ by Poincar\'e duality with $\AR^{\ast}(\lk_w \Mu)$.
	
	\begin{proof}
		We have $(1)\Leftrightarrow (2)$ by Poincar\'e duality. For $(3)\Leftrightarrow (1)$, observe that $\ker \upalpha$ and $\im \upalpha$ are orthogonal complements in $\BR_\ast(\Mu)$. Hence 
		\[\mathcal{I}_W\ \coloneqq\ \left(\im\left[\BR_{k+1}(\Mu)\ \xrightarrow{\ ``{\sum_{v\in W'}}" x_{v}\ }\ \BR_{k}(\Mu) \right]\ +\ \BR_{k}(\partial \Mu\ \subset \Mu)\right)\ \cap\ \AR_{k}(\st_w \Mu)
		\]
		in  $\AR_{k}(\st_w \Mu)$
		and 
		\[\mathcal{K}_W\ \coloneqq\ x_w\upalpha\ \subset\ \AR_{k}(\st_w \Mu)\]
		are orthogonal to each other. It follows that
		 \[x_{w}\ker \upalpha\ \cap\ \im\upalpha\ =\ 0\ \ \ \text{in}\ \BR_k (\Mu,\partial \Mu)\] if and only if the Hall-Laman relations are true for $\mc{K}_W$ in $\AR_{\ast}(\st_w \Mu)$.
		 
		To finally show that also $(1)\Rightarrow (3)$, it suffices to show 
		\begin{lem}\label{lem:orth}
			$\mathcal{I}_W$ and $\mathcal{K}_W$ are orthogonal complements in $\AR_{k}(\lk_w \Mu).$	
		\end{lem}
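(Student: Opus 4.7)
The plan is to establish both containment and matching of dimensions for the identity $\mathcal{I}_W = \mathcal{K}_W^{\perp}$ inside $\AR_k(\lk_w \Mu)$. The two ingredients will be (i) a compatibility between the middle Poincar\'e pairing on $\lk_w \Mu$ and the Poincar\'e pairing on $\Mu$, mediated by the cone lemmas, and (ii) the self-adjointness of multiplication by the degree-one form $\upalpha$ under the pairing on $\Mu$.

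First I would exhibit the link pairing via the cone lemmas. The first cone lemma gives $\AR_k(\lk_w \Mu) \cong \AR_k(\st_w \Mu)$, and the second gives $\AR_{k+1}(\st_w^\circ \Mu) \xrightarrow{x_w} \AR_k(\st_w \Mu)$ as an isomorphism. Combining these with the injections $\AR_k(\st_w \Mu) \hookrightarrow \BR_k(\Mu)$ and $\AR_{k+1}(\st_w^\circ \Mu) \hookrightarrow \BR_{k+1}(\Mu)$ of Corollary~\ref{lem:cl3}, the middle pairing on $\AR_k(\lk_w \Mu)$ becomes the $\Mu$-pairing: writing any $a \in \AR_k(\lk_w \Mu)$ as $x_w \tilde{a}$ with $\tilde a \in \AR_{k+1}(\st_w^\circ \Mu)$, we have $\langle a, b\rangle_{\lk_w \Mu} = \langle \tilde a, b\rangle_{\Mu}$ for $b$ pushed into $\BR_k(\Mu,\partial\Mu)$.

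Next, for orthogonality $\mathcal{I}_W \subseteq \mathcal{K}_W^{\perp}$, let $a = x_w \alpha \in \mathcal{K}_W$ with $\alpha \in \ker \upalpha$, and let $b \in \mathcal{I}_W$. The $\BR_k(\partial \Mu \subset \Mu)$ summand in the definition of $\mathcal{I}_W$ vanishes under the quotient to $\BR_k(\Mu,\partial\Mu)$, so $b \equiv \upalpha(\gamma)$ there for some $\gamma \in \BR_{k+1}(\Mu)$. The compatibility above, together with the self-adjointness of multiplication by $\upalpha$ under the Poincar\'e pairing, yields
\[\langle a, b \rangle_{\lk_w \Mu}\ =\ \langle \alpha, \upalpha(\gamma)\rangle_{\Mu}\ =\ \langle \upalpha(\alpha), \gamma\rangle_{\Mu}\ =\ 0,\]
since $\alpha \in \ker \upalpha$. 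For the dimension count, non-degeneracy of the Poincar\'e pairing on $\Mu$ combined with self-adjointness shows that the orthogonal complement of $\ker \upalpha \subset \BR_{k+1}(\Mu)$ inside $\BR_k(\Mu,\partial\Mu)$ is exactly $\im \upalpha$. Intersecting with the image of $\AR_k(\st_w \Mu)$ recovers $\mathcal{I}_W$, using that $\AR_k(\st_w \Mu) \hookrightarrow \BR_k(\Mu,\partial\Mu)$ is injective for interior $w$ (with the boundary case reducing analogously). By perfectness of the pairing on the link, this gives $\dim \mathcal{I}_W = \dim \mathcal{K}_W^{\perp}$, so combined with orthogonality we conclude $\mathcal{I}_W = \mathcal{K}_W^{\perp}$.

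The main obstacle I anticipate is the careful identification of the link pairing with the $\Mu$-pairing via the cone lemmas and Corollary~\ref{lem:cl3}, particularly the bookkeeping between the relative pair $(\Mu, \partial \Mu)$ and the absolute $\Mu$, and tracking that the $\BR_k(\partial \Mu \subset \Mu)$ term indeed contributes trivially to the pairing with $\BR_{k+1}(\Mu)$ without polluting the computation.
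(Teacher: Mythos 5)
Your proof is essentially correct in outline, but it takes a genuinely different route from the paper's. The paper treats orthogonality as already established (it appears in the preceding sentences of the proof of Theorem~\ref{thm:perturb}) and then proves the \emph{complement} claim by a pure dimension count: it uses the short exact sequence
\[0\ \longrightarrow\ \BR_{k+1}(\Mu-w\subset \Mu)\ \longrightarrow\ \BR_{k+1}(\Mu)\ \longrightarrow\ \AR_{k}(\st_w \Mu)\ \longrightarrow\ 0,\]
together with the fact that $\ker\upalpha\cap\ker x_w$ and $\im\upalpha+\im x_w$ are Poincar\'e-orthogonal complements, to verify that $\dim\mathcal{I}_W+\dim\mathcal{K}_W=\dim\AR_k(\st_w\Mu)$. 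Your argument instead directly identifies $\mathcal{K}_W^\perp$ using the compatibility of pairings: once you establish that $\langle x_w\alpha, b\rangle_{\lk_w\Mu}=\langle\alpha,\iota(b)\rangle_\Mu$ (the Gysin-type identity via the cone lemmas and Corollary~\ref{lem:cl3}), you get $b\in\mathcal{K}_W^\perp$ iff $\iota(b)\in(\ker\upalpha)^\perp=\im\upalpha$ in one stroke, which simultaneously proves orthogonality and the complement claim, with no separate dimension argument. This is arguably cleaner, and buys you a uniform derivation rather than two disjoint halves.

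A remark on the injectivity of $\iota:\AR_k(\st_w\Mu)\to\BR_k(\Mu,\partial\Mu)$, which you flag as a concern and only address for interior $w$: you do not actually need to assume this separately. Given the compatibility identity and the surjectivity of $\pi:\BR_{k+1}(\Mu)\twoheadrightarrow\AR_k(\st_w\Mu)$, any $y$ with $\iota(y)=0$ satisfies $\langle\pi(x),y\rangle_{\lk_w\Mu}=0$ for all $x$, hence $y=0$ by non-degeneracy of the link pairing (Theorem~\ref{thm:pd} for $\lk_w\Mu$). So injectivity of $\iota$ is a \emph{consequence} of your compatibility lemma rather than an additional hypothesis; making this explicit would resolve the boundary case without a separate reduction. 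The genuine remaining work in your approach is precisely the careful verification of the pairing compatibility, which you correctly identify as the main obstacle; the paper's proof sidesteps this by working entirely inside $\Mu$ and avoiding the link pairing altogether.
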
 
		For this, it suffices to argue that their dimensions sum up to $\dim\AR_{k}(\st_w \Mu)$, which follows by the short exact sequence
		\[0\ \longrightarrow\ \BR_{k+1}(\Mu-w\subset \Mu)\ \longrightarrow\ \BR_{k+1}(\Mu)\ \longrightarrow\ \AR_{k}(\st_w \Mu)\ \longrightarrow\ 0\]
		and because \[\ker \upalpha  \ \cap\ \ker x_w\quad\text{and}\quad\im\upalpha \ +\ \im x_w\] are orthogonal complements in the space of stresses $\BR_{k+1}(\Mu-w\subset \Mu)$ and its Poincar\'e dual in $\BR_{k}(\Mu,\partial \Mu)$, the image  \[\im\left[\BR_{k}(\Mu,\partial \Mu)\ \longrightarrow\ \AR_{k}(\Mu,\partial \Mu \cup \st_w \Mu)\right].\qedhere\]
	\end{proof}

We can therefore extend the transversal prime property from $W'$	to $W$ if any of the above conditions hold. Note further that the task to verify the conditions in Theorem~\ref{thm:perturb} is much simpler if we use the transversal prime property applies to $W'$, which we did not assume for its proof (indeed, we could have chosen $\upalpha$ in any other way). In that case, the kernel of $\upalpha$ is a stress subspace, and the image is a partitioned space and described by the partition complex of Sections~\ref{sec:part} and~\ref{sec:part2}.

 This has especially nice description if $\Mu-W'$ and
\[\bigcup_{v\in W'}\st_v \Mu\]
	are codimension zero manifolds, when the full force of Proposition~\ref{prp:part} applies. As we see the pullbacks to the star of $w$  appear directly as $\mathcal{K}_W$ resp.\ $\mathcal{I}_W$ above in the induction step, this is of central importance for our calculations.
\begin{figure}[h!tb]
			\begin{center}
				\includegraphics[scale = 0.7]{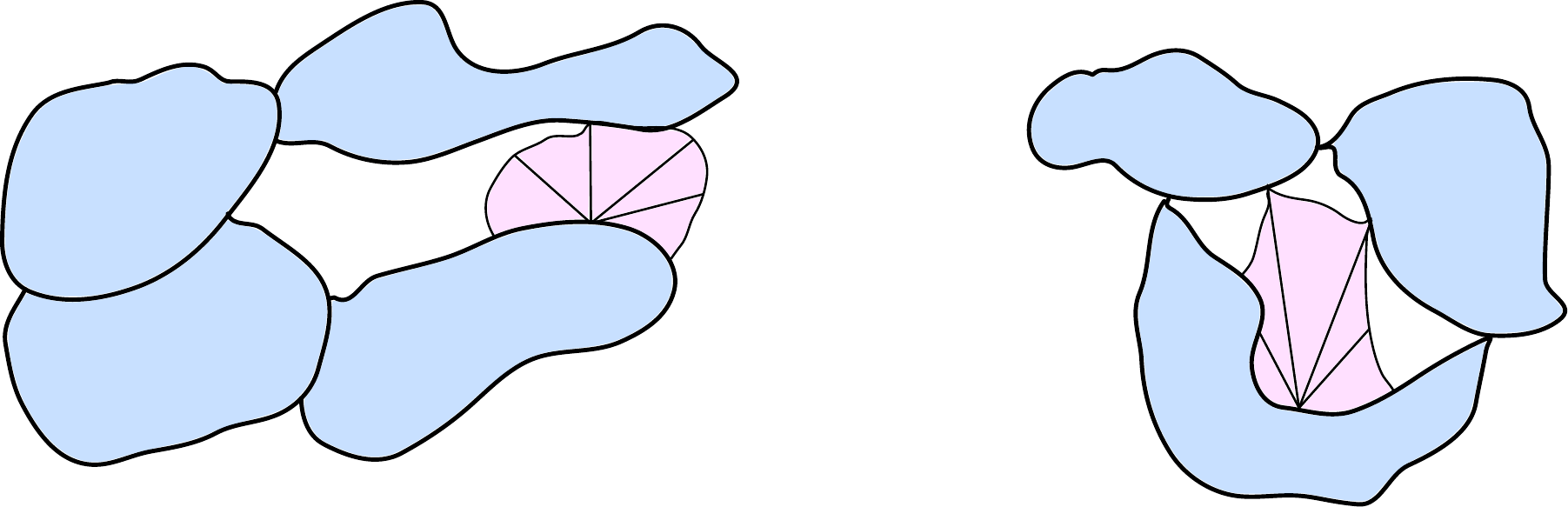}
				\caption{The perturbative approach is simpler if the intermediate complexes are manifolds}
				\label{decomp}
			\end{center}
		\end{figure}
	Unfortunately, this is not always the case. There are then two options: perhaps to use higher order derivatives in Lemma~\ref{lem:perturbation} as alluded to in the beginning of Section~\ref{sec:vd} to realize the algebraic version of reshuffling, or even a direct "homological" version of that Lemma taking into account all prime divisors at once, but we lack such a lemma. Instead, we use another method: we encode global homological information by embeddings into higher-dimensional manifolds, introducing an exterior locus of control and using the fact that in these context, complexes have easier decompositions.
			
	Nevertheless, this observation provides the critical tool for us; it allows us to prove the Lefschetz theorem by induction, having come full circle: We will prove the Hall-Laman relations in lower dimensions, which implies the hard Lefschetz theorem by induction, which in turn implies the Hall-Laman relations using the characterization theorem. The details are the subject of the next two sections. 
	
	\section{Railway constructions, Lefschetz theorems and globality via detours}\label{sec:railway}
	
	The main issue presenting itself to us, if we wish to apply the perturbation lemma in general, is that triangulations even of spheres may be very nasty already in dimension $3$. Indeed, the hardness of distinguishing manifolds and embeddings (as exemplified by the existence of knotted embeddings of circles) in dimension three and higher implies that  in general orders of vertices may not be nice enough to apply the reasoning as we did in the case of shellable spheres. Compare also~\cite{Lickorish} for an explicit construction. The next two sections are largely setup, and we postpone the proofs to Section~\ref{sec:induct}. We also focus on PL spheres, and postpone the discussion of general manifolds to the final section.
	
The approach can briefly be summarized as follows: Ideally, we would like to have a way to apply the perturbation Lemma~\ref{lem:perturbation} globally, ideally in a homological way. Unfortunately, we have no good statement of that kind that allows us to build a homology theory. Indeed, we do not even have a nice symmetric version of the perturbation Lemma. On the other hand, we have no good way to exhaust our triangulated manifolds either. 

Instead, we introduce globality through the backdoor: We consider our sphere $\varSigma$, for which we do not have a good decomposition but want to prove the Lefschetz theorem, into a higher-dimensional sphere $\widetilde{\varSigma}$ where we can ensure a sufficiently nice decomposition, and use topological properties of the embedding \[\varSigma\ \longhookrightarrow\ \widetilde{\varSigma}\] to conclude the originally desired Lefschetz theorem.
	
	\subsection{Railway construction}
			
We need to refine our notions of envelopes, and in particular Corollary~\ref{cor:envelope}. We greedily ask for the best achievable properties for envelopes, which is why the definition is convoluted and subdivided into two parts.
	
We say a an orientable simplicial rational manifold $U$ in $\R^d$ with induced boundary and of dimension $d-1$, and $\varDelta$ is a $(k-1)$-dimensional induced subcomplex of the interior~$U^\circ=(U,\partial U)$ of $U$, and finally a linear order on the vertices of $\varDelta$ is a \Defn{railway} in degree $k\le d$ if the following conditions are satisfied:
	\begin{compactenum}[(1)]
\item For every initial segment $W$ of the vertices of $\varDelta$, the complex
		$\mr{N}_{W} U$	is the {regular neighbourhood} of an induced subcomplex $U[W] \subset U^\circ$ of $U$, that is, 
		\[\mr{N}_W U\ =\ \bigcup_{w \in W} \St_w U\ =\ \bigcup_{v\ \text{vertex of}\ U[W]} \St_v U\]
		and $\mr{N}_W U$ as a rational manifold with boundary strongly deformation retracts to the subcomplex induced by the vertices of $U[W]$
		\[\mr{N}_{W} \varDelta\ =\ \bigcup_{w \in W} \St_w \varDelta,\]
		which itself strongly retracts to $\mr{N}_W \varDelta$ and $U[W]$.
\item The restriction of $\mr{N}_W U$ to the boundary of $U$ is a regular neighbourhood in $\partial U$.
\item If $w$ is the terminal vertex of $W$ and $W'=W{\setminus} w$, then
\[U_w\ \coloneqq\ (\lk_w U)\ \cap\ (\partial U \cup \mr{N}_{W'} U)\] is a regular neighbourhood submanifold in the $(d-2)$-manifold $\lk_w U$ and its complement $\mr{C}{U}_w$ is a regular neighbourhood homotopy equivalent to $(\lk_w U) \cap (\varDelta-W')$.
\item $U$ is an envelope for $\varDelta$ in degree $k$: we have $\AR^{k}(U)=\AR^{k}(\varDelta)$.
	\end{compactenum}
We call $\varDelta$ its \Defn{tabula}. We call $U$ a \Defn{m\'etro} if it satisfies Conditions~(1), (2) and (3). 
	
	It is often useful to impose an additional condition of topological simplicity. We call a railway resp.\ m\'etro \Defn{octavian} if:
	\begin{compactenum}[(1)]
		\setcounter{enumi}{4}
		\item $\mr{N}_{W'} U$ in $U$ is a $(k-2)$-acyclic rational manifold (for every initial segment $W'$) for all initial segments $W'$, and
		\item so is its complement in $U$: the complementary rational manifold to $\mr{N}_{W'} U$ in $U$ is $(k-2)$-acyclic.
	\end{compactenum}

We call a (octavian) m\'etro of degree $k$ \Defn{hereditary} if for every initial segment $W$ with final vertex $w$, the complex $U_w$ is a hereditary (octavian) m\'etro in $\lk_w \varSigma$ of degree $k-1$, or $\varDelta$ is of dimension $0$. Moreover, we demand that $\lk_w \mr{N}_{W{\setminus}\{w\}}U$ is the initial segment of the m\'etro, that is, it is a regular neighbourhood ball whose vertices are initial in the order.

We call a octavian railway \Defn{hereditary} if for every initial segment $W$ with final vertex $w$, the complex $U_w$ is a hereditary octavian m\'etro in $\lk_w \varSigma$ or $\varDelta$ is of dimension $0$. Moreover, we demand that $\lk_w \mr{N}_{W{\setminus}\{w\}}U$ is the initial segment of the railway, that is, it is a regular neighbourhood ball whose vertices are initial in the order.

	\begin{prp}\label{prp:railways} Consider $\varSigma$ a combinatorial rational homology sphere of dimension $d-1$ realized in $\R^{d}$, and $\varGamma$ a subcomplex of dimension $k-1$ in $\varSigma$ with $k\le \frac{d}{2}$. 
	
	Then (assuming Theorem~\ref{mthm:gl}(1) for $(d-3)$-dimensional PL spheres) there exists a hereditary octavian railway $U$ with tabula $\varDelta \supset \varGamma$ that is, up to subdivision of $\varSigma$, a hypersurface in the same, and $\varDelta$ is an envelope for $\varGamma$ in degree $k$.
	\end{prp}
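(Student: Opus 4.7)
The plan is a simultaneous induction on $d$, using Theorem~\ref{mthm:gl}(1) for $(d-3)$-dimensional PL spheres as the inductive input. The base case $k=1$ (where $\varGamma$ is zero-dimensional) is straightforward, since a regular neighborhood of a finite set of points in a manifold is a disjoint union of balls, and any linear order of those points gives a railway trivially.

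\textbf{Step 1 (ambient hypersurface and tabula).} Apply Proposition~\ref{prp:env} and Corollary~\ref{cor:envelope} to place $\varGamma$ on the boundary of its regular neighborhood inside a sufficiently fine refinement of $\varSigma$, yielding a codimension-one submanifold $U_0 \subset \varSigma$ with $\AR^k(\varGamma) \cong \AR^k(U_0)$ as in the envelope property. Enlarge $\varGamma$ to an induced subcomplex $\varDelta \subset U_0^\circ$ by adding interior vertices (introduced via further subdivision of $\varSigma$ outside $\varGamma$ when needed); each enlargement preserves $\AR^k(U_0) \cong \AR^k(\varDelta)$ provided the subdivision is chosen generically, as a new interior vertex then carries no new $k$-stress.

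\textbf{Step 2 (vertex order and octavianity).} Order the vertices of $\varDelta$ in a shelling-like manner so that each initial segment $W$ yields a regular neighborhood $\mr{N}_W U_0$ of an induced subcomplex $U_0[W] \subset U_0^\circ$, the attachment at each step being a regular-neighborhood extension of $\mr{N}_{W'} U_0$ along $U_w = \lk_w U_0 \cap (\partial U_0 \cup \mr{N}_{W'} U_0)$. The existence of such an order follows from standard PL regular-neighborhood theory after further subdivisions of $\varSigma$ outside $\varGamma$, using the Bing-style moves invoked in the proof of Proposition~\ref{prp:env}. The octavian condition, namely the $(k-2)$-acyclicity of $\mr{N}_{W'} U_0$ and of its complement in $U_0$, is arranged in parallel: since $2k \le d$, the dimensions involved are low enough that further subdivisions allow each attached piece to be built from contractible cells of dimension at most $k-1$, so no low-dimensional homology is created; Alexander duality inside $U_0$ then links the two acyclicities.

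\textbf{Step 3 (hereditarity, the main obstacle).} The delicate requirement is the hereditary condition: for every terminal vertex $w$ of every initial segment $W$, the slice $U_w \subset \lk_w U_0$, with the order induced from $\varDelta$, must itself be a hereditary octavian m\'etro in $\lk_w U_0$. Since $\lk_w U_0$ sits inside the lower-dimensional sphere $\lk_w \varSigma$, this is precisely the inductive version of the statement one dimension down, and Theorem~\ref{mthm:gl}(1) for $(d-3)$-dimensional PL spheres is exactly the input needed so that, via the cone lemmas translating degree-$k$ data on $\st_w U_0$ to degree-$(k-1)$ data on $\lk_w U_0$, the envelope property transfers correctly to each link and the inductively constructed m\'etro in $\lk_w U_0$ faithfully contains $\lk_w \varDelta$. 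The principal difficulty is then the combinatorial coordination problem: one must pick a \emph{single} global linear order on the vertices of $\varDelta$ whose restriction to every link coincides with a valid hereditary octavian m\'etro order there. This is handled by a tree-like simultaneous extension, at each stage selecting the next vertex whose status is compatible with the inductive structure in every affected link. The freedom to subdivide $\varSigma$ outside $\varGamma$, together with the low-dimensional acyclicity guaranteed by octavianity, ensures that this greedy choice never gets stuck.
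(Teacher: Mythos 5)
Your proposal is not correct as written: it mislocates the essential role of the $(d-3)$-dimensional Lefschetz theorem, which is the actual content of the proposition.

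The hypothesis ``assuming Theorem~\ref{mthm:gl}(1) for $(d-3)$-dimensional PL spheres'' is not there for the hereditary condition, as your Step~3 suggests; it is there for the envelope property of $U$ itself, which your Step~1 treats as nearly automatic. You assert that after enlarging $\varGamma$ to $\varDelta$ by subdivision ``each enlargement preserves $\AR^k(U_0) \cong \AR^k(\varDelta)$ provided the subdivision is chosen generically, as a new interior vertex then carries no new $k$-stress.'' That claim is precisely what needs proof, and it is false without the Lefschetz input: a regular neighbourhood $U$ of $\varDelta$ is a $(d-2)$-manifold with boundary that in general does carry $k$-stresses not supported on $\varDelta$. (Corollary~\ref{cor:envelope} handles this by deleting offending $(k-1)$-faces, but that surgery destroys the regular-neighbourhood and manifold structure one needs for Conditions~(1)--(3), which is why the paper flags that a stronger statement is required here.) The actual mechanism is: take a simplicial collapse $U \searrow \varDelta$, and at each elementary collapse along a free face $\sigma$ stellarly subdivide $\sigma$ and place the new vertex $v_\sigma$ in general position; the hard Lefschetz theorem applied in $\lk_{e_\sigma}(U{\uparrow}\sigma)$ (equivalently the Hall--Laman relations in $\lk_{v_\sigma}(U{\uparrow}\sigma)$) then forces
\[\AR_{k-1}(\uppi\hspace{0.3mm}\lk_{v_\sigma} U{\uparrow }\sigma)\ \xrightarrow{\ \cdot{\tet}\ }\ \AR_{k-2}(\uppi\hspace{0.3mm}\lk_{v_\sigma} U{\uparrow }\sigma)\]
to be an isomorphism, so that $\st^\circ_{v_\sigma}(U{\uparrow}\sigma)$ supports no $k$-stress. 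Iterating over the whole collapse pushes all $k$-stresses of $U$ into $\varDelta$. This is where the lower-dimensional Lefschetz theorem does its work, and it is the crux of the proposition; without it you have no control over $\AR^k(U)$.

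Your Step~3, by contrast, attributes to the Lefschetz theorem a role (``the envelope property transfers correctly to each link'') that the paper neither states nor needs; the paper's treatment of hereditarity is the one genuinely soft part of the argument (``carefully minding the links in every construction step''), and your tree-like coordination heuristic is a reasonable elaboration of that, but it is not where the algebraic input enters. Separately, your Step~2 gets the octavian property in roughly the right spirit, but in the paper this is arranged already at the level of $\varDelta$ (one adds $(k-1)$-faces to $\varGamma$ so that deleting initial vertex segments keeps the complex $(k-2)$-acyclic, before taking the regular neighbourhood), rather than by ``building each attached piece from contractible cells'' afterward; the order matters because the tabula must be fixed before the envelope argument is run.
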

	
	We call $\varGamma$ the \Defn{republic} of the tabula $\varDelta$ and railway $U$.
	
	\begin{proof}
		The proof is an application of the general position principle in high codimension: we start with an arbitrary order on the vertices of $\varGamma$, which we may assume to be $(k-2)$-acyclic. In fact, by adding faces to $\varGamma$ of dimension $k-1$ to obtain a new complex $\varDelta$, we may assume that after removing initial sets of vertices, the complex $\varDelta$ remains $(k-2)$-acyclic and embeddable in $\varSigma$ up to subdivision, and so is the neighbourhood of the initial vertices. Figure~\ref{map} shows how one modifies center to tabula when removing a first vertex. By subdividing $\varDelta$ outside of $\varGamma$ and putting the new vertices in general position, we can assume the former is an envelope in degree $k$ for the former.
		
		\begin{figure}[h!tb]
			\begin{center}
				\includegraphics[scale = 1.1]{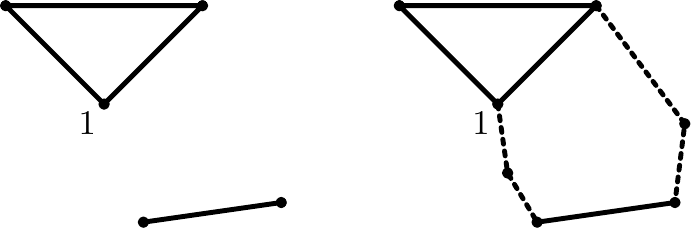}
				\caption{Removing vertices forces us to attach $(k-1)$-faces to keep connectivity conditions intact. It is easiest to visualize in the case of a 3-sphere it with two points removed, to a $2$-sphere, and thinking of the construction as drawing $\varDelta'$ and $\varGamma$ in the plane as an immersed complex using bridges and tunnels as in a knot diagram. }
				\label{map}
			\end{center}
		\end{figure}
		
		It is clear that we can ensure that $\mr{N}_W U$ and $\partial U \cap \mr{N}_W U$ are manifolds at each step, which can be subdivided outside of $\varDelta$ to ensure Conditions~(1) to (3). We can now construct $U$ iteratively around $\varGamma$ as a regular neighbourhood hypersurface in $\varSigma$ using general position of $(k-1)$-dimensional complexes in a $(d-1)$-manifold, as shown in Figure~\ref{railway1}. With the connectivity assumptions on $\varDelta$ we ensured in the beginning, we obtain the octavian property.
		
		\begin{figure}[h!tb]
			\begin{center}
				\includegraphics[scale = 1.0]{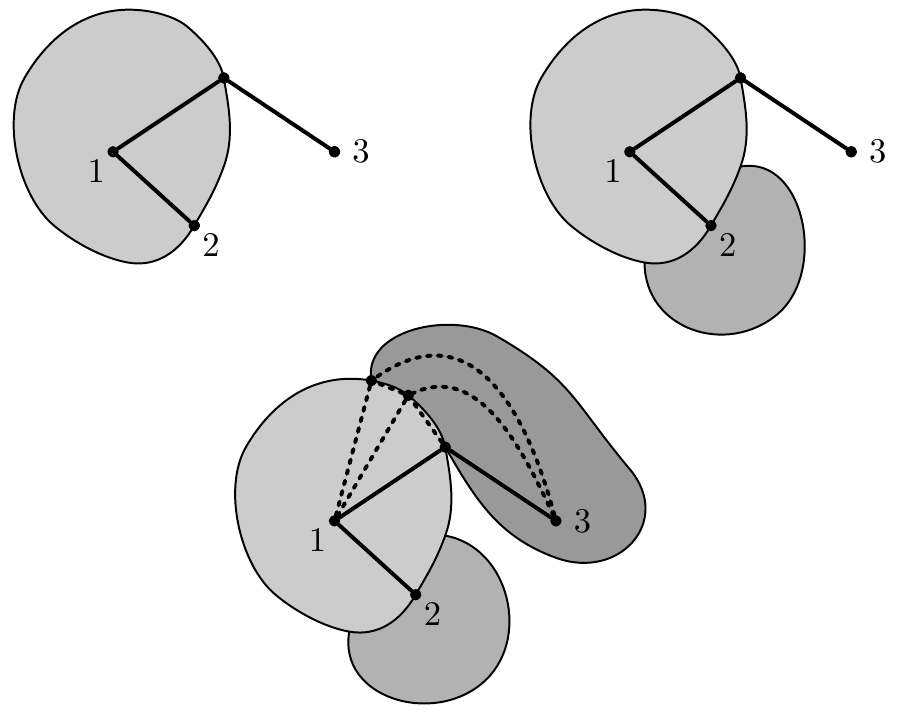}
				\caption{Thickening to a hypersurface, and the intermediate manifolds $U_{\{1\}}$, $U_{\{1,2\}}$ and $U_{\{1,2,3\}}$  }
				\label{railway1}
			\end{center}
		\end{figure} 
		We now turn to making sure that the result is an envelope.			

This is achieved by subdividing the construction in a sufficiently fine way, and applying Theorem~\ref{mthm:gl}: As a regular neighbourhood of $\varDelta$, we may assume $U$ has a simplicial collapse $U\searrow \varDelta$ to $\varDelta$ (see~\cite{Whitehead}). Consider an elementary collapse $U\searrow_e U'$, along a free face~$\sigma$. Stellarly subdivide $\sigma$, and move the new vertices into general position. 

By the Hall-Laman relations for $\lk_{v_\sigma} U{\uparrow }\sigma$, or equivalently the hard Lefschetz theorem for $\lk_{e_\sigma} U{\uparrow }\sigma$ (where $e_\sigma$ is the unique interior edge to $U{\uparrow }\sigma$ incident to $v_\sigma$), we see that after the subdivision, the open star $\st_{v_\sigma}^\circ U{\uparrow }\sigma$ does not support a $k$-stress: If ${\tet}$ is the height over a general position hyperplane in $v^\bot\in \R^d$, $\uppi$ the projection to that hyperplane, then 
\[\AR_{k-1}(\uppi\hspace{0.3mm}\lk_{v_\sigma} U{\uparrow }\sigma)\ \xrightarrow{\ \cdot{\tet}\ }\ \AR_{k-2}(\uppi\hspace{0.3mm}\lk_{v_\sigma} U{\uparrow }\sigma)\]
is an isomorphism. 
 Hence, any $k$-stress of $U{\uparrow} \sigma$ must be supported in $U'$.

		\begin{figure}[h!tb]
			\begin{center}
				\includegraphics[scale = 1.0]{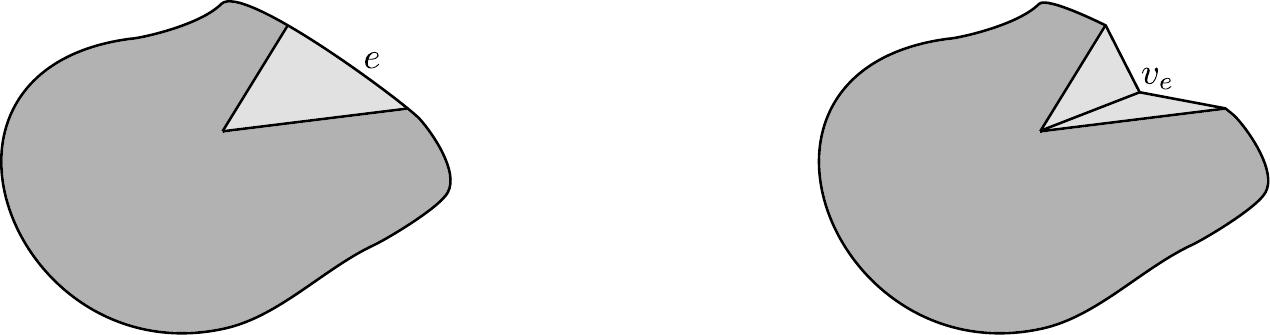}
				\caption{Subdividing a simple homotopy collapse along a free edge $e$, and perturbing it to remove stresses in the neighbourhood of the newly created vertex $v_e$}
				\label{collapse}
			\end{center}
		\end{figure} 

 Repeating this we see that after stellar subdivisions along the collapses, we observe that any $k$-stress of the railway can be assumed to be supported in $\varDelta$. 
		
		Finally, it is straightforward to see that the entire construction can be made hereditary by carefully minding the links in every construction step.
	\end{proof}
	
	\begin{rem}
		It is tempting to draw connections from railways to the somewhat similar notion of dessins d'enfant, at least in the case of surfaces. However, use and critical properties are different, so that we chose not to overemphasize this connection by choosing a different name. 
	\end{rem}
	
	\begin{rem}
	Of course, the proposition is not conditional as we provide a proof of Theorem~\ref{mthm:gl}(1) in the end, but we chose to include the assumption to make the dependencies clear. With a little more work, however, one can even remove that dependency, as one can show that we need Theorem~\ref{mthm:gl}(1) only for sufficiently fine subdivisions, which can be assumed to be polytopal (see \cite{AI}) and satisfy the Lefschetz property following Stanley. 
	\end{rem}
	
	\subsection{Biased Poincar\'e duality via hypersurface envelopes}
	
The critical observation to make is that railways allow us to impose a good order, and therefore show the Lefschetz theorem. Alas, we need to make use of a finer understanding of the map from Chow cohomology to simplicial homology (already encountered in Isomorphism~\eqref{eq:homol}) given by the Map~\eqref{eq:maphom}.
	
Consider a rational sphere $\varSigma$ of dimension $d-1$ realized in $\R^{d}$, where $d=2k$, and $U$ a $(k-2)$-acyclic hypersurface (with boundary) in $\varSigma$. We are interested in the following consequence of Theorem~\ref{thm:ospd2}. 
	
	\begin{thm}\label{thm:env}
		Under the preceding conditions, assume that
		\begin{compactenum}[(1)]
			\item with respect to a projection $\uppi$ to a general position hyperplane, the height ${\tet}$ induces an isomorphism
			\[\BR_{k}(\uppi\hspace{0.3mm} U) \ \xrightarrow{\ \cdot \tet\ }\ \AR_{k-1}(\uppi\hspace{0.3mm} U, \uppi\hspace{0.3mm} \partial U).\]
			\item The map
			\[\AR_k(\mathrm{D}U)\ \longrightarrow\ (H_{k-1})^{\binom{d}{k}}({\widetilde{\mr{D}}\varSigma})\]
			is surjective. 
		\end{compactenum}
		Then we obtain the \emph{biased Poincar\'e duality property for $U$:} Biased Poincar\'e duality holds with respect to the ideal $\KK^{k}(U,\varSigma)$ in $\varSigma$. Conversely, if (1) holds, biased  Poincar\'e duality implies (2).
		\qed
	\end{thm}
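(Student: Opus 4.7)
The plan is to chain together Lemma~\ref{lem:reductodouble}, the commutative diagram displayed just before it, and Theorem~\ref{thm:ospd2}, using that $\varSigma$ is a sphere to collapse the ambient homology term.

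As a preliminary step, I would observe that since $\varSigma$ is a $(d-1)$-sphere and $k-1 < d-1$, the reduced homology $(H_{k-1})(\varSigma;\R)$ vanishes. By the definition of $\BR_k(X \subset \varSigma)$ as the kernel of the composed map to $(H_{k-1})^{\binom{d}{k}}(\varSigma)$, this identifies $\BR_k(\mathrm{D}U \subset \varSigma)$ with $\AR_k(\mathrm{D}U)$. Consequently, hypothesis~(2) is literally the surjectivity of $\BR_k(\mathrm{D}U \subset \varSigma) \to (H_{k-1})^{\binom{d}{k}}(\widetilde{\mathrm{D}}\varSigma)$.

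Next, with hypothesis~(1) in hand, I would apply Lemma~\ref{lem:reductodouble}: the $(k-2)$-acyclicity of $U$ together with the sphericity of $\varSigma$ puts us in the setting of that lemma, and (1) is exactly the surjection condition there. The lemma then tells us that the canonical isomorphism $\AR_k(\widetilde{\mathrm{D}}\varSigma, \mathrm{D}U) \xrightarrow{\sim} \AR_k(\varSigma, U)$ restricts to an isomorphism of the image of $\BR_k(\widetilde{\mathrm{D}}\varSigma \subset \varSigma)$ with $\KK_k(\varSigma, U)$. Reading the commutative square displayed immediately before Lemma~\ref{lem:reductodouble}, this means the desired surjection $\BR_k(\varSigma)_{|\overline{U}} \twoheadrightarrow \KK_k(\varSigma, U)$ -- which by Corollary~\ref{cor:map} is precisely the biased Poincar\'e duality we want for $\KK^k(U,\varSigma)$ -- is equivalent to the surjection $\BR_k(\widetilde{\mathrm{D}}\varSigma) \twoheadrightarrow \KK_k(\widetilde{\mathrm{D}}\varSigma, \mathrm{D}U)$.

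Finally, Theorem~\ref{thm:ospd2} applies directly to this last surjection, reformulating it as exactness of
\[\BR_k(\mathrm{D}U \subset \varSigma) \longrightarrow (H_{k-1})^{\binom{d}{k}}(\widetilde{\mathrm{D}}\varSigma) \longrightarrow (H_{k-1})^{\binom{d}{k}}(\varSigma).\]
Sphericity kills the right term, so this exactness collapses to surjectivity of the first arrow, which is exactly hypothesis~(2) after the preliminary identification. Both implications of the theorem now follow by traversing the chain of equivalences in either direction, with (1) remaining in force throughout to keep Lemma~\ref{lem:reductodouble} applicable.

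The main bookkeeping obstacle I anticipate is verifying the compatibility of $\BR_\ast$ versus $\AR_\ast$ across the folding map $\uptau\colon \widetilde{\mathrm{D}}\varSigma \to \varSigma$, and confirming that the $(k-2)$-acyclic hypersurface $U$ genuinely supplies all the topological hypotheses of Lemma~\ref{lem:reductodouble} and Theorem~\ref{thm:ospd2} (in particular the relative acyclicity and the triviality of the map to $H_{d-2}$). Once these identifications are spelled out, the theorem is essentially a direct reading of the diagrams produced in Section~\ref{sec:doub}.
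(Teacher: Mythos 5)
Your approach matches the paper's intended one. The theorem is stated with only a $\qed$ precisely because it is meant to be a direct read-off of the chain you assemble: Lemma~\ref{lem:reductodouble} (invoked via hypothesis~(1)), the commutative diagram displayed between the statement of Lemma~\ref{lem:reductodouble} and its proof, and Theorem~\ref{thm:ospd2}, together with the sphere hypothesis killing the terminal $(H_{k-1})(\varSigma)$ factor. Your preliminary identification $\BR_k(\mathrm{D}U\subset\varSigma)=\AR_k(\mathrm{D}U)$ is correct and is exactly what makes hypothesis~(2) line up with the exactness condition of Theorem~\ref{thm:ospd2}.

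One small correction and one substantive caveat. The diagram you cite sits \emph{after} the statement of Lemma~\ref{lem:reductodouble}, not before it, and it has three rows, not two. More importantly, your converse is not quite ``traversing the chain in either direction.'' Theorem~\ref{thm:ospd2} is a genuine if-and-only-if, and Lemma~\ref{lem:reductodouble} gives a genuine isomorphism on the top row, but the diagram chase itself is stated as a one-way implication: a surjection of the right vertical map plus the top isomorphism yields a surjection of the left vertical map, whereas the reverse direction requires knowing something extra about the bottom horizontal map (surjectivity of $\bigoplus_{\sigma\in\widetilde{\mr{D}}\varSigma^{(k-1)}}\AR_k(\st_\sigma\widetilde{\mr{D}}\varSigma)\to\bigoplus_{\sigma\notin U}\AR_k(\st_\sigma\varSigma)$), or an independent dimension-count argument exploiting that biased Poincar\'e duality forces $\BR_k(\varSigma)_{|\overline{U}}\to\KK_k(\varSigma,U)$ to be an isomorphism. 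The paper itself elides this, so it is not a defect unique to your write-up, but if you want the converse to stand on its own you should supply that missing step rather than appeal to symmetry of the chain.
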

	
	It is useful to note a particular corollary of this fact: 
	
	\begin{cor}
		Under the preceding conditions, assume that $\varDelta$ is the $(k-1)$-skeleton of a rational hypersurface and sphere $\check{\varSigma}$ in $\varSigma$ and that $U$ is a hypersurface envelope in degree $k$ for $\varDelta$ in some refinement of $\varSigma$ (that does not affect $\varDelta$). Then we have an isomorphism
		\[\AR^{k-1}(\uppi\hspace{0.3mm} \check{\varSigma})\ \xrightarrow{\ \cdot \tet\ }\ \AR^{k}(\uppi\hspace{0.3mm} \check{\varSigma}) \]
that is, the Picard divisor $\tet$ defines a Lefschetz map in $\uppi\hspace{0.3mm} \check{\varSigma}$ with respect to the middle isomorphism. 	\end{cor}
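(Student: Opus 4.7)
The plan is to trace the biased Poincar\'e duality supplied by Theorem~\ref{thm:env} back to a vanishing statement for the middle stress space of $\check{\varSigma}$, and then convert that vanishing into the desired Lefschetz isomorphism via the projection formula $\AR_k(X) = \ker[\AR_k(\uppi\hspace{0.3mm} X) \xrightarrow{\cdot \tet} \AR_{k-1}(\uppi\hspace{0.3mm} X)]$ stated at the beginning of Section~\ref{sec:doub}.

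First I would chain a sequence of ideal identifications in order to migrate the biased Poincar\'e duality into the unrefined $\varSigma$ and onto $\check{\varSigma}$. Because $U$ is a degree-$k$ envelope of $\varDelta$, the ideals $\KK^k(\widehat{\varSigma}, U)$ and $\KK^k(\widehat{\varSigma}, \varDelta)$ in the refinement $\widehat{\varSigma}$ coincide, so Theorem~\ref{thm:env} in fact delivers biased Poincar\'e duality for the latter. The refinement $\widehat{\varSigma} \to \varSigma$ is the identity on $\varDelta$, so Lemma~\ref{lem:PLinv} descends biased Poincar\'e duality to $\KK^k(\varSigma, \varDelta)$. Finally, $\varDelta$ is the full $(k-1)$-skeleton of $\check{\varSigma}$ and degree-$k$ squarefree monomials are supported on $(k-1)$-faces, so $\AR^k(\varDelta) = \AR^k(\check{\varSigma})$ and $\KK^k(\varSigma, \varDelta) = \KK^k(\varSigma, \check{\varSigma})$.

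Since $\check{\varSigma}$ is itself a rational hypersurface sphere in $\varSigma$ and serves as an envelope for $\varDelta$ in degree $k$, Proposition~\ref{prp:ospd} applies with $E = \check{\varSigma}$ and yields the equivalence of the biased Poincar\'e duality just obtained with the vanishing
\[\AR_k(\check{\varSigma}) = 0.\]
The projection formula identifies this stress space with $\ker[\AR_k(\uppi\hspace{0.3mm} \check{\varSigma}) \xrightarrow{\cdot \tet} \AR_{k-1}(\uppi\hspace{0.3mm} \check{\varSigma})]$, so $\cdot\tet$ is injective in the Weil dual picture. Since $\uppi\hspace{0.3mm} \check{\varSigma}$ is a generic projection of a rational $(2k-2)$-sphere to $\R^{2k-1}$, the projected coordinates form a genuine linear system of parameters and the Artinian reduction is a Gorenstein algebra of top degree $2k-1$; in particular degrees $k-1$ and $k$ are Poincar\'e-dual and hence of equal dimension. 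Injectivity therefore promotes to isomorphism, and taking Weil duals gives the claimed isomorphism $\AR^{k-1}(\uppi\hspace{0.3mm} \check{\varSigma}) \xrightarrow{\cdot \tet} \AR^k(\uppi\hspace{0.3mm} \check{\varSigma})$. The only real subtlety is the bookkeeping at the first step: verifying that the envelope identification of ideals survives the refinement so that Lemma~\ref{lem:PLinv} does transfer biased Poincar\'e duality intact; once this is in place, the invocation of Proposition~\ref{prp:ospd} and the projection formula are essentially direct.
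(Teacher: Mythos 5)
Your proposal is correct and follows the same route as the paper's (extremely terse) proof, which simply says that Theorem~\ref{thm:env} gives biased Poincar\'e duality with respect to $\varDelta$ in $\varSigma$ and that the claim then follows from Proposition~\ref{prp:ospd}. You fill in all the suppressed steps—the identification of $\KK^k$-ideals across envelopes in degree $k$, the invocation of Lemma~\ref{lem:PLinv} to descend from the refinement back to $\varSigma$, the substitution of $\check{\varSigma}$ for $\varDelta$ since they share a $(k-1)$-skeleton, the translation of $\AR_k(\check{\varSigma})=0$ into injectivity of $\cdot\tet$ via the projection formula, and the promotion of injectivity to an isomorphism via Gorenstein duality of $\uppi\hspace{0.3mm}\check{\varSigma}$—all of which the paper leaves implicit.
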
	
	
	\begin{proof}
	Following Theorem~\ref{thm:env}, $\varSigma$ satisfies biased Poincar\'e duality with respect to $\varDelta$. The claim follows by
 Proposition~\ref{prp:ospd}.
	\end{proof}
	
	This observation is of central importance.	
	
	\subsection{Using railways to establish the middle Lefschetz isomorphism}\label{ssc:railways}
	
	We obtain an interesting "bait and switch" trick to simplify the proof of the middle Lefschetz theorem, and the proof of biased Poincar\'e duality with respect to subcomplexes in spheres; indeed, while both may be hard to do for a specific surface resp.\ hypersurface envelope, it is exceedingly easier for railways. In turn, we only have to discuss the Lefschetz properties of the latter to conclude properties of the former.
	
	In particular, we obtain a proof of the middle isomorphism of the hard Lefschetz theorem for a $(2k-2)$-dimensional $\varSigma$ sphere embedded as a hypersurface in a sphere $\varSigma'$ of dimension $(2k-1)$ without having to worry about good orderings in $\varSigma$. The construction of $\varSigma'$ is of no importance, as both biased Poincar\'e duality for $\KK(\varSigma',\varSigma)$ and hard Lefschetz for $\varSigma$ are equivalent (see Proposition~\ref{prp:ospd}), so we may simply take the suspension $\susp\varSigma$ realized in $\R^{2k}$.
	
	But the former can be proven using a different envelope for the $k$-skeleton of $\varSigma$, in particular we can choose it to be a railway with all the nice properties we desire. The (middle) hard Lefschetz isomorphism for $\varSigma$ follows as a corollary. This means that contrary to the case of $L$-decomposable spheres, we rely on an indirect way to derive the hard Lefschetz theorem.

\subsection{$\dots$ and the Hall-Laman relations}\label{sec:beymid}
We can also use the previous observation to establish the Hall-Laman relations and the general Lefschetz isomorphism. That is comparatively easy to do directly if the sphere is itself a hereditary m\'etro, but if that is not the case, then we circumvent the issue by reducing the problem to biased Poincar\'e duality, which in turn we prove using an octavian and hereditary railway for an appropriate complex.

To this end, assume we wish to prove the Hall-Laman relations for a pair $(\varSigma, \varDelta)$, where $\varDelta$ is a subcomplex of $\varSigma$ a $(d-1)$-sphere, specifically the Hall-Laman relations for
$\KK^\ast(\varSigma,\varDelta)$ or its annihilator. Label the two vertices of the suspension $\mbf{n}$ and $\mbf{s}$ (for north and south).
We then argue using the following observation, which follows as Proposition~\ref{prp:ospd}. Let $\uppi$ denote the projection along $\mbf{n}$, and let $\tet$ denote the height over that projection, and let $A\ast B$ denote the free join of two simplicial complexes $A$ and $B$.
	
	\begin{lem}\label{lem:midred}
Considering $\susp\varSigma$ realized in $\R^{d+1}$, and $k< \frac{d}{2}$, the following two are equivalent:
		\begin{compactenum}[(1)]
					\item The Hall-Laman relations for \[\KK^{k+1}(\susp\varSigma,\susp(\varDelta) \cup \mbf{s}\ast\varSigma)	\quad\text{resp.} \quad \overline{\KK}^{k+1}(\susp\varSigma,\mbf{n}\ast\varDelta)\]
					with respect to $x_{\mbf{n}}$.									
			\item The Hall-Laman relations for 
 \[\KK^{k}(\uppi\hspace{0.3mm}\varSigma,\uppi\hspace{0.3mm} \varDelta)	\quad\text{resp.} \quad \overline{\KK}^{k}(\uppi\hspace{0.3mm}\varSigma,\uppi\hspace{0.3mm} \varDelta)\]			
with respect to $\tet$. 
		\end{compactenum}
	\end{lem}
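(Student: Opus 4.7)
The plan is to use the cone lemmas to translate the Hall--Laman pairings weighted by $x_{\mbf{n}}$ on the suspension $\susp\varSigma$ into Hall--Laman pairings weighted by $\tet$ on the projection $\uppi\varSigma$. In $\AR^\ast(\susp\varSigma)$ multiplication by $x_{\mbf{n}}$ kills any monomial containing $\mbf{s}$ (since $\{\mbf{n},\mbf{s}\}$ is a non-face of $\susp\varSigma$), and the Cone Lemmas of Section~\ref{sec:basics} provide a natural graded identification of $x_{\mbf{n}}\cdot\AR^\ast(\susp\varSigma)$ with a shifted copy of $\AR^\ast(\uppi\varSigma)$, under which multiplication by $x_{\mbf{n}}$ on the suspension corresponds to multiplication by $\tet$ on the projection.

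First I would identify the ideals. A face of $\susp\varSigma$ lying outside $\susp\varDelta\cup\mbf{s}\ast\varSigma$ must contain $\mbf{n}$, avoid $\mbf{s}$, and meet $\lk_{\mbf{n}}\susp\varSigma=\varSigma$ in a face outside $\varDelta$; hence such faces are exactly $\{\mbf{n}\}\cup\tau$ with $\tau\in\varSigma\setminus\varDelta$. Consequently
\[\KK^{k+1}\bigl(\susp\varSigma,\,\susp\varDelta\cup\mbf{s}\ast\varSigma\bigr)\;=\;x_{\mbf{n}}\cdot\KK^{k}(\uppi\varSigma,\uppi\varDelta)\]
under the Cone Lemma identification. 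For the annihilator side, an element of $\overline{\KK}^{k+1}(\susp\varSigma,\mbf{n}\ast\varDelta)$ is annihilated by $x_{\mbf{s}}$ (as $\mbf{s}\notin\mbf{n}\ast\varDelta$), hence lies in $x_{\mbf{n}}\cdot\AR^{k}(\uppi\varSigma)$; translating the remaining annihilation conditions on stars $\st_\sigma\susp\varSigma$ term by term under the Cone Lemma then yields
\[\overline{\KK}^{k+1}(\susp\varSigma,\mbf{n}\ast\varDelta)\;=\;x_{\mbf{n}}\cdot\overline{\KK}^{k}(\uppi\varSigma,\uppi\varDelta).\]

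Next I would compare the bilinear forms. For $a',b'\in\AR^{k}(\uppi\varSigma)$ and $a=x_{\mbf{n}}a'$, $b=x_{\mbf{n}}b'$, using that $\susp\varSigma$ is a $d$-sphere with socle in degree $d+1$, the suspension Hall--Laman form evaluates as
\[\deg_{\susp\varSigma}\bigl(ab\cdot x_{\mbf{n}}^{d-2k-1}\bigr)\;=\;\deg_{\susp\varSigma}\bigl(x_{\mbf{n}}^{d-2k+1}\,a'b'\bigr),\]
and iterating the intertwining $x_{\mbf{n}}\leftrightarrow\tet$ the right hand side equals a nonzero scalar multiple of $\deg_{\uppi\varSigma}(a'b'\cdot\tet^{d-2k})$. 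Since multiplication by $x_{\mbf{n}}$ is an isomorphism onto its image, the equivalence $(1)\Leftrightarrow(2)$ follows for both the $\KK$ and the $\overline{\KK}$ version. The principal technical obstacle is the annihilator identification in the first step: the $\KK$-side is essentially immediate from the face enumeration in the suspension, but for $\overline{\KK}$ one has to track the kernels of the restriction-to-star maps through the Cone Lemma, and the cleanest route is to establish the $\KK$-identification first and then invoke that $\KK^{k}$ and $\overline{\KK}^{k}$ are Poincar\'e orthogonal complements in their respective Poincar\'e duality algebras.
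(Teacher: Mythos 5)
Your proposal follows essentially the same route as the paper's proof: the paper normalizes $\tet=x_{\mbf n}-x_{\mbf s}$ and sets up a commutative square identifying $\cdot\tet^{d-2k}$ on $\AR^k(\uppi\varSigma)$ with $\cdot x_{\mbf n}^{d-2k-1}$ on $\AR^{k+1}(\susp\varSigma,\mbf s\ast\varSigma)$ via the two cone lemmas, then restricts to the ideals; you unpack the same intertwining into degree identities and explicit ideal identifications, which is equivalent. One cautionary note on your closing remark: deducing the $\overline{\KK}$-side from the $\KK$-side by citing that ``$\KK^k$ and $\overline{\KK}^k$ are Poincar\'e orthogonal complements'' is not quite right as stated, since that orthogonality only holds in middle degree $2k=d$ (the paper makes exactly this caveat), and here the two ideals are moreover attached to different subcomplexes ($\susp\varDelta\cup\mbf s\ast\varSigma$ versus $\mbf n\ast\varDelta$), so they are not directly Poincar\'e dual to each other. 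Your earlier direct treatment of $\overline{\KK}$ — using that $x_{\mbf s}$ annihilates it and tracking the star-restriction conditions through the cone lemma — is the correct route, and is what the paper's phrase ``restricting to ideals and their Poincar\'e duals'' is doing implicitly; I would keep that argument and drop the shortcut.
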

	
\begin{proof}
	Without loss of generality we have $\tet=x_{\mbf{n}}-x_{\mbf{s}}$ in  $\AR^{\ast}(\susp\varSigma)$.
Consider then the diagram
\[\begin{tikzcd}[column sep=5em]
 \AR^{k}(\uppi\varSigma) \arrow{r}{\ \ \ \ \cdot \tet^{d-2k}\ \ \ \  } \arrow{d}{\sim } & \AR^{d-k}(\uppi\varSigma) \arrow{d}{\sim } \\
\AR^{k+1}(\susp\varSigma,\mbf{s}\ast\varSigma) \arrow{r}{\ \ \ \cdot x_{\mbf{n}}^{d-2k-1}\ \ \ } & \AR^{d-k}(\mbf{n}\ast\varSigma)
\end{tikzcd}
\]
where the first vertical map is defined by the composition of cone lemmas
\[\AR^{k}(\uppi\varSigma)\ \cong\ \AR^{k}(\mbf{n}\ast\varSigma)\ \xrightarrow{\ \cdot x_{\mbf{n}} \ }\ \AR^{k+1}(\susp\varSigma,\mbf{s}\ast\varSigma).\]
and the second vertical map is simply the cone lemma. An isomorphism on the top is then equivalent to an isomorphism of the bottom map, and restricting to ideals and their Poincar\'e duals gives the desired.
\end{proof}
	
Hence, we can reduce the Hall-Laman relations in their entirety to biased Poincar\'e duality.	

Before we continue, let us sum up what happened here:

The case of $L$-decomposable spheres suggest that we prove the Lefschetz theorem by pure induction: we prove the Hall-Laman-relations for lower-dimensional spheres and the perturbation program to prove the Lefschetz theorem in higher dimensions, leaving us to establish the biased Poincar\'e duality as base case of the Hall-Laman relations.

Instead, we ignore the Lefschetz theorem entirely, and prove biased Poincar\'e duality by induction on dimension, and Lefschetz theorem merely falls off as a side-product.

\section{Inductive step via railways and proof of the main theorem}\label{sec:induct}
	
	We are ready to study the crucial inductive step necessary for Theorem~\ref{mthm:gl}. Following the preceding discussion, we need to prove biased Poincar\'e duality for subcomplexes of spheres. For this purpose, we noticed that we can reduce to the case of railways and m\'etros using Theorem~\ref{thm:env} and Proposition~\ref{prp:railways}: 

We have to verify that for every subcomplex $\varDelta$ of dimension $k-1$ in a PL sphere $\varSigma$ of dimension $d-1=2k-1$, we can show biased Poincar\'e duality. To this end, we show that for a suitable hypersurface envelope for degree $k$ (specifically, the octavian hereditary railway with $\varDelta$ as its republic), we can insure that the conditions of Theorem~\ref{thm:env} apply. We do so by induction on the dimension, using biased Poincar\'e duality for spheres of dimension $2k-3$. 
	
	\subsection{Analysis of links, Part (1)} 
	Consider therefore a railway $U$ of dimension $2k-2$ in a sphere $\varSigma$ in $\R^d$ and of dimension $d-1=2k-1$, and initial segments of vertices $W$ and $W'$ with $W'=W-w$. Let us now verify both conditions, using as assumption the following:
	
\begin{compactitem}[$\circ$]
\item The biased Poincar\'e duality in PL spheres of dimension $d-3$, and
\item the middle Lefschetz isomorphism in PL spheres of dimension $d-4$ (which is a special case of the first assumption), and
\item for sufficiently fine codimension zero m\'etros $\Mu$ for $(k-2)$-dimensional complexes in spheres $\varSigma$ of dimension $d-3$, the transversal prime properties: The images of
\begin{equation}\label{eq:trans1}
\AR_{k}(\varSigma,\varSigma-\Mu^\circ)\ \xrightarrow{\ \cdot \ell\ }\ \AR_{k-1}(\varSigma)
\end{equation}
and
\begin{equation}\label{eq:trans2}
\BR_{k}(\Mu,\partial \Mu)\ \xrightarrow{\ \cdot \ell\ }\ \AR_{k-1}(\varSigma)
\end{equation}
satisfy biased Poincar\'e duality in $\varSigma$
where we assume that $\ell$ is supported on interior vertices of $\Mu$. This again is a consequence of the inductive program and the first assumption, and much simpler than proving the Lefschetz theorem for all manifolds as m\'etros allow for simple decomposition. This derivation follows as in Theorem~\ref{thm:vd}, see also the Section~\ref{sec:metro}.
\end{compactitem}
	To verify that $U$ satisfies assumption Theorem~\ref{thm:env}(1) (or rather, that $U$ can be made to satisfy the same by changing its coordinates) using the inductive program, we consider $\uppi$ is a projection to $\R^{d-1}$, and $\tet$ denotes the height with respect to that projection, which we will construct via the inductive approach. This follows essentially directly as in Section~\ref{sec:genperap}. 
	
In particular, following the envelope Theorem~\ref{thm:env}, we then have to understand the pullback \[
	x_w\BR_{k}(\uppi\hspace{0.3mm} U- W')\ \subset\ 
	\AR_{k-1}(\uppi\hspace{0.3mm} \lk_w U)
	\] along the differential
	$x_w$. 
	
	We abbreviate this subspace by $\mc{K}_W$.
	
\begin{rem}
We have to be careful here, as we are changing the linear system of parameters, or equivalently the vertex coordinates of $\varSigma$ in $\R^d$, so we have to verify that property (4) of railways still applies. This can be done indirectly, using the constructed Lefschetz element to perturb the original coordinates, or directly as follows:
The construction of railways only used the combinatorial structure of the same, and Lefschetz properties of edges outside of the republic of $U$. It is easy to see that choosing the Lefschetz element in general position on interior vertices of $U$ therefore does not affect property (4).
\end{rem}	
	
	To show they satisfy the biased Poincar\'e duality property, we note that $\Lk_w U\, \cap\, \mr{N}_W U$ is a manifold itself. To see how this helps us specifically, we shall argue that the octavian and hereditary condition ensure that 
	$\mc{K}_W$ is approximated by the stress complex of a subcomplex following Lemma~\ref{lem:approx}. 
	
	First, we need the following basic consequence of the first condition of railways.
	
	\begin{lem}
		We have
		\[\langle \AR_k(\uppi\hspace{0.3mm} \st_v U): v\in W' \rangle\ =\ \langle \AR_k(\uppi\hspace{0.3mm} \st_v U): v\in (U[W'])^{(0)}\rangle \subset \AR_k(\uppi\hspace{0.3mm} U)\]
		provided $U$ is in general position and the generic Lefschetz theorem is established for $(d-4)$-spheres.
	\end{lem}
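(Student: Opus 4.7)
The forward inclusion $\langle \AR_k(\uppi\hspace{0.3mm} \St_v U):\ v \in W' \rangle \subseteq \langle \AR_k(\uppi\hspace{0.3mm} \St_v U):\ v\in (U[W'])^{(0)}\rangle$ is immediate, so the plan is to establish the reverse inclusion vertex-by-vertex. Fix $v \in (U[W'])^{(0)} \setminus W'$; I would show $\AR_k(\uppi\hspace{0.3mm} \St_v U) \subseteq \sum_{w\in W'}\AR_k(\uppi\hspace{0.3mm}\St_w U)$ inside $\AR_k(\uppi\hspace{0.3mm} U)$. The railway defining equality $\bigcup_{w \in W'}\St_w U = \bigcup_{u \in (U[W'])^{(0)}}\St_u U$ forces every face of $U$ containing $v$ to also contain some $w \in W'$. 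Since $v\notin W'$, this translates to the star-cover
\[\lk_v U\ =\ \bigcup_{w \in N}\St_w \lk_v U,\qquad N\coloneqq W' \cap (\lk_v U)^{(0)}.\]

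The next step is a subcover version of partition of unity applied to $\uppi\hspace{0.3mm}\lk_v U$. The intersections $\St_S \lk_v U$ for $S\subseteq N$ a face of $\lk_v U$ are themselves stars of faces, hence Cohen--Macaulay, and the \v{C}ech complex of the subcover is exact at the level of degree zero because $\lk_v U$ is fully covered by the $N$-stars. Running the spectral sequence argument of Lemma~\ref{lem:partyyyyy} verbatim on the truncated complex yields
\[\bigoplus_{w\in N}\AR_k(\uppi\hspace{0.3mm}\St_w \lk_v U)\ \twoheadrightarrow\ \AR_k(\uppi\hspace{0.3mm}\lk_v U).\]
Both the general position assumption on $U$ and the inductive Lefschetz hypothesis for $(d-4)$-spheres, i.e., the edge-links $\lk_{\{v,w\}}U$, enter precisely here: they guarantee that $\uppi$ preserves the dimensions of the stress spaces involved and that the iterated cone identifications below are isomorphisms onto the correct subspaces of $\AR_k(\uppi\hspace{0.3mm} U)$.

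To finish, I would transport the surjection back to $U$ using the cone lemma. Since $\St_w \lk_v U$ agrees with $\lk_v \St_{\{v,w\}} U$, two applications of the cone lemma give natural identifications $\AR_k(\uppi\hspace{0.3mm}\St_w \lk_v U)\cong \AR_k(\uppi\hspace{0.3mm}\St_{\{v,w\}}U)$ and $\AR_k(\uppi\hspace{0.3mm}\lk_v U)\cong \AR_k(\uppi\hspace{0.3mm}\St_v U)$. The former sits inside $\AR_k(\uppi\hspace{0.3mm}\St_w U)$ via the inclusion $\St_{\{v,w\}}U \subseteq \St_w U$, so the displayed surjection becomes the decomposition
\[\AR_k(\uppi\hspace{0.3mm}\St_v U)\ =\ \sum_{w\in N}\AR_k(\uppi\hspace{0.3mm}\St_{\{v,w\}}U)\ \subseteq\ \sum_{w\in W'}\AR_k(\uppi\hspace{0.3mm}\St_w U)\]
as desired. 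The main obstacle is justifying the subcover partition of unity after projection, specifically the compatibility of $\uppi$ with the truncated \v{C}ech complex on the nerve $\lk_v U[N]$; this is precisely where the general position of $U$ and the $(d-4)$-sphere Lefschetz hypothesis are indispensable.
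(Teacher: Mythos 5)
Your strategy differs from the paper's and contains a genuine gap. You reduce vertex-by-vertex: fix $v\in (U[W'])^{(0)}\setminus W'$, pass to $\lk_v U$, and try to apply a partition-of-unity surjection over the subcover by the stars of $N = W'\cap (\lk_v U)^{(0)}$. The paper instead inducts on the initial segment $W$ of the tabula and, at each step, controls the spurious vertices of $(U[W])^{(0)}$ by observing that the intermediate regular neighbourhood $(\mr{N}_{W'}U\cap \St_w U)-w$ is a $(d-4)$-manifold supporting no $k$-stress relative to its boundary once coordinates are in general position; that vanishing, which is where the $(d-4)$-sphere Lefschetz hypothesis enters, is what removes the extra generators.

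The step you label the "main obstacle" is not a routine verification but the missing content. Lemma~\ref{lem:partyyyyy} is a statement about the cover of a Cohen--Macaulay complex by \emph{all} vertex stars; its proof hinges on the full partition double complex $\widetilde{\mc{P}}^\bullet$, Reisner's theorem, and the resulting spectral-sequence collapse. None of that transfers to a proper subcover: the truncated \v{C}ech complex need not be exact beyond degree zero, and "running the spectral sequence argument verbatim on the truncated complex" computes a different object. In fact, a subcover partition of unity is equivalent to the vanishing of certain relative stress groups on the region of $\lk_v U$ not covered by the $N$-stars — which is exactly what the paper's argument extracts from general position plus the Lefschetz theorem on $(d-4)$-spheres. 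Your appeal to those hypotheses ("$\uppi$ preserves the dimensions of the stress spaces" and "the iterated cone identifications ... are isomorphisms") misidentifies their role: they are not needed to make the cone lemma work or to control $\uppi$, but to kill the obstructing stresses. Without that, the claimed surjection $\bigoplus_{w\in N}\AR_k(\uppi\St_w\lk_v U)\twoheadrightarrow \AR_k(\uppi\lk_v U)$ is unjustified. There is also a codimension mismatch glossed over: $\uppi\lk_v U$ has dimension $d-3$ inside $\R^{d-1}$, i.e., codimension two, whereas Lemma~\ref{lem:partyyyyy} is stated and proved in codimension one; an adapted Ishida/Koszul argument is needed even to set up the full-cover version here.
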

	
	\begin{proof}
		We prove this by induction on the size of $W$. Assume we proved the statement for $W'$, we proceed to prove it when we add the next vertex $w$, where $W=W'\cup\{w\}$. Then 
		\[(\mr{N}_{W'} U\cap \st_w U)-w\]
		is a $(d-4)$-manifold and a regular neighbourhood. It therefore does not support any $k$-stress relative to the boundary provided $U$ is in general position.
	\end{proof}
	
	\begin{lem}
		The orthogonal complement $\mc{K}_W^\bot$ of $\mc{K}_W$ is 
		\begin{align*}
		\ker [h:\AR_{k-1}(\uppi\hspace{0.3mm} (\lk_w U) \cap \uppi\hspace{0.3mm}(\partial U \cup \mr{N}_{W'} U))\ \rightarrow&\ \ \AR_{k-2}(\uppi\hspace{0.3mm} (\lk_w U)]\ +\ \\
		&\AR_{k-1}(\uppi\hspace{0.3mm} \lk_w \mr{N}_{W'} U)\ \subset\ \AR_{k-1}(\uppi\hspace{0.3mm} \lk_w U).
		\end{align*}
	\end{lem}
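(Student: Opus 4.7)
The plan is to identify $\mc{K}_W^\bot$ by first dualising the description of $\mc{K}_W$ via the cone lemma, and then refining the resulting stress description using the octavian and hereditary structure of $U$ together with the inductive biased Poincar\'e duality in dimension $2k-3$.

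First, I would reinterpret $\mc{K}_W$ as a partitioned subspace of $\AR_{k-1}(\uppi\hspace{0.3mm} \lk_w U)$. By the second cone lemma, $x_w$ induces an isomorphism $\AR_k(\st_w^\circ \uppi\hspace{0.3mm} U) \xrightarrow{\sim} \AR_{k-1}(\uppi\hspace{0.3mm} \lk_w U)$, and under it $\mc{K}_W = x_w\BR_k(\uppi\hspace{0.3mm} (U-W'))$ becomes the image of those stresses on $\uppi\hspace{0.3mm} (U-W')$ whose support meets the star of $w$. Applying the partition of unity (Lemma~\ref{lem:partyyyyy}) to $\uppi\hspace{0.3mm} (U - W')$ identifies this image with the partitioned subspace of $\AR_{k-1}(\uppi\hspace{0.3mm} \lk_w U)$ generated by the stars of those vertices of $\lk_w U$ lying outside $\mr{N}_{W'}U$, that is, the partitioned subspace supported on $\mr{C}U_w$.

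Second, I would compute the orthogonal complement. Using the Poincar\'e pairing on $\AR_\ast(\uppi\hspace{0.3mm} \lk_w U)$, the complement of a partitioned subspace is, by Corollary~\ref{cor:map} and Proposition~\ref{prp:part}, a stress subspace. Splitting $\lk_w U$ into the submanifold $\lk_w\mr{N}_{W'}U$ and the hypersurface $U_w = \lk_w U \cap (\partial U \cup \mr{N}_{W'}U)$ that bounds its complement, this stress subspace decomposes into (a) stresses supported entirely on $\lk_w \mr{N}_{W'}U$, giving the summand $\AR_{k-1}(\uppi\hspace{0.3mm} \lk_w \mr{N}_{W'}U)$, and (b) stresses on $U_w$ that extend to stresses on $\uppi\hspace{0.3mm} \lk_w U$. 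By Theorem~\ref{thm:env}(1) applied inductively to $\lk_w U$, this extension condition is precisely the vanishing $\ker h$, since $h$ encodes multiplication by $\tet$, i.e., the obstruction coming from the projection's height.

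The main obstacle is showing that these two contributions span all of $\mc{K}_W^\bot$ rather than merely injecting into it. This relies critically on the octavian and hereditary structure of the railway: the $(k-2)$-acyclicity of $\mr{N}_{W'}U$ and of its complement, combined with the hereditary property transmitting biased Poincar\'e duality down to $\lk_w U$, provides enough control via Mayer--Vietoris and the partition-of-unity long exact sequence to conclude the required dimension count. Concretely, the short exact sequence
\[
0\ \longrightarrow\ \BR_{k+1}(U-W' \subset U)\ \longrightarrow\ \BR_{k+1}(U)\ \longrightarrow\ \AR_k(\st_w^\circ U)\ \longrightarrow\ 0
\]
combined with the inductive biased Poincar\'e duality for the $(2k-3)$-sphere $\uppi\hspace{0.3mm} \lk_w U$ yields
\[
\dim \mc{K}_W\ +\ \dim\!\left(\ker h\ +\ \AR_{k-1}(\uppi\hspace{0.3mm} \lk_w \mr{N}_{W'}U)\right)\ =\ \dim \AR_{k-1}(\uppi\hspace{0.3mm} \lk_w U),
\]
closing the identification.
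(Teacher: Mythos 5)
Your proposal reconstructs the orthogonality argument from scratch, whereas the paper's proof is a three-line appeal to prior machinery: it establishes surjectivity of $\bigoplus_{v\in (U[W'])^{(0)}}\AR_k(\uppi\hspace{0.3mm}\st_v U)\twoheadrightarrow\AR_k(\uppi\hspace{0.3mm}\mr{N}_{W'}U)$ via the octavian condition~(5) and Proposition~\ref{prp:part}, replaces $(U[W'])^{(0)}$ by $W'$ using the preceding lemma, and then invokes Lemma~\ref{lem:orth} to close the dimension count. Your general intuition — that $\mc{K}_W$ is a partitioned subspace, so its complement is a stress subspace, decomposable along $U_w$ and $\lk_w\mr{N}_{W'}U$, with a dimension count finishing the job — is aligned with the paper's philosophy. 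But two steps need repair.

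First, your identification of $\mc{K}_W$ as the partitioned subspace generated by stars of vertices of $\lk_w U$ lying \emph{outside} $\mr{N}_{W'}U$ is not accurate as stated. Since $\mc{K}_W = x_w\BR_k(\uppi\hspace{0.3mm}(U-W'))$, partition of unity for $U-W'$ yields generators supported on stars of vertices of $\lk_w(U-W')$, which includes vertices in $\mr{N}_{W'}U\setminus W'$. This is precisely the gap the paper's preceding lemma is there to close: it shows
\[
\langle\AR_k(\uppi\hspace{0.3mm}\st_v U):v\in W'\rangle\ =\ \langle\AR_k(\uppi\hspace{0.3mm}\st_v U):v\in (U[W'])^{(0)}\rangle
\]
so that the combinatorial generating set $W'$ and the geometric one $(U[W'])^{(0)}$ define the same partitioned subspace. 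Without that lemma, the informal appeal to Mayer--Vietoris at the end of your proposal is not enough to justify "supported on $\mr{C}{U}_w$."

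Second, your step (b) attributes the extension condition to "Theorem~\ref{thm:env}(1) applied inductively to $\lk_w U$." But Theorem~\ref{thm:env}(1) is one of the two \emph{hypotheses} of that theorem, not a conclusion one can invoke inductively; what is actually doing the work here is that $h$ is multiplication by the height $\tet$, and the kernel condition is read off the partition complex via Proposition~\ref{prp:part} applied to the manifold-with-boundary $U_w$. The paper sidesteps this entirely by delegating the dimension count to Lemma~\ref{lem:orth}, so if you want a self-contained route you need to replace this step with the orthogonal-complement argument underlying that lemma rather than citing Theorem~\ref{thm:env}.
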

	
	\begin{proof}
		We have that
		\[\bigoplus_{v\in (U[W'])^{(0)}} \AR_k(\uppi\hspace{0.3mm}\st_{v} U)\ \longrightarrow\ \AR_k(\uppi\hspace{0.3mm}\mr{N}_{W'} U)\]
		is a surjection by assumption (5) and Proposition~\ref{prp:part}. 
		By the previous lemma, we therefore have
		\[\bigoplus_{v\in W'} \AR_k(\uppi\hspace{0.3mm}\st_{v} U)\ \longtwoheadrightarrow\ \AR_k(\uppi\hspace{0.3mm}\mr{N}_{W'} U).\]
		Hence, the claim follows by Lemma~\ref{lem:orth}.
	\end{proof}
	
	This is already in the form we could treat it, essentially encoding the Poincar\'e pairing on the primitive part. It is the third assumption at the beginning of the section. Let us see how this is achieved in detail,  using the assumption that the railway is additionally hereditary.
	
	\subsection{Lefschetz for m\'etros}\label{sec:metro}
	
Let us assume the height $\vartheta$ is chosen as decaying for the order on vertices for the railway in the link, and using Lemma~\ref{lem:approx}, which implies that $\mc{K}_W^\bot$ approximates a stress subspace, which satisfies biased Poincar\'e duality by induction on the dimension. Formulated as a lemma for m\'etros, we have the following:
	
\begin{lem}\label{lem:metros}
Consider $\Mu$ a $(2k-3)$-dimensional hereditary octavian m\'etro realized in $\R^{2k-2}$, and $\ell$ a decaying Lefschetz function along the vertices of the tabula, then the kernels of 
\begin{equation*}
\AR_{k-1}(\Mu)\ \xrightarrow{\ \cdot \ell\ }\ \AR_{k-2}(\Mu)
\end{equation*}
and
\begin{equation*}\BR_{k-1}(\Mu)\ \xrightarrow{\ \cdot \ell\ }\ \BR_{k-2}(\Mu)
\end{equation*}
approximate stress subspaces.
\end{lem}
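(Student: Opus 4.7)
The plan is to prove the lemma by simultaneous induction on the dimension $k$ and, for fixed $k$, on the number of vertices of the tabula of $\Mu$; the $\AR$-version follows from the $\BR$-version via Proposition~\ref{prp:part} and Corollary~\ref{lem:cl3}, which control the discrepancy (the interior socle contribution) compatibly with the filtration by initial segments of the tabula. For the inductive step, let $w_1$ be the first vertex of the tabula and decompose $\ell = \varepsilon_1 x_{w_1} + \ell'$, where $\ell' = \sum_{i \geq 2} \varepsilon_i x_{w_i}$ with the ratios $\varepsilon_i/\varepsilon_{i+1}$ tending to infinity. I would apply the approximation Lemma~\ref{lem:approx}(2) with $\upalpha = \cdot x_{w_1}$ and $\upbeta = \cdot \ell'$ to conclude that, as $\varepsilon_2/\varepsilon_1 \to 0$, the kernel of $\cdot \ell$ approaches
\[\ker(\cdot x_{w_1}) \,\cap\, \ker\bigl[\cdot \ell' : \BR_{k-1}(\Mu) \longrightarrow \BR_{k-2}(\Mu)/\im(\cdot x_{w_1})\bigr].\]

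The transversality hypothesis of Lemma~\ref{lem:approx}(2) translates, via Theorem~\ref{thm:perturb}(3), to biased Poincar\'e duality on a pullback ideal $\mc{K}_{\{w_1\}} \subset \AR_{k-2}(\lk_{w_1} \Mu)$. The hereditary octavian hypothesis on $\Mu$ means precisely that the cross-section $U_{w_1}$ (cf.\ condition~(3) in the definition of m\'etro) together with its complement in $\lk_{w_1} \Mu$ forms a hereditary octavian m\'etro of dimension $2k-4$ in $\R^{2k-3}$, so the required biased Poincar\'e duality is the dimension-$(k{-}1)$ case of the inductive program enumerated at the start of Section~\ref{sec:induct}. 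Meanwhile, $\ker(\cdot x_{w_1})$ on $\BR_{k-1}(\Mu)$ is exactly the stress subspace $\BR_{k-1}((\Mu - w_1) \subset \Mu)$, and $\Mu - w_1$ inherits from $\Mu$ the structure of a hereditary octavian m\'etro with one fewer tabula vertex (this is built into the recursive hereditary clause of the definition). By the inductive hypothesis on tabula size, applied inside $\BR_{k-1}((\Mu - w_1) \subset \Mu)$, the remaining kernel of $\cdot \ell'$ approximates a stress subspace. Concatenating, the kernel of $\cdot \ell$ on $\BR_{k-1}(\Mu)$ approximates the stress subspace supported on the complement of all tabula vertices, which is a stress subspace in the sense of Section~\ref{ssc:sop}.

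The main obstacle I expect is keeping the hereditary and octavian conditions in lockstep across the iteration: one has to verify not merely that $\Mu - w_1$ inherits these properties, but that each subsequent link cross-section $U_{w_i}$ is again a hereditary octavian m\'etro of dimension exactly one less, so that the inductive program closes at every step and so that the transversality hypothesis of Lemma~\ref{lem:approx}(2) is available uniformly. This is precisely the combinatorial content packaged into the definition of \emph{hereditary}, engineered in Section~\ref{sec:railway} to make exactly this argument go through; still, checking the compatibility of the regular-neighbourhood data on $\lk_{w_i}\Mu$ with the deletion of earlier vertices will require some care, and is the one place where the octavian condition (the $(k-2)$-acyclicity on both sides of every initial segment) is used in an essential, non-cosmetic way.
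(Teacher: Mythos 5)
Your argument is exactly the paper's, which proves the lemma by the one-line appeal to Proposition~\ref{prp:approx}: iterate the perturbation/approximation Lemmas~\ref{lem:perturbation}, \ref{lem:approx} along the tabula order, verifying the transversality hypothesis at each step via Theorem~\ref{thm:perturb}(3) and biased Poincar\'e duality in $\lk_{w_i}\Mu$, which is supplied inductively by the hereditary octavian structure. You have merely made explicit the induction that the paper compresses into the reference to Proposition~\ref{prp:approx} and the preceding discussion of Theorem~\ref{thm:vd} and Section~\ref{sec:genperap}; the bookkeeping concern you flag about carrying the hereditary/octavian data through the deletion of $w_1$ is real but is precisely what the recursive clauses of the definition are designed to absorb.
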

	
\begin{proof}
This follows as in Proposition~\ref{prp:approx}.
\end{proof}

This shows that given sufficiently general position, we can guarantee that Assumption~(1) is satisfied in Theorem~\ref{thm:env}. Notice that we only used the Lefschetz Isomorphism~\eqref{eq:trans1} for the verification of the first condition of Theorem~\ref{thm:env}, the other statement shall be used for the second condition.

\begin{rem}
We reduced the Lefschetz theorem for m\'etros to biased Poincar\'e duality in lower dimensions, but in fact this reduction is unnecessary, and one can argue purely on the level of m\'etros, similar to the proof of Theorem~\ref{thm:vd}.
\end{rem}
	
\subsection{Maps to homology}
	
Now, we have used the perturbation program to establish that we can ensure Assumption~(1) of Theorem~\ref{thm:env}. For Condition~(2), we have to discuss maps to cohomology, which we do in this section that gives the culminating technology for our purpose.

We discuss here the closed hypersurfaces in spheres, and then in the next section we review how this can be achieved in doubles.
	
	Let us assume we are in the situation of Theorem~\ref{thm:char_ospd}, but that $\Mu$ is a rational sphere~$\varSigma$. We have to establish an isomorphism
	\begin{equation}\label{eq:isoospd}
	\AR_k({E})\ \cong\ (H_{k-1})^{\binom{d}{k}}(\Mu_1).
	\end{equation}
	For this, we write $\R^d$ as $\R^{[d]}$, and consider the projection $\uppi$ to $\R^{[d-1]}$. Consider the subspace
	\[\CR_k(\uppi\hspace{0.3mm} {E})\ \coloneqq\ 
	\ker\left[\AR_k(\uppi\hspace{0.3mm}{E})
	\rightarrow (H_{k-1})^{\binom{[d-1]}{k}}(\Mu_1)\right].
	\]
	Then, if ${\tet}$ denotes the height over $\R^{[d-1]}$, then we obtain the following observation:
	\begin{lem}\label{lem:Ciso}
		Assume we have an isomorphism
		\begin{equation}\label{eq:iso2}
		\ker[\CR_k(\uppi\hspace{0.3mm} {E})\ \xrightarrow{\ \cdot {\tet} \ }\ 
		\AR_{k-1}(\uppi\hspace{0.3mm} E)]\ \cong\ (H_{k-1})^{\binom{[d-1]}{k-1}\ast \{d\}}(\Mu_1)
		\end{equation}
		induced by $\mr{hom}_{\tet}$.
		Then we have Isomorphism~\eqref{eq:isoospd}.
	\end{lem}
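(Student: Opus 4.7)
The strategy is to exploit the coordinate split
\[
\binom{[d]}{k} \;=\; \binom{[d-1]}{k} \;\sqcup\; \bigl(\binom{[d-1]}{k-1}\ast\{d\}\bigr),
\]
which separates those $k$-subsets of coordinates not involving the last one from those that do. Correspondingly, the map $\mathrm{hom}: \AR_k(E) \to (H_{k-1})^{\binom{[d]}{k}}(\Mu_1)$ decomposes into a horizontal part that factors through $\mathrm{hom}_\uppi: \AR_k(\uppi\hspace{0.3mm} E) \to (H_{k-1})^{\binom{[d-1]}{k}}(\Mu_1)$ under the inclusion $\AR_k(E)\subset\AR_k(\uppi\hspace{0.3mm} E)$, and a vertical part that is precisely the map $\mathrm{hom}_\tet$ appearing in the statement. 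My plan is to prove that the combined map $(\mathrm{hom}_\uppi,\mathrm{hom}_\tet)$, restricted to $\AR_k(E)$, is a bijection.

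For injectivity, recall that $\AR_k(E) = \ker[\cdot\tet:\AR_k(\uppi\hspace{0.3mm} E)\to\AR_{k-1}(\uppi\hspace{0.3mm} E)]$, so the kernel of $\mathrm{hom}_\uppi|_{\AR_k(E)}$ is by construction $\AR_k(E)\cap \CR_k(\uppi\hspace{0.3mm} E) = \ker[\CR_k(\uppi\hspace{0.3mm} E)\xrightarrow{\cdot\tet}\AR_{k-1}(\uppi\hspace{0.3mm} E)]$. The hypothesis asserts that $\mathrm{hom}_\tet$ takes this subspace isomorphically onto the $\binom{[d-1]}{k-1}\ast\{d\}$-summand of the target, which yields injectivity of the combined map by a two-line chase: anything in its kernel first lies in $\AR_k(E)\cap\CR_k(\uppi\hspace{0.3mm} E)$ and is then killed by $\mathrm{hom}_\tet$.

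The difficult half is surjectivity. The $\binom{[d-1]}{k-1}\ast\{d\}$-factor is immediately covered by $\mathrm{hom}_\tet$ via the hypothesis. For the $\binom{[d-1]}{k}$-factor, I would first invoke Proposition~\ref{prp:part}, applied to $\Mu_1$ (as a manifold with boundary $E$) in its projected realization in $\R^{[d-1]}$, to obtain that $\mathrm{hom}_\uppi: \AR_k(\uppi\hspace{0.3mm} E)\to(H_{k-1})^{\binom{[d-1]}{k}}(\Mu_1)$ is surjective, and then use the short exact sequence
\[
0\ \longrightarrow\ \AR_k(E)\ \longrightarrow\ \AR_k(\uppi\hspace{0.3mm} E)\ \xrightarrow{\ \cdot\tet\ }\ \tet\AR_k(\uppi\hspace{0.3mm} E)\ \longrightarrow\ 0
\]
to lift the desired preimages into $\AR_k(E)$. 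Given a class $\beta$ in the target and a preimage $\alpha\in\AR_k(\uppi\hspace{0.3mm} E)$, the task is to correct $\alpha$ by an element of $\CR_k(\uppi\hspace{0.3mm} E)$ so that $\tet\alpha$ is cancelled, i.e.\ to realize $\tet\CR_k(\uppi\hspace{0.3mm} E) = \tet\AR_k(\uppi\hspace{0.3mm} E)$. This equality of images is the main obstacle, and it is exactly what the hypothesis purchases: by fixing the dimension of $\ker[\CR_k(\uppi\hspace{0.3mm} E)\xrightarrow{\cdot\tet}\AR_{k-1}(\uppi\hspace{0.3mm} E)]$ to be that of $(H_{k-1})^{\binom{[d-1]}{k-1}\ast\{d\}}(\Mu_1)$, a dimension count on the above exact sequence forces $\tet\CR_k(\uppi\hspace{0.3mm} E)$ to fill all of $\tet\AR_k(\uppi\hspace{0.3mm} E)$, closing the argument.
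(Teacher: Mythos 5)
Your decomposition $\binom{[d]}{k}=\binom{[d-1]}{k}\sqcup\bigl(\binom{[d-1]}{k-1}\ast\{d\}\bigr)$, the resulting split of $\mathrm{hom}$ into $\mathrm{hom}_\uppi$ and $\mathrm{hom}_\tet$, and your injectivity chase are exactly what the paper does. The problem is the surjectivity step, where your final dimension count is circular. Tracking it carefully with the three exact sequences
\[
0\to\AR_k(E)\to\AR_k(\uppi E)\xrightarrow{\cdot\tet}\tet\AR_k(\uppi E)\to0,\qquad
0\to\ker\to\CR_k(\uppi E)\xrightarrow{\cdot\tet}\tet\CR_k(\uppi E)\to0,
\]
and $0\to\CR_k(\uppi E)\to\AR_k(\uppi E)\to\im\mathrm{hom}_\uppi\to0$, one finds
\[
\dim\tet\AR_k(\uppi E)-\dim\tet\CR_k(\uppi E)
\;=\;
\dim(H_{k-1})^{\binom{[d]}{k}}(\Mu_1)-\dim\AR_k(E),
\]
after substituting the hypothesis and the surjectivity of $\mathrm{hom}_\uppi$. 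So the equality $\tet\CR_k(\uppi E)=\tet\AR_k(\uppi E)$ you want is \emph{equivalent} to $\dim\AR_k(E)=\dim(H_{k-1})^{\binom{[d]}{k}}(\Mu_1)$, which is the conclusion you are trying to prove. The hypothesis and the surjectivity of $\mathrm{hom}_\uppi$ alone only give the inequality $\dim\AR_k(E)\le\dim(H_{k-1})^{\binom{[d]}{k}}(\Mu_1)$; they do not force equality.

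The missing ingredient, left implicit in the paper's very compressed proof, is the a priori surjectivity of the \emph{full} map $\mathrm{hom}\colon\AR_k(E)\to(H_{k-1})^{\binom{[d]}{k}}(\Mu_1)$. This comes for free in the standing setup: the partition-of-unity mechanism (Proposition~\ref{prp:part} / Lemma~\ref{lem:partyyyyy}) gives surjectivity of $\AR_k(E)\to(H_{k-1})^{\binom{[d]}{k}}(E)$, and because $\varSigma$ is a rational sphere with $\varSigma=\Mu_1\cup_E\Mu_2$, Mayer--Vietoris forces $H_{k-1}(E)\twoheadrightarrow H_{k-1}(\Mu_1)$. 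Once you know this surjectivity, your injectivity argument alone finishes the proof, and your dimension count becomes a verification rather than a deduction. Finally, a smaller slip: you invoke Proposition~\ref{prp:part} for $\Mu_1$ realized in $\R^{[d-1]}$, but $\Mu_1$ has dimension $d-1$ and so is not properly realized in a $(d-1)$-dimensional ambient space; the correct object to apply it to is $\uppi\hspace{0.3mm}E$, of dimension $d-2$, in $\R^{[d-1]}$, and then one again composes with the Mayer--Vietoris surjection $H_{k-1}(E)\twoheadrightarrow H_{k-1}(\Mu_1)$ to land in the homology of $\Mu_1$.
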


Here, we index the map $\mr{hom}$ by the element of the linear system 
${\tet}$.

	\begin{proof}
    Provided Isomorphism~\eqref{eq:iso2} holds, we have in particular a surjection
\[\CR_k(\uppi\hspace{0.3mm} {E})\ \xrightarrow{\ \cdot {\tet} \ }\ 
		\AR_{k-1}(\uppi\hspace{0.3mm} E).\]
Hence, the quotient 
\[\bigslant{\AR_k(\uppi\hspace{0.3mm}{E})}{\CR_k(\uppi\hspace{0.3mm} {E})}\]
is generated by kernel elements of the map
\[\AR_k(\uppi\hspace{0.3mm} {E})\ \xrightarrow{\ \cdot {\tet} \ }\ 
		\AR_{k-1}(\uppi\hspace{0.3mm} E).\]
that map injectively to 
		\[\bigslant{(H_{k-1})^{\binom{d}{k}}(\Mu_1)}{(H_{k-1})^{\binom{[d-1]}{k-1}\ast \{d\}}(\Mu_1)}\]
		which in turn is isomorphic to
\[\bigslant{(H_{k-1})^{\binom{d}{k}}(\Mu_1)}{\mr{hom}_\tet\left(\ker[\CR_k(\uppi\hspace{0.3mm} {E})\ \xrightarrow{\ \cdot {\tet} \ }\ 
		\AR_{k-1}(\uppi\hspace{0.3mm} E)] \right)}\]
		as desired.
		\end{proof}	
	
	We can even turn this into a Lefschetz isomorphism with respect to a supremely interesting pairing! We do so as follows: 
	
	First, assume that ${E}$ is induced in $\varSigma$ (which we can do without loss of generality by Lemma~\ref{lem:PLinv}). Second, assume that $\vartheta$ induces a proper embedding of $\varSigma$ even if it may be degenerate on ${E}$. This is easily obtained by choosing ${\tet}$ generic on the vertices of $\varSigma$ not in the ${E}$.
	Now we define a pairing
	\begin{equation}\label{eq:coolpairing}
	\AR_k({E})\ \times\  (H_{k-1})^{\binom{[d]}{k}}(\Mu_1)\ \longrightarrow\ \R
	\end{equation}
	as follows: Every element of $\AR_k(\Mu_1)$ can naturally be written as a sum of an element of
	$(H_{k-1})^{\binom{[d]}{k}}(\Mu_1)$
	and an element of $\BR_k(\Mu_1)$. But the latter is orthogonal to $\AR_k({E})$ in~$\AR(\varSigma)$, so that the natural pairing
\[\AR_k(\varSigma)\ \times\ \AR_k(\varSigma)\ \longrightarrow\ \R\]	
	restricted to
	\[\AR_k({E})\ \times\ \AR_k(\Mu_1)\ \longrightarrow\ \R\]
	induces the desired pairing via the isomorphism
	\[(H_{k-1})^{\binom{[d]}{k}}(\Mu_1)\ \cong\ \bigslant{\AR_k(\Mu_1)}{\BR_k(\Mu_1)}.\]
	Note that Isomorphism~\eqref{eq:isoospd} is equivalent to saying that Pairing~\eqref{eq:coolpairing} is perfect, and that 
	$\CR_k(\uppi\hspace{0.3mm} {E})$ is the orthogonal complement to $(H_{k-1})^{\binom{[d-1]}{k}}(\Mu_1)$ when $\vartheta=0$.

	\begin{rem}
		These relative pairings are very interesting, and one can draw up an entire Hall-Laman theory for them. We will contend ourselves with developing the basic case we need for the Lefschetz theorem for spheres.
	\end{rem}
	
	Now, we have to adapt the inductive program, specifically Theorem~\ref{thm:perturb}. We do so as follows:
	
	Assuming that we have proven the transversal prime property is proven for a set of vertices $W'$, that is, for ${\tet}[W']\coloneqq``{\sum_{v\in W'}}" x_v$, we have
	\[\ker \left[\mr{hom}_{{\tet}[W']}:\ker {\tet}[W']\ \rightarrow\ (H_{k-1})^{\binom{[d-1]}{k-1}\ast \{d\}}(\Mu_1)\right]
	\ =\ \ann_{\CR_k(\uppi\hspace{0.3mm} {E})}\langle x_v| v\in W' \rangle\]
	where we denote the final space as \[\CR_k(\uppi\hspace{0.3mm} {E}-W'\subset \uppi\hspace{0.3mm} {E})\ =\ \AR_k(\uppi\hspace{0.3mm} {E}-W') \cap \CR_k(\uppi\hspace{0.3mm} {E}).\]
	We have the following useful observation:
	\begin{lem}\label{lem:orthC}
		When restricting the Pairing~\eqref{eq:coolpairing} to
		\[\CR_k({E})\ \times\ (H_{k-1})^{\binom{[d-1]}{k-1}\ast \{d\}}(\Mu_1)\ \longrightarrow\ \R,\]
		where
		\[\CR_k({E})\ \coloneqq\ \ker[\CR_k(\uppi\hspace{0.3mm} {E})\ \xrightarrow{\ \cdot {\tet}[W'] \ }\ 
		\AR_{k-1}(\uppi\hspace{0.3mm} E)]\]
		the spaces
		\[\CR_k(\uppi\hspace{0.3mm} {E}-W'\subset \uppi\hspace{0.3mm} {E})\ \subset\ \CR_k({E})\ \subset\ \AR_k(E)\] and \[\mr{hom}_{{\tet}[W']}\CR_k({E})\ \subset\ (H_{k-1})^{\binom{[d-1]}{k-1}\ast \{d\}}(\Mu_1)\ \subset\ (H_{k-1})^{\binom{[d]}{k}}(\Mu_1)\]
		are orthogonal complements.
	\end{lem}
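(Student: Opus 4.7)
The plan is to reduce the orthogonal-complement statement to an adjointness property for multiplication by $\tet[W']$ under Pairing~\eqref{eq:coolpairing}, the annihilator identification recorded immediately before the lemma, and a dimension count.

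First, I would record the adjointness. Since Pairing~\eqref{eq:coolpairing} is inherited from the Poincar\'e pairing on $\AR(\varSigma)$ modulo the $\AR^1(\varSigma)$-stable orthogonal submodule $\BR_k(\Mu_1)$, the pairing itself is one of $\AR^1(\varSigma)$-modules. By construction of the $\mr{hom}$-map in Lemma~\ref{lem:Ciso}, the image $\mr{hom}_{\tet[W']}(c)$ of $c\in \CR_k(E)$ represents the class $[\tet[W']\cdot c]$ in $(H_{k-1})^{\binom{[d]}{k}}(\Mu_1)$, so for all $a\in \AR_k(E)$ and $c\in \CR_k(E)$ one has
\[(a,\, \mr{hom}_{\tet[W']}(c)) \;=\; (\tet[W']\cdot a,\, c),\]
the right-hand side being interpreted via the shifted version of Pairing~\eqref{eq:coolpairing} in degree $k-1$. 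The orthogonality direction then follows at once: for $a\in \CR_k(\uppi\hspace{0.3mm} E - W' \subset \uppi\hspace{0.3mm} E) = \ann_{\CR_k(\uppi\hspace{0.3mm} E)}\langle x_v : v\in W'\rangle$, every prime divisor $x_v$ with $v\in W'$ annihilates $a$, so $\tet[W']\cdot a=0$ and hence $(a,\, \mr{hom}_{\tet[W']}(c))=0$ for all $c\in \CR_k(E)$. The symmetric argument in the other variable yields the analogous inclusion.

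Finally, I would upgrade these inclusions to equalities by a dimension count. The annihilator identification before the lemma gives $\ker[\mr{hom}_{\tet[W']}|_{\CR_k(E)}] = \CR_k(\uppi\hspace{0.3mm} E - W' \subset \uppi\hspace{0.3mm} E)$, so
\[\dim \mr{hom}_{\tet[W']}\CR_k(E) \;=\; \dim \CR_k(E) - \dim \CR_k(\uppi\hspace{0.3mm} E - W' \subset \uppi\hspace{0.3mm} E),\]
which is exactly the codimension needed for orthogonal complementarity in a perfect pairing. The main obstacle is therefore to establish perfectness of the restricted Pairing~\eqref{eq:coolpairing} on $\CR_k(E) \times (H_{k-1})^{\binom{[d-1]}{k-1}\ast\{d\}}(\Mu_1)$; this is precisely the content of Lemma~\ref{lem:Ciso} once the transversal prime property for $W'$ has been secured in the one-lower dimension via the inductive set-up of Sections~\ref{sec:genperap} and~\ref{sec:metro}. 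With perfectness in hand, the count closes both inclusions and completes the proof.
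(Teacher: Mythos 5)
Your plan—establish orthogonality of the two subspaces, then close with a dimension count—is the same skeleton the paper uses, but both of your two steps have real gaps, and the second is circular.

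\textbf{The adjointness step is not established and as stated does not typecheck.} You assert that $\mr{hom}_{\tet[W']}(c)$ represents the class $[\tet[W']\cdot c]$ in $(H_{k-1})^{\binom{[d]}{k}}(\Mu_1)$. But $\CR_k(E)$ is by definition the \emph{kernel} of multiplication by $\tet[W']$ on $\CR_k(\uppi E)$, so $\tet[W']\cdot c=0$ for every $c\in \CR_k(E)$; if your identification were literal, $\mr{hom}_{\tet[W']}$ would vanish identically, which is not what the map does. The map $\mr{hom}_{\tet[W']}$ is the $\tet$-component of the Ishida/stress-to-homology map~\eqref{eq:maphom} restricted to the $\tet$-kernel, and extracting an adjointness relation against Pairing~\eqref{eq:coolpairing} requires an argument; it is not immediate from $\AR^1(\varSigma)$-module structure, and the "shifted version of Pairing~\eqref{eq:coolpairing} in degree $k-1$" you invoke does not pair $\AR_{k-1}$ with $\CR_k(E)$. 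By contrast, the paper proves orthogonality by a different route: it observes that $\CR_k(\uppi E-W'\subset\uppi E)$ maps trivially to $(H_{k-1})^{\binom{[d]}{k}}(\Mu_1)$ (being in the homological kernel already for the $\binom{[d-1]}{k}$-components and having $\tet[W']$-image zero for the remaining $\binom{[d-1]}{k-1}\ast\{d\}$-components by the transversal prime property), hence that space sits inside the $\BR$-part. Since Pairing~\eqref{eq:coolpairing} is, by construction, the Poincar\'e pairing modulo the $\BR$-part, orthogonality is immediate once you have that containment; no adjointness identity is needed.

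\textbf{The dimension count is circular.} You close the argument by invoking perfectness of the restricted Pairing~\eqref{eq:coolpairing} on $\CR_k(E)\times(H_{k-1})^{\binom{[d-1]}{k-1}\ast\{d\}}(\Mu_1)$, attributing it to Lemma~\ref{lem:Ciso} plus the transversal prime property. But Lemma~\ref{lem:Ciso} is a conditional reduction: it says \emph{if} Isomorphism~\eqref{eq:iso2} holds, then Isomorphism~\eqref{eq:isoospd} (equivalently, perfectness of the full pairing) follows. It does not supply perfectness of either pairing, and the transversal prime property for $W'$ only gives the kernel identification $\ker[\mr{hom}_{\tet[W']}|_{\CR_k(E)}]=\CR_k(\uppi E-W'\subset\uppi E)$. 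Perfectness of Pairing~\eqref{eq:coolpairing} is precisely the target of the whole inductive program in this section; Lemma~\ref{lem:orthC} is a stepping stone on the way there and cannot assume it. The paper's dimension count instead leverages the stronger, unconditional identification $\CR_k(\uppi E-W'\subset\uppi E)=\BR_{k-1}(\Mu_1)\cap\AR_k(\uppi E-W')$. Once you know your subspace sits exactly inside the $\BR$-part, the count can be carried out against the \emph{already-known} perfect Poincar\'e pairing on the Gorenstein quotient $\BR^\ast(\Mu_1)$ (Proposition~\ref{ns}), rather than against the pairing whose nondegeneracy you are still trying to establish. That extra identification is exactly the ingredient your proposal is missing.
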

	\begin{proof}
		It is clear from the discussion above that both subspaces are orthogonal to each other. Indeed, 
		as \[\CR_k(\uppi\hspace{0.3mm} {E}-W'\subset \uppi\hspace{0.3mm} {E})\ \longrightarrow\ (H_{k-1})^{\binom{[d]}{k}}(\Mu_1)\] is a trivial map, then
\[\CR_k(\uppi\hspace{0.3mm} {E}-W'\subset \uppi\hspace{0.3mm} {E}) \ \longhookrightarrow\ \BR_{k-1}(\Mu_1)\]
and in fact
\[\CR_k(\uppi\hspace{0.3mm} {E}-W'\subset \uppi\hspace{0.3mm} {E}) \ =\ \BR_{k-1}(\Mu_1)\ \cap\ \AR_k(\uppi\hspace{0.3mm} {E}-W').\]
But $\BR_{k-1}(\Mu_1)$ and $\CR_k({E})$ are orthogonal, and so are in particular the former and the latter spaces image in $(H_{k-1})^{\binom{[d-1]}{k-1}\ast \{d\}}(\Mu_1)$ under $\mr{hom}_{{\tet}[W']}$. It finally follows from a dimension count that they are indeed orthogonal complements.
	\end{proof}
	
	When we want to prove the transversal prime property for $W=W'\cup \{w\}$, we want to use perturbation.
	We map $\CR_{k}(\uppi\hspace{0.3mm} {E})$ to $\AR_{k-1}(\uppi\hspace{0.3mm} \st_w  {E})$. Notice now that
	\[\im [{\tet}[W']:\CR_{k}(\uppi\hspace{0.3mm} {E})\rightarrow \AR_{k-1}(\uppi\hspace{0.3mm} \st_w  {E})]\cap \AR_{k-1}(\uppi\hspace{0.3mm} \st_w  {E})\] generates a subspace of the kernel of
	\[\bigoplus_{v\in W} \AR_{k-1}(\uppi\hspace{0.3mm} \st_v{E})\ \longrightarrow\ 
	\AR_{k-1}(\uppi\hspace{0.3mm} {E})\]
	which following Proposition~\ref{prp:part} maps to $(H_{k-1})^{\binom{[d-1]}{k-1}}({E})$. 
	Consider the composition with   
	\begin{align*}
	(H_{k-1})^{\binom{[d-1]}{k-1}}({E})\ & \cong\ 
	(H_{k-1})^{\binom{[d-1]}{k-1}\ast \{d\}}({E})\ \longrightarrow\ \\ \longrightarrow\  &(H_{k-1})^{\binom{[d-1]}{k-1}\ast \{d\}}(\Mu_1) \ \longrightarrow\ 
	\bigslant{(H_{k-1})^{\binom{[d-1]}{k-1}\ast \{d\}}(\Mu_1)}{\im[\mr{hom}_{{\tet}[W']}]}
	\end{align*}
	where we abbreviated
	\[\im[\mr{hom}_{{\tet}[W']}]\ =\ \im\left[\mr{hom}_{{\tet}[W']}:\ker {\tet}[W']\ \rightarrow\ (H_{k-1})^{\binom{[d-1]}{k-1}\ast \{d\}}(\Mu_1)\right],\]
	an image we can understand equivalently as the image  of
	\[\ker\left[\bigoplus_{v\in W'} \AR_{k-1}(\uppi\hspace{0.3mm} \st_v{E})\ \longrightarrow\ 
	\AR_{k-1}(\uppi\hspace{0.3mm} {E})\right]\ \longrightarrow\ (H_{k-1})^{\binom{[d-1]}{k-1}\ast \{d\}}(\Mu_1)\]
     under the above composition.
	Consider therefore finally the kernel of this composition
	\[\ker\left[\bigoplus_{v\in W} \AR_{k-1}(\st_v\uppi\hspace{0.3mm} {E})\ \longrightarrow\ 
	\AR_{k-1}(\uppi\hspace{0.3mm} {E})\right]\ \longrightarrow\ \bigslant{(H_{k-1})^{\binom{[d-1]}{k-1}\ast \{d\}}(\Mu_1)}{\im[\mr{hom}_{{\tet}[W']}]}
	\]
	and project it to $\AR_{k-1}(\st_w\uppi\hspace{0.3mm} {E})$. Denote the resulting subspace of $\AR_{k-1}(\st_w\uppi\hspace{0.3mm} {E})$ by $\mathcal{I}_W$. Let on the other hand 
	\[\mathcal{K}_W\ \coloneqq\ x_w\CR_{k}(\uppi\hspace{0.3mm} {E}-W'\subset \uppi\hspace{0.3mm} {E}).\]
\begin{lem}\label{lem:orthC3}
The spaces $\mathcal{K}_W$ and $\mathcal{I}_W$ are orthogonal complements in $\AR_{k-1}(\lk_w \uppi\hspace{0.3mm}E)$
\end{lem}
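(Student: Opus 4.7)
The strategy is to mirror the proof of Lemma~\ref{lem:orth}, which establishes the analogous orthogonal-complement statement for the ``absolute'' spaces. The plan is to first verify that $\mathcal{K}_W$ and $\mathcal{I}_W$ are orthogonal under the Poincar\'e pairing on $\AR_{k-1}(\lk_w \uppi\hspace{0.3mm} E)$, and then check that their dimensions sum to the dimension of the ambient space, which forces orthogonal complementarity.

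For orthogonality, I would factor the pairing through the relative pairing~\eqref{eq:coolpairing} as set up in Lemma~\ref{lem:orthC}. By definition an element of $\mathcal{K}_W$ has the form $x_w \cdot \eta$ for some $\eta \in \CR_k(\uppi\hspace{0.3mm} E - W' \subset \uppi\hspace{0.3mm} E)$, while an element of $\mathcal{I}_W$ is the $w$-chart of a class in $\ker[\bigoplus_{v\in W} \AR_{k-1}(\st_v\uppi\hspace{0.3mm} E) \rightarrow \AR_{k-1}(\uppi\hspace{0.3mm} E)]$ whose image in $(H_{k-1})^{\binom{[d-1]}{k-1}\ast\{d\}}(\Mu_1)/\im[\mr{hom}_{\tet[W']}]$ vanishes, hence whose associated homology class lies in $\im[\mr{hom}_{\tet[W']}]$. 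Shifting $x_w$ via the second cone lemma converts the pairing of $\mathcal{K}_W$ with $\mathcal{I}_W$ into a pairing of $\CR_k(\uppi\hspace{0.3mm} E - W' \subset \uppi\hspace{0.3mm} E)$ with an element of $\im[\mr{hom}_{\tet[W']}] \subset \mr{hom}_{\tet[W']}\CR_k(E)$, and this vanishes precisely by the orthogonality asserted in Lemma~\ref{lem:orthC}.

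For the dimension count, I would use a short exact sequence
\[0 \longrightarrow \CR_k(\uppi\hspace{0.3mm} E - w \subset \uppi\hspace{0.3mm} E) \longrightarrow \CR_k(\uppi\hspace{0.3mm} E) \xrightarrow{\ \cdot x_w \ } x_w\CR_k(\uppi\hspace{0.3mm} E) \longrightarrow 0\]
to compute $\dim \mathcal{K}_W$ (after restricting to the ``$-W'$'' factor), and Proposition~\ref{prp:part} for the partition-complex description of $\mathcal{I}_W$, which identifies it with a quotient of $\ker[\bigoplus_{v\in W}\AR_{k-1}(\st_v\uppi\hspace{0.3mm} E)\rightarrow \AR_{k-1}(\uppi\hspace{0.3mm} E)]$ modulo the part hitting $\im[\mr{hom}_{\tet[W']}]$. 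Matching both counts against $\dim \AR_{k-1}(\lk_w\uppi\hspace{0.3mm} E)$, obtained from the second cone lemma together with the relative pairing~\eqref{eq:coolpairing}, then establishes the equality forcing orthogonal complementarity.

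The main obstacle I anticipate is the careful bookkeeping of the subspaces $\CR_k$, which are themselves defined as kernels of maps to cohomology, against both the projection $\uppi$ and the quotient by $\im[\mr{hom}_{\tet[W']}]$. In particular one must check that the projection in the definition of $\mathcal{I}_W$ does not lose information relevant to the pairing, and that elements of $\BR_{k-1}(\Mu_1)$ entering via the partition complex do not interfere with the orthogonality, which is exactly where one exploits that $\BR_{k-1}(\Mu_1)$ is orthogonal to all of $\AR_k(E)$ under the global Poincar\'e pairing on $\AR_\ast(\varSigma)$ (as used already in setting up Pairing~\eqref{eq:coolpairing}).
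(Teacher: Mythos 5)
Your route differs from the paper's. The paper does not split the proof into ``orthogonality'' plus ``dimension count'': it instead invokes Lemma~\ref{lem:PLinv} to place, after subdivision, an auxiliary vertex $u$ in the interior of $\Mu_1$ with $\uppi\hspace{0.3mm} u = 0$ and $(\st_u \Mu_1) \cap E = \st_w E$. Pulling $\mathcal{I}_W$ back along the composition $\AR_k(E \cup \st_u \Mu_1) \xrightarrow{\cdot x_u} \AR_{k-1}(\st_u \Mu_1) \to \AR_{k-1}(\lk_u \Mu_1)$ identifies it as the image of a subspace that together with $\mathcal{K}_W$ is already covered by Lemma~\ref{lem:orthC}, so the orthogonal-complement assertion is inherited in one stroke. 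That is the device you are missing, and it is precisely what makes ``shifting $x_w$ via the cone lemma'' rigorous: the pairing on $\AR_{k-1}(\lk_w \uppi\hspace{0.3mm} E)$ is not intrinsic to the link but induced from the ambient Pairing~\eqref{eq:coolpairing}, and without a geometric vertex $u$ realizing the cone there is no clean map along which to transport it. As written, your orthogonality step asserts a conversion of pairings that needs exactly this construction to be literal rather than heuristic, and your separate dimension count (restricting the short exact sequence to the $-W'$ factor, and identifying $\mathcal{I}_W$ with the quotient via Proposition~\ref{prp:part}) is only sketched where the projection to $\AR_{k-1}(\st_w \uppi\hspace{0.3mm} E)$ can a priori drop rank. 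The paper's single reduction avoids both issues because Lemma~\ref{lem:orthC} already delivers the complementarity, not just orthogonality. Your strategy is plausible in outline, but to close it you should either import the auxiliary vertex $u$ or supply an independent, explicit identification of the link pairing with the restriction of Pairing~\eqref{eq:coolpairing} along $x_w$, and then make the rank count exact.
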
		

\begin{proof}
By Lemma~\ref{lem:PLinv}, we may assume the existence of a vertex $u \in \Mu_1^\circ$ so that
\begin{compactitem}[$\circ$]
\item $u$ projects to the origin under $\uppi$, that is, \[\uppi\hspace{0.3mm} u\ =\ 0\] and
\item we have \[(\st_u \Mu_1) \cap E\ =\ \st_w E.\]
\end{compactitem}
Consider then the preimage \[\widetilde{\mc{J}}_W\ \subset\ \AR_{k}(E\cup \st_u \Mu_1)\] of 
\begin{equation*}
\im [{\tet}[W']:\CR_{k}(\uppi\hspace{0.3mm} {E})\rightarrow \AR_{k-1}(\uppi\hspace{0.3mm} \st_w  {E})]\ \subset\ \AR_{k-1}(E\cap \lk_u \Mu_1)
\end{equation*} with respect to the composition
\begin{equation}\label{eq:whopper}
\AR_{k}(E\cup \st_u \Mu_1)\ \xrightarrow{\ \cdot x_u\ }\ \AR_{k-1}(\st_u \Mu) \ \longrightarrow\ \AR_{k-1}(\lk_u \Mu_1).
\end{equation}
Further restrict to the kernel $\widehat{\mc{I}}_W$ of
\begin{align*}
	\widetilde{\mc{I}}_W\  \longhookrightarrow\ &\AR_{k}(E\cup \st_u \Mu_1)\ \longrightarrow\ \\ \longrightarrow\  &(H_{k-1})^{\binom{[d-1]}{k-1}\ast \{d\}}(\Mu_1) \ \longrightarrow\ 
	\bigslant{(H_{k-1})^{\binom{[d-1]}{k-1}\ast \{d\}}(\Mu_1)}{\im[\mr{hom}_{{\tet}[W']}]}.
	\end{align*}
Under Composition	~\eqref{eq:whopper}, the image of $\widehat{\mc{I}}_W$ is the space ${\mc{I}}_W$ we defined before. Hence, the claim follows by Lemma~\ref{lem:orthC}.
\end{proof}

	From this lemma, we immediately obtain a version of Theorem~\ref{thm:perturb}.
	
	\begin{prp}\label{prp:perturb2}
		In the situation above, the following are equivalent.
		\begin{compactenum}[(1)]
			\item The assumptions of Lemma~\ref{lem:perturbation}(1) hold, that is, 
			\[x_w\CR_{k}(\uppi\hspace{0.3mm} {E}-W'\subset \uppi\hspace{0.3mm} {E})\ \cap\ \mathcal{I}_W \ =\ {0}.\]
			\item The assumptions of Lemma~\ref{lem:perturbation}(2) hold, that is,
			\[x_w^{-1} \left(\AR_{k-1}(\lk_w {E}) \cap \mathcal{I}_W\right)\ +\ \CR_k(\uppi\hspace{0.3mm} {E}-W'\subset \uppi\hspace{0.3mm} {E})\ =\ \CR_k(\uppi\hspace{0.3mm}{E}).\]
			\item The pullback $\mathcal{K}_W$ of $\CR_{k}(\uppi\hspace{0.3mm} {E}-W)$ to 
			\[\AR_{k-1}(\st_w {E})\ \cong\ \AR_{k-1}(\lk_w {E})\]
			along $x_w$ satisfies biased Poincar\'e duality in the $(d-2)$-sphere $\lk_w {E}$. \qedhere
		\end{compactenum}
	\end{prp}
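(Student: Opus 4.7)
My plan is to parallel the proof of Theorem~\ref{thm:perturb} essentially verbatim, replacing the plain Poincar\'e pairing by the coupled pairing~\eqref{eq:coolpairing} and the spherical orthogonality Lemma~\ref{lem:orth} by its relative analog Lemma~\ref{lem:orthC3}. Concretely, I would apply the perturbation Lemma~\ref{lem:perturbation} to
\[\upalpha\ =\ {\tet}[W']:\CR_k(\uppi\hspace{0.3mm} E)\ \longrightarrow\ \AR_{k-1}(\uppi\hspace{0.3mm} E), \qquad \upbeta\ =\ x_w\]
viewed as linear maps between the CR spaces, and translate the hypotheses of that lemma into statements about the spherical pair $(\lk_w E, \mathcal{K}_W)$.

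First I would establish \textbf{(1)} $\Leftrightarrow$ \textbf{(2)}. The transversal prime property already proven at $W'$ identifies $\ker\upalpha$ with $\CR_k(\uppi\hspace{0.3mm} E-W'\subset \uppi\hspace{0.3mm} E)$, while the set-up of Lemma~\ref{lem:orthC} identifies the orthogonal complement to $\im\upalpha$ in the pairing~\eqref{eq:coolpairing} with $\mathcal{I}_W$ inside the appropriate quotient of $\AR_{k-1}(\lk_w \uppi\hspace{0.3mm} E)$. The two conditions of Lemma~\ref{lem:perturbation} are then exchanged by Poincar\'e duality applied to~\eqref{eq:coolpairing}, yielding the equivalence.

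Next I would establish \textbf{(3)} $\Leftrightarrow$ \textbf{(1)}. Here the crucial input is Lemma~\ref{lem:orthC3}: the subspaces $\mathcal{K}_W$ and $\mathcal{I}_W$ sit as orthogonal complements in $\AR_{k-1}(\lk_w \uppi\hspace{0.3mm} E)$. The transversality condition \textbf{(1)} reads $\mathcal{K}_W \cap \mathcal{I}_W = 0$, which in the presence of this orthogonal decomposition is equivalent to the statement that
\[\mathcal{K}_W\ \longrightarrow\ \bigslant{\AR_{k-1}(\lk_w \uppi\hspace{0.3mm} E)}{\mathcal{I}_W}\]
is injective. By Corollary~\ref{cor:map}, this injectivity is precisely biased Poincar\'e duality for $\mathcal{K}_W$ in $\lk_w E$, which is \textbf{(3)}. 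The reverse direction proceeds identically since the orthogonal complement relation is symmetric.

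The main obstacle is not in the present proposition itself, whose proof is quite formal once the orthogonality statement of Lemma~\ref{lem:orthC3} is granted. The genuine difficulty sits in Lemma~\ref{lem:orthC3}, which delicately balances the injection of $\CR_k(\uppi\hspace{0.3mm} E-W'\subset \uppi\hspace{0.3mm} E)$ into $\BR_{k-1}(\Mu_1)$, the auxiliary decomposition via a vertex $u$ projecting to the origin under $\uppi$, and the dimension count derived from Lemma~\ref{lem:orthC}. Once that orthogonality is in hand, Proposition~\ref{prp:perturb2} follows immediately from the template of Theorem~\ref{thm:perturb}.
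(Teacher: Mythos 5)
Your proposal is correct and matches the paper's approach: the paper states the proposition immediately after Lemma~\ref{lem:orthC3} with the remark that it is "a version of Theorem~\ref{thm:perturb}", and the intended argument is precisely what you lay out — Poincar\'e duality for the coupled pairing~\eqref{eq:coolpairing} exchanges conditions (1) and (2), while the orthogonal complementarity of $\mathcal{K}_W$ and $\mathcal{I}_W$ from Lemma~\ref{lem:orthC3} together with Corollary~\ref{cor:map} identifies (1) with (3). You also correctly locate the real content in Lemma~\ref{lem:orthC3}, which the paper proves separately before stating the proposition.
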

	
	Arguing as in Lemma~\ref{lem:perturbation}, we obtain that if the conditions of Proposition~\ref{prp:perturb2} are satisfied, then the transversal prime property holds for $W$.
	
\subsection{Maps to homology in the double}
	
Let us first reformulate the second condition in Theorem~\ref{thm:env}. Denote by $\CR_k(\uppi\hspace{0.3mm} \mr{D}U)$ the kernel of the map
\[\AR_{k}(\uppi\hspace{0.3mm} \mr{D}U)\ \longrightarrow\ (H_{k-1})^{\binom{[d-1]}{k}}(\widetilde{\mr{D}}\varSigma)
\]
where, for context, we recall that we consider the ambient space $\R^{[d]}$ and the projection $\uppi$ to $\R^{[d-1]}$.

\begin{lem}
To obtain surjectivity of
			\[\AR_k(\mathrm{D}U)\ \longrightarrow\ (H_{k-1})^{\binom{d}{k}}({\widetilde{\mr{D}}\varSigma})\]
it is enough to demonstrate an isomorphism 
			\begin{equation}\label{eq:surjint}\BR_{k}(\uppi\hspace{0.3mm} U) \ \xrightarrow{\ \cdot \tet\ }\ \AR_{k-1}(\uppi\hspace{0.3mm} U, \uppi\hspace{0.3mm} \partial U)		
				\end{equation}
			and a surjection
\begin{equation}\label{eq:doubleLL}
\ker\left[\CR_k(\uppi\hspace{0.3mm} \mr{D}U)\ \xrightarrow{\ \cdot {\tet}\ }\ \AR_{k-1}(\uppi\hspace{0.3mm} \mr{D}U)\right] \ \xrightarrow{\ \mr{hom}_\tet\ }\ 
(H_{k-1})^{\binom{[d-1]}{k-1}\ast \{d\}}(\widetilde{\mr{D}}\varSigma).
\end{equation}
\end{lem}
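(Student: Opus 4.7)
I propose to prove the surjection by splitting the target cohomology along the last coordinate direction. Writing
\[
\binom{[d]}{k} \;=\; \binom{[d-1]}{k} \;\sqcup\; \binom{[d-1]}{k-1}\ast\{d\},
\]
the target decomposes as a direct sum, and I denote the two coordinate projections of $\mr{hom}$ by $\mr{hom}_0$ and $\mr{hom}_\tet$. With this splitting, it suffices to verify that $\mr{hom}_0:\AR_k(\mr{D}U)\twoheadrightarrow (H_{k-1})^{\binom{[d-1]}{k}}(\widetilde{\mr{D}}\varSigma)$ is surjective and that its kernel inside $\AR_k(\mr{D}U)$ still surjects onto the second summand via $\mr{hom}_\tet$; the combined map $\mr{hom}$ is then surjective.

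The second of these requirements is exactly hypothesis~(2). Indeed, the subspace $\ker[\cdot\tet:\CR_k(\uppi\,\mr{D}U)\rightarrow \AR_{k-1}(\uppi\,\mr{D}U)]$ embeds into $\AR_k(\mr{D}U)$, because killing $\cdot\tet$ in the projected realization is precisely the missing Minkowski balance condition for coordinate $d$, upgrading a projected stress to a genuine stress in $\R^{[d]}$. By the very definition of $\CR_k(\uppi\,\mr{D}U)=\ker\mr{hom}_0$, this subspace also lies in $\ker\mr{hom}_0$, and hypothesis~(2) asserts precisely that $\mr{hom}_\tet$ surjects from it onto the second summand.

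What remains is the first requirement, $\mr{hom}_0:\AR_k(\mr{D}U)\twoheadrightarrow (H_{k-1})^{\binom{[d-1]}{k}}(\widetilde{\mr{D}}\varSigma)$. Partition of unity (Proposition~\ref{prp:part}) yields that the unrestricted map $\AR_k(\uppi\,\mr{D}U)\rightarrow (H_{k-1})^{\binom{[d-1]}{k}}(\widetilde{\mr{D}}\varSigma)$ is surjective with kernel $\CR_k(\uppi\,\mr{D}U)$ by definition. Since $\AR_k(\mr{D}U)=\ker[\cdot\tet:\AR_k(\uppi\,\mr{D}U)\rightarrow\AR_{k-1}(\uppi\,\mr{D}U)]$, surjectivity of $\mr{hom}_0$ on $\AR_k(\mr{D}U)$ is equivalent to $\AR_k(\mr{D}U)+\CR_k(\uppi\,\mr{D}U)=\AR_k(\uppi\,\mr{D}U)$, that is, to the image equality
\[
\tet\cdot\CR_k(\uppi\,\mr{D}U) \;=\; \tet\cdot\AR_k(\uppi\,\mr{D}U) \quad\text{in } \AR_{k-1}(\uppi\,\mr{D}U).
\]

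This final equality is the content of hypothesis~(1), read through the folding map $\uptau:\mr{D}U\rightarrow U$. Under $\uptau$, every stress in $\AR_k(\uppi\,\mr{D}U)$ decomposes into a $\uptau$-symmetric summand pulled back from $\AR_k(\uppi\,U)$ and a $\uptau$-antisymmetric remainder; the antisymmetric part lies automatically in $\CR_k(\uppi\,\mr{D}U)$, since its $\mr{hom}_0$-image in $\widetilde{\mr{D}}\varSigma$ cancels across the two charts. For the symmetric part, the isomorphism $\BR_k(\uppi\,U)\xrightarrow{\cdot\tet}\AR_{k-1}(\uppi\,U,\uppi\,\partial U)$ of hypothesis~(1), combined with Lemma~\ref{lem:reductodouble} applied to $\Mu=\varSigma$ and $E=U$, produces preimages inside $\CR_k(\uppi\,\mr{D}U)$ with the same $\cdot\tet$-image, giving the required equality. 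The main obstacle is the careful bookkeeping of the symmetric/antisymmetric decomposition across the two charts of the double and its compatibility with the defining condition of $\CR_k$; once this is in place, the two hypotheses combine cleanly along the target decomposition to yield the full surjection onto $(H_{k-1})^{\binom{[d]}{k}}(\widetilde{\mr{D}}\varSigma)$.
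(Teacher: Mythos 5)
The overall strategy — split the target along the $d$-th coordinate, match hypothesis~(2) to the $\binom{[d-1]}{k-1}\ast\{d\}$ summand, and use the folding map to treat the $\binom{[d-1]}{k}$ summand — is the same skeleton as the paper's proof. However, the key claim in your treatment of the first summand is wrong, and it is wrong in a way that reverses the role of the two parts of the $\uptau$-decomposition.

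You assert that "the antisymmetric part lies automatically in $\CR_k(\uppi\,\mr{D}U)$, since its $\mr{hom}_0$-image in $\widetilde{\mr{D}}\varSigma$ cancels across the two charts," and then invoke hypothesis~(1) only on the symmetric part. This is backwards. An antisymmetric stress has $\mr{hom}$-image in $\mc{F}=\ker[\uptau\colon H_{k-1}(\mr{D}U)\to H_{k-1}(U)]$, and the boundary inclusion $\iota\colon \mr{D}U\hookrightarrow\widetilde{\mr{D}}\varSigma$ restricts to an \emph{isomorphism} $\mc{F}\xrightarrow{\sim}H_{k-1}(\widetilde{\mr{D}}\varSigma)$: this follows from the Mayer--Vietoris sequence for $\varSigma$ decomposed into a collar of $U$ and the complement, using $H_{k-1}(\varSigma)=H_k(\varSigma)=0$, which makes $(\uptau,\iota_*)\colon H_{k-1}(\mr{D}U)\to H_{k-1}(U)\oplus H_{k-1}(\widetilde{\mr{D}}\varSigma)$ an isomorphism. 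Thus antisymmetric classes do \emph{not} cancel in $\widetilde{\mr{D}}\varSigma$; they are exactly the ones that generate the quotient $\AR_k(\uppi\,\mr{D}U)/\CR_k(\uppi\,\mr{D}U)$, and the two charts do not identify in $\widetilde{\mr{D}}\varSigma$. The paper represents the quotient by stresses in $\ker[\uptau\colon\AR_k(\uppi\,\mr{D}U)\to\AR_k(\uppi\,U)]$ mapping to $\mc{F}^{\binom{[d-1]}{k}}$, and then uses the surjectivity furnished by hypothesis~(1) to move these representatives into $\ker(\cdot\tet)=\AR_k(\mr{D}U)$. You apply hypothesis~(1) to the wrong part of the decomposition.

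A secondary issue: your reduction relies on the unrestricted map $\AR_k(\uppi\,\mr{D}U)\to (H_{k-1})^{\binom{[d-1]}{k}}(\widetilde{\mr{D}}\varSigma)$ being surjective "by partition of unity." Proposition~\ref{prp:part} gives surjectivity onto $(H_{k-1})^{\binom{[d-1]}{k}}$ of the manifold $\mr{D}U$ itself, not of $\widetilde{\mr{D}}\varSigma$; the further surjectivity of $\iota_*$ is precisely what the identification of $\mc{F}$ with $H_{k-1}(\widetilde{\mr{D}}\varSigma)$ supplies, and it is not an automatic consequence of partition of unity. Consequently, the equivalence you derive (namely that $\mr{hom}_0$ surjectivity on $\AR_k(\mr{D}U)$ reduces to $\tet\,\CR_k(\uppi\,\mr{D}U)=\tet\,\AR_k(\uppi\,\mr{D}U)$) is not the right intermediate target — the paper explicitly points out that $\cdot\tet\colon\CR_k(\uppi\,\mr{D}U)\to\AR_{k-1}(\uppi\,\mr{D}U)$ is not expected to be surjective in the double situation, which is exactly why this lemma does not simply mirror Lemma~\ref{lem:Ciso} but must route through $\mc{F}$.
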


\begin{proof}
Similar to the proof of Lemma~\ref{lem:Ciso}, but we do no longer care for (or expect for that matter, as $\tet$ must be very special to give a double in the preimage of the projection) the map
\begin{equation}\label{eq:doubleL}
\CR_k(\uppi\hspace{0.3mm} \mr{D}U)\ \xrightarrow{\ \cdot {\tet}\ }\ \AR_{k-1}(\uppi\hspace{0.3mm} \mr{D}U)
\end{equation}
 to be a surjection.

An important subspace of $H_{k-1}(\mr{D}U)$ to keep in mind is induced by the folding map:
\[\mc{F}\ \coloneqq\ \ker[\uptau: H_{k-1}(\mr{D}U)\ \longrightarrow\ H_{k-1}(U)].\]

With this, the quotient
\begin{equation}\label{eq:quot}
\bigslant{\AR_k(\uppi\hspace{0.3mm}\mr{D}U)}{\CR_k(\uppi\hspace{0.3mm} \mr{D}U)}
\end{equation}
is generated by preimage of
$\mc{F}^{\binom{[d-1]}{k}}$ in the map
\[\AR_k(\uppi\hspace{0.3mm}\mr{D}U)\ \xrightarrow{\ \mr{hom}\ }
\ (H_{k-1})^{\binom{[d-1]}{k}}(\widetilde{\mr{D}}U).
\]
A basis for the Quotient~\eqref{eq:quot} can therefore be chosen to be in the preimage of the folding map intersected with the kernel of the folding map 
\[\AR_k(\uppi\hspace{0.3mm}\mr{D}U)\ \xrightarrow{\ \uptau\ }\ \AR_k(\uppi\hspace{0.3mm}U).\]
Finally, we use surjectivity of the Map~\eqref{eq:surjint}, and conclude that we can choose the basis to map isomorphically to
		\[\mc{F}^{\binom{[d-1]}{k}}\ \cong\ \bigslant{(H_{k-1})^{\binom{d}{k}}(\widetilde{\mr{D}}\varSigma)}{(H_{k-1})^{\binom{[d-1]}{k-1}\ast \{d\}}(\widetilde{\mr{D}}\varSigma)}\]
		which in turn is isomorphic to
\[\bigslant{(H_{k-1})^{\binom{d}{k}}(\widetilde{\mr{D}}\varSigma)}{\mr{hom}_\tet\left(\ker[\CR_k(\uppi\hspace{0.3mm} \mr{D}U)\ \xrightarrow{\ \cdot {\tet}\ }\ \AR_{k-1}(\uppi\hspace{0.3mm} \mr{D}U)]\right)}\]
		as was claimed.
\end{proof}

Now, let us consider Theorem~\ref{thm:env}(2). To prove surjectivity of the Map~\eqref{eq:doubleLL}, we use inductive program as in the previous section.

			
 Let $\{\hat{v}, \check{v}\}=\uptau^{-1}v$ denote the preimages in the associated charts for interior vertices $v$ of~$U$. Assume we have proven the appropriate stable annihilator property for $W'$, that is,
\begin{compactenum}[(1)]
\item if 
\[{\tet}[W']\ \coloneqq\ ``{\sum_{v\in W'}}"  (x_{\hat{v}}+ x_{\check{v}})\]
and $\mr{hom}_{{\tet}[W']}$ is the associated map to homology, the image of 
\[\mr{hom}_{{\tet}[W']}{:}\, \ker \left[{\tet}[W']:\CR_k(\uppi\hspace{0.3mm} \mr{D}U)\ \rightarrow\ \AR_{k-1}(\uppi\hspace{0.3mm} \mr{D}U)\right] \rightarrow (H_{k-1})^{\binom{[d-1]}{k-1}\ast \{d\}}(\widetilde{\mr{D}}\varSigma)\]
is as large as possible: it equals the image of \[\ker\left[\bigoplus_{v\in \uptau^{-1}W'} \AR_{k-1}(\uppi\hspace{0.3mm} \st_v \mr{D}U)\ \longrightarrow\ 
\AR_{k-1}(\uppi\hspace{0.3mm} \mr{D}U)\right]\]
in $(H_{k-1})^{\binom{[d-1]}{k-1}\ast \{d\}}(\widetilde{\mr{D}}\varSigma)$. Notice that because 
\[\mr{N}_{\uptau^{-1} W'} \mr{D}U\ =\ \bigcup_{v\in \uptau^{-1}W'} \st_v \mr{D}U\]
is a manifold by Condition (2) for railways, this image can be identified as the image of
\[\left(\mc{F}\cap H_{k-1}(\mr{N}_{\uptau^{-1} W'} \mr{D}U)\right)^{\binom{[d-1]}{k-1}\ast \{d\}}\ \longrightarrow\ (H_{k-1})^{\binom{[d-1]}{k-1}\ast \{d\}}(\widetilde{\mr{D}}\varSigma
).\]
Furthermore,
\item the image
\[\im \left[{\tet}[W'] :\CR_k(\uppi\hspace{0.3mm} \mr{D}U)\ \rightarrow\ \AR_{k-1}(\uppi\hspace{0.3mm} \mr{D}U)\right]\]
contains the images of
\[\im\left[{\ x_{\hat{v}}+ x_{\check{v}}\ }:\BR_k(\uppi\hspace{0.3mm} \hat{U}) \ \oplus\ \BR_k(\uppi\hspace{0.3mm} \check{U})\ \xrightarrow\ \AR_{k-1}(\uppi\hspace{0.3mm} \mr{D}U) \right],\]
for $v\in W'$. This is simply the Lefschetz theorem we proved iteratively for step I.
\end{compactenum} 

Additionally, by the Lefschetz theorem for $(d-4)$-spheres, we may assume that the stress space $\AR_{k}(\uppi\hspace{0.3mm} \hat{U}-\hat{W'})$ and therefore also $\AR_{k}(\uppi\hspace{0.3mm} \check{U}-\check{W'})$
have no degree $k$ stresses outside of their manifold part.

As in the previous section, \[\im [{\tet}[W'] :\CR_k(\uppi\hspace{0.3mm} \mr{D}U)\ \rightarrow\ \AR_{k-1}(\uppi\hspace{0.3mm} \mr{D}U)]\cap \langle \AR_{k-1}(\uppi\hspace{0.3mm} \st_{\hat{w}}  \mr{D}U),\AR_{k-1}( \uppi\hspace{0.3mm} \st_{\check{w}} \mr{D}U)\rangle\] generates a subspace of the kernel of
	\[\bigoplus_{v\in U^{(0)} } \AR_{k-1}(\uppi\hspace{0.3mm} \st_v \mr{D}U)\ \longrightarrow\ 
	\AR_{k-1}(\uppi\hspace{0.3mm} \mr{D}U)\]
	which following Proposition~\ref{prp:part} maps to $(H_{k-1})^{\binom{[d-1]}{k-1}}(\mr{D}U)$. 
	
	Consider the composition with   
	\begin{align*}
	(H_{k-1})^{\binom{[d-1]}{k-1}}( & \mr{D}U)\  \cong\ 
	(H_{k-1})^{\binom{[d-1]}{k-1}\ast \{d\}}(\mr{D}U)\ \longrightarrow\ \\ & \longrightarrow\  (H_{k-1})^{\binom{[d-1]}{k-1}\ast \{d\}}(\widetilde{\mr{D}}\varSigma) \ \longrightarrow\ 
	\bigslant{(H_{k-1})^{\binom{[d-1]}{k-1}\ast \{d\}}(\widetilde{\mr{D}}\varSigma)}{\im[\mr{hom}_{{\tet}[W']}]}
	\end{align*}
	where $\im[\mr{hom}_{{\tet}[W']}]$ has an especially nice description by transversal prime property: It is the image of the $(k-1)$-st homology of the manifold $\mr{N}_{\uptau^{-1} W'} \mr{D}U$ in $\widetilde{\mr{D}}\varSigma$ (to the $\binom{[d-1]}{k-1}\ast \{d\}$-th power).
	
	Consider the projection of the kernel of this composition to
	\[
	\AR_{k-1}(\uppi\hspace{0.3mm} \st_{\hat{w}}  \mr{D}U)\ \oplus\ \AR_{k-1}( \uppi\hspace{0.3mm} \st_{\check{w}} \mr{D}U)\
	\]
	and call it $\mathcal{I}_W$. 

\subsection{Analysis of links, Part (2)} 
We can now finish the proof, as we have reduced the difficult problem of constructing Lefschetz maps to a homological problem. 

If $w$ is the next vertex of the linear order as before, then we have as in Lemma~\ref{lem:orthC}.
\begin{lem}
$\mathcal{I}_W$ is orthogonal to the pullback of
\[\widetilde{\CR}_k(\uppi\hspace{0.3mm} \mr{D}U-\uptau^{-1}W')\] along \[x_{\uptau^{-1}w}\ \coloneqq\ x_{\hat{w}}+x_{\check{w}}\] to 
\[\widetilde{\AR}_{k-1}(\uppi\hspace{0.3mm} \lk_{{w}} U)\ \coloneqq\ \AR_{k-1}(\uppi\hspace{0.3mm} \lk_{\hat{w}} \mr{D}U) \ \oplus\ \AR_{k-1}(\uppi\hspace{0.3mm} \lk_{\check{w}} \mr{D}U)\]
with respect to the antidiagonal fundamental class, that is, the fundamental class of $\mr{D}U$ under pullback along $x_{\uptau^{-1}w}$. \qed
\end{lem}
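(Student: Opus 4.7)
The plan is to mirror the proof of Lemma~\ref{lem:orthC}, with the necessary adjustments coming from the doubled setting and the orientation structure of $\mr{D}U$. First I would make precise what "antidiagonal fundamental class" means: since the folding map $\uptau\hspace{0.3mm}:\mr{D}U \to U$ realizes $\mr{D}U$ as two charts $\hat{U}$ and $\check{U}$ carrying opposite orientations, the fundamental class of $\mr{D}U$ restricts to opposite fundamental classes on $\st_{\hat{w}}\mr{D}U$ and $\st_{\check{w}}\mr{D}U$. Consequently, multiplication by $x_{\uptau^{-1}w} = x_{\hat{w}}+x_{\check{w}}$ descends to a Poincar\'e pairing on $\widetilde{\AR}_{k-1}(\uppi\hspace{0.3mm}\lk_w U) = \AR_{k-1}(\uppi\hspace{0.3mm}\lk_{\hat{w}}\mr{D}U) \oplus \AR_{k-1}(\uppi\hspace{0.3mm}\lk_{\check{w}}\mr{D}U)$ that is the difference of the two chartwise degree maps.

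Next I would unravel both subspaces concretely in this ambient space. For any $\beta \in \widetilde{\CR}_k(\uppi\hspace{0.3mm}\mr{D}U-\uptau^{-1}W')$, the pullback $(x_{\hat{w}}\beta,\, x_{\check{w}}\beta)$ lies in the image of $\widetilde{\CR}_k$ restricted to classes supported away from $\uptau^{-1}W'$; by the definition of $\widetilde{\CR}_k$, these classes map trivially to $(H_{k-1})^{\binom{[d-1]}{k}}(\widetilde{\mr{D}}\varSigma)$ and so lie in the stress space of the complement of $\mr{N}_{\uptau^{-1}W'} \mr{D}U$. On the other hand $\mathcal{I}_W$ was defined precisely as the projection, to the $w$-stars, of a kernel of a map whose cokernel records exactly the homological image of $W'$-stars; so any element of $\mathcal{I}_W$ lifts to a class in $\AR_k(\uppi\hspace{0.3mm}\mr{D}U)$ whose image in $(H_{k-1})^{\binom{[d-1]}{k-1}\ast\{d\}}(\widetilde{\mr{D}}\varSigma)$ lies in $\im[\mr{hom}_{{\tet}[W']}]$, and hence is realized by a class supported on $\mr{N}_{\uptau^{-1}W'} \mr{D}U$.

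Now I would combine these two observations via a commutative diagram analogous to the one in the proof of Lemma~\ref{lem:orthC}. The pairing between an element of $\mathcal{I}_W$ and a pullback $(x_{\hat{w}}\beta,\, x_{\check{w}}\beta)$, computed with respect to the antidiagonal fundamental class, can be rewritten globally as the evaluation of a product $\beta\cdot\alpha'$ on the fundamental class of $\mr{D}U$, where $\alpha'$ is any lift of $\alpha \in \mathcal{I}_W$. Because $\alpha'$ can be chosen supported on $\mr{N}_{\uptau^{-1}W'} \mr{D}U$ while $\beta$ is supported outside $\uptau^{-1}W'$, and because the two subcomplexes intersect only along $(k-2)$-faces after using the $(k-2)$-acyclic octavian hypothesis on the railway, the product vanishes at the level of the Poincar\'e pairing. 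Orthogonality follows.

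The main obstacle I anticipate is bookkeeping the sign conventions to confirm that the antidiagonal pairing is the correct one, and verifying that one can indeed lift an element of $\mathcal{I}_W$ to a class supported on $\mr{N}_{\uptau^{-1}W'}\mr{D}U$; this last step uses Proposition~\ref{prp:part} applied to the manifold $\mr{N}_{\uptau^{-1}W'}\mr{D}U$ (which is a manifold by condition~(2) of railways) to identify the image of $\mr{hom}_{{\tet}[W']}$ with $(\mc{F}\cap H_{k-1}(\mr{N}_{\uptau^{-1}W'}\mr{D}U))^{\binom{[d-1]}{k-1}\ast\{d\}}$, exactly as was used in setting up condition~(1) of the induction.
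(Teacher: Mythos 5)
The paper dispatches this lemma with a ``$\qed$'': it is meant to follow immediately as the doubled analogue of Lemma~\ref{lem:orthC} and Lemma~\ref{lem:orthC3}, whose proofs run through the duality between $\BR_{k-1}$ and its orthogonal complement under the Poincar\'e pairing (via the chain of equalities $\CR_k(\uppi\,E-W'\subset\uppi\,E)=\BR_{k-1}(\Mu_1)\cap\AR_k(\uppi\,E-W')$ and the orthogonality of $\BR_{k-1}(\Mu_1)$ and $\CR_k(E)$), together with a dimension count. Your setup and the first two paragraphs are consistent with that strategy, and the remark about how the fundamental class restricts with opposite signs to the two charts is a helpful clarification of what the antidiagonal pairing is.

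The gap is in your third paragraph. You argue that the pairing vanishes because a representative $\alpha'$ of $\mathcal{I}_W$ can be chosen supported in $\mr{N}_{\uptau^{-1}W'}\mr{D}U$, while $\beta$ is supported in $\mr{D}U-\uptau^{-1}W'$, and that ``the two subcomplexes intersect only along $(k-2)$-faces'' by the octavian hypothesis. This reasoning does not work: the regular neighbourhood $\mr{N}_{\uptau^{-1}W'}\mr{D}U$ and the deletion $\mr{D}U-\uptau^{-1}W'$ overlap in a full-dimensional region (the regular neighbourhood contains all stars of $W'$-vertices, and the deletion contains everything except those vertices, so the intersection includes the entire boundary collar of the regular neighbourhood), so the product of two stresses supported on these pieces need not vanish. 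Moreover, the octavian condition asserts that $\mr{N}_{W'}U$ and its complement are each $(k-2)$-\emph{acyclic}; it makes no claim about the dimension of their intersection, so it cannot be used to conclude that the supports of $\alpha'$ and $\beta$ are essentially disjoint. The orthogonality here is not a disjoint-support phenomenon at all --- it is an algebraic consequence of one space lying inside the Gorenstein quotient $\BR$ and the other pairing trivially against $\BR$ by construction, exactly as in the proof of Lemma~\ref{lem:orthC}. The correct route is to transport that argument through the doubling (noting that the pullback of $\widetilde{\CR}_k(\uppi\,\mr{D}U-\uptau^{-1}W')$ plays the role of $\mathcal{K}_W$ and $\mathcal{I}_W$ the role of $\mathcal{I}_W$ from Lemma~\ref{lem:orthC3}), rather than appealing to support geometry. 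A secondary issue: you speak of lifting elements of $\mathcal{I}_W$ to $\AR_k(\uppi\,\mr{D}U)$, but $\mathcal{I}_W$ sits in degree $k-1$, and any lift must be explicit about which degree-raising map (pullback along a prime divisor, or the auxiliary-vertex trick of Lemma~\ref{lem:orthC3}) is used; as written the degree bookkeeping does not check out.
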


Here,
$\widetilde{\CR}_k(\uppi\hspace{0.3mm} \mr{D}U-\uptau^{-1}W')$ is the kernel of
\[{\AR}_k(\uppi\hspace{0.3mm} \mr{D}U-\uptau^{-1}W')\ \longrightarrow\ \left(\bigslant{H_{k-1}(\widetilde{\mr{D}}\varSigma)}{ H_{k-1}(\widetilde{\mr{D}}\varSigma) }\right)^{\binom{[d-1]}{k}}.\]
Restrict $x_{\uptau^{-1}w}\widetilde{\CR}_k(\uppi\hspace{0.3mm} \mr{D}U-\uptau^{-1}W')$ to classes that are in the kernel of the folding map $\uptau$, obtaining a subspace $\mc{S}$ of
$\widetilde{\AR}_{k-1}(\uppi\hspace{0.3mm} \lk_{{w}} U)$. This subspace is naturally invariant under the natural involution map $\upiota$ that exchanges the charts. Note that this involution preserves the pairing. 

It suffices to consider this subspace as only these classes give rise to homology cycles in $\mc{F}$. It would be enough to show that this pullback satisfies biased Poincar\'e duality. However, we do something simpler: First observe that, because we only have to take care of the pullback to one chart by symmetry of the pairing. This is already simpler, but we can simplify further.

However, we will work with another decomposition: We shall construct a nice subspace $\mc{T}$ of
\[\widetilde{\CR}_k(\uppi\hspace{0.3mm} \mr{D}U-\uptau^{-1}W')\ \subset \widetilde{\AR}_{k-1}(\uppi\hspace{0.3mm} \lk_{{w}} U)\]
such that $\mc{T}$ maps into $\mc{S}$ under $\mr{id}-\upiota$. 

We do so by exploiting condition (6) of octavian railways and obtain:

\begin{lem}\label{lem:fold2}
The kernel 
\[\mc{H}\ \coloneqq\ \ker\left[H_{k-1}(\uppi\hspace{0.3mm} \mr{D}{U}-\uptau^{-1}{W'})\ \longrightarrow\ H_{k-1}(\uppi\hspace{0.3mm} \mr{D}U)\ \longrightarrow\ \bigslant{H_{k-1}(\widetilde{\mr{D}}\varSigma)}{ H_{k-1}(\widetilde{\mr{D}}\varSigma)}\right]\]
surjects onto $H_{k-2}(\lk_w ({U}-{W'}))$ under the folding map $\uptau$.\qed
\end{lem}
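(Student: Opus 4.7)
The plan is to compute $\mc{H}$ via the long exact sequence of the pair $(\mathrm{D}U, \mathrm{D}U - \uptau^{-1}W')$, push it down along the folding map to $H_{k-1}(U-W')$, and extract the residue at $w$ using the connecting homomorphism $\delta$ of the pair $((U-W'), (U-W')-w)$. The two essential inputs are octavian condition~(6) applied to the initial segment $W' \cup \{w\}$, and a Mayer--Vietoris calculation showing that single-vertex deletion from a $(d-1)$-manifold with boundary is surjective on $H_{k-1}$.

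First, since $W' \subset U^\circ$, the long exact sequence gives $\mc{H}$ as the image of $\partial\colon H_k(\mathrm{D}U, \mathrm{D}U - \uptau^{-1}W') \to H_{k-1}(\mathrm{D}U - \uptau^{-1}W')$. The folding map $\uptau$ induces a surjection $H_k(\mathrm{D}U, \mathrm{D}U - \uptau^{-1}W') \twoheadrightarrow H_k(U, U-W')$ by summing the two charts, so naturality of $\partial$ yields $\uptau(\mc{H}) = \ker[H_{k-1}(U-W') \to H_{k-1}(U)]$. Separately, excision on $\st_w$ together with the long exact sequence of $(U-W', (U-W')-w)$ gives a connecting map $\delta\colon H_{k-1}(U-W') \to \widetilde H_{k-2}(\lk_w(U-W'))$ whose cokernel embeds into $H_{k-2}(U - (W' \cup \{w\}))$; octavian condition~(6) applied to the initial segment $W' \cup \{w\}$ forces the latter group to vanish, so $\delta$ is surjective.

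The main step (and hardest part) will be to verify that $\delta$ remains surjective on $\uptau(\mc{H}) = \ker[H_{k-1}(U-W') \to H_{k-1}(U)]$. This is equivalent to $\ker\delta + \uptau(\mc{H}) = H_{k-1}(U-W')$, which in turn reduces to the coincidence of the images of $H_{k-1}(U-W')$ and $H_{k-1}(U-(W'\cup\{w\}))$ inside $H_{k-1}(U)$. The key observation is that for any vertex $v$ of a $(d-1)$-manifold with boundary $M$, the Mayer--Vietoris sequence for $M = \st_v M \cup (M-v)$ forces $H_{k-1}(M-v) \twoheadrightarrow H_{k-1}(M)$: the link $\lk_v M$ is either a $(d-2)$-sphere (interior vertex) or a $(d-2)$-ball (boundary vertex), and $\widetilde H_{k-2}(\lk_v M) = 0$ in either case, as $k - 2 < d - 2$ follows from $k \le \nicefrac{d}{2}$. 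Iterating this single-vertex argument over all elements of $W' \cup \{w\}$---at every step the intermediate complex remains a manifold with boundary---shows that both images in question coincide with all of $H_{k-1}(U)$, thereby completing the surjection claimed in the lemma.
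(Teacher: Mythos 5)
The paper gives no explicit proof of this lemma (it is stated with a \qed mark), so there is nothing to compare against directly; the factorization supplied immediately after the lemma suggests a slightly different bookkeeping (folding only in the last step, passing through $H_{k-2}(\uppi\,\mr{C}\widetilde{U}_w)$ rather than through $H_{k-1}(U-W')$), but by naturality of connecting maps under $\uptau$ the overall map is the same, so the first three paragraphs of your argument---LES of the pair, naturality under folding, the connecting map $\delta$, and octavian condition (6) to get surjectivity of $\delta$---are all sound.

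There is, however, a genuine gap in the last step. You reduce the claim to showing that $\im[H_{k-1}(U-W')\to H_{k-1}(U)]$ coincides with $\im[H_{k-1}(U-W'-w)\to H_{k-1}(U)]$, and then assert both are all of $H_{k-1}(U)$ by iterating the one-vertex Mayer--Vietoris argument. But the iteration relies on \emph{every} intermediate link being a sphere or ball of the right dimension, which only holds automatically for the first deleted vertex. For $i\ge 1$ the relevant link is
\[\lk_{v_{i+1}}\bigl(U-\{v_1,\dots,v_i\}\bigr)\ =\ (\lk_{v_{i+1}}U)-\{v_1,\dots,v_i\},\]
a $(2k-3)$-sphere with several open vertex-stars removed. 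This need not be a ball, and its $\widetilde{H}_{k-2}$ need not vanish: for $k=2$, deleting two non-adjacent vertices from the circle $\lk_{v_3}U$ disconnects it, giving $\widetilde{H}_0\ne 0$; for $k=3$, deleting three pairwise-adjacent vertices of $\lk_{v_4}U\cong S^3$ that do not span a $2$-face yields a solid-torus-like complement with $\widetilde{H}_1\ne 0$. In either case the Mayer--Vietoris boundary into $\widetilde{H}_{k-2}(\text{link})$ may be nonzero, and the surjectivity of $H_{k-1}(M-v_{i+1})\to H_{k-1}(M)$ is not guaranteed. The assertion ``at every step the intermediate complex remains a manifold with boundary'' is also not automatic --- it requires that each deletion preserve sphere/ball links, which is precisely what can fail.

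What is missing is an appeal to the hereditary and octavian railway conditions, which are designed to rule out the above degeneracies: the hereditary condition makes $U_w$ a m\'etro of degree $k-1$ in $\lk_w U$, and its octavian condition (6) gives exactly the $(k-3)$-acyclicity of the complement of the accumulated neighbourhood in the link, which is the vanishing you need in each Mayer--Vietoris step. Alternatively, one can bypass the iterated single-vertex argument entirely and establish the coincidence of images via the long exact sequence of the triple $(U,\,U-W',\,U-W'-w)$ combined with the $(k-2)$-acyclicity of the complements of $\mr{N}_{W'}U$ and $\mr{N}_{W'\cup\{w\}}U$ from octavian condition (6); this is essentially the computation $\coker[\phi]\hookrightarrow H_{k-1}(U,U-W')$ and its cousin for $W'\cup\{w\}$. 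Either way, the plan is right, but the key surjectivity in your final paragraph is not substantiated as written.
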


Here, the map to $H_{k-2}(\lk_w ({U}-{W'}))$ factors as
\begin{align*}H_{k-1}(\uppi\hspace{0.3mm} \mr{D}{U}-\uptau^{-1}{W'})\ \longrightarrow\ H_{k-1}(\st_{\uptau^{-1}w}^\circ (\uppi\hspace{0.3mm} &\mr{D}{U}-\uptau^{-1}{W'}) )\ \longrightarrow\ \\ \longrightarrow\  &H_{k-2}(\uppi\hspace{0.3mm}\mr{C}\widetilde{U}_w) \ \xrightarrow{\ \uptau\ }\ 
H_{k-2}(\lk_w ({U}-{W'}))
\end{align*}
where
\[\st_{\uptau^{-1}w} (\uppi\hspace{0.3mm} \mr{D}{U}-\uptau^{-1}{W'})\ \coloneqq\ \st_{\hat{w}} (\uppi\hspace{0.3mm} \mr{D}{U}-\uptau^{-1}{W'}) \cup \st_{\check{w}} (\uppi\hspace{0.3mm} \mr{D}{U}-\uptau^{-1}{W'})\]
and
\[\uppi\hspace{0.3mm}\mr{C}\widetilde{U}_w\ \coloneqq\ (\uppi\hspace{0.3mm} \mr{C}\hat{U}_w) \cup (\uppi\hspace{0.3mm} \mr{C}\check{U}_w)\]
and finally 
\[\st_{\uptau^{-1}w}^\circ( \uppi\hspace{0.3mm} \mr{D}{U}-\uptau^{-1}{W'})\ \coloneqq\ (\st_{\uptau^{-1}w} (\uppi\hspace{0.3mm} \mr{D}{U}-\uptau^{-1}{W'}),\uppi\hspace{0.3mm}\mr{C}\widetilde{U}_w ).\]

We can therefore argue as follows: Consider a subspace \[\widetilde{\mc{H}}\ \subset\ H_{k-1}(\st_{\uptau^{-1}w}^\circ (\uppi\hspace{0.3mm} \mr{D}{U}-\uptau^{-1}{W'}))
\] of
 the image of $\mc{H}$ in $H_{k-1}(\st_{\uptau^{-1}w}^\circ (\uppi\hspace{0.3mm} \mr{D}{U}-\uptau^{-1}{W'}))$ such that the above composition maps $\widetilde{\mc{H}}$ maps isomorphically to $H_{k-2}(\lk_w ({U}-{W'}))$. 
Then we can consider 
\[\AR_{k}( \st_{\uptau^{-1}w}^\circ (\uppi\hspace{0.3mm} \mr{D}{U}-\uptau^{-1}{W'})) \ \longrightarrow\ 
 (H_{k-1})^{\binom{[d-1]}{k}}(\st_{\uptau^{-1}w}^\circ (\uppi\hspace{0.3mm} \mr{D}{U}-\uptau^{-1}{W'}))\]
and the preimage \[\mc{D}_w\subset \AR_{k}( \st_{\uptau^{-1}w}^\circ (\uppi\hspace{0.3mm} \mr{D}{U}-\uptau^{-1}{W'})) \] of 
$\widetilde{\mc{H}}^{\binom{[d-1]}{k}}$. Then 
\[x_{\uptau^{-1}w}\mc{D}_w\ \subset\ x_{\uptau^{-1}w}\widetilde{\CR}_k(\uppi\hspace{0.3mm} \mr{D}U-\uptau^{-1}W')\]
and
\[((\mr{id}-\upiota)\circ x_{\uptau^{-1}w})(\mc{D}_w)\ \subset\ \mc{S}\]
so that we identify $x_{\uptau^{-1}w}\mc{D}_w$ as the desired space $\mc{T}$.

We shall argue the following.
\begin{compactenum}[(1)]
\item Biased Poincar\'e duality for $\mc{T}$ implies the transversal prime property for doubles with respect to the vertex set $W=W'\cup\{w\}$.
\item Biased Poincar\'e duality for $\mc{T}$ holds.
\end{compactenum}

To the first point, we simply notice that biased Poincar\'e duality for $\mc{T}$ implies that 
\[x_{\uptau^{-1}w}\mc{D}_w \cap \AR_{k-1}(\st_{w} \uppi\hspace{0.3mm} \mr{D}{U} \cap \uppi\hspace{0.3mm}(\partial U \cup \mr{N}_{\uptau^{-1}{W'}} \mr{D}U))\]
maps surjectively to
\[(H_{k-2})^{\binom{[d-1]}{k-1}}
\left(\st_{w} \hspace{0.3mm} \mr{D}{U} \cap \hspace{0.3mm}(\partial U \cup \mr{N}_{\uptau^{-1}{W'}} \mr{D}U)\right).\]

But then 
\[(\mc{T}\ \cup\ \upiota\mc{T})\ \cap\ \AR_{k-1}(\st_{w}\uppi\hspace{0.3mm} \mr{D}{U}\ \cap\ \uppi\hspace{0.3mm}(\partial U \cup \mr{N}_{\uptau^{-1}{W'}} \mr{D}U))
\]  
generates 
\[(H_{k-1})^{\binom{[d-1]}{k-1}}
\left(\st_{w} \hspace{0.3mm} \mr{D}{U}\ \cap\ \hspace{0.3mm}(\partial U \cup \mr{N}_{\uptau^{-1}{W'}} \mr{D}U) \right)\]
which implies that 
\begin{align*}
\mr{hom}_{{\tet}[W]}\ &=\  \mr{hom}_{{\tet}[W']}\ \\ &+\ \im\left[(H_{k-1})^{\binom{[d-1]}{k-1}}
\left(\st_{w} \mr{D}{U} \cap \hspace{0.3mm}(\partial U \cup \mr{N}_{\uptau^{-1}{W'}} \mr{D}U\right))\rightarrow (H_{k-1})^{\binom{[d-1]}{k-1}\ast \{d\}}(\widetilde{\mr{D}}\varSigma)\right] \\
&=\ \im\left[\left(\mc{F}\cap H_{k-1}(\mr{N}_{\uptau^{-1} W} \mr{D}U)\right)^{\binom{[d-1]}{k-1}\ast \{d\}} \rightarrow (H_{k-1})^{\binom{[d-1]}{k-1}\ast \{d\}}(\widetilde{\mr{D}}\varSigma
)\right].
\end{align*}

To achieve the second point, notice that we can choose $\widetilde{\mc{H}}$ within some bounds, and in particular can choose find a codimension zero submanifold $\varUpsilon$ of 
 $\uppi\hspace{0.3mm}\mr{C}\widetilde{U}_w$ such that
$\widetilde{\mc{H}}$ defines relative homology classes in 
\[H_{k-1}(\st_{\uptau^{-1}w}^\circ(\mr{D}{U}-\uptau^{-1}{W'}),\varUpsilon)\]
and such that the composition
\[\widetilde{\mc{H}}\ \longhookrightarrow\ 
H_{k-1}(\st_{\uptau^{-1}w}^\circ (\mr{D}{U}-\uptau^{-1}{W'}),\varUpsilon)
\ \longrightarrow\ H_{k-2}(\varUpsilon)\]
is an isomorphism. If $k\ge 3$, we can simply choose $\varUpsilon$ to be the restriction of
 $\uppi\hspace{0.3mm}\mr{C}\widetilde{U}_w$ to one chart, and if $k=2$, then $\varUpsilon$ is naturally chosen so that the folding map defines an isomorphism of manifolds on it, see Figure~\ref{etale}. For $k=1$ we already solved the problem completely in Example~\ref{ex:smooth}. With this choice, the second point follows immediately from Lemma~\ref{lem:metros}. To see this in detail, we argue as follows:

Hence, we can represent the image of $\widetilde{\mc{H}}$ in by codimension zero submanifolds. We obtain a pair $(X,\varUpsilon)$ (see also Figure~\ref{etale}) such that 
\begin{compactitem}[$\circ$]
\item $X$ is a $(k-2)$-acyclic manifold ${X}$ of dimension $2k-2$ that maps simplicially and locally injectively to \[
\upvarphi({X})\ =\ \st_{\uptau^{-1}w} (\uppi\hspace{0.3mm} \mr{D}{U}-\uptau^{-1}{W'})\ \subset\ \uppi\hspace{0.3mm}\mr{D}U.\]
\item The $(k-3)$-connected submanifold of its boundary ${\varUpsilon}$ maps isomorphically to the complement $\uppi\hspace{0.3mm}\mr{C}{U}_w$.
\item Moreover, the relative stresses $\AR_k({X},{\varUpsilon})$ map surjectively to $\mc{S}$
\[\mc{T}\ =\ \left(x_{\uptau^{-1}w}\circ\upvarphi\right) (\AR_k({X},{\varUpsilon})).\]
\end{compactitem}

		\begin{figure}[h!tb]
			\begin{center}
				\includegraphics[scale = 1.2]{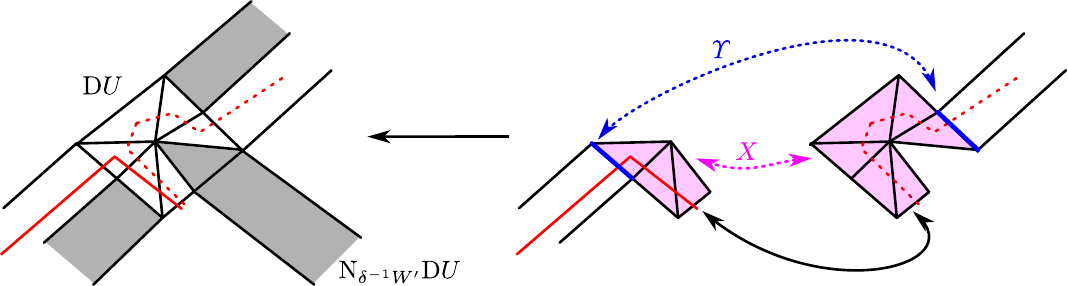}
				\caption{The pullback of ${\AR}_k(\uppi\hspace{0.3mm} \mr{D}U-\uptau^{-1}W')$ to $\st_{\uptau^{-1}w} \uppi\hspace{0.3mm} \mr{D}U$ is decomposed along the relative homology classes of $\im[\ker[H_{k-1}(\mr{D}U)\rightarrow {H_{k-1}(\widetilde{\mr{D}}\varSigma)}/{ H_{k-1}(\widetilde{\mr{D}}\varSigma)}]\rightarrow H_{k-1}(H_{k-1}( \st_v \mr{D}U, \mr{C}{U}_w)]$ by reducing it to a pairing question in a simpler space ${X}$.}
				\label{etale}
			\end{center}
		\end{figure} 
		
This allows us to finish the proof, as simplicial maps descend to morphisms of face rings and we can therefore discuss $\AR_k({X},{\varUpsilon})$ instead of the doubled railway: We denote by $\mr{C}{\varUpsilon}_w$ the complement of ${\varUpsilon}$ in $\lk_{w}^\upvarphi {X}$, and by $U_{\widetilde{w}}$ the copy of the star  $\st_w \uppi\hspace{0.3mm} U$ in the chart corresponding to the vertex $\widetilde{w}\ \in\ (\upvarphi\circ\uptau)^{-1} w$. Then the orthogonal complement to $\AR_k({X},{\varUpsilon})$ in 
\[\lk_{w}^\upvarphi {X}\ \coloneqq\ \bigcup_{\widetilde{w}\in(\upvarphi\circ\uptau)^{-1} w } \lk_{\widetilde{w}} {X}\]
is described as
\begin{align*}
\ker[\vartheta\BR_k(\mr{C}{\varUpsilon}_w)\ \longrightarrow\  & \BR_{k-1}(\mr{C}{\varUpsilon}_w)]\ +\ \\
\ & \bigoplus_{\widetilde{w}\in(\upvarphi\circ\uptau)^{-1} w } \AR_{k-1}(\lk_{\widetilde{w}} {U_{\widetilde{w}} - X^\circ})\ \subset\ \bigoplus_{\widetilde{w}\in(\upvarphi\circ\uptau)^{-1} w } \AR_{k-1}(\lk_{\widetilde{w}} U_{\widetilde{w}})
\end{align*}
which we know for sufficiently general position Artinian reduction and Lefschetz element
satisfies biased Poincar\'e duality by the Lefschetz theorem for m\'etros, see also the second part of Lemma~\ref{lem:metros}. 
Hence, we can ensure that the railway $U$ satisfies the second condition of Theorem~\ref{thm:env} by induction on dimension.
		
	\section{Beyond PL spheres}\label{sec:mani}
	
	For manifolds that are not PL spheres, the same proof goes through with minor caveats. These are exclusively related to the reduction to railways, which are easy construct in combinatorial manifolds using basic general position techniques, but which demands some additional words beyond that case. 
	
The main issue is that in general, given a simplicial complex $\varDelta$ of dimension $k\le\nicefrac{d-1}{2}-1$ contained in a homology sphere of dimension $d-1$, it is not clear that $\varDelta$ is contained in a hypersurface of trivial homology, making the construction of railways in this case tricky.
While it is possible to remedy this by considering Buchsbaum railways instead of ones that are railways, this leads us rather far astray. 	
	
For simplicity, we shall therefore not prove the Hall-Laman relations for general manifolds, but only the hard Lefschetz theorems.
	
	\subsection{Homology spheres}
	
	Consider for instance first the case of a combinatorial rational homology sphere $\varSigma$ of dimension $d-1$. First, let us note that
	the proof of biased Poincar\'e duality for combinatorial rational homology spheres goes through verbatim (as we already established the existence in the case of combinatorial rational homology spheres), completing the proof of Theorem~\ref{mthm:opd}, so it remains to discuss the Lefschetz theorem.
	
	If $\varSigma$ is not a PL sphere, then $\susp\varSigma$  is not a PL sphere, but we can use results of Freedman and Kervaire~\cite{Freedman, Kervaire} to instead realize $\varSigma$ as hypersurface in a PL sphere $\varSigma'$. Following the same program as above, we obtain immediately the middle hard Lefschetz theorem for $\varSigma$ by proving biased Poincar\'e duality for $\varSigma'$ with respect to the subcomplex $\varSigma$. To obtain the higher Lefschetz isomorphisms, we want to use Lemma~\ref{lem:midred}:

If we want to prove to the Lefschetz theorem, say, from degree $k$ to degree $d-k$, then Lemma~\ref{lem:midred} asks us to prove biased Poincar\'e duality for the $(d-k-1)$-skeleton of $\partial \Delta_{i} \ast \varSigma$, where  $i=d-2k$, in the $i$-fold suspension $\susp^i \varSigma$ of $\varSigma$. However that in general is not a combinatorial manifold. However, we can still use the observation: The suspension vertices in the $i$-fold suspension form a $i$-dimensional crosspolytope $\blacklozenge^i$. Using the Freedman-Kervaire construction, we can modify \[\susp^i \varSigma\ =\ \blacklozenge^i \ast \varSigma\]
outside of 
\[ (\blacklozenge^i)^{(\le i-2)} \ast \varSigma,\] where $(\cdot)^{(\le j)}$ denotes the $j$-skeleton of a simplicial complex, to a triangulated $(d+i-1)$-sphere. In this modification, the subcomplex $\partial \Delta_{i} \ast \varSigma$ is not affected, hence we are reduced back to proving biased Poincar\'e duality in a PL sphere.

However, this still only works for combinatorial rational homology spheres, but not rational spheres. For this, we can be slightly more clever about the octavian railway construction, and exploit the fact that suspensions are always simply connected. Consider again
 $\partial \Delta_i \ast \varSigma$, with $i=d-2k$, in the $i$-fold suspension $\susp^i \varSigma$ of $\varSigma$. Now, we can construct a octavian railway for this complex by starting with exactly those PL singular poles, circumventing the issue as, of course, the link of a singular vertex in $\partial \Delta_i \ast \varSigma$ is already a rational homology sphere. Then, we turn to the vertices of $\varSigma$ itself, whose links are simply connected.
   This can then be exploited to complete the railway construction in the desired generality, which finishes the proof.
   
Let us summarize:
   
\begin{thm}\label{mthm:h}
		Consider a triangulated rational $(d-1)$-sphere $\varSigma$, and the associated graded commutative ring $\R[\varSigma]$. Then
		there exists an open dense subset of the Artinian/socle reductions $\mathcal{R}$ of $\R[\Mu]$ and an open dense subset $\mathcal{L} \subset \R^1[\varSigma]$, where $\AR(\varSigma)\in \mc{R}$, such that for every $k\le \nicefrac{d}{2}$, we have the
\begin{compactenum}[(1)]
\item \emph{Generic Lefschetz theorem:}  For every $\AR(\varSigma)\in \mathcal{R}$ and every $\ell \in \mathcal{L}$, we have an isomorphism 
			\[\AR^k(\varSigma)\ \xrightarrow{\ \cdot \ell^{d-2k} \ }\ \AR^{d-k}(\varSigma).\]
\item \emph{Biased Poincar\'e duality:} If $\varSigma$ is combinatorial, then in addition
the perfect pairing
			\[\begin{array}{rccccc}
\AR^k(\varSigma)& \times &\AR^{d-k}(\varSigma)& \longrightarrow &\ \AR^d(\varSigma)\cong \R \\
			a		&	& b& {\xmapsto{\ \ \ \ }} &\ \mr{deg}(ab)
			\end{array}\]
		is non-degenerate in the first factor when restricted to any squarefree monomial ideal $\mc{I}$.
\end{compactenum}
\end{thm}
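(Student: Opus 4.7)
The plan is to reduce the statement for general triangulated rational spheres to the PL case that has already been treated, in two separate reductions corresponding to the two parts of the theorem. For part (2), biased Poincaré duality, the inductive machinery of Sections \ref{sec:pairings}--\ref{sec:induct} was in fact set up for combinatorial rational homology manifolds all along: Theorem \ref{thm:env} and the railway construction of Proposition \ref{prp:railways} use only general-position arguments on links, which are rational homology spheres of one lower dimension. I would therefore first verify by induction on dimension that the entire biased Poincaré duality argument carries through verbatim when $\varSigma$ is a combinatorial rational sphere instead of a PL sphere, using that the inductive hypotheses required (the generic Lefschetz theorem on $(d-3)$- and $(d-4)$-dimensional links, and the m\'etro-Lefschetz property of Lemma \ref{lem:metros}) continue to apply to those links.

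For part (1), the middle Lefschetz isomorphism at $k=\nicefrac{d}{2}$ is obtained via Proposition \ref{prp:ospd}: one realizes $\varSigma$ as a hypersurface in an ambient PL sphere $\varSigma'$ of one higher dimension, and then the biased Poincaré duality for $\varSigma'$ with respect to the subcomplex $\varSigma$ is equivalent to the Lefschetz isomorphism on $\varSigma$. When $\varSigma$ is combinatorial but not PL, the suspension $\susp\varSigma$ is not a PL sphere; I would use the Freedman--Kervaire construction to re-triangulate a neighbourhood of the two suspension vertices to produce a genuine PL sphere $\varSigma'$ containing $\varSigma$ as an induced hypersurface, and then invoke the biased Poincaré duality proved in part (2).

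To promote this to the higher Lefschetz maps $\AR^k\to\AR^{d-k}$ for $k<\nicefrac{d}{2}$, I would apply Lemma \ref{lem:midred} iteratively. Setting $i=d-2k$, the lemma translates the Hall--Laman relations for $\KK^{k}(\uppi\hspace{0.3mm}\varSigma,\uppi\hspace{0.3mm}\varDelta)$ at height $\tet$ into the corresponding statement at the suspension vertex. Iterating $i$ times realizes the question inside $\partial\Delta_{i}\ast\varSigma$ sitting in $\susp^i\varSigma=\blacklozenge^i\ast\varSigma$. Although the latter fails to be a combinatorial manifold, I would modify $\susp^i\varSigma$ outside of the subcomplex $(\blacklozenge^i)^{(\le i-2)}\ast\varSigma$ via Freedman--Kervaire to a genuine triangulated $(d+i-1)$-sphere; since this modification leaves $\partial\Delta_i\ast\varSigma$ untouched, part (2) again supplies the required biased Poincaré duality there.

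Finally, to extend from the combinatorial case to arbitrary triangulated rational spheres, I would refine the octavian railway construction of Proposition \ref{prp:railways} at PL-singular vertices. The key observation is that links of singular vertices of $\varSigma$ inside $\partial\Delta_i\ast\varSigma$ are already rational homology spheres of lower dimension (by induction subject to the theorem), while the remaining vertices of $\varSigma$ lie in the suspension direction, whose joins are simply connected. This will let me order the vertices of the railway's tabula so that the singular poles come first and the general-position step for the $(k-1)$-faces can be carried out in the simply connected locus. The main obstacle here is the last step: one must verify that the hereditary and octavian conditions on the railway can be preserved through the Freedman--Kervaire surgery and through the reordering that respects PL-singularities; once this is in place the entire inductive machinery of Sections \ref{sec:railway}--\ref{sec:induct} goes through, and both (1) and (2) follow in the stated generality.
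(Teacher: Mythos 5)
Your proposal is correct and follows essentially the same route as the paper's Section~\ref{sec:mani}: biased Poincar\'e duality extends verbatim to combinatorial rational spheres, the middle Lefschetz isomorphism is recovered via Freedman--Kervaire to embed $\varSigma$ as a hypersurface in a PL sphere, higher Lefschetz maps come from iterating Lemma~\ref{lem:midred} together with modifying $\susp^i\varSigma$ outside $(\blacklozenge^i)^{(\le i-2)}\ast\varSigma$, and the general rational-sphere case is handled by starting the railway at the singular suspension poles and exploiting simple connectivity of the remaining links. One small slip: in the final step the relevant singular vertices are the suspension poles (whose links in $\susp^i\varSigma$ fail to be PL), not singular vertices of $\varSigma$ itself, and it is the poles---not vertices of $\varSigma$---that lie in the suspension direction; you correct yourself a sentence later, so the plan still lands on the paper's argument.
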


	\subsection{The main theorem for manifolds}
	
For manifolds, we again restrict to Lefschetz theorem. Anticipating the final result, we obtain

\begin{thm}\label{mthm:gl2}
		Consider a triangulated rational $(d-1)$-manifold $\Mu$, and the associated graded commutative ring $\R[\Mu]$. Then
		there exists an open dense subset of the Artinian/socle reductions $\mathcal{R}$ of $\R[\Mu]$ and an open dense subset $\mathcal{L} \subset \R^1[\Mu]$, where $\AR(\Mu)\in \mc{R}$, such that for every $k\le \nicefrac{d}{2}$, we have the
\begin{compactenum}[(1)]
\item \emph{Generic Lefschetz theorem:}  For every $\AR(\Mu)\in \mathcal{R}$ and every $\ell \in \mathcal{L}$, we have an isomorphism 
			\[\BR^k(\Mu)\ \xrightarrow{\ \cdot \ell^{d-2k} \ }\ \BR^{d-k}(\Mu).\]
\item \emph{Biased Poincar\'e duality:} If $\Mu$ is combinatorial, then in addition
the perfect pairing
			\[\begin{array}{rccccc}
\BR^k(\Mu)& \times &\BR^{d-k}(\Mu)& \longrightarrow &\ \BR^d(\Mu)\cong \R \\
			a		&	& b& {\xmapsto{\ \ \ \ }} &\ \mr{deg}(ab)
			\end{array}\]
		is non-degenerate in the first factor when restricted to any squarefree monomial ideal $\mc{I}$.
\end{compactenum}
\end{thm}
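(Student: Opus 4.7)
The plan is to reduce Theorem~\ref{mthm:gl2} to the corresponding statements for PL spheres (Theorem~\ref{mthm:gl}, Theorem~\ref{mthm:opd}) and their rational homology sphere extension (Theorem~\ref{mthm:h}). The central observation is that, by the Novik-Swartz result recorded as Proposition~\ref{ns}, the quotient ring $\BR^\ast(\Mu) = \AR^\ast(\Mu)/\Socl(\Mu)$ is already a Gorenstein Poincar\'e duality algebra enjoying the partition of unity
\[
\BR^\ast(\Mu)\ \longhookrightarrow\ \bigoplus_{v\in \Mu^{(0)}} \AR^\ast(\st_v \Mu),
\]
which was the only feature of spherical face rings actually used in Section~\ref{sec:pairings}--Section~\ref{sec:induct}. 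In other words, the entire formal apparatus (biased Poincar\'e duality, the transversal prime property, the perturbation lemma, railways) transplants to $\BR^\ast(\Mu)$ without modification; the inductive input on dimension is provided by the fact that links of faces in a (combinatorial resp.\ rational) manifold are (combinatorial resp.\ rational) spheres.

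First I would invoke the persistence Lemma~\ref{lem:persistence}, together with the reduction Lemma~\ref{lem:midred}, to bring the problem to the diagonal case $d=2k$ and to biased Poincar\'e duality at an arbitrary subcomplex $\varDelta \subset \Mu$ of dimension $k-1$. Link-biased-Poincar\'e duality is then supplied by the inductive hypothesis via Theorem~\ref{mthm:h}, since $\lk_v \Mu$ is a rational sphere of dimension $d-2$. Next, I would run the railway argument of Section~\ref{sec:induct} inside $\Mu$ itself: the octavian hereditary railway of Proposition~\ref{prp:railways} only uses regular neighbourhoods and general position in a PL $(d-1)$-manifold, both of which are available when $\Mu$ is combinatorial. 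The characterization Theorem~\ref{thm:env} and the doubling Theorem~\ref{thm:ospd2} were formulated for manifolds from the outset, so the only substitution needed is to replace $\AR^\ast(\varSigma)$ by $\BR^\ast(\Mu)$ wherever a Poincar\'e pairing is used, and to replace $(H_{k-1})^{\binom{d}{k}}(\varSigma) = 0$ by the now non-trivial space $(H_{k-1})^{\binom{d}{k}}(\Mu)$ appearing in Proposition~\ref{prp:part}. Crucially, the quotient by $\Socl$ kills precisely those classes, so the surjectivity onto homology required in Theorem~\ref{thm:ospd2}(2) becomes a condition the inductive program can verify exactly as in Section~\ref{sec:induct}.

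Once biased Poincar\'e duality in degree $k$ for $d=2k$ is established, the generic middle Lefschetz isomorphism $\BR^k(\Mu) \xrightarrow{\cdot\ell} \BR^{k-1}(\Mu)$ for $d=2k-1$ follows by the envelope-embedding trick of Section~\ref{ssc:railways}, where one realizes $\Mu$ as a hypersurface in a higher-dimensional PL ambient. For rational homology manifolds, where suspensions can fail to be PL, I would adopt the same Freedman-Kervaire style workaround used in the proof of Theorem~\ref{mthm:h}: construct the railway hypersurface by starting at the potentially singular locus (whose links are again rational spheres covered by Theorem~\ref{mthm:h}) and extending into the manifold stratum by ordinary PL general position. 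The propagation to general $k$ with $d-2k\ge 1$ then follows by applying Lemma~\ref{lem:midred} to $\BR^\ast(\Mu)$, noting that all intermediate quotients remain Poincar\'e duality algebras of the expected formal dimension because the cone lemmas commute with the $\Socl$ reduction.

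The main obstacle is the last step for non-PL rational manifolds: the auxiliary complex $\partial \Delta_i \ast \Mu$ used in Lemma~\ref{lem:midred} is not a PL manifold, so the railway construction cannot be invoked directly on it. The resolution is that $\BR^\ast$ of the join is still a Poincar\'e duality algebra of the correct socle degree, and the problematic strata sit entirely among the suspension poles whose links are $\Mu$ itself--whose Lefschetz properties we have established in the preceding step of the induction. Combined with the modification trick borrowed from Theorem~\ref{mthm:h}, this lets us perform the hypersurface envelope construction outside a codimension-zero neighbourhood of the poles while handling the poles separately via the already-established Lefschetz property of $\BR^\ast(\Mu)$, completing the induction.
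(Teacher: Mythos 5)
Your proposal captures the broad strategy of the paper: replace $\AR^\ast$ by the Novik--Swartz Gorenstein quotient $\BR^\ast(\Mu)$, reduce to the middle pairing via suspensions and Lemma~\ref{lem:midred} (resp.\ its manifold version Lemma~\ref{lem:midred2}), construct railways to establish biased Poincar\'e duality, and handle non-PL rational manifolds and the singular poles of the suspension by Freedman--Kervaire type resolutions with the already-established Lefschetz properties of the links. This is indeed what the paper does.

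However, the claim that ``the entire formal apparatus $\ldots$ transplants to $\BR^\ast(\Mu)$ without modification'' and that Proposition~\ref{prp:railways} ``only uses regular neighbourhoods and general position'' is not accurate, and it hides the one point the paper has to change in an essential way. The octavian conditions (5), (6) of a railway ask that $\mr{N}_{W'} U$ and its complement in $U$ be $(k-2)$-\emph{acyclic}; this is arranged by general position inside a sphere, but in a manifold $\Mu$ (or in $\susp^i\Mu$) with non-trivial low-dimensional rational homology, these acyclicity conditions can simply fail to be achievable. The paper replaces them by the conditions (5'), (6') which only require the homology maps into $\Mu$ (resp.\ along the radial projection $\uppsi$ for the suspension) to be \emph{injective} up to degree $k-2$; the $\Socl$ reduction then kills exactly the residual homology classes, so the map-to-homology arguments of Section~\ref{sec:induct} close. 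You notice that $(H_{k-1})^{\binom{d}{k}}(\Mu)$ becomes non-trivial and that $\Socl$ kills it, but you do not connect this with the fact that the octavian railway conditions themselves have to be weakened, so your assertion that Theorem~\ref{thm:ospd2}(2) is verified ``exactly as in Section~\ref{sec:induct}'' is not yet justified. A smaller slip: for $i>1$ the link of a suspension pole in $\susp^i\Mu$ is $\susp^{i-1}\Mu$, not $\Mu$, so the induction on the poles runs through the intermediate suspensions rather than landing directly on $\Mu$.
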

	
	Only few modifications are necessary, and we sketch the proof of the above theorem by surveying the necessary modifications. The first is the definition of railways we need to prove the second statement.
	
	\subsection{Railways and m\'etros in manifolds}
	We fix this as follows. We keep Conditions~(1)-(4) as is, and modify the last conditions using the ambient manifold $\Mu$.
	
	\begin{compactenum}[(1')]
		\setcounter{enumi}{4}
		\item $\mr{N}_{W'} U\hookrightarrow \Mu$ induces an injection in homology up to dimension $(k-2)$ for all initial segments $W'$.
		\item The complementary manifold to $\mr{N}_{W'} U$ in $U$ mapped into $\Mu$ induces an injection in homology up to dimension $(k-2)$.
	\end{compactenum}
	
	A (octavian) m\'etro in $\Mu$ is again a railway of codimension zero and satisfying Conditions~(1) to (3), (5') and (6'). It is again an easy exercise to show hereditary octavian railways in combinatorial $(d-1)$-manifolds $\Mu$ exist for every subcomplex $\Delta$ of dimension at most $\frac{d}{2}-1$.

	\subsection{Using railways, revisited}
	
	The proof of Theorem~\ref{mthm:gl2} for works as the proof for spheres, reducing everything to the case of the middle pairing using the following version of Lemma~\ref{lem:midred}. This is straightforward if one is only interested in the middle Lefschetz isomorphism, but requires some definitions in general.
	
	Let $\susp^i$ denote the $i$-fold suspension, let $\mbf{n}_i$ denote the $i$-th north pole, and let $\Delta_{i-1}$ denote the simplex on the first $(i-1)$ north poles.
	
	\begin{lem}\label{lem:midred2}
Considering $\susp^i\Mu$ realized in $\R^{d+i}$, and $k+i\le \frac{d}{2}$, the following two are equivalent:
		\begin{compactenum}[(1)]
					\item The Hall-Laman relations for \[\overline{\KK}^{k+1}(\susp^i\Mu,\Delta_{i}\ast\Mu)\]
					with respect to the map $x_{\mbf{n}_i}$.									
			\item The Hall-Laman relations for 
 \[\overline{\KK}^{k}(\uppi\hspace{0.3mm} \susp^{i-1}\Mu,\uppi\hspace{0.3mm} \Delta_{i-1}\ast \Mu)\]			
with respect to $\tet$. \qed
		\end{compactenum}
	\end{lem}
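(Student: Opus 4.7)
The plan is to reduce Lemma~\ref{lem:midred2} to a single application of the same argument used in Lemma~\ref{lem:midred}, iterated or, what is the same, applied to the ``top" suspension coordinate, but now working with the Gorenstein quotient $\BR^\ast$ of Proposition~\ref{ns} in place of $\AR^\ast$ (which need not be Poincar\'e dual for manifolds that are not rational homology spheres). Since the statement only concerns the annihilator side $\overline{\KK}$, one orientation of the argument in Lemma~\ref{lem:midred} is enough.

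First I would place the coordinates so that the last pair of poles $\mbf{n}_i,\mbf{s}_i$ sits on the extra coordinate axis of $\R^{d+i}=\R^{d+i-1}\oplus\R\langle\mbf{n}_i\rangle$, with $\uppi$ the projection along $\mbf{n}_i$ onto $\R^{d+i-1}$ and $\tet=x_{\mbf{n}_i}-x_{\mbf{s}_i}\in\BR^1(\susp^i\Mu)$ the corresponding height. Under this choice, $\uppi\hspace{0.3mm}\susp^i\Mu=\susp^{i-1}\Mu$ and $\uppi\hspace{0.3mm}(\Delta_i\ast\Mu)=\Delta_{i-1}\ast\Mu$, so the combinatorial identifications in the statement are transparent.

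Second, I would assemble the Lemma~\ref{lem:midred} diagram with $\BR^\ast$ throughout:
\[
\begin{tikzcd}[column sep=4em]
\BR^{k}(\uppi\hspace{0.3mm}\susp^{i-1}\Mu) \arrow{r}{\ \cdot\tet^{d-2k}\ } \arrow{d}{\sim} & \BR^{d-k-i+1}(\uppi\hspace{0.3mm}\susp^{i-1}\Mu) \arrow{d}{\sim} \\
\BR^{k+1}(\susp^i\Mu,\,\mbf{s}_i\ast\susp^{i-1}\Mu) \arrow{r}{\ \cdot x_{\mbf{n}_i}^{d-2k-1}\ } & \BR^{d-k-i+2}(\mbf{n}_i\ast\susp^{i-1}\Mu),
\end{tikzcd}
\]
the left vertical map being $\BR^{k}(\uppi\hspace{0.3mm}\susp^{i-1}\Mu)\cong \BR^{k}(\mbf{n}_i\ast\susp^{i-1}\Mu)\xrightarrow{\cdot x_{\mbf{n}_i}}\BR^{k+1}(\susp^i\Mu,\mbf{s}_i\ast\susp^{i-1}\Mu)$ by the two cone lemmas, and the right vertical map a single cone lemma. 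Commutativity is immediate because $x_{\mbf{n}_i}\tet=x_{\mbf{n}_i}^2$ modulo $\mbf{s}_i\ast\susp^{i-1}\Mu$. Because $\BR^\ast(\susp^i\Mu)$ is a Poincar\'e duality algebra, the Hall-Laman nondegeneracy at $x_{\mbf{n}_i}$ in the bottom row is equivalent to nondegeneracy of the top row at $\tet$, provided one identifies the respective ideals; this is the content of Lemma~\ref{lem:midred} transplanted to the manifold setting.

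The step I expect to be the main obstacle is precisely this identification of ideals: one must show that under the cone lemma isomorphism the ideal $\overline{\KK}^{k+1}(\susp^i\Mu,\Delta_i\ast\Mu)$ pulled back to $\BR^k(\uppi\hspace{0.3mm}\susp^{i-1}\Mu)$ is exactly $\overline{\KK}^k(\uppi\hspace{0.3mm}\susp^{i-1}\Mu,\uppi\hspace{0.3mm}\Delta_{i-1}\ast\Mu)$. Concretely, a class in the upper left annihilates $\AR^\ast(\st_\sigma(\uppi\hspace{0.3mm}\susp^{i-1}\Mu))$ for every face $\sigma\notin\uppi\hspace{0.3mm}\Delta_{i-1}\ast\Mu$ iff its image annihilates $\AR^\ast(\st_{\{\mbf{n}_i\}\cup\sigma}(\susp^i\Mu))$ for every such $\sigma$, which by the cone lemmas is iff it annihilates $\AR^\ast(\st_\tau\susp^i\Mu)$ for every $\tau\notin\Delta_i\ast\Mu$ with $\mbf{n}_i\in\tau$; the complementary case $\mbf{n}_i\notin\tau$ is vacuous because these stars already vanish under multiplication by $x_{\mbf{n}_i}$. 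Once this bookkeeping is carried out, the equivalence follows from the diagram, and the lemma is proved.
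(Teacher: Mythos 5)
Your approach is the one the paper intends—the paper's own justification of Lemma~\ref{lem:midred2} is simply that it "follows as before" from Lemma~\ref{lem:midred} once $\AR^\ast$ is replaced by the Gorenstein quotient $\BR^\ast$, and your commutative diagram and cone-lemma bookkeeping of ideals are exactly that. Two points in the execution are off, though. The degree bookkeeping carries over the $i=1$ exponents without accounting for the extra $i-1$ suspension coordinates: since $\BR^\ast(\uppi\hspace{0.3mm}\susp^{i-1}\Mu)$ has its fundamental class in degree $d+i-1$, the top arrow should read $\tet^{d+i-1-2k}$ and the bottom one $x_{\mbf{n}_i}^{d+i-2k-2}$, both landing in degree $d+i-1-k$; the right-hand column degrees $d-k-i+1$ and $d-k-i+2$ that you wrote are not even consistent with the exponents on your own arrows.

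The more substantive issue is the claim that "the complementary case $\mbf{n}_i\notin\tau$ is vacuous because these stars already vanish under multiplication by $x_{\mbf{n}_i}$." That is automatic only when $\mbf{s}_i\in\tau$, since then $\mbf{n}_i\notin\st_\tau\susp^i\Mu$ and $x_{\mbf{n}_i}$ acts as zero on $\AR^\ast(\st_\tau\susp^i\Mu)$. But for $i\ge 2$ there are faces $\tau\notin\Delta_i\ast\Mu$ containing neither $\mbf{n}_i$ nor $\mbf{s}_i$ (for instance $\tau=\{\mbf{s}_j\}$ with $j<i$), and $\mbf{n}_i$ is then a vertex of $\st_\tau\susp^i\Mu$, so multiplication by $x_{\mbf{n}_i}$ is nonzero on that star. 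The desired vanishing still holds—for such $\tau$ the class $x_{\mbf{n}_i}a$ restricted to $\st_\tau\susp^i\Mu$ lies in the ideal $x_{\mbf{n}_i}\AR^\ast(\st_\tau\susp^i\Mu)$, which a further cone lemma identifies with $\AR^\ast(\st_{\tau\cup\{\mbf{n}_i\}}\susp^i\Mu)$, reducing its vanishing to the case $\mbf{n}_i\in\tau$ that you already handled—but this is an extra reduction, not a vacuity, and as written your ideal identification is incomplete.
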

		
	Here, we define $\BR^\ast(\susp^i(\Mu))$ as the quotient of $\AR(\susp(\Mu))$ by the interior socle ideal, that is, the ideal generated by socle elements of degree less than $d+i$. Notice that 
	\[
	(H^{k-1})^{\binom{d}{k}}(\Mu))\ \cong\ (\Socl)^k(\susp^i(\Mu))\]
	following the partition double complex of Lemma~\ref{lem:partyyyyy}. The lemma follows as before.
	
	Now, we only need to prove the biased Poincar\'e duality of Lemma~\ref{lem:midred2}. For this, we again need the notion of railways. We define this as follows:
	
	A railway in a subdivision of $\susp^i\Mu$ is a singular hypersurface with boundary that is a rational manifold outside the suspension points, and such that at a face $\sigma$ of $\Delta_i$, the link of $\sigma$ in the railway is up to subdivision coinciding 
	with $\partial ((\Delta_{i}-\sigma)\ast\Mu)$. The octavian condition is modified by 
	considering the radial projection 
	\[\uppsi:\susp^i\Mu{\setminus}\{\text{singular points}\} \ \longrightarrow\ \Mu\]
	the conditions (5) and (6) are modified as
	\begin{compactenum}[(1')]
		\setcounter{enumi}{4}
		\item $\uppsi:\mr{N}_{W'} U\rightarrow \Mu$ induces an injection in homology up to dimension $(k-2)$ for all initial segments $W'$.
		\item The complementary manifold to $\mr{N}_{W'} U$ in $U$, mapped to $\Mu$ along $\uppsi$ induces an injection in homology up to dimension $(k-2)$.
	\end{compactenum}

\subsection{Positive characteristic}

What happens when we do no longer restrict to rational homology spheres, but homology spheres, or even manifolds, with respect to any other field? What works, and what has to be modified, and what does apparently not work?

One thing we rely on is that the field is infinite. That leaves still the possibility that the characteristic is positive.

Immediately, we then see that the approximation results, that is, Lemma~\ref{lem:approx} and its relatives, make less sense in positive characteristic. However, the perturbation lemma, and in particular the Lefschetz theorems, still apply as stated.

There are two ways to see the latter: we worked with stress spaces, the Weil dual for the face ring most of the time, and it is perhaps not immediately clear how to define these in a suitable way in positive characteristic. For this, we replace the differential operator by the Hasse derivative and obtain the appropriate Weil dual in positive characteristic. With this, and apart from the aforementioned approximation lemma, the results apply as usual.

Alternatively, we should note however that we worked with stress spaces precisely because of the nice approximations of primitive subspaces, and they were at no point critical to our investigations. Indeed, we could have worked with face rings throughout, would have to modify our perspective slightly, but then otherwise work as before, and in particular obtain the Lefschetz theorems and Hall-Laman relations.

	{\small
		\bibliographystyle{myamsalpha}
		\bibliography{ref}}

\end{document}